\DeclareFontFamily{OMX}{MnSymbolE}{}%
\DeclareSymbolFont{MnLargeSymbols}{OMX}{MnSymbolE}{m}{n}%
\DeclareFontShape{OMX}{MnSymbolE}{m}{n}{%
  <-6>  MnSymbolE5%
  <6-7>  MnSymbolE6%
  <7-8>  MnSymbolE7%
  <8-9>  MnSymbolE8%
  <9-10> MnSymbolE9%
  <10-12> MnSymbolE10%
  <12->   MnSymbolE12%
}{}%
\DeclareFontShape{OMX}{MnSymbolE}{b}{n}{%
  <-6>  MnSymbolE-Bold5%
  <6-7>  MnSymbolE-Bold6%
  <7-8>  MnSymbolE-Bold7%
  <8-9>  MnSymbolE-Bold8%
  <9-10> MnSymbolE-Bold9%
  <10-12> MnSymbolE-Bold10%
  <12->   MnSymbolE-Bold12%
}{}%
\let\llangle\@undefined%
\let\rrangle\@undefined%
\DeclareMathDelimiter{\llangle}{\mathopen}
{MnLargeSymbols}{'164}{MnLargeSymbols}{'164}%
\DeclareMathDelimiter{\rrangle}{\mathclose}
{MnLargeSymbols}{'171}{MnLargeSymbols}{'171}%
\numberwithin{equation}{section}%
\newcounter{thmintro}%
\declaretheorem[style=theorem,sibling=equation]{corollary}%
\declaretheorem[style=theorem,sibling=equation]{lemma}%
\declaretheorem[style=theorem,sibling=equation]{proposition}%
\declaretheorem[style=theorem,sibling=equation]{theorem}%
\declaretheorem[style=theorem,numbered=no, name=Theorem]{theorem*}%
\declaretheorem[style=theorem,sibling=thmintro,name=Theorem, Refname={Theorem,Theorems}]{theorem intro}%
\declaretheorem[style=theorem,numbered=no,name=Corollary]{corollary intro}%
\declaretheorem[style=definition,sibling=equation]{definition}%
\declaretheorem[style=definition,numbered=no,name=Definition]{definition*}%
\declaretheorem[style=definition,sibling=equation,name={Definition-Proposition}]{defprop}%
\declaretheorem[style=definition,sibling=equation,Refname={Notation,Notations}]{notation}%
\declaretheorem[style=definition,sibling=equation]{construction}%
\declaretheorem[style=definition,sibling=equation]{variant}%
\declaretheorem[style=definition,sibling=equation,Refname={Setting,Settings}]{setting}%
\declaretheorem[style=definition,numbered=no]{acknowledgements}%
\declaretheorem[style=definition,numbered=no]{conventions}%
\declaretheorem[style=definition,numbered=no,name={Financial Support}]{financialsupport}%
\declaretheorem[style=remark,sibling=equation]{remark}%
\declaretheorem[style=remark,sibling=equation,numbered=no,name=Remark]{uremark}%
\declaretheorem[style=remark,sibling=equation]{example}%
\newcommand{\kk}{\mathbf{k}}%
\newcommand{\ZZ}{\mathbb{Z}}%
\DeclareMathOperator{\chark}{char}
\newcommand{\op}{\mathrm{op}}%
\NewDocumentCommand{\id}{sO{}}{\IfBooleanTF{#1}{1}{\mathrm{id}}_{#2}}%
\DeclareMathOperator{\img}{im}%
\NewDocumentCommand{\set}{mo}{\{#1\IfValueTF{#2}{\,\mid\,#2}{}\}}%
\NewDocumentCommand{\add}{s m}{\operatorname{add}\IfBooleanTF{#1}{(#2)}{#2}}
\NewDocumentCommand{\proj}{s m}{\operatorname{proj}\IfBooleanTF{#1}{(#2)}{#2}}
\NewDocumentCommand{\mmod}{s m}{\operatorname{mod}\IfBooleanTF{#1}{(#2)}{#2}} 
\NewDocumentCommand{\smod}{s m}{\underline{\operatorname{mod}}(#2)}%
\NewDocumentCommand{\Hom}{O{}mm}{\operatorname{Hom}_{#1}(#2,#3)}%
\NewDocumentCommand{\sHom}{O{}mm}{\underline{\operatorname{Hom}}_{#1}(#2,#3)}%
\NewDocumentCommand{\RHom}{sO{}D<>{\bullet,*}mm}{\mathbb{R}\operatorname{Hom}_{#2}\IfBooleanT{#1}{^{#3}}(#4,#5)}%
\NewDocumentCommand{\dgHom}{O{}mmO{}}{\mathbf{Hom}_{#1}^{#4}(#2,#3)}%
\NewDocumentCommand{\Ext}{O{}O{\bullet}mm}{\operatorname{Ext}_{#1}^{#2}(#3,#4)}
\NewDocumentCommand{\Lotimes}{O{}}{\otimes_{#1}^{\mathbb{L}}}
\NewDocumentCommand{\DerCat}{O{}m}{\operatorname{D}\IfValueT{#1}{^{\mathrm{#1}}}(#2)}%
\newcommand{\category}[1]{\mathcal{#1}}%
\newcommand{\C}{\category{C}}%
\newcommand{\T}{\category{T}}%
\NewDocumentCommand{\ev}{O{}}{\operatorname{ev}_{#1}} 
\NewDocumentCommand{\s}{sO{}m}{\mathbf{s}^{#2}\IfBooleanTF{#1}{(#3)}{#3}}
\NewDocumentCommand{\vdeg}{m}{\lvert #1\rvert_{\mathrm{v}}} 
\NewDocumentCommand{\hdeg}{m}{\lvert #1\rvert_{\mathrm{h}}} 
\NewDocumentCommand{\pairing}{D<>{}}{\langle #1\rangle}
\NewDocumentCommand{\BB}{mO{\bullet}}{\operatorname{B}(#1)_{#2}}
\RenewDocumentCommand{\H}{O{\bullet}m}{\operatorname{H}^{#1}(#2)}%
\NewDocumentCommand{\Hclass}{m}{\{\!\!\{ {#1} \}\!\!\}}
\NewDocumentCommand{\HC}{sO{\bullet}D<>{*}mod<>}{\operatorname{C}\IfBooleanTF{#1}{^{#2}}{^{#2,#3}}(#4\IfValueT{#5}{,#5})\IfValueT{#6}{[#6]}}
\NewDocumentCommand{\nHC}{sO{\bullet}D<>{*}mod<>}{\overline{\operatorname{C}}\IfBooleanTF{#1}{^{#2}}{^{#2,#3}}(#4\IfValueT{#5}{,#5})\IfValueT{#6}{[#6]}}
\NewDocumentCommand{\HH}{sO{\bullet}D<>{*}mod<>}{\operatorname{HH}\IfBooleanTF{#1}{^{#2}}{^{#2,#3}}(#4\IfValueT{#5}{,#5})\IfValueT{#6}{[#6]}}
\NewDocumentCommand{\HoHC}{sO{\bullet}D<>{*}mod<>}{\operatorname{C}\IfBooleanTF{#1}{_{#2}}{_{#2,#3}}(#4\IfValueT{#5}{,#5})\IfValueT{#6}{[#6]}}
\NewDocumentCommand{\nHoHC}{sO{\bullet}D<>{*}mod<>}{\overline{\operatorname{C}}\IfBooleanTF{#1}{_{#2}}{_{#2,#3}}(#4\IfValueT{#5}{,#5})\IfValueT{#6}{[#6]}}
\NewDocumentCommand{\HoHH}{sO{\bullet}D<>{*}mod<>}{\operatorname{HH}\IfBooleanTF{#1}{_{#2}}{_{#2,#3}}(#4\IfValueT{#5}{,#5})\IfValueT{#6}{[#6]}}
\newcommand{\Hd}{d_{\mathrm{Hoch}}}
\newcommand{\HoHd}{b}
\newcommand{\BV}{\Delta} 
\NewDocumentCommand{\BimHC}{sD!!{A^e}O{\bullet}O{\star}D<>{*}mod<>}{\operatorname{C}_{#2}\IfBooleanTF{#1}{^{#3,#4,#5}}{^{#3,#5}}(#6\IfValueTF{#7}{,#7}{,#6})\IfValueT{#8}{[#8]}}
\NewDocumentCommand{\BimHH}{sD!!{A^e}O{\bullet}O{\star}D<>{*}mod<>}{\operatorname{Ext}_{#2}\IfBooleanTF{#1}{^{#3,#4,#5}}{^{#3,#5}}(#6\IfValueTF{#7}{,#7}{,#6})\IfValueT{#8}{[#8]}}
\newcommand{\dBim}{d_{\mathrm{Bim}}}
\newcommand{\dv}{d_{\mathrm{v}}}
\renewcommand{\dh}{d_{\mathrm{h}}}
\NewDocumentCommand{\RelBimHC}{sD!!{A^e}O{\bullet}O{\star}D<>{*}mmd<>}{\operatorname{C}\IfBooleanTF{#1}{^{#3,#4,#5}}{^{#3,#5}}(#6\IfValueTF{#7}{\,|\,#7}{,#6})\IfValueT{#8}{[#8]}}
\NewDocumentCommand{\RelBimHH}{sD!!{A^e}O{\bullet}O{\star}D<>{*}mmd<>}{\operatorname{HH}\IfBooleanTF{#1}{^{#3,#4,#5}}{^{#3,#5}}(#6\IfValueTF{#7}{\,|\,#7}{,#6})\IfValueT{#8}{[#8]}}
\NewDocumentCommand{\dRelBim}{O{A}D<>{M}}{d_{#1|#2}}
\NewDocumentCommand{\HMC}{sO{\bullet}D<>{*}mod<>}{\operatorname{CM}\IfBooleanTF{#1}{^{#2}}{^{#2,#3}}(#4\IfValueTF{#5}{,#5}{,\Hclass{\Astr<d+2>[#4]}})\IfValueT{#6}{[#6]}}
\NewDocumentCommand{\HMH}{sO{\bullet}D<>{*}mod<>}{\operatorname{HM}\IfBooleanTF{#1}{^{#2}}{^{#2,#3}}(#4\IfValueTF{#5}{,#5}{,\Hclass{\Astr<d+2>[#4]}})\IfValueT{#6}{[#6]}}
\newcommand{\HMd}{d_{\mathrm{HM}}}
\NewDocumentCommand{\BimHMC}{sD!!{A}O{\bullet}O{\star}D<>{*}mO{d+2}d<>}{\operatorname{CM}_{#2^e}\IfBooleanTF{#1}{^{#3,#4,#5}}{^{#3,#5}}(#6,\Hclass{\Astr<#7>[\IfValueTF{#8}{#8}{#2\ltimes{#6}}]})}
\NewDocumentCommand{\BimHMH}{sD!!{A}O{\bullet}O{\star}D<>{*}mO{d+2}d<>}{\operatorname{EM}_{#2^e}\IfBooleanTF{#1}{^{#3,#4,#5}}{^{#3,#5}}(#6,\Hclass{\Astr<#7>[\IfValueTF{#8}{#8}{#2\ltimes{#6}}]})}
\newcommand{\BimHMd}{d_{\mathrm{BimHM}}}
\NewDocumentCommand{\YonedaProd}{mm}{{#1}\circ{#2}}
\NewDocumentCommand{\preLie}{mm}{{#1}\bullet{#2}}
\NewDocumentCommand{\cupp}{mm}{{#1}\cdot{#2}}
\NewDocumentCommand{\braces}{mm}{{#1}\{#2\}}
\DeclareMathOperator{\Sq}{Sq}
\NewDocumentCommand{\Astr}{sD<>{}O{A}}{\IfBooleanTF{#1}{\overline{m}}{m}_{#2}^{#3}}
\NewDocumentCommand{\AstrAux}{sD<>{}O{A}}{\widetilde{m}_{#2}^{#3}}
\NewDocumentCommand{\AllBims}{s}{\operatorname{Bim}_{A_{\infty}}\IfBooleanT{#1}{^{\vee}}}
\NewDocumentCommand{\AllAlgs}{s}{\operatorname{Alg}_{A_{\infty}}\IfBooleanT{#1}{^{\vee}}}
\NewDocumentCommand{\graft}{mm}{{#1}\vee{#2}}
\NewDocumentCommand{\PBT}{O{n}}{\operatorname{PBT}_{#1}}
\NewDocumentCommand{\Map}{O{}mm}{\operatorname{Map}_{#1}(#2,#3)}
\NewDocumentCommand{\dgOp}{}{\operatorname{dgOp}}
\NewDocumentCommand{\opA}{O{\infty}}{\mathbb{A}_{#1}}
\NewDocumentCommand{\opEnd}{m}{\mathcal{E}(#1)}
\NewDocumentCommand{\opLinEnd}{mm}{\mathcal{E}(#1,#2)}
\NewDocumentCommand{\Str}{O{\opA}D<>{\Astr}m}{\operatorname{Str}_{#1,#2}(#3)}
\NewDocumentCommand{\dSS}{O{}}{\mathrm{d}_{#1}}
\NewDocumentCommand{\gLambda}{O{\sigma}D<>{d}}{\Lambda(#1,#2)}
\newcommand{\bfLambda}{\mathbf{\Lambda}}
\NewDocumentCommand{\Aut}{sm}{\operatorname{Aut}(#2)}
\NewDocumentCommand{\Out}{sm}{\operatorname{Out}(#2)}
\NewDocumentCommand{\InnAut}{sm}{\operatorname{Inn}(#2)}
\NewDocumentCommand{\Pic}{sm}{\operatorname{Pic}(#2)}
\NewDocumentCommand{\twBim}{sO{1}mO{1}}{\IfBooleanTF{#1}{{}_{#2}{#3}}{{}_{#2}{#3}_{#4}}}
\begin{document}

\title[The Derived Auslander--Iyama Correspondence II]%
{The Derived Auslander--Iyama Correspondence II: Bimodule Calabi--Yau Structures}%

\author[G.~Jasso]{Gustavo Jasso}%

\address[G.~Jasso]{%
  Mathematisches Institut, %
  Universität zu Köln, %
  Weyertal 86-90, %
  50931 Köln, %
  Germany}%
\email{gjasso@math.uni-koeln.de}%
\urladdr{https://gustavo.jasso.info}%

\author[F.~Muro]{Fernando Muro}%

\address[F.~Muro]{%
  Universidad de Sevilla, %
  Facultad de Matemáticas, %
  Departamento de Álgebra, %
  Calle Tarfia s/n, %
  41012 Sevilla, %
  Spain%
}%
\email{fmuro@us.es}%
\urladdr{https://personal.us.es/fmuro/}%

\keywords{Triangulated categories; differential graded algebras; Hochschild
  cohomology; $A_\infty$-algebras; $A_\infty$-bimodules; Massey products.}%

\subjclass[2020]{Primary: 18G80 Secondary: 18N40}%

\begin{abstract}
  Let $d$ be a positive integer. In a previous article we established a bijective
correspondence between the following classes of objects, considered up to the
appropriate notion of equivalence: differential graded (dg) algebras with
finite-dimensional $0$-th cohomology such that the canonical generator of their
perfect derived category is a basic $d\ZZ$-cluster tilting object, and basic
Frobenius algebras that are twisted $(d+2)$-periodic as bimodules. In this
article, we prove a variant of our general correspondence for bimodule right
Calabi--Yau dg algebras. A novel ingredient is a new cohomology theory which
contains obstructions to the existence and uniqueness of minimal
$A_\infty$-bimodule structures on a graded bimodule. As an application of our
results, we obtain, to our knowledge, the first example of an algebraic
triangulated category with a triangulated Calabi--Yau structure that cannot be
lifted to a bimodule right Calabi--Yau structure on any of its dg enhancements.


\end{abstract}

\maketitle

\setcounter{tocdepth}{1}
\tableofcontents

\section{Introduction}

\subsection{Calabi--Yau triangulated categories}

We work over a field $\kk$. Let $\T$ be an algebraic\footnote{A triangulated
  category is \emph{algebraic} if it is equivalent to the homotopy category of a
  pre-triangulated dg category~\cite[Section~3.6]{Kel06}.} triangulated category
with finite-dimensional morphism spaces and split idempotents. Following
Kontsevich~\cite{Kon98}, the triangulated category $\T$ is
\emph{$n$-Calabi--Yau}, $n\in\ZZ$, if there exist natural isomorphisms
\[
  \T(y,x[n])\stackrel{\sim}{\longrightarrow}D\T(x,y),\qquad{x,y\in\T},
\]
where $V\mapsto DV$ denotes the passage to the linear dual, that exhibit the
$n$-fold power of the shift functor as a (graded) Serre functor in the sense
of~\cite{BK89}.\footnote{See~\cite[Section~2.6]{Kel08} and Van den
  Bergh's~\cite[Appendix~A]{Boc08} for the precise definition of a graded Serre
  functor as well as a discussion of sign issues.} The bounded derived category
of coherent sheaves on a smooth projective Calabi--Yau variety of dimension $n$
is an $n$-Calabi--Yau triangulated category, hence the terminology. Not least
due to Kontsevich's celebrated Homological Mirror Symmetry
Conjecture~\cite{Kon95}, Calabi--Yau triangulated categories also play an
important role in symplectic geometry. In representation theory of quivers and
algebras, prime examples of Calabi--Yau triangulated categories are (higher)
cluster categories~\cite{BMR+06,Kel05,Ami09,Guo11,FKQ24} and stable categories
of suitable self-injective algebras including preprojective algebras of Dynkin
quivers and higher-dimensional generalisations thereof~\cite{Ar96,Cra00,IO13}.
We refer the reader to the survey articles~\cite{Kel08,Kel10} for further
context and examples.

Recall that a differential graded (dg) algebra $\Gamma$ is \emph{homologically
  smooth} if it is compact as a dg $\Gamma$-bimodule. Following Kontsevich and
Ginzburg~\cite{Gin06}, such a dg algebra $\Gamma$ is \emph{bimodule left
  $n$-Calabi--Yau}, $n\in\ZZ$, if there exists an isomorphism
\[
  \RHom[\Gamma^e]{\Gamma}{\Gamma^e}\cong\Gamma[-n],\qquad
  \Gamma^e\coloneqq\Gamma\otimes\Gamma^\op,
\]
in the derived category of dg $\Gamma$-bimodules, and it is a general fact that
the finite-dimensional (=perfectly valued) derived category of $\Gamma$, that is
the full subcategory of the unbounded derived category $\DerCat{\Gamma}$ spanned
by the dg $\Gamma$-modules whose cohomology has finite total dimension, is an
$n$-Calabi--Yau triangulated category in this case~\cite[Lemma~4.1]{Kel08}. For
example, the dg algebra $C_*(\Omega X)$ of chains on the loop space of an
$n$-dimensional compact orientable manifold is bimodule left
$n$-Calabi--Yau~\cite{Lur09c,CG15} (see also~\cite[Theorem~5.4]{BD19}). More
generally, every homologically smooth dg algebra $B$ admits a (left)
$n$-Calabi--Yau completion $\mathbf{\Pi}_n(B)$, $n\in\ZZ$, that, as the name
suggests, has the bimodule left $n$-Calabi--Yau
property~\cite{Kel11,Yeu16,BCS24}. Keller's Calabi--Yau completion
simultaneously generalises the construction of the preprojective algebra of a
quiver~\cite{GP79,BGL87} and that of the Ginzburg dg algebra of a quiver with
potential~\cite{Gin06} if one takes into account deformations. For example, up
to quasi-isomorphism, preprojective algebras of extended Dynkin quivers arise as
$2$-Calabi--Yau completions, and there is a suitable generalisation of this
important fact to higher dimensions~\cite{HIO14}. It is worth mentioning that
(deformed) Calabi--Yau completions also arise in symplectic geometry, see for
example~\cite{EL17a,LU24}. In algebro-geometric terms, the passage to the
Calabi--Yau completion corresponds to the passage from a smooth algebraic
variety to the total space of its (shifted) cotangent bundle~\cite{BCS24}. In
\Cref{subsec:examples} we recall some examples that motivate our work in which
Calabi--Yau completions play a prominent role.

In this article, a sequel to~\cite{JKM22}, we investigate the question of how
the main theorem therein interacts with bimodule Calabi--Yau properties. We are
concerned with the following alternative notion of a Calabi--Yau dg algebra,
which is also due to Kontsevich~\cite{Kon93} (see also~\cite{KS09,BD19}). Let
$A$ be a dg algebra whose cohomology is degree-wise finite-dimensional. We say
that $A$ is \emph{bimodule right $n$-Calabi--Yau}, $n\in\ZZ$, if there exists an
isomorphism
\[
  A[n]\cong DA,\qquad DA\coloneqq\dgHom[\kk]{A}{\kk},
\]
in the derived category of dg $A$-bimodules, where $DA$ is the (differential
graded) linear dual of the diagonal $A$-bimodule. In this case, the perfect
derived category $\DerCat[c]{A}$, that is the full subcategory of $\DerCat{A}$
spanned by the compact objects, is $n$-Calabi--Yau as a triangulated category.
Under a suitable refinement, the bimodule right Calabi--Yau property can be
regarded as Koszul dual to the bimodule left Calabi--Yau property, see for
example~\cite[Proposition~10.4]{KW21} and the more recent~\cite{BCL25}. Under
Koszul duality, Keller's Calabi--Yau completion of a homologically smooth dg
algebra corresponds to Segal's cyclic completion of dg algebras with
finite-dimensional cohomology~\cite{Seg08}, which can be understood as a derived
analogue of the passage from a finite-dimensional algebra $\Lambda$ to its
trivial extension ${\Lambda\ltimes D\Lambda}$. This ties with the fact that the
bimodule right Calabi--Yau property can be understood as a derived version of
the notion of a symmetric algebra.\footnote{Recall that a (necessarily
  finite-dimensional) algebra is \emph{symmetric} if $\Lambda\cong D\Lambda$ as
  a $\Lambda$-bimodule.} In representation theory of algebras, cyclic
completions are central to the construction of higher cluster categories of
finite-dimensional algebras of finite global dimension~\cite{Kel05,Ami09,Guo11}.
Finally, we mention that bimodule right Calabi--Yau algebras play a central role
in Kontsevich and Soibelman's approach to Donaldson--Thomas
invariants~\cite{KS08} and are also abundant in symplectic geometry in the form
of certain flavours of Fukaya categories~\cite{Fuk10} (see also~\cite{CL11}).

\subsection{Main results}

In this article, we investigate the subtle relationship between Calabi--Yau
structures at the triangulated and dg-enhanced levels. The following is one of
our main results. To our knowledge, this result gives the first example of a
triangulated Calabi--Yau structure that cannot be lifted to a bimodule right
Calabi--Yau structure.

\begin{theorem*}[{see~\Cref{thm:non-enhanceable-CY} and~\Cref{rmk:A-dual-numbers-periodicity}}]
  Let $\kk$ be a field with $\chark(\kk)=2$ and consider the dg algebra
  \begin{equation}
    \label{eq:A-intro}
    A\coloneqq\frac{\kk[e,t^{\pm1}]\langle h\rangle}{(h^2,ht+th,he+eh+1)},\qquad
    |e|=|h|=0,\quad |t|=1,
  \end{equation}
  endowed with the differential
  \[
    d(e)=0,\qquad d(t)=0,\qquad d(h)=e^2t,
  \]
  whose cohomology is isomorphic to the graded Laurent polynomial algebra
  \[
    \bfLambda=\Lambda[\imath^{\pm1}],\qquad |\imath|=-1,
  \]
  where $\Lambda=\kk[\varepsilon]/(\varepsilon^2)$ is the algebra of dual
  numbers.\footnote{This dg algebra was first considered
    in~\cite[Remark~8]{MSS07}.} Then, the nondegenerate symmetric associative
  pairing
  \[
    \pairing<-,->_{a,b}\colon\Lambda\times\Lambda\longrightarrow\kk,\qquad
    \pairing<1,1>_{a,b}=a,\qquad\pairing<1,\varepsilon>_{a,b}=b\neq0,
  \]
  defines a triangulated $0$-Calabi--Yau structure on $\DerCat[c]{A}$ that can
  be lifted to a bimodule right $0$-Calabi--Yau structure on $A$ if and only if $a=0$.
\end{theorem*}

The previous theorem is established through a careful analysis of the obstructions to
lifting graded-bimodule isomorphisms on cohomology to quasi-isomorphisms of dg
bimodules (see~\Cref{thm:CY-equivalence} below). In fact, this
obstruction theory can be developed and applied in a broader setting, as we
now explain.

The central concept underlying this article is that of
a cluster tilting object, whose definition is due to Iyama~\cite{Iya07} in the
abelian setting and Iyama and Yoshino~\cite{IY08} in the triangulated setting,
see also~\cite{GKO13,IJ17} for the `periodic' variant below. For ${d=2}$,
cluster tilting objects play a fundamental role in the additive categorification
programme of Fomin--Zelevinsky's cluster algebras~\cite{BMR+06,Ami09,Kel08a}
and, in general, are central to Iyama's higher-dimensional version of
Auslander--Reiten Theory~\cite{Iya07,Iya07a,Iya11}. We refer the reader to the
introduction to our previous article~\cite{JKM22} for a more thorough discussion
of our motivation.

\begin{definition*}[Iyama--Yoshino] Let $d\geq1$. A (basic) object $C\in\T$ is
  \emph{$d\ZZ$-cluster tilting}\footnote{If the triangulated category $\T$ is
    not necessarily $\operatorname{Hom}$-finite, it is necessary to impose the
    additional condition that the subcategory $\add*{A}\subseteq\DerCat[c]{A}$
    is functorially finite in the sense of~\cite{AS80}.} if
  \[
    \forall i\not\in d\ZZ,\qquad \T(C,C[i])=0,
  \]
  and the following equalities of full subcategories of $\T$ hold:
  \begin{align*}
    \add*{C}&=\set{X\in\T}[\forall 0<i<d,\ \T(X,C[i])=0]\\&=\set{Y\in\T}[\forall 0<i<d,\ \T(C,Y[i])=0],
  \end{align*}
  where $\add*{C}\subseteq\T$ denotes the closure of $C$ under the formation of
  finite direct sums and retracts in $\T$. In the presence of the second
  condition, the first vanishing condition is equivalent to the existence of an
  isomorphism $C\cong C[d]$ in $\T$.
\end{definition*}
If $d=1$ then a $1\ZZ$-cluster tilting object is simply an additive generator of
$\T$. By definition, a triangulated category $\T$ is \emph{additively finite} if
and only if it admits an additive generator. Thus, the existence of a
$1\ZZ$-cluster tilting object in $\T$ places us precisely in the setup of the
second-named author's~\cite{Mur22}. For example, the perfect derived category of
the dg algebra in \eqref{eq:A-intro} is additively finite. More generally,
although it is not obvious from the definition, a $d\ZZ$-cluster tilting object
must generate $\T$ as a triangulated category with split
idempotents~\cite[Theorem~3.1]{IY08}. Since the ambient category $\T$ is assumed
to be algebraic, the presence of a $d\ZZ$-cluster tilting object $C\in\T$
therefore implies the existence of a dg algebra $A$ and an equivalence of
triangulated categories~\cite{Kel94}
\[
  \DerCat[c]{A}\stackrel{\sim}{\longrightarrow}\T,\qquad A\longmapsto C.
\]
The above equivalence induces an isomorphism of graded algebras
\[
  \textstyle\H{A}\cong\bigoplus_{di\in d\ZZ}\T(C,C[di]),\qquad g*f\coloneqq
  g[dj]\circ f,\qquad |f|=dj;
\]
since the object $C\in\T$ is $d\ZZ$-cluster tilting, these graded algebras are
`$d$-sparse' in the sense that they are concentrated in degrees $d\ZZ$. The main
results in our previous works,~\cite{Mur20b} for $d=1$ and
\cite[Theorem~A]{JKM22} in general, show that---remarkably---the dg algebra $A$
above is uniquely determined up to quasi-isomorphism by the finite-dimensional
algebra $\H[0]{A}\cong\T(C,C)$ and the $\H[0]{A}$-bimodule
$\H[-d]{A}\cong\T(C,C[-d])$. Therefore, it is natural to ask whether the
property of the dg algebra $A$ being bimodule right Calabi--Yau can be detected
at the level of the graded algebra $\H{A}$ as well. For example, an evident
necessary condition is the existence of an isomorphism of graded
$\H{A}$-bimodules\footnote{Here and throughout the article, we write $V\mapsto V(1)$
  for the shift of a graded vector space, and reserve the notation $V\mapsto
  V[1]$ for cochain complexes/differential graded vector spaces.}
\[
  \H{A}(n)\stackrel{\sim}{\longrightarrow}D\H{A},
\]
which in cohomological degree $0$ implies the existence of an isomorphism of
$\H[0]{A}$-bimodules
\[
  \T(C,C[n])=\H[n]{A}\stackrel{\sim}{\longrightarrow}D\H[0]{A}=D\T(C,C).
\]
Before stating the first main result in this article, we remind the reader of
the Hochschild cohomology
\[
  \HH{\bfLambda}=\HH{\bfLambda}[\bfLambda]\coloneqq\BimHH!\bfLambda^e!{\bfLambda},
\]
defined for any graded algebra $\bf\Lambda$; in particular we can take
$\bfLambda=\H{A}$.

\setcounter{thmintro}{2}
\begin{theorem intro}
  \label{thm:CY-equivalence}
  Let $\kk$ be a perfect field. Fix integers $d\geq1$ and $m\in\ZZ$ and
  set\footnote{The dg algebras in \Cref{thm:CY-equivalence} have their
    cohomology concentrated in degrees $d\ZZ$, and hence can only have the
    bimodule right $n$-Calabi--Yau property for $n\in d\ZZ$.} $n\coloneqq md$.
  Let $A$ be a dg algebra such that $\H{A}$ is degree-wise
  finite-dimensional\footnote{Equivalently, the perfect derived category
    $\DerCat[c]{A}$ is $\operatorname{Hom}$-finite.} and whose $0$-th cohomology
  is a basic finite-dimensional algebra. Suppose $A\in\DerCat[c]{A}$ is a
  $d\ZZ$-cluster tilting object. Then, the following statements are equivalent:
  \begin{enumerate}
  \item\label{it:thm:CY-equivalence:CY} The dg algebra $A$ is bimodule right
    $n$-Calabi--Yau.
  \item\label{it:thm:CY-equivalence:BV} There exists an isomorphism of graded
    $\H{A}$-bimodules
    \[
      \varphi\colon\H{A}(n)\stackrel{\sim}{\longrightarrow}D\H{A}
    \]
    and
    \[
      0=\BV(\Hclass{\Astr<d+2>[A]})\in\HH[d+1]<-d>{\H{A}}
    \]
    where $\BV=\BV_\varphi$ is the Batalin--Vilkovisky operator (\Cref{def:BV})
    and $\Hclass{\Astr<d+2>}$ is the universal Massey product of length $d+2$ of
    $A$ (\Cref{def:UMP-dgAs}).
  \end{enumerate}
  Moreover, a graded $\H{A}$-bimodule isomorphism
  $\varphi\colon\H{A}(n)\stackrel{\sim}{\to}D\H{A}$ satisfies the vanishing
  condition in statement \eqref{it:thm:CY-equivalence:BV} if and only if it is
  induced by a bimodule right $n$-Calabi--Yau structure on $A$.
\end{theorem intro}

The universal Massey product of length $d+2$ is an invariant associated with any
dg algebra with $d$-sparse cohomology. This invariant also plays a crucial role in the
proof of the main theorem in~\cite{Mur22} and in the proof of~\cite[Theorem~A]{JKM22}. The
vanishing condition in \Cref{thm:CY-equivalence}\eqref{it:thm:CY-equivalence:BV}
is of obstruction-theoretic nature. The Batalin--Vilkovisky (BV) operator is
obtained, as explained in \cite{Tra08a}, by dualising Connes' boundary operator
$B$ on the (normalised) Hochschild chain complex of $\H{A}$ by means of the graded bimodule
isomorphism $\varphi$. BV operators have been considered in relation to
smooth Calabi--Yau algebras already in Ginzburg's seminal preprint~\cite{Gin06} (see
also \cite{CEG07}) and have been further investigated in this and other closely
related contexts by several authors, see for
example~\cite{Lam10,CCEY21,CYZ16,LZZ16,Vol16} among many others. Furthermore,
the well-known relationship between the $B$ operator and the ISB sequence of
Connes relates the BV operator to cyclic and negative cyclic homology, see for
example~\cite[Section~8]{CYZ16}, and the latter cohomology theories are known to
control the deformation theory of smooth Calabi--Yau
algebras~\cite{PS95,dTdVVdB18}. 
From this point of view, the vanishing condition in
\Cref{thm:CY-equivalence}\eqref{it:thm:CY-equivalence:BV} seems natural, and
is in fact satisfied by every bimodule right Calabi--Yau dg algebra with
$d$-sparse cohomology (\Cref{prop:CY-BV}). Under the hypothesis
of~\Cref{thm:CY-equivalence}, provided that a graded $\H{A}$-bimodule
isomorphism $\H{A}(n)\cong D\H{A}$ exists, we do not know if the condition
\[
  \BV_\varphi(\Hclass{\Astr<d+2>[A]})=0
\]
is always satisfied for a suitable choice of bimodule isomorphism $\varphi$. In
\Cref{prop:BV-non-zero} we provide an explicit example that shows that, in
general, the above vanishing does depend on the choice of $\varphi$. In any
case, it is interesting that this vanishing condition is the only obstruction to
the validity of the bimodule Calabi--Yau property in our context. Finally, we
mention that, in symplectic geometry, the obstruction for the existence of an
\emph{exact} left Calabi--Yau structure on the wrapped Fukaya category of a
nondegenerate Liouville manifold can also be formulated in terms of the
BV operator~\cite[Proposition~4]{Li24}.

\Cref{thm:CY-equivalence} is a consequence of a more precise statement. Let
$\Lambda$ be a basic Frobenius algebra and $I$ an invertible $\Lambda$-bimodule.
Since $\Lambda$ is basic, we may choose an algebra automorphism
${\sigma\colon\Lambda\stackrel{\sim}{\to}\Lambda}$, unique up to inner
automorphisms, such that $I\cong\twBim{\Lambda}[\sigma]$ as
$\Lambda$-bimodules.\footnote{More precisely, the map
  \[
    \Out{\Lambda}\stackrel{}{\longrightarrow}\Pic{\Lambda},\qquad[\sigma]\longmapsto[\twBim{\Lambda}[\sigma]],
  \]
  is a group isomorphism, where
  $\Out{\Lambda}\coloneqq\Aut{\Lambda}/\InnAut{\Lambda}$ is the outer
  automorphism group of $\Lambda$ and $\Pic{\Lambda}$ is the Picard group of
  invertible $\Lambda$-bimodules~\cite[Proposition~3.8]{Bol84}. } We introduce
the graded algebra
\[
  \textstyle\bfLambda=\gLambda\coloneqq\bigoplus_{di\in
    d\ZZ}\twBim[\sigma^i]{\Lambda},\qquad x*y\coloneqq\sigma^j(x)\cdot y,\quad
  |y|=dj,
\]
where $(x,y)\mapsto x\cdot y$ denotes the product in $\Lambda$. By construction,
$\bfLambda$ is concentrated in degrees $d\ZZ$ and it is easy to see that
$\bfLambda$ can be described as the $\sigma$-twisted Laurent polynomial algebra
with coefficients in $\Lambda$ in a variable of degree $-d$. We are chiefly
interested in the case where there exists an isomorphism
\[
  \Omega_{\Lambda^e}^{d+2}(\Lambda)\cong\twBim{\Lambda}[\sigma]
\]
in the stable category of $\Lambda$-bimodules, so that $\Lambda$ is
\emph{twisted $(d+2)$-periodic with respect to $\sigma$}, see~\cite{ES08} for a
survey and \cite{CDIM20,JKM22,KLW24} and the references therein for more
information on this class of algebras. The inclusion of the {degree $0$}
component $j\colon\Lambda=\bfLambda^0\hookrightarrow\bfLambda$ induces a
restriction map
\[
  j^*\colon\HH{\bfLambda}[\bfLambda]\longrightarrow\HH{\Lambda}[\bfLambda]\cong\Ext[\Lambda^e][\bullet]{\Lambda}{\bfLambda^*}.
\]
We are ready to state the second main result in this article, which is a
Calabi--Yau variant of~\cite[Theorem~A]{JKM22}.

\begin{theorem intro}[Calabi--Yau Auslander--Iyama Correspondence]
  \label{thm:CY-correspondence}
  Let $\kk$ be a perfect field. Fix integers $d\geq1$ and $m\in\ZZ$ and set
  $n\coloneqq md$. There are bijective correspondences between the following:
  \begin{enumerate}
  \item\label{it:thm:CY-correspondence:dgAs} Quasi-isomorphism classes of dg
    algebras $A$ with the following properties:
    \begin{enumerate}
    \item\label{it:thm:CY-correspondence:dgAs:it:H0} The $0$-th cohomology $\H[0]{A}$ is a basic
      finite-dimensional algebra.
    \item\label{it:thm:CY-correspondence:dgAs:it:dZ-CT} The free dg $A$-module $A\in\DerCat[c]{A}$ is a $d\ZZ$-cluster tilting
      object.
    \end{enumerate}
    Moreover, we require the dg algebra $A$ to satisfy the following property:
    \begin{enumerate}
      \setcounter{enumii}{2}
    \item\label{it:thm:CY-correspondence:dgAs:it:CY} The dg algebra $A$ is bimodule right $n$-Calabi--Yau.
    \end{enumerate}
  \item\label{it:thm:CY-correspondence:gLambda} Equivalence classes of pairs
    $(\Lambda,I)$ consisting of
    \begin{enumerate}
    \item\label{it:thm:CY-correspondence:gLambda:it:tp} a basic Frobenius algebra $\Lambda$ that is twisted $(d+2)$-periodic
      and
    \item\label{it:thm:CY-correspondence:gLambda:it:I} an invertible $\Lambda$-bimodule $I$ such that $\Omega_{\Lambda^e}^{d+2}(\Lambda)\cong I$
      in the stable category of $\Lambda$-bimodules.
    \end{enumerate}
    Moreover, we require the pair $(\Lambda,I)$ to satisfy the following
    property:
    \begin{enumerate}
      \setcounter{enumii}{2}
    \item\label{it:thm:CY-correspondence:gLambda:it:CY}
      There exists an
      isomorphism of graded $\bfLambda$-bimodules\footnote{Recall that the
        induced graded bimodule structures on shifts and linear duals involve additional
      signs. In the special case $n=d$, it suffices to require the existence of an isomorphism of
      $\Lambda$-bimodules
      \[
        I\cong\twBim[\sigma]{\Lambda}\stackrel{\sim}{\longrightarrow}D\Lambda
      \]
      which is to say that $\sigma$ is a Nakayama automorphism for the
      Frobenius algebra $\Lambda$, see
      \Cref{coro:graded-bimodule-iso-vs-pairing-trivial} for the precise statement.}
      \[
        \varphi\colon\bfLambda(n)\stackrel{\sim}{\longrightarrow}D{\bfLambda},
      \]
      where $\sigma\colon\Lambda\stackrel{\sim}{\to}\Lambda$ is an algebra
      automorphism such that $I\cong\twBim{\Lambda}[\sigma]$ as
      $\Lambda$-bimodules. Moreover, given any exact sequence of
      $\Lambda$-bimodules
      \[
        \eta\colon\quad 0\to I\to P_{d+2}\to\cdots\to P_3\to P_2\to
        P_1\to\Lambda\to 0
      \]
      with projective(-injective) middle terms, we have
      \[
        0=\BV(\Hclass{\Astr<d+2>[\eta]})\in\HH[d+1]<-d>{\bfLambda},
      \]
      where $\BV=\BV_\varphi$ is the Batalin--Vilkovisky operator
      (\Cref{def:BV}), and $\Hclass{\Astr<d+2>[\eta]}\in\HH[d+2]<-d>{\bfLambda}$ is
      the unique class whose Gerstenhaber square vanishes, that is
      \[
        0=\Sq(\Hclass{\Astr<d+2>[\eta]})\in\HH{\bfLambda},
      \]
      and such that
      \[
        j^*\Hclass{\Astr<d+2>[\eta]}=\Hclass{\eta}\in\HH[d+2]<-d>{\Lambda}[\bfLambda]\cong\Ext[\Lambda^e][d+2]{\Lambda}{I}.
      \]
    \end{enumerate}
  \end{enumerate}
  The correspondence
  $\eqref{it:thm:CY-correspondence:dgAs}\to\eqref{it:thm:CY-correspondence:gLambda}$
  is given by $A\mapsto(H^0(A),H^{-d}(A))$. Moreover, a graded
  $\bfLambda$-bimodule isomorphism
  $\varphi\colon\bfLambda(n)\stackrel{\sim}{\to}D{\bfLambda}$ satisfies the
  vanishing condition in statement
  \eqref{it:thm:CY-correspondence:gLambda:it:CY} for some $\eta$ if and only if
  it is induced by a bimodule right $n$-Calabi--Yau structure on any choice of
  dg algebra $A$ satisfying the conditions in statement
  \eqref{it:thm:CY-correspondence:dgAs}.
\end{theorem intro}

In the context of \Cref{thm:CY-correspondence}, two pairs $(\Lambda,I)$ and
$(\Lambda',I')$ as in~\eqref{it:thm:CY-correspondence:gLambda} are equivalent if
there exists an algebra isomorphism
$\varphi\colon\Lambda\stackrel{\sim}{\to}\Lambda'$ such that $\varphi^*(I')\cong
I$. We also mention that some of the conditions in
\eqref{it:thm:CY-correspondence:gLambda:it:CY} can be formulated purely in terms of
$\Lambda$, see \Cref{prop:graded-bimodule-iso-vs-pairing}.

Removing the bimodule Calabi--Yau properties
\eqref{it:thm:CY-correspondence:dgAs:it:CY} and
\eqref{it:thm:CY-correspondence:gLambda:it:CY} from \Cref{thm:CY-correspondence}
yields precisely~\cite[Theorem~A]{JKM22}, which is the main theorem therein.
Hence the theorem follows from~\cite[Theorem~A]{JKM22} as soon as we verify that
the correspondence is compatible with the
Calabi--Yau properties \eqref{it:thm:CY-correspondence:dgAs:it:CY} and
\eqref{it:thm:CY-correspondence:gLambda:it:CY}. This is non-trivial. As we
explain below, our proof of~\Cref{thm:CY-correspondence} makes use of techniques
of homotopy theory, in particular of an extension to
$A_\infty$-bimodules~\cite{JM25} of the enhanced obstruction theory for
$A_\infty$-algebras developed by the second-named author in~\cite{Mur20b} and,
in this way, it has some formal similarities with our proof
of~\cite[Theorem~A]{JKM22}.

\begin{uremark}
  There is a version of \Cref{thm:CY-correspondence}, which follows immediately
  from~\cite[Theorem~A]{JKM22}, where items \eqref{it:thm:CY-correspondence:dgAs:it:CY} and
\eqref{it:thm:CY-correspondence:gLambda:it:CY} are replaced by
  the following weaker conditions (recall that $n=md$):
  \begin{itemize}
    \item[(1c')] The additive category $\add*{A}\subseteq\DerCat[c]{A}$ is $n$-Calabi--Yau in the
      sense that there are natural isomorphisms of vector spaces
      \[
        \Hom[A]{Q}{P[n]}\stackrel{\sim}{\longrightarrow}D\Hom[A]{P}{Q},\qquad P,Q\in\add*{A}.
      \]
      Here, $\add*{A}$ denotes the smallest additive subcategory of $\DerCat[c]{A}$
      that contains $A$ and that is closed under retracts. In other words, the
      $n$-fold shift is a Serre functor on the additive category $\add*{A}$.
    \item[(2c')] There exists an isomorphism of $\Lambda$-bimodules
      \[
        \varphi\colon\twBim[\sigma^m]{\Lambda}\stackrel{\sim}{\longrightarrow}D\Lambda,
      \]
      where $\sigma\colon\Lambda\stackrel{\sim}{\longrightarrow}\Lambda$ is an
      algebra automorphism such that $I\cong\twBim{\Lambda}[\sigma]$ as
      $\Lambda$-bimodules.
    \end{itemize}
    Indeed, this follows from the equivalence of pairs
    \[
      (\add*{A},[-d])\stackrel{\sim}{\longrightarrow}(\proj*{\Lambda},-\otimes_{\Lambda}\twBim{\Lambda}[\sigma])
    \]
    given by the Yoneda functor
    \[
      \Hom[A]{A}{-}\colon\add*{A}\stackrel{\sim}{\longrightarrow}\proj*{\Lambda},
    \]
    where $\Lambda=\H[0]{A}$ (see also~\cite[Corollary~4.5.20]{JKM22}). There is
    a further version of \Cref{thm:CY-correspondence}, which also follows immediately
    from~\cite[Theorem~A]{JKM22}, where items \eqref{it:thm:CY-correspondence:dgAs:it:CY} and
\eqref{it:thm:CY-correspondence:gLambda:it:CY} are now replaced by
    the following conditions:
    \begin{itemize}
    \item[(1c'')] There are natural isomorphisms of vector spaces
      \[
        \Hom[A]{Q}{P[n]}\stackrel{\sim}{\longrightarrow}D\Hom[A]{P}{Q},\qquad P,Q\in\add*{A},
      \]
      that exhibit the $n$-fold shift as a graded Serre functor on the
      $d$-sparse graded category associated with the pair $(\add*{A},[d])$,
      see~\cite[Appendix~A]{Boc08}. Equivalently, the $n$-fold shift is a (graded)
      Serre functor on the $(d+2)$-angulated category $(\add*{A},[d],\pentagon)$,
      where $\pentagon$ is the standard $(d+2)$-angulation obtained
      from~\cite[Theorem~1]{GKO13}.
    \item[(2c'')] There exists an isomorphism of graded $\bfLambda$-bimodules
      \[
        \varphi\colon\bfLambda(n)\stackrel{\sim}{\longrightarrow}D{\bfLambda},
      \]
      where $\sigma\colon\Lambda\stackrel{\sim}{\to}\Lambda$ is an algebra
      automorphism such that $I\cong\twBim{\Lambda}[\sigma]$ as
      $\Lambda$-bimodules.
    \end{itemize}
    Thus, the main difficulty in establishing \Cref{thm:CY-correspondence} is that of
    promoting a graded bimodule isomorphism as
    in~\eqref{it:thm:CY-correspondence:gLambda:it:CY}
    to a \emph{quasi-isomorphism} of dg bimodules as
    in~\eqref{it:thm:CY-correspondence:dgAs:it:CY}.
\end{uremark}

\subsection{Examples}
\label{subsec:examples}

\Cref{thm:CY-equivalence,thm:CY-correspondence} are motivated by the following
examples. For $d=1$, \Cref{thm:CY-equivalence,thm:CY-correspondence} can be
regarded as statements concerning additively-finite Calabi--Yau triangulated
categories. Prominent examples of such categories are the $m$-Calabi--Yau
cluster categories associated with Dynkin quivers, see~\cite{BMR+06} for the case
$m=2$ and \cite[Corollary~1]{Kel05} for the case $m\in\ZZ$. It is also worth
noting that, over an algebraically closed field, the weak\footnote{That is, the
  $n$-fold shift functor is a Serre functor but not necessarily in the graded
  sense.} $1$-Calabi--Yau additively-finite triangulated categories have been
classified by Amiot~\cite[Theorem 9.5]{Ami07}, leveraging
\cite[Theorem~1.2]{BES07}.

The aforementioned examples generalise as follows. Let $d\geq1$. A (basic)
finite-dimensional algebra $H$ of global dimension at most $d$ is
\emph{$d$-representation-finite ($d$-hereditary)} if there exists a $d$-cluster
tilting $H$-module $M\in\mmod*{H}$~\cite{IO11}. The $d$-Calabi--Yau
Amiot--Guo--Keller (AGK) cluster category of $H$,
\[
  \DerCat[c]{\mathbf{\Pi}}\stackrel{\pi}{\longrightarrow}\C_d(H)\coloneq\DerCat[c]{\mathbf{\Pi}}/\DerCat[fd]{\mathbf{\Pi}},\qquad
  \mathbf{\Pi}\coloneq\mathbf{\Pi}_{d+1}(H),
\]
is a $d$-Calabi--Yau triangulated category and
$\pi\mathbf{\Pi}\in\C(\mathbf{\Pi})$ is a $d\ZZ$-cluster tilting
object~\cite{Ami09,Guo11,Kel05,IO13,IY20}, see also~\cite[Section~6.3]{JKM22}.
More generally, the $md$-Calabi--Yau AGK cluster category $\C_{md}(H)$, $m\geq1$, admits a $d\ZZ$-cluster tilting
object~\cite[Section~5]{OT12}.\footnote{In~\cite{OT12} the authors treat the
  case $m=2$ but, relying on the results in~\cite{Guo11,IY20}, their methods
  generalise to the case $m\geq1$~\cite{OWJas17,JKb}.}

\subsection{Applications}

\subsubsection{Contractibility of curves in Calabi--Yau threefolds}

Our work has potential applications in algebraic geometry. Consider the
polynomial algebra $\kk[u^{-1}]$ with the variable $u^{-1}$ placed in
cohomological degree $-2$. A dg algebra is said to be
\emph{$\kk[u^{-1}]$-enhanced} if it is isomorphic to a dg $\kk[u^{-1}]$-algebra
in the homotopy category of dg $\kk$-algebras. Hua and Keller conjecture that
the derived deformation (dg) algebra $\Gamma=\Gamma_C^Y$ associated with an nc
rigid rational curve $C\subset Y$ in a quasi-projective Calabi--Yau threefold is
$\kk[u^{-1}]$-enhanced if and only if the curve is
contractible~\cite[Conjecture~6.8]{HK24} (we refer the reader to
\emph{loc.~cit.}~for an explanation of the terminology). Moreover, they prove
the necessity of their conjecture by showing that, when the curve $C$ contracts,
the dg algebra $\Gamma$ is quasi-isomorphic to the connective cover
$\tau^{\leq0}A$ of a dg algebra $A$ that satisfies the conditions in
\Cref{thm:CY-equivalence} for $d=2$ and $m=1$ and such that the compact derived
category $\DerCat[c]{A}$ is quasi-equivalent to the dg category of maximal
Cohen--Macaulay modules over the ring of formal functions of the singularity
underlying the contraction, see~\cite[Proposition~6.9]{HK24} and Keller's
appendix to~\cite{JKM22} and \cite{JKM24}, where the role of the main theorem in
\cite{JKM22} in relation to the proof of the Donovan--Wemyss
Conjecture~\cite{DW16} is also explained.\footnote{A proof of the conjecture that does
not rely on~\cite[Theorem~A]{JKM22} has appeared recently in~\cite{KLW24}.} The
key point is that the latter dg category of maximal Cohen--Macaulay modules is
linear over the algebra of Laurent polynomials $\kk[u,u^{-1}]$ as a consequence
of a famous theorem of Eisenbud~\cite{Eis80}. Therefore, it is interesting to
find sufficient criteria for a dg algebra to be $\kk[u,u^{-1}]$-enhanced.

A necessary condition for a dg algebra $A$ to be $\kk[u,u^{-1}]$-enhanced is the
existence of an isomorphism $A[2]\cong A$ in the derived category of dg
$A$-bimodules. Suppose given a dg algebra $A$ that satisfies the assumptions in
\Cref{thm:CY-equivalence} for $d=2$ and $m=1$; in particular,
\[
  A[2]\cong DA
\]
in the derived category of dg $A$-bimodules. Suppose now that there exists a
further isomorphism
\[
  \H{A}(2)\cong \H{A}.
\]
If the additional conditions in \Cref{thm:CY-equivalence} are satisfied when
applied to the pair $(\H[0]{A},\H[-2]{A})$ for $d=2$ and $m=0$, the theorem
yields the existence of an isomorphism $A\cong DA$ in the derived category of dg
$A$-modules and therefore
\[
  A[2]\cong DA\cong A;
\]
in particular the perfect derived category $\DerCat[c]{A}$ is $2$-periodic. It
would therefore be interesting to determine if such a dg algebra $A$ is
necessarily $\kk[u,u^{-1}]$-enhanced.

\subsubsection{Frobenius Jacobian algebras}

Another potential application of our work is to a representation-theoretic
characterisation of the Frobenius Jacobian algebras. Suppose that $\kk$ is an
algebraically closed field of characteristic $0$. Let $Q$ be a finite quiver,
and recall that a potential $W$ on $Q$ is the datum of a (possibly infinite)
linear combination of cycles of length at least $2$ in $Q$. To the pair $(Q,W)$,
Derksen, Weyman and Zelevinsky~\cite{DWZ08} associate the Jacobian algebra
$J(Q,W)$. When $J(Q,W)$ is finite-dimensional, it is also
$1$-Iwanaga--Gorenstein, for $J(Q,W)$ can be realised as the endomorphism
algebra of a $2$-cluster tilting object in a $2$-Calabi--Yau triangulated
category~\cite{Ami09,KR07}. Moreover, $J(Q,W)$ is Frobenius (hence
$0$-Iwanaga--Gorenstein) if and only if it is the endomorphism algebra of a
$2\ZZ$-cluster tilting object~\cite{HI11}. Consequently, Frobenius Jacobian
algebras are twisted $4$-periodic with respect to the Nakayama automorphism $\nu
$ of the algebra, which moreover satisfies the conditions in
\Cref{prop:graded-bimodule-iso-vs-pairing} with $d=2$ and $m=1$ with respect to
a suitable choice of Frobenius pairing. Furthermore, as a consequence of
\Cref{thm:CY-equivalence}, the vanishing condition
\[
  0=\BV(\Hclass{\Astr<4>[\eta]})\in\HH[3]<-2>{\bfLambda}[\bfLambda]
\]
also holds, where $\Lambda=J(Q,W)$ is the Jacobian algebra and $\eta$ is a
suitable exact sequence of $\Lambda$-bimodules that witnesses the existence of a
stable $\Lambda$-bimodule isomorphism
$\Omega_{\Lambda^e}^4(\Lambda)\simeq\twBim{\Lambda}[\nu]$. In view of the work of
Keller and Liu~\cite{KL23a,KL23b,KL23c} and \Cref{thm:CY-correspondence}, it is
tempting to conjecture that these properties almost characterise this class of
algebras, see also~\cite[Section~6.3]{JKM22} and \Cref{subsec:future_work}.

\subsection{Future work}
\label{subsec:future_work}

Bimodule Calabi--Yau properties for dg algebras admit refinements to what are
called Calabi--Yau \emph{structures}~\cite{KS09}, see~\cite{BD19,KW21} for
interesting applications of these refined notions in representation theory of
finite-dimensional algebras and elsewhere in mathematics. In future work we hope
to investigate whether the dg algebras $A$ in
\Cref{thm:CY-correspondence}\eqref{it:thm:CY-correspondence:dgAs} admit a
\emph{right Calabi--Yau structure}, that is a class $\widetilde{\varphi}\in
D\mathrm{HC}_{-n}(A)$ in the linear dual of the cyclic homology of $A$ whose
image
\[
  \varphi\in D\mathrm{HH}_{-n}(A)\cong\Hom[\DerCat{A^e}]{A[n]}{DA}
\]
in the Hochschild homology of $A$ induces an isomorphism
\[{\varphi\colon A[n]\stackrel{\sim}{\longrightarrow}DA}\] in the derived
category of dg $A$-bimodules. Right Calabi--Yau structures are known to exist in
numerous examples. Their existence is relevant not least due to the
recent proof by Keller and Liu~\cite{KL23a,KL23c} of a modified version of
Amiot's conjecture~\cite{Ami11} on the characterisation of the $2$-Calabi--Yau
cluster categories associated with quivers with potential (and of a
higher-dimensional generalisation thereof). Indeed, in addition to working in
the pseudo-compact setting, the main modification to the conjecture is an
additional assumption of existence of a right Calabi--Yau structure at the level
of enhancements---rather than the mere Calabi--Yau property at the level of
triangulated categories.

\subsection{Outline of the proof of \Cref{thm:CY-correspondence}}

Assume that the correspondence in \Cref{thm:CY-correspondence} is well
defined. It is then injective, for it is the restriction of the bijective
correspondence in~\cite[Theorem~A]{JKM22}. Hence, it is enough to show that the
inverse correspondence in the latter theorem restricts to a map
$\eqref{it:thm:CY-correspondence:gLambda}\to\eqref{it:thm:CY-correspondence:dgAs}$
under the additional assumptions in \Cref{thm:CY-correspondence}. Notice,
however, that the right Calabi--Yau property is a statement about dg
$A$-bimodules rather than about the dg algebra $A$ itself. For this reason, we
need to appeal to an enhanced obstruction theory for the existence and
uniqueness of $A_\infty$-bimodule structures that is developed in~\cite{JM25}
very much along the lines of its precursor for $A_\infty$-algebras introduced by
the second-named author in~\cite{Mur20b}. In particular, we use a novel
cohomology theory introduced in~\cite{JM25} called Massey bimodule cohomology
(\Cref{def:RelBimHMC}). This permits us to establish the following Calabi--Yau
analogue of \cite[Theorem~B]{JKM22}, from which \Cref{thm:CY-correspondence}
follows using computations similar to those carried out in
\cite{Mur22,JKM22}.

\begin{theorem intro}
  \label{thm:CY-Kadeikshvili}
  Let $A$ be a dg algebra whose cohomology is concentrated in degrees $d\ZZ$,
  $d\geq1$. Suppose that the following Massey bimodule cohomology
  satisfies\footnote{Here and throughout the article, we write $A(0)$ to
    emphasise that we wish to treat $A$ as a bimodule rather than as an
    algebra.}
  \[
    \BimHMH!\H{A}![p+1]<-p>{\H{A}}<A\ltimes A(0)>=0,\qquad p>d.
  \]
  Suppose also that there exists an isomorphism of graded $\H{A}$-bimodules
  \[
    \varphi\colon \H{A}(n)\stackrel{\sim}{\longrightarrow}D\H{A}
  \]
  for some $n\in d\ZZ$. If we have
  \[
    0=\BV(\Hclass{\Astr<d+2>})\in\HH[d+1]<-d>{\H{A}}=\BimHH[d+1]<-d>{\H{A}},
  \]
  then there exists an isomorphism $\tilde{\varphi}\colon A[n]\simeq DA$ in the
  derived category of dg $A$-bimodules that induces $\varphi$ in cohomology. In
  particular, $A$ is bimodule right $n$-Calabi--Yau.
\end{theorem intro}

The class $\Hclass{\Astr<d+2>[A\ltimes A(0)]}$, that we call the \emph{bimodule
  universal Massey product of length $d+2$}, can be defined for any pair $(A,M)$
consisting of a dg algebra and a dg bimodule over it whose cohomologies are
concentrated in degrees $d\ZZ$. This new obstruction class lives in a cohomology
theory, called bimodule Hochschild cohomology, that is of independent
interest. \Cref{thm:CY-Kadeikshvili} can be regarded as a Kadeishvili-type
theorem: Given the vanishing of certain
obstructions of Hochschild type, it guarantees the existence of a quasi-isomorphism $A[n]\simeq
DA$ as soon as there is an isomorphism at the cohomological level. For the sake of comparison,
recall that Kadeishvili's Formality Theorem~\cite{Kad88} says that, given the
vanishing of certain obstructions of Hochschild type, two dg algebras whose
cohomologies are isomorphic as graded algebras are already quasi-isomorphic as
dg algebras. Ultimately, \Cref{thm:CY-Kadeikshvili} is obtained as an
application of the more general~\cite[Theorem~6.2.7]{JM25}, in which the relevant obstruction is
expressed as an equality of \emph{bimodule} universal Massey products. While the
necessary obstruction theory is developed in~\cite{JM25}, one of the main
technical contributions in this article is to show that, in the case of the
diagonal bimodule, this obstruction is equivalent to the vanishing of the BV
operator on the \emph{algebra} universal Massey product. This is achieved by
establishing an explicit isomorphism
\begin{align*}
  \RelBimHH{\H{A}}{\H{A}}&\stackrel{\sim}{\longrightarrow}\HH{\H{A}}{[\varepsilon]}/(\varepsilon^2),\qquad |\varepsilon|=(1,0),
\end{align*}
where the source denotes the bimodule Hochschild cohomology of the
diagonal bimodule of the graded algebra $\H{A}$, see~\Cref{coro:kappa} and the discussion below.

Given a \emph{graded} algebra $A$ and a \emph{graded} $A$-bimodule
$M$, the bimodule Hochschild cochain complex $\RelBimHC{A}{M}$ is part of a
standard triangle
\[
  \BimHC{M}<-1>\longrightarrow\RelBimHC{A}{M}\longrightarrow\HC{A}\stackrel{\delta}{\longrightarrow}\BimHC{M}
\]
of differential bigraded vector spaces. Here, $\HC{A}$ is the familiar
Hochschild complex of $A$ and
\[
  \BimHC{M}\cong\Hom[A^e]{\BB{A}\otimes_AM\otimes_A\BB{A}}{M}
\]
is the cochain complex that computes the bigraded vector space $\BimHH{M}$ of
graded-bimodule self-extensions; here, $\BB{A}$ is the bar resolution of $A$. The connecting map in the above triangle
\begin{align*}
  \HC{A}&\stackrel{\delta}{\longrightarrow}\BimHC{M}\\
  c&\longmapsto\cupp{\id[M]}{c}-\cupp{c}{\id[M]}
\end{align*}
is the commutator with the identity map of $M$. Here, we use the natural left and
right actions of $\HC{A}$ on $\BimHC{M}$ induced by the usual monoidal structure
on the derived category of $A$-bimodules. The
upshot is the existence of long exact sequences of graded vector spaces
\[
  \begin{tikzcd}[column sep=small,row sep=small]
    \cdots\rar{\delta}&\BimHH[n-1]{M}\rar{i}&\RelBimHH[n]{A}{M}\rar{p}&\HH[n]{A}\ar[out=-10,in=170,overlay]{dll}[description]{\delta}\\
    &\BimHH[n]{M}\rar{i}&\RelBimHH[n+1]{A}{M}\rar{p}&\HH[n+1]{A}\rar{\delta}&\cdots
  \end{tikzcd}
\]
in which the connecting maps vanish precisely when the graded $\HH{A}$-bimodule
$\BimHH{M}$ is graded-symmetric, see \Cref{prop:RelBimHC-all}. For example, this
vanishing occurs when $M=A$ is the diagonal bimodule since $\HH{A}$ is
graded-commutative with respect to the cup product. In fact, in the latter case
there is an explicit isomorphism of Gerstenhaber algebras (with respect to the
total degree)
\begin{equation*}
  \RelBimHH{A}{A}\stackrel{\sim}{\longrightarrow}\HH{A}{[\varepsilon]}/(\varepsilon^2),
\end{equation*}
where $\varepsilon$ is a central element of bidegree $(1,0)$, see
\Cref{coro:kappa}. Ultimately, this isomorphism is what permits us to relate the
obstruction to the validity of the bimodule right Calabi--Yau property in
\Cref{thm:CY-Kadeikshvili}, and then also in
\Cref{thm:CY-equivalence,thm:CY-correspondence}, to the BV operator, see
\Cref{prop:CY-Ai-aux}.

Returning to the case of dg algebras and dg bimodules with $d$-sparse
cohomology, the bimodule universal Massey product
\[
  \Hclass{\Astr<d+2>[A\ltimes M]}\in\RelBimHH[d+2]<-d>{\H{A}}{\H{M}}
\]
is defined in terms of any minimal model of the pair $(A,M)$; that is, by
definition, it is given by a minimal model of $A$ as an $A_\infty$-algebra and a
compatible minimal model of $M$ as an $A_\infty$-bimodule. For this reason, in
order to prove \Cref{thm:CY-Kadeikshvili}, we first prove a variant for minimal
$A_\infty$-algebras (\Cref{thm:CY-Kadeishvili-Ai}), for which we are required to
study $A_\infty$-bimodules in some detail as well.

\subsection{Structure of the article}

The proofs of
\Cref{thm:CY-equivalence,thm:CY-correspondence,thm:CY-Kadeikshvili} require
substantial general preliminaries. We hope that we have made the article
more accessible by including these preliminaries in detail. In
\Cref{sec:preliminaries} we recall basic aspects of the theory of dg algebras
that are needed throughout the article. In \Cref{sec:Hochschild-algebras} we
recall the definition and fundamental algebraic structure on the Hochschild
cohomology of a graded algebra. In \Cref{sec:Hochschild-bimodules} we recall
from \cite{JM25} the definition and algebraic structure of the bimodule
Hochschild cohomology associated with a graded bimodule, and study the bimodule
Hochschild cohomology of the diagonal bimodule in some detail, since this
is our main object of interest. In \Cref{sec:Ai-stuff}, we recall various aspects
of the theory of $A_\infty$-algebras and $A_\infty$-bimodules that are needed to
prove the crucial \Cref{thm:CY-Kadeikshvili} and its $A_\infty$-variant
\Cref{thm:CY-Kadeishvili-Ai}. In \Cref{sec:Kadeishvili}, we recall from
\cite{Mur20b,JKM22,JM25} the definition of the Hochschild--Massey cochain
complex and its variant for bimodules as well as important results concerning
them, and use these to prove \Cref{thm:CY-Kadeishvili-Ai} and
\Cref{thm:CY-Kadeikshvili}. In \Cref{sec:Thm:CY} we give our proofs of
\Cref{thm:CY-equivalence,thm:CY-correspondence} using
\Cref{thm:CY-Kadeikshvili}---it is only in this section that the specific
context of \Cref{thm:CY-equivalence,thm:CY-correspondence} becomes relevant.
Finally, in \Cref{sec:the-example} we give our example of a non-enhanceable
triangulated Calabi--Yau structure.

\begin{conventions}
  We work over an arbitrary field $\kk$. We say that an object $c$ in a
  $\kk$-linear category in which the Krull--Remak--Schmidt Theorem holds (for
  example, in an additive category with finite-dimensional morphism spaces and
  split idempotents \cite[Cor.~4.4]{Kra15}) is \emph{basic} if in any
  decomposition $c=c_1\oplus c_2\oplus\cdots\oplus c_n$ into indecomposable
  objects, there is an isomorphism $c_i\cong c_j$ if and only if $i=j$. A
  finite-dimensional algebra $\Lambda$ is \emph{basic} if the regular
  representation of $\Lambda$ is basic as an object of its category of
  finite-dimensional (projective) modules. The Jacobson radical of a
  finite-dimensional algebra $\Lambda$ is denoted by $J_\Lambda$; our main
  results require the ground field to be perfect or, more generally, that the
  semisimple algebra $\Lambda/J_\Lambda$ is separable over $\kk$. Finally, we
  compose morphisms in a category as functions: the composite of morphisms
  $f\colon x\to y$ and $g\colon y\to z$ is the morphism $gf=g\circ f\colon x\to
  z$.
\end{conventions}

\begin{table}
  \begin{center}
    \bgroup \def\arraystretch{1.33}%
    \begin{tabular}{|r | p{23em} l|}
      \hline
      $c_1\bullet_i c_2$&Infinitesimal composition of multilinear maps&\eqref{eq:infinitesimal_composition}\\
      $\braces{c_1}{c_2}$&Binary brace operation&\eqref{eq:binary_brace}\\
      $\braces{m}{x,y}$&Binary multiplication brace operation&\eqref{eq:generic-cup_product}\\
      \hline
      $\HC{A}$&Hochschild cochain complex of $A$&\eqref{eq:HC}\\
      $\preLie{c_1}{c_2}$&Pre-Lie product&\eqref{eq:pre-Lie_product-HC}\\
      $[c_1,c_2]$&Gerstenhaber bracket&\eqref{eq:Gerstenhaber_bracket-HC}\\
      $\Sq(c)$&Gerstenhaber square&\eqref{eq:Gerstenhaber_square-HC}\\
      $\Hd$&Hochschild differential&\eqref{eq:Hd}\\
      $\HH{A}$&Hochschild cohomology of $A$&\eqref{eq:HH}\\
      $\cupp{c_1}{c_2}$&Cup product&\eqref{eq:cup_product-HC}\\
      \hline
      $\BimHC{M}$&Bimodule cochain complex of $M$&\eqref{eq:BimHC}\\
      $\BimHH{M}$&Cohomology of $\BimHC{M}$&\eqref{eq:BimHH}\\
      $\dBim=\dh+\dv$&Bimodule cochain complex differential&\eqref{eq:dBim}\\
      $\dh$&Horizontal bimodule differential&\eqref{eq:dh}\\
      $\dv$&Vertical bimodule differential&\eqref{eq:dv}\\
      $\YonedaProd{c_1}{c_2}$&Yoneda product on $\BimHC{M}$&\eqref{eq:Yoneda_product-BimHC}\\
      $[c_1,c_2]$&Lie bracket on $\BimHC{M}$&\eqref{eq:Lie_bracket-BimHC}\\
      \hline
      $\RelBimHC{A}{M}$&Bimodule Hochschild cochain complex of $M$&\eqref{eq:RelBimHC}\\
      $\preLie{c_1}{c_2}$&Bimodule pre-Lie product&\eqref{eq:pre-Lie_product-RelBimHC}\\
      $[c_1,c_2]$&Bimodule Gerstenhaber bracket&\eqref{eq:Gerstenhaber_bracket-RelBimHC}\\
      $\Sq(c)$&Bimodule Gerstenhaber square&\eqref{eq:Gerstenhaber_square-RelBimHC}\\
      $m_2^{A\ltimes M}$&Brace algebra multiplication on $\RelBimHC{A}{M}$&\eqref{eq:square-zero_product-AM}\\
      $\dRelBim$&Bimodule Hochschild differential&\eqref{eq:Hd-RelBimHC}\\
      $\RelBimHH{A}{M}$&Bimodule Hochschild cohomology of $M$&\eqref{eq:RelBimHH}\\
      $\cupp{c_1}{c_2}$&Bimodule cup product&\eqref{eq:cup_product-RelBimHC}\\
      $\delta$&Connecting homomorphism ${\HC{A}\to\BimHC{M}}$ &\eqref{eq:delta}\\
      \hline
      $\HMC{A}$&Hochschild--Massey cochain complex of $A$&\eqref{eq:HMC}\\
      $\HMd$&Hochschild--Massey differential&\eqref{eq:HMd}\\
      $\HMH{A}$&Hochschild--Massey cohomology of $A$&\eqref{eq:HMH}\\
      \hline
      $\BimHMC{M}$&Massey bimodule cochain complex of $M$&\eqref{eq:RelBimHMC}\\
      $\BimHMd$&Massey bimodule differential&\eqref{eq:BimHMd}\\
      $\BimHMH{M}$&Massey bimodule cohomology of $M$&\eqref{eq:RelBimHMH}\\
      \hline
    \end{tabular}
    \egroup
  \end{center}\vspace{1em}

  \caption{The various Hochschild-type cochain complexes considered in this
    article, with their cohomologies and operations. Here, $A$ is a graded
    algebra and $M$ is a graded $A$-bimodule. In the two bottom blocks, $A$ and
    $M$ are assumed to be concentrated in degrees $d\ZZ$, $d\geq1$, and are
    endowed with minimal $A_\infty$-structures.}
  \label{table:Hochschild}
\end{table}

\section{Differential graded algebras}
\label{sec:preliminaries}

To fix notation and sign conventions, in this preliminary section we
recall the basic aspects of the representation theory of differential graded
algebras. All the material in this section is standard and therefore our
exposition is rather brief; our main references are~\cite{Kel94,Kel06}.

\subsection{Differential graded vector spaces}

\begin{definition}
  A \emph{differential graded (dg) vector space} is a pair $V=(V^\sharp,d_V)$,
  consisting of a graded vector space $V^\sharp=\coprod_{i\in\ZZ}V^i$ and a
  linear map ${d=d_V\colon V^\sharp\to V^\sharp}$ such that $d(V^i)\subseteq
  V^{i+1}$ and $d\circ d=0$. A \emph{morphism of dg vector spaces} $f\colon V\to
  W$ is a linear map such that $d_W\circ f=f\circ d_W$. The collection of dg
  vector spaces and their morphisms forms an abelian category in the evident way.
\end{definition}

\begin{notation}
  Let $V$ be a dg vector space. For a homogeneous element $v\in V^i$ we write
  $|v|\coloneqq i$. Most formulas involving elements of a dg vector space are
  given in terms of their homogeneous elements, even if this assumption is not
  made explicit in the text. This abuse of language should not lead to
  confusion.
\end{notation}

\begin{remark}
  There is an evident equivalence between the category of cochain complexes of
  vector spaces and that of dg vector spaces that associates to a dg vector
  space $V$ the cochain complex $(V^i,d|_{V^i})_{i\in\ZZ}$; in particular, a
  morphism of dg vector spaces $f\colon V\to W$ is uniquely determined by its
  restrictions
  \[
    f^i\colon V^i\longrightarrow W^i,\qquad i\in\ZZ.
  \]
  In the sequel we treat this equivalence of categories as an identification and
  pass between the two notions as the context requires.
\end{remark}

The category of dg vector spaces has a symmetric monoidal structure that we now
recall.

\begin{definition}
  The tensor product of two dg vector spaces $V$ and $W$ is the dg vector space
  $V\otimes W$ with homogeneous components
  \[
    (V\otimes W)^k\coloneqq\coprod_{i+j=k}V^i\otimes W^j,\qquad k\in\ZZ,
  \]
  and the differential
  \[
    d_{V\otimes W}(v\otimes w)\coloneqq d_v(v)\otimes w+(-1)^{|v|}v\otimes
    d_W(w).
  \]
  The braiding---responsible for the Koszul sign rule---is given by
  \[
    V\otimes W\stackrel{\sim}{\longrightarrow}W\otimes V,\qquad v\otimes
    w\mapsto (-1)^{|v||w|}w\otimes v.
  \]
  The tensor product of two morphisms of dg vector spaces
  \[ {f\colon V_1\longrightarrow W_1}\qquad\text{and}\qquad{g\colon
      V_2\longrightarrow W_2}
  \]
  is the morphism
  \[
    (f\otimes g)(v_1\otimes v_2)\coloneqq f(v_1)\otimes g(v_2),\qquad v_1\in V_1,\ v_2\in
    V_2.
  \]
\end{definition}

This symmetric monoidal structure is closed, with the following internal
$\operatorname{Hom}$.

\begin{definition}
  Let $V$ and $W$ be dg vector spaces. We define $\dgHom[\kk]{V}{W}$ to be the
  dg vector space with the homogeneous components
  \[
    \dgHom[\kk]{V}{W}[i]\coloneqq\prod_{j\in\ZZ}\Hom[\kk]{V^j}{W^{j+i}},\qquad
    i\in\ZZ,
  \]
  and the differential
  \begin{equation}
    \label{eq:partial}
    \partial_{V,W}(f)\coloneqq d_W\circ f-(-1)^{|f|}f\circ d_V,\qquad f\in\dgHom[\kk]{V}{W}.
  \end{equation}
  A morphism $f\in\dgHom[\kk]{V}{W}[i]$ is said to be \emph{homogeneous} of
  degree $|f|\coloneqq i$. The \emph{evaluation map}
  \begin{equation}
    \label{eq:evaluation-map}
    \ev=\ev[V,W]\colon \dgHom[\kk]{V}{W}\otimes V\longrightarrow W,\qquad f\otimes
    v\longmapsto f(v),
  \end{equation}
  is a morphism of dg vector spaces and is the canonical choice of counit for an
  adjunction $(-\otimes V)\dashv \dgHom[\kk]{V}{-}$.
\end{definition}

\begin{remark}
  \label{rmk:extra-signs-tensor-maps}
  Suppose given morphisms of vector spaces
  \[ {f\colon V_1\longrightarrow W_1}\qquad\text{and}\qquad{g\colon
      V_2\longrightarrow W_2}
  \]
  that are homogeneous but not necessarily of degree $0$. The Koszul sign
  rule implies that additional signs arise when evaluating the map
  $f\otimes g$ in homogeneous elementary tensors:
  \[
    (f\otimes g)(v_1\otimes v_2)\coloneqq(-1)^{|g||v|}f(v_2)\otimes g(v_2),\qquad v_1\in
    V_1,\ v_2\in V_2.
  \]
\end{remark}

\begin{remark}
  Graded vector spaces, viewed as dg vector spaces with vanishing differential,
  form a full subcategory of that of dg vector spaces, and the above closed
  symmetric monoidal structure restricts to this full subcategory.
\end{remark}

\begin{example}
  \label{ex:DV}
  Let $V$ be a dg vector space. The \emph{($\kk$-)linear dual} of $V$ is the dg
  vector space
  \[
    DV\coloneqq\dgHom[\kk]{V}{\kk}.
  \]
  It has the homogeneous components
  \[
    (DV)^i=\Hom[\kk]{V^{-i}}{\kk},\qquad i\in\ZZ,
  \]
  and the differential (notice the minus sign)
  \[
    \partial(f)=\partial_{V,\kk}(f)=-(-1)^{|f|}f\circ d_V,\qquad i\in\ZZ.
  \]
  The additional minus sign in the differential, which stems from
  \eqref{eq:partial}, is necessary when considering bimodule structures on $V$
  and its linear dual (see also \Cref{ex:dual map}).
\end{example}

\begin{example}
  \label{ex:dual map}
  Let $V$ and $W$ be dg vector spaces. Then, the map
  \begin{align*}
    D\colon\dgHom[\kk]{V}{W}\longrightarrow\dgHom[\kk]{DW}{DV}\\
    f\longmapsto (f^*\colon g\longmapsto (-1)^{|f||g|}gf),
  \end{align*}
  is a morphism of dg vector spaces.
\end{example}

\begin{definition}
  Let $V$ be a dg vector space. Given $n\in\ZZ$, we let $V[n]$ be the dg vector
  space with the homogeneous components
  \[
    V[n]\coloneqq V^{i+n},\qquad i\in\ZZ,
  \]
  and the differential $d_{V[n]}\coloneqq(-1)^nd_V$. Given a further dg vector
  space $W$ and a morphism ${f\in\dgHom[\kk]{V}{W}[i]}$, we define the morphism
  $f[n]\in\dgHom[\kk]{V[n]}{W[n]}[i]$ by the formula
  \[
    f[n](v)\coloneqq (-1)^{ni}f(v),\qquad v\in V.
  \]
\end{definition}

\begin{notation}
  \label{not:s}
  We make use the following standard notational device\footnote{Rigorously, we
    let $\s[n]\coloneqq 1_\kk\in\kk[n]$ and identify $V[n]$ with the tensor
    product $\kk[n]\otimes V$.} that makes it easier to keep track of degrees
  and Koszul signs when shifts of dg vector spaces are involved: For a
  homogeneous element $v\in V^{i}$, we write $\s[n]v\in V[n]$ for the
  corresponding homogeneous element of degree $|\s[n]v|=i-n$. In particular, when
  computing Koszul signs we may regard $\s[n]$ as a formal symbol of degree
  $-n$. With this convention in mind, the differential of $V[n]$ is given by
  \[
    d_{V[n]}(\s[n]v)=(-1)^{n}\s[n]d_V(v),\qquad v\in V.
  \]
  Similarly, for $f\in\dgHom[\kk]{V}{W}[i]$, the morphism
  $f[n]\in\dgHom[\kk]{V[n]}{W[n]}[i]$ is given by the formula
  \[
    f[n](\s[n]v)=(-1)^{n|f|}\s[n]f(v)=(-1)^{ni}\s[n]f(v),\qquad v\in V.
  \]
\end{notation}

\begin{example}
  \label{ex:UVnW}
  Let $U$, $V$, and $W$ be dg vector spaces and $n\in\ZZ$. Then, the map
  \begin{align*}
    \varphi\colon U\otimes V[n]\otimes
    W&\stackrel{\sim}{\longrightarrow}(U\otimes V\otimes W)[n]\\
    u\otimes\s[n]v\otimes w,&\longmapsto (-1)^{n|u|}\s[n](u\otimes v\otimes w),
  \end{align*}
  is an isomorphism of dg vector spaces. Moreover, for homogeneous morphisms of
  dg vector spaces (not necessarily of degree $0$)
  \[
    f\colon U_1\longrightarrow U_2,\qquad g\colon V_1\longrightarrow
    V_2,\qquad\text{and}\qquad h\colon W_1\longrightarrow W_2,
  \]
  the following diagram commutes:
  \[
    \begin{tikzcd}
      U_1\otimes V_1[n]\otimes W_1\rar{\varphi}\dar[swap]{f\otimes g[n]\otimes
        h}&(U_1\otimes V_1\otimes W_1)[n]\dar{(f\otimes g\otimes h)[n]}\\
      U_2\otimes V_2[n]\otimes W_2\rar{\varphi}&(U_2\otimes V_2\otimes W_2)[n].
    \end{tikzcd}
  \]
\end{example}

\begin{example}
  \label{ex:VWn}
  Let $V$ and $W$ be dg vector spaces and $n\in\ZZ$. A morphism
  ${f\in\dgHom[\kk]{V}{W}}$ can be regarded as an element
  ${f\s[n]\in\dgHom[\kk]{V[-n]}{W}}$ of degree $|f|-n$ via the formula
  \[
    f\s[n](\s[-n]v)\coloneqq f(v),\qquad v\in V.
  \]
  Moreover, the morphism
  \[
    \varphi\colon\dgHom[\kk]{V}{W}{[n]}\stackrel{\sim}{\longrightarrow}\dgHom[\kk]{V[-n]}{W},\qquad
    \s[n]f\longmapsto(-1)^{n|f|}f\s[n],
  \]
  is an isomorphism of dg vector spaces.
\end{example}

\begin{definition}
  Let $V$ be a dg vector space. Recall that the \emph{cohomology} of $V$ is the
  graded vector space
  \[
    \H{V}\coloneqq\ker d_V/\img d_V.
  \]
  A morphism between dg vector spaces $f\colon V\to W$ is a
  \emph{quasi-isomorphism} if the induced map
  \[
    \H{f}\colon\H{V}\longrightarrow\H{W},\qquad [v]\longmapsto[f(v)],
  \]
  is an isomorphism of graded vector spaces.
\end{definition}

\begin{remark}
  Since we work over a field, a morphism between dg vector spaces $f\colon V\to
  W$ is a quasi-isomorphism if and only if it is a \emph{homotopy equivalence},
  that is if there exists a morphism of dg vector spaces $g\colon W\to V$ such
  that
  \begin{align*} [g\circ
    f]=[\id[V]]&\in\H[0]{\dgHom[\kk]{V}{V}}\intertext{and}[f\circ
    g]=[\id[W]]&\in\H[0]{\dgHom[\kk]{W}{W}}.
  \end{align*}
\end{remark}

\subsection{Differential graded algebras}

A \emph{differential graded (dg) algebra} is a dg vector space $A$ endowed with
an associative and unital multiplication law
\[
  A\otimes A\longrightarrow A,\qquad x\otimes y\longmapsto x\cdot y=xy,
\]
given by a morphism of dg vector spaces. Explicitly, the multiplication law
satisfies
\[
  |x\cdot y|=|x|+|y|,\qquad x,y\in A,
\]
as well as the \emph{graded Leibniz rule}
\[
  d(x\cdot y)=d(x)\cdot y+(-1)^{|x|}x\cdot d(y),\qquad x,y\in A.
\]
The cohomology of a dg algebra $A$ is a graded algebra (dg algebra with
vanishing differential) with the induced multiplication law
\[
  \H{A}\otimes\H{A}\longrightarrow\H{A},\qquad [x]\otimes[y]\longmapsto[xy].
\]
A \emph{morphism of dg algebras} $f\colon A\to B$ is a morphism of dg vector
spaces such that $f(1_A)=1_B$ and
\[
  f(x\cdot y)=f(x)\cdot f(y),\qquad x,y\in A.
\]
Such a morphism of dg algebras induces a morphism of graded algebras
\[
  \H{f}\colon\H{A}\longrightarrow\H{B},\qquad [x]\longmapsto[f(x)].
\]
A morphism of dg algebras $f$ is a \emph{quasi-isomorphism} if $\H{f}$ is an
isomorphism of graded algebras (equivalently, an isomorphism of graded vector
spaces).

\subsection{Differential graded modules}

Let $A$ be a dg algebra.

\begin{definition}
  A \emph{differential graded (dg) right $A$-module} is a dg vector space $M$
  endowed with an action map
  \[
    M\otimes A\longrightarrow M,\qquad m\otimes x\longmapsto m\cdot x,
  \]
  given by a morphism of dg vector spaces and such that, for all $m\in M$ and
  for all $x,y\in A$, the following equalities are satisfied:
  \begin{align*}
    m\cdot 1&=m,& (m\cdot x)\cdot y&=m\cdot(xy).
  \end{align*}
\end{definition}

\begin{remark}
  There is an evident notion of dg \emph{left} $A$-module and all of the above
  discussion applies \emph{mutatis mutandis} to these. Furthermore, left dg
  $A$-modules identify with right dg $A^\op$-modules, where $A^\op$ is the dg
  algebra with the same underlying dg vector space as $A$ but with the opposite
  multiplication law:
  \[
    A^\op\otimes A^\op\longrightarrow A^\op,\qquad x\otimes y\longmapsto
    (-1)^{|x||y|}yx.
  \]
  For this reason we omit the adjective `right' in the sequel.
\end{remark}

\begin{example}
  The multiplication of the dg algebra $A$ endows it with the structure of a dg
  $A$-module that we refer to as the \emph{regular dg $A$-module}.
\end{example}

\begin{example}
  Let $M$ be a dg $A$-module and $n\in\ZZ$. The dg vector space $M[n]$ is a dg
  $A$-module with the action map
  \[
    M[n]\otimes A\longrightarrow M[n],\qquad \s[n](m)\otimes
    x\longmapsto\s[n](mx).
  \]
\end{example}

\begin{remark}
  The cohomology of a dg $A$-module $M$ has the structure of an $\H{A}$-module
  with the induced action map
  \[
    \H{M}\otimes\H{A}\longrightarrow\H{M},\qquad [m]\otimes[x]\longmapsto[m\cdot
    x].
  \]
\end{remark}

\begin{definition}
  A \emph{morphism of dg $A$-modules} $f\colon M\to N$ is a morphism of dg
  vector spaces such that, for all $x\in A$ and for all $m\in M$,
  \[
    f(m\cdot x)=f(m)\cdot x.
  \]
  Such a morphism of dg $A$-modules induces a morphism of graded $\H{A}$-modules
  \[
    \H{f}\colon\H{M}\longrightarrow\H{N},\qquad[m]\longmapsto[f(m)].
  \]
  A morphism of dg $A$-modules $f$ is a \emph{quasi-isomorphism} if $\H{f}$ is
  an isomorphism of graded $\H{A}$-modules (equivalently, an isomorphism of
  graded vector spaces).
\end{definition}

\begin{definition}
  The \emph{derived category} $\DerCat{A}$ of dg $A$-modules, defined as the
  localisation of the category of dg $A$-modules at the class of
  quasi-isomorphisms, is a (locally small) compactly generated triangulated
  $\kk$-category. The \emph{perfect derived category $\DerCat[c]{A}$ of $A$} is,
  by definition, the smallest triangulated subcategory of $\DerCat{A}$ that
  contains $A$ and is closed under retracts; as the notation indicates, it coincides
  with the full subcategory of $\DerCat{A}$ spanned by the compact
  objects~\cite{Kel94}.
\end{definition}

\subsection{Differential graded bimodules}

Let $A$ be a dg algebra.

\begin{definition}
  A \emph{differential graded (dg) $A$-bimodule} is a dg vector space $M$
  endowed with an action map
  \[
    A\otimes M\otimes A\longrightarrow A,\qquad x\otimes m\otimes y\longmapsto
    x\cdot m\cdot y,
  \]
  given by a morphism of dg vector spaces and such that, for all $m\in M$ and
  for all $w,x,y,z\in A$, the following equalities are satisfied:
  \begin{align*}
    (1\cdot m\cdot 1)&=m,& w\cdot (x\cdot m\cdot y)\cdot z&=(wx)\cdot m\cdot (yz).
  \end{align*}
\end{definition}

\begin{remark}
  It is customary to identify dg $A$-bimodules with dg $(A\otimes
  A^{\op})$-modules as follows: Given a dg $A$-bimodule $M$, define a right
  action of $A\otimes A^{\op}$ on (the underlying dg vector space of) $M$ via
  \[
    m\cdot(x\otimes y)\coloneqq (-1)^{|x||m|}x\cdot m\cdot y,\qquad m\in M,
    x,y\in A.
  \]
  On the one hand, this identification is convenient because it allows us to extend all
  notions available for right dg modules to dg bimodules. On the other hand,
  since we are also interested in working with minimal $A_\infty$-bimodules later
  in this article, this identification is inconvenient since the tensor product
  of $A_\infty$-algebras is an unwieldy beast, see for
  example~\cite{Lod11,MTTV21}.
\end{remark}

The following dg bimodules play a crucial role in this article.

\begin{example}
  \label{ex:diagonal_bimodule}
  The multiplication of the dg algebra $A$ endows it with the structure of a dg
  $A$-bimodule that we refer to as the \emph{diagonal dg $A$-bimodule}.
\end{example}

\begin{example}
  \label{ex:DA}
  Let $M$ be a dg $A$-bimodule. The dg vector space $DM=\dgHom[\kk]{M}{\kk}$ is
  a dg $A$-bimodule with the action map
  \[
    (x\cdot f\cdot y)(m)\coloneqq (-1)^{|x|(|f|+|y|+|m|)}f(y\cdot m\cdot
    x),\qquad x,y\in A,\ m\in M, f\in DM.
  \]
  Notice that, since we work over a field, there is a canonical isomorphism of
  graded $\H{A}$-bimodules $\H{DM}\cong D\H{M}$ that we treat as an
  identification in the sequel.
\end{example}

\begin{example}
  \label{ex:Mn}
  Let $M$ be a dg $A$-bimodule and $n\in\ZZ$. The dg vector space $M[n]$ is a dg
  $A$-bimodule with the action
  \begin{equation}
    \label{eq:Mn}
    x\cdot\s[n]{m}\cdot y\coloneqq(-1)^{n|x|}\s[n](x\cdot m\cdot y),\qquad
    x,y\in A,\ m\in M.
  \end{equation}
  In other words, the action map
  \[
    \mu_{1,1}^{M[n]}\colon A\otimes M[n]\otimes A\longrightarrow M[n]
  \]
  is defined, in terms of the action map
  \[
    \mu_{1,1}^{M}\colon A\otimes M\otimes A\longrightarrow M
  \]
  by the commutativity of the following diagram, in which the horizontal map
  is the isomorphism of dg vector spaces from \Cref{ex:UVnW}:
  \begin{equation}
    \begin{tikzcd}
      A\otimes M[n]\otimes A\dar\rar{\sim}\dar{\mu_{1,1}^M}&(A\otimes M\otimes A)[n]\dar{\s[n]\mu_{1,1}^M}\\
      M[n]\rar[equals]&M[n].
    \end{tikzcd}
  \end{equation}
\end{example}

\begin{example}
  Let $M$ be a dg $A$-bimodule and $n\in\ZZ$. As a special case of \Cref{ex:Mn},
  we see that the dg $A$-bimodule $(DM)[n]$ is endowed with the action
  \begin{align*}
    (x\cdot \s[n]f\cdot y)(m)&=(-1)^{n|x|}(\s[n](x\cdot f\cdot y))(m)\\
                             &=(-1)^{|x|(n+|f|+|y|+|m|)}f(ymx),
  \end{align*}
  where $f\in DM$ and $x,y\in A$ and $m\in M$.
\end{example}

\begin{example}
  \label{ex:DAn}
  Let $M$ be a dg $A$-bimodule and $n\in\ZZ$. Recall from \Cref{ex:VWn} that the
  map
  \begin{align*}
    \varphi\colon (DM)[n]\stackrel{\sim}{\longrightarrow}D(M[-n]),\qquad \s[n]f\longmapsto(-1)^{n|f|}f\s[n]
  \end{align*}
  is an isomorphism of dg vector spaces, where $f\in DM$. It is straightforward
  to verify that this isomorphism is in fact an isomorphism of dg $A$-bimodules.
\end{example}

\begin{remark}
  The cohomology of a dg $A$-bimodule $M$ has the structure of a graded
  $\H{A}$-bimodule with the induced action map
  \[
    \H{A}\otimes\H{M}\otimes\H{A}\longrightarrow\H{M},\qquad
    [x]\otimes[m]\otimes[y]\longmapsto[x\cdot m\cdot y].
  \]
\end{remark}

\begin{definition}
  A \emph{morphism of dg $A$-bimodules} $f\colon M\to N$ is a morphism of dg
  vector spaces such that, for all $x,y\in A$ and for all $m\in M$,
  \[
    f(x\cdot m\cdot y)=x\cdot f(m)\cdot y.
  \]
  Such a morphism of dg $A$-bimodules induces a morphism of graded
  $\H{A}$-bimodules
  \[
    \H{f}\colon\H{M}\longrightarrow\H{N},\qquad [m]\longmapsto[f(m)].
  \]
  A morphism of dg $A$-bimodules $f$ is a \emph{quasi-isomorphism} if $\H{f}$ is
  an isomorphism of graded $\H{A}$-bimodules (equivalently, an isomorphism of
  graded vector spaces).
\end{definition}

\begin{definition}
  The \emph{derived category of dg $A$-bimodules} is defined in the obvious way,
  as the localisation of the category of dg $A$-bimodules at the class of
  quasi-isomorphisms.
\end{definition}

\begin{remark}
  Identifying dg $A$-bimodules with (right) dg $A\otimes A^\op$-modules, we see
  that the derived category of dg $A$-bimodules is also a (locally small)
  compactly generated triangulated $\kk$-category. However, we do not make
  explicit use of this fact in the sequel.
\end{remark}

\subsection{Bimodule right Calabi--Yau structures}

The following is the central concept of interest in this article.

\begin{definition}[{\cite{Kon93,KS09}}]
  \label{def:right-CY-as-bimodule}
  Let $A$ be a dg algebra whose cohomology is degree-wise finite dimensional and
  $n\in\ZZ$.
  \begin{enumerate}
  \item A \emph{bimodule right $n$-Calabi--Yau structure} on $A$ is an
    isomorphism
    \[
      \varphi\colon A[n]\stackrel{\sim}{\longrightarrow}DA
    \]
    in the derived category of dg $A$-bimodules.
  \item We say that $A$ is \emph{right $n$-Calabi--Yau as a bimodule} if it
    admits a bimodule right $n$-Calabi--Yau structure.
  \end{enumerate}
\end{definition}

\begin{remark}
  \label{rmk:right-CY-as-bimodule}
  Let $A$ be a dg algebra whose cohomology is degree-wise finite-dimensional. A
  bimodule right $n$-Calabi--Yau structure $\varphi\colon
  A[n]\stackrel{\sim}{\to}DA$ induces an isomorphism of graded $\H{A}$-bimodules
  \[
    \H{\varphi}\colon\H{A}[-n]\stackrel{\sim}{\longrightarrow}\H{DA}\cong D\H{A},
  \]
  The isomorphisms of graded vector spaces
  \[
    \begin{tikzcd}[ampersand replacement=\&] H\rar{\varphi[-n]}\&(DH)[-n]\cong
      D(H[n])\&D(DH)\lar[swap]{\varphi^*}
    \end{tikzcd}
  \]
  induce an isomorphism of vector spaces
  \[
    H^i\cong(D(DH))^i=D((DH)^{-i})=D(D(H^i)),\qquad i\in\ZZ.
  \]
  which shows that the assumption that $H=\H{A}$ is degree-wise
  finite-dimensional in \Cref{def:right-CY-as-bimodule} is necessary.
\end{remark}

\begin{remark}
  We warn the reader that bimodule right Calabi--Yau structures are more often
  considered for \emph{proper} dg algebras, that is dg algebras whose cohomology
  has finite total dimension, an assumption that is never satisfied by a
  non-zero dg algebra as in \Cref{thm:CY-equivalence,thm:CY-correspondence}.
\end{remark}

\section{Hochschild cohomology of graded algebras}
\label{sec:Hochschild-algebras}

In this section we recall, paying particular attention to the signs involved,
the definition and algebraic structure present in the Hochschild
co\-ho\-mo\-lo\-gy of graded algebras.

\subsection{Differential graded Lie algebras}

\begin{definition}
  A \emph{differential graded (dg) Lie algebra} is a dg vector space $L$
  endowed with a \emph{Lie bracket}
  \[
    L\otimes L\longrightarrow L,\qquad x\otimes y\longmapsto [x,y],
  \]
  that is given by a morphism of dg vector spaces and such that the following
  axioms are satisfied\footnote{Recall that we make no assumption
    on the characteristic of the ground field.}
  \begin{align*}
    [x,x]&=0&|x|&\text{ is even},\\
    [x,[x,x]]&=0&|x|&\text{ is odd},\\
    [x,y]&=-(-1)^{|x||y|}[y,x],\\
    [x,[y,z]]&=[[x,y],z]+(-1)^{|x||y|}[y,[x,z]].
  \end{align*}
  The last equality is called the \emph{graded Jacobi identity}.
\end{definition}

\begin{remark}
  By definition, the Lie bracket in a dg Lie algebra $L$ satisfies
  \[
    |[x,y]|=|x|+|y|,\qquad x,y\in L,
  \]
  as well as the graded Leibniz rule
  \[
    d([x,y])=[d(x),y]+(-1)^{|x|}[x,d(y)],\qquad x,y\in L.
  \]
\end{remark}

\begin{example}
  \label{ex:dga-to-dgla}
  Let $A$ be a dg algebra. Then, the graded commutator
  \[
    A\otimes A\longrightarrow A,\qquad x\otimes y\longmapsto xy-(-1)^{|x||y|}yx,
  \]
  endows (the underlying dg vector space of) $A$ with the structure of a dg Lie
  algebra.
\end{example}

\begin{remark}
  The cohomology of a dg Lie algebra $L$ has the structure of a graded Lie
  algebra (dg Lie algebra with vanishing differential) with the induced Lie
  bracket
  \[
    \H{L}\otimes\H{L}\longrightarrow\H{L},\qquad
    [x]\otimes[y]\longmapsto\Big[[x,y]\Big].
  \]
\end{remark}

\begin{definition}
  Let $L_1=(L_1,[-,-]_1)$ and $L_2=(L_2,[-,-]_2)$ be dg Lie algebras. A
  \emph{morphism of dg Lie algebras} $f\colon L_1\to L_2$ is a morphism of dg
  vector spaces that preserves the Lie bracket:
  \[
    f([x,y]_1)=[f(x),f(y)]_2,\qquad x,y\in L_1.
  \]
  In this case, the induced morphism of graded vector spaces
  \[
    \H{f}\colon\H{L_1}\longrightarrow\H{L_2}
  \]
  is a morphism of graded Lie algebras. Such a morphism $f$ is a
  \emph{quasi-isomorphism} if $\H{f}$ is an isomorphism of graded Lie algebras
  (equivalently, an isomorphism of graded vector spaces).
\end{definition}

We also need the following variant of the notion of a dg Lie algebra.

\begin{definition}
  A \emph{shifted differential graded (dg) Lie algebra} is a dg vector space
  $L$ equipped with a degree $-1$ Lie bracket
  \[
    L\otimes L\longrightarrow L,\qquad x\otimes y\longmapsto[x,y],
  \]
  such that the map
  \[
    L[1]\otimes L[1]\longrightarrow L[1]\qquad \s x\otimes \s y\longmapsto [\s
    x,\s y]\coloneqq\s{[x,y]},
  \]
  which is a degree $0$ morphism of dg vector spaces\footnote{Notice that
    \[
      |[\s x,\s y]|=|\s[x,y]|=(|x|+|y|-1)-1=(|x|-1)+(|y|-1)=|\s x|+|\s y|.
    \]} endows the dg vector space $L[1]$ with the structure of a dg Lie
  algebra. Explicitly,
  \begin{itemize}
  \item the following variant of the graded Leibniz rule is satisfied
    \[
      d([x,y])=[d(x),y]+(-1)^{|\s x|}[x,d(y)],\qquad x,y\in L,
    \]
    where $|\s x|=|x|-1$ is the shifted degree,
  \item and the following equations are satisfied for all $x,y,z\in L$:
    \begin{align}
      \label{eq:shifted_Lie_algebra}
      [x,x]&=0&|x|&\text{ is odd},\nonumber\\
      [x,[x,x]]&=0&|x|&\text{ is even},\nonumber\\
      [x,y]&=-(-1)^{|\s x||\s y|}[y,x],\\
      [x,[y,z]]&=[[x,y],z]+(-1)^{|\s x||\s y|}[y,[x,z]]\nonumber.
    \end{align}
  \end{itemize}
\end{definition}

\subsection{Gerstenhaber algebras}

\begin{definition}[{\cite{Ger63}}]
  \label{def:Gerstenhaber-algebra}
  A \emph{Gerstenhaber algebra} is a graded vector space $G$ equipped with the
  following structures:
  \begin{itemize}
  \item The graded vector space $G$ is a shifted graded Lie algebra with degree
    $-1$ Lie bracket
    \[
      G\otimes G\longrightarrow G,\qquad x\otimes y\longmapsto[x,y].
    \]
  \item The graded vector space $G$ is endowed with a graded-commutative (not
    necessarily unital) algebra structure, that is $G$ is equipped with an
    associative multiplication map
    \[
      G\otimes G\longrightarrow G,\qquad x\otimes y\longmapsto x\cdot y,
    \]
    given by a morphism of graded vector spaces and such that
    \[
      x\cdot y=(-1)^{|x||y|}y\cdot x,\qquad x,y\in G.
    \]
  \end{itemize}
  These structures must be compatible in the sense that \emph{Gerstenhaber
    relation} must be satisfied:
  \begin{equation}
    \label{eq:Gerstenhaber_relation}
    [x,y\cdot z]=[x,y]\cdot z+(-1)^{|\s x||y|}y\cdot[x,z],\qquad x,y,z\in G,
  \end{equation}
  where $|\s x|=|x|-1$ is the shifted degree. If $\chark(\kk)=2$, a Gerstenhaber
  algebra must be equipped with an additional map called \emph{Gerstenhaber
    square}
  \[
    \Sq\colon G\longrightarrow G,\qquad x\longmapsto \Sq(x),
  \]
  that satisfies
  \[
    |\Sq(x)|=2|x|-1,\qquad x\in G.
  \]
  In general, this map is not linear nor is an abelian group homomorphism. The
  Gerstenhaber square must satisfy the following equations for all $x,y\in G$:
  \begin{align}
    \label{eq:Gerstenhaber_square-relations}
    \Sq(x+y)&=\Sq(x)+\Sq(y)+[x,y],\nonumber\\
    \Sq(x\cdot y)&=\Sq(x)\cdot y^2+x\cdot[x,y]\cdot y+x^2\cdot\Sq(y),\\
    [\Sq(x),y]&=[x,[x,y]].\nonumber
  \end{align}
\end{definition}

\begin{remark}
  \label{rmk:Gerstenhaber_square-char}
  If $\chark(\kk)\neq 2$, we define the Gerstenhaber square operation in a
  Gerstenhaber algebra in even degrees as follows:
  \[
    \Sq\colon G^{2n}\longrightarrow G^{4n-1},\qquad
    x\longmapsto\tfrac{1}{2}[x,x].
  \]
  This operation satisfies the required equations in
  \Cref{def:Gerstenhaber-algebra} whenever these make sense.
\end{remark}

\subsection{Differential bigraded vector spaces}

\begin{definition}
  A \emph{bigraded vector space} is a tuple of vector spaces
  \[
    V^{\bullet,*}=(V^{p,q})_{(p,q)\in\ZZ^2}.
  \]
  A bihomogeneous element $v\in V^{p,q}$ has the \emph{bidegree} $(p,q)$ and the
  \emph{total degree} $|v|\coloneqq p+q$. We also write $\hdeg{v}\coloneqq p$
  for the \emph{horizontal degree} and $\vdeg{v}\coloneqq q$ for the
  \emph{vertical degree}, so that $|v|=\hdeg{v}+\vdeg{v}$. A \emph{morphism}
  $f\colon V^{\bullet,*}\to V^{\bullet,*}$ \emph{of bidegree $(r,s)\in\ZZ^2$}
  between bigraded vector spaces is a tuple of linear maps
  \[
    (V^{p,q}\to W^{p+r,q+s})_{(p,q)\in\ZZ^2}.
  \]
  Bigraded vector spaces and bihomogeneous morphisms of bidegree $(0,0)$ form an
  abelian category in the obvious way.
\end{definition}

\begin{definition}
  A \emph{differential bigraded vector space} is a pair $(V^{\bullet,*},d_V)$
  consisting of a bigraded vector space and a bidegree $(1,0)$ differential
  $d_V\colon V^{\bullet,*}\to V^{\bullet,*}$, that is $d_V\circ d_V=0$ and
  \[
    d_V(V^{p,q})\subseteq V^{p+1,q},\qquad (p,q)\in\ZZ^2.
  \]
  Thus, $(V^{\bullet,q},d_V)$ is a cochain complex of vector spaces for each
  $q\in\ZZ$. A \emph{morphism of differential bigraded vector spaces} is a
  bidegree $(0,0)$ morphism of bigraded vector spaces ${f\colon V^{\bullet,*}\to
    W^{\bullet,*}}$ such that $d_W\circ f=f\circ d_V$.
\end{definition}

\begin{definition}
  The \emph{cohomology} of a differential bigraded vector space
  $(V^{\bullet,*},d_V)$ is the bigraded vector space
  \[
    \H[\bullet,*]{V^{\bullet,*},d_V}\coloneqq(\H[p,q]{V^{\bullet,*},d_V})_{(p,q)\in\ZZ^2},
  \]
  where
  \[
    \H[p,q]{V^{\bullet,*},d_V}\coloneqq\H[p]{V^{\bullet,q},d_V},\qquad
    (p,q)\in\ZZ^2.
  \]
  A morphism $f\colon(V^{\bullet,*},d_V)\to(W^{\bullet,*},d_W)$ of differential
  bigraded vector spaces is a \emph{quasi-isomorphism} if the morphisms of
  cochain complexes of vector spaces
  \[
    f\colon(V^{\bullet,q},d_V)\longrightarrow(W^{\bullet,q},d_W),\qquad q\in\ZZ,
  \]
  are quasi-isomorphisms; equivalently, the induced map
  \[
    \H[\bullet,*]{f}\colon\H[p,q]{V^{\bullet,*},d_V}\longrightarrow\H[p,q]{W^{\bullet,*},d_W}
  \]
  is an isomorphism of bigraded vector spaces.
\end{definition}


\begin{definition}
  \label{def:hv-shifts}
  Let $(V^{\bullet,*},d_V)$ be a differential bigraded vector space. Given
  ${n\in\ZZ}$, we let $(V^{\bullet,*}[n],d_{V[n]})$ be the differential bigraded
  vector space with \emph{horizontally-shifted} bihomogeneous components
  \[
    V[n]^{p,q}\coloneqq V^{p+n,q},\qquad (p,q)\in\ZZ^2,
  \]
  and which is endowed with the bidegree $(1,0)$ differential $d_{V[n]}\coloneqq
  (-1)^nd_V$. Notice that
  \[
    \H[p,q]{V^{\bullet,*}[n],d_{V[n]}}=\H[p+n,q]{V^{\bullet,*},d_V},\qquad
    (p,q)\in\ZZ^2.
  \]
  Similarly, we let $(V^{\bullet,*}(n)),d_{V(n)}$ be the differential bigraded
  vector space with vertically-shifted bihomogeneous components
  \[
    V(n)^{p,q}\coloneqq V^{p,q+n}\qquad (p,q)\in\ZZ^2,
  \]
  and which is endowed with the differential $d_{V(n)}=d_V$. Notice that
  \[
    \H[p,q]{V^{\bullet,*}(n),d_{V(n)}}=\H[p,q+n]{V^{\bullet,*},d_V},\qquad
    (p,q)\in\ZZ^2.
  \]
\end{definition}

\begin{remark}
  We identify graded vector spaces with bigraded vector spaces concentrated in
  horizontal degree $0$. Hence, to a graded vector space $V$ we associate the
  bigraded vector space with the bihomogeneous components
  \[
    V^{0,j}\coloneqq V^j,\qquad j\in\ZZ,
  \]
  and $V^{i,*}=0$ for $i\neq 0$. Under this identification, the shift of graded
  vector spaces corresponds to the \emph{vertical} shift of bigraded vector
  spaces. In particular, when dealing with graded and bigraded vector spaces in
  tandem, we write $V\mapsto V(1)$ for the (vertical-)shift operation on graded
  vector spaces.
\end{remark}

\subsection{Brace algebras}

Our primary source of Gerstenhaber algebras are brace algebras with additional
structure, see~\cite{Kad88,Get93,GV95}. We recall the variant of the definition
that is compatible with our preferred sign conventions.

\begin{definition}[{\cite{GV95}}]
  \label{def:brace_algebra}
  A \emph{brace algebra} is a bigraded graded vector space $B^{\bullet,*}$ that
  is endowed with a system of bidegree $(-n,0)$ \emph{brace operations}
  \begin{align*}
    B^{p_0,q_0}\otimes B^{p_1,q_1}\otimes\cdots\otimes B^{p_n,q_n}&\longrightarrow B^{p_0+p_1+\cdots+p_n-n,q_0+q_1+\cdots+q_n}&n\geq1\\
    x_0\otimes x_1\otimes\cdots\otimes x_n&\longmapsto\braces{x_0}{x_1\dots,x_n},
  \end{align*}
  which vanish when $p_0<n$, and which satisfy the \emph{brace relation}
  \begin{multline}
    \label{eq:brace_relation}
    \braces{\braces{x}{y_1,\dots,y_p}}{z_1,\dots,z_q}=\\\sum_{}(-1)^{\maltese}\braces{x}{z_1,\dots,z_{i_1},\braces{y_1}{z_{i_1+1},\dots,z_{j_1}},\dots,\braces{y_p}{z_{i_p+1},\dots,z_{j_p}},z_{j_p+1},\dots,z_q};
  \end{multline}
  here, the sum ranges over all interlaced tuples
  \[
    0\leq i_1\leq j_1\leq\cdots\leq i_p\leq j_p\leq q,
  \]
  and the sign is determined by the
  Koszul sign rule with respect to the shifted total degree:
  \[
    \textstyle\maltese\coloneqq\sum_{s=1}^p\sum_{t=1}^{i_s}|\s y_s||\s
    z_t|=\sum_{s=1}^p\sum_{t=1}^{i_s}(|y_s|-1)(|z_t|-1).
  \]
\end{definition}

\begin{remark}
  The prototypical examples of brace algebras are obtained from graded
  (non-symmetric) operads by combining their infinitesimal composition
  operations in a natural way, see~\cite[Section~1.1]{GV95} for details.
  Although all the brace algebras that interest us are of this form, in this
  article we try to keep the use of operadic language to a minimum as it is not
  strictly necessary for our purposes.
\end{remark}

The following result summarises the algebraic structures that we wish to extract
from a brace algebra, compare with~\cite[Sections~1.1 and~1.2]{GV95}.

\begin{defprop}
  \label{defprop:braces-Gerstenhaber}
  Let $B$ be a brace algebra.
  \begin{enumerate}
  \item\label{eq:braces-Gerstenhaber-preLie} The bidegree $(-1,0)$ \emph{pre-Lie
      product} is the binary operation
    \begin{align*}
      B^{p,q}\otimes B^{s,t}\longrightarrow B^{p+s-1,q+t}\qquad x\otimes y\longmapsto \preLie{x}{y}\coloneqq\braces{x}{y}.
    \end{align*}
    If $\chark(\kk)=2$ or $p+q$ is even, we also consider the \emph{Gerstenhaber
      square} operation
    \begin{align*}
      \Sq\colon B^{p,q}&\longrightarrow B^{2p-1,2q},\qquad x\longmapsto\Sq(x)\coloneqq\preLie{x}{x}.
    \end{align*}
  \item\label{eq:braces-Gerstenhaber-bracket} The graded commutator of the
    pre-Lie product, called the \emph{Gerstenhaber Lie bracket},
    \begin{align*}
      B^{p,q}\otimes B^{s,t}&\longrightarrow B^{p+s-1,q+t}\\
      x\otimes y&\longmapsto[x,y]\coloneqq\preLie{x}{y}-(-1)^{|\s x||\s y|}\preLie{y}{x},
    \end{align*}
    endows (the underlying graded vector space of) $B$ with the structure of a
    shifted graded Lie algebra with respect to the total degree. Of course, the
    Gerstenhaber Lie bracket has bidegree $(-1,0)$.
  \item\label{eq:braces-Gerstenhaber-multiplication} A \emph{multiplication} on
    $B$ is an element $m\in B^{2,0}$ such that
    \[
      \preLie{m}{m}=\braces{m}{m}\stackrel{!}{=}0.
    \]
    In the presence of a multiplication we consider the following additional
    structure on $B$:
    \begin{itemize}
    \item The bidegree $(0,0)$ \emph{cup product} is the binary operation
      \begin{align*}
        B^{p,q}\otimes B^{s,t}&\longrightarrow B^{p+s,q+t}\\
        x\otimes y&\longmapsto\cupp{x}{y}\coloneqq(-1)^{|\s x|}\braces{m}{x,y}.
      \end{align*}
      Endowed with the cup product, $B$ is a graded (non-unital) algebra with
      respect to the total degree.
    \item The bidegree $(1,0)$ map
      \begin{align*}
        d\colon B^{p,q}&\longrightarrow B^{p+1,q}\\
        x&\longmapsto
           d(x)\coloneqq[m,x]=\preLie{m}{x}-(-1)^{|\s c|}\preLie{x}{m},
      \end{align*}
      is a differential: $d\circ d=0$. Consequently, $(B^{\bullet,*},d)$ is a
      differential bigraded vector space.
    \item With the above differential, $(B^{\bullet,*},d)$ is a shifted dg Lie
      algebra with the Gerstenhaber Lie bracket and a (non-unital) dg algebra
      with the cup product, in both cases with respect to the total degree.
    \item With the induced operations, the cohomology of $(B^{\bullet,*},d)$,
      which itself is naturally bigraded, is a Gerstenhaber algebra with respect
      to the total degree. In particular, the cup product is graded-commutative
      with respect to the total degree at the cohomological level (but in
      general not at the cochain level):
      \[
        \cupp{[x]}{[y]}=(-1)^{|x||y|}\cupp{[y]}{[x]},\qquad x,y\in
        B^{\bullet,*}.
      \]
    \end{itemize}
  \end{enumerate}
\end{defprop}

\begin{remark}
  A multiplication on a brace algebra is equivalent to the data of a homotopy
  Gerstenhaber algebra in the sense of~\cite[Definition~2]{GV95}, see Theorem~3
  in~\emph{op.~cit.}
\end{remark}

\begin{remark}
  The Gerstenhaber algebra structure on the cohomology of a brace algebra $B$
  depends only on the binary brace defining the pre-Lie product and on braces of
  the form $\braces{m}{x,y}$, where $m\in B^{2,0}$ is a multiplication. For the
  purposes of this article, other braces of higher arity and the brace
  relation are only needed to prove the claims in
  \Cref{defprop:braces-Gerstenhaber}. For this reason we do not define the full
  brace algebra structure on the Hochschild(-type) cochain complexes recalled in
  later sections.
\end{remark}

\begin{example}
  For illustration purposes, let us show that the cup product in a brace algebra
  $B^{\bullet,*}$ with multiplication $m\in B^{2,0}$ is associative: Firstly,
  since $\braces{m}{m}=0$ and $|\s m|=2-1=1$, the brace relation yields
  \[
    0=\braces{\braces{m}{m}}{x,y,z}\stackrel{\eqref{eq:brace_relation}}{=}\braces{m}{\braces{m}{x,y},z}+(-1)^{|\s
      x|}\braces{m}{x,\braces{m}{y,z}}.
  \]
  Therefore, since $|x\cdot y|=|x|+|y|$,
  \begin{align*}
    \cupp{(\cupp{x}{y})}{z}-\cupp{x}{(\cupp{y}{z})}&=(-1)^{|\s x|+|\s(\cupp{x}{y})|}\braces{m}{\braces{m}{x,y},z}-(-1)^{|\s x|+|\s y|}\braces{m}{x,\braces{m}{y,z}}\\
                                                   &=(-1)^{|y|}\Big(\braces{m}{\braces{m}{x,y},z}+(-1)^{|\s
                                                     x|}\braces{m}{x,\braces{m}{y,z}}\Big)\\
                                                   &=0.
  \end{align*}
  Thus $\cupp{(\cupp{x}{y})}{z}=\cupp{x}{(\cupp{y}{z})}$, as required.
\end{example}


\subsection{Interlude: Infinitesimal compositions}

In order to simplify some formulas that arise when dealing with Hochschild-type
cochain complexes, it is convenient to introduce a minimal amount of operadic
notation. Consider homogeneous operations
\begin{align*}
  c_1\colon X_1\otimes\cdots\otimes X_p&\longrightarrow X,&\hdeg{c_1}&\coloneqq p,&\vdeg{c_1}&=q,&|c_1|&\coloneqq p+q\\
  c_2\colon Y_1\otimes\cdots\otimes Y_s&\longrightarrow Y,&\hdeg{c_2}&\coloneqq s,&\vdeg{c_2}&=t,&|c_2|&\coloneqq s+t,
\end{align*}
where $X_1,\dots,X_p,X$ and $Y_1,\dots,Y_s,Y$ are graded vector spaces, and the
vertical degrees are the degrees of the maps as morphisms of graded vector
spaces.

\begin{notation}
  For each $1\leq i\leq p$ such that $X_i=Y$, we define the \emph{(suspended) infinitesimal
    composition}
  \[
    c_1\bullet_ic_2\colon X_1\otimes\cdots\otimes
    X_{i-1}\otimes(Y_1\otimes\cdots\otimes Y_s)\otimes
    X_{i+1}\otimes\cdots\otimes X_p\longrightarrow X
  \]
  by the formula
  \begin{multline}
    \label{eq:infinitesimal_composition}
    (c_1\bullet_ic_2)(x_1,\dots,x_{i-1},y_1,\dots,y_s,x_{i+1},\dots,x_p)\coloneqq\\
    (-1)^{\maltese_i}c_1(x_1,\dots,x_{i-1},c_2(y_1,\dots,y_s),x_{i+1},\dots,x_p),
  \end{multline}
  with the sign given by
  \begin{align*}
    \maltese_i&\coloneqq\textstyle(\hdeg{c_2}-1)(\hdeg{c_1}-i)+\vdeg{c_2}\left(\hdeg{c_1}-1+\sum_{j=1}^{i-1}|x_j|\right)\\
              &=\textstyle(s-1)(p-i)+t\left(p-1+\sum_{j=1}^{i-1}|x_j|\right),
  \end{align*}
  If $X_i\neq Y$ we set $c_1\bullet_ic_2\coloneqq0$.
\end{notation}

\begin{remark}
  Infinitesimal composition is associative in the sense that
  \begin{align}
    \label{eq:infinitesimal_composition-associative}
    \begin{split}
      c_1\bullet_i (c_2\bullet_ j c_3)&=(c_1\bullet_i c_2)\bullet_{i+j-1}c_3\\
      (c_1\bullet_i c_2)\bullet_j c_3&=(-1)^{\maltese}(c_1\bullet_j c_3)\bullet_{i+n-1}c_2,\qquad j<i,\ n={\hdeg{c_3}},
    \end{split}
  \end{align}
  whenever these are defined, where the sign is given by the following integer
  modulo $2$
  \[
    \maltese=(\vdeg{c_2}+1-\hdeg{c_2})(\vdeg{c_3}+1-\hdeg{c_3})=|\s c_2||\s
    c_3|.
  \]
\end{remark}

\begin{remark}
  The previous formula is motivated by the definition of the infinitesimal
  compositions in the linear endomorphism operad of a pair of vector spaces
  introduced in~\cite{BM09}. The extra signs---those not given by the Koszul
  sign rule---stem from the implicit application of the so-called operadic
  suspension, compare with~\cite[Definition~2.4]{Mur16}.
\end{remark}
  
\begin{notation}
  Following~\cite{GV95}, we define
  \begin{equation}
    \label{eq:binary_brace}
    \braces{c_1}{c_2}\coloneqq\sum_{i=1}^pc_1\bullet_ic_2.
  \end{equation}
\end{notation}

\begin{remark}
  Notice that the number of inputs of $\braces{c_1}{c_2}$ is $p+s-1$, as
  expected from \Cref{def:brace_algebra}.
\end{remark}

Suppose given a further homogeneous operation
\begin{align*}
  m\colon W_1\otimes W_2&\longrightarrow W,&\hdeg{m}&\coloneqq 2,&\vdeg{m}&=0,&|m|\coloneqq 2.
\end{align*}
With this additional datum we introduce the following notation.

\begin{notation}
  For $c\coloneqq c_1$ we set $d(c)\coloneqq\braces{m}{c}-(-1)^{|\s
    c|}\braces{c}{m}$. Explicitly,
  \begin{align}
    \label{eq:generic-Hd}
    \begin{split}
      d(c)(x_1,\dots,x_{p+1})&=(-1)^{q|x_1|+q}m(x_1,c(x_2,\dots,x_{p+1}))\\
                             &\phantom{=}+\sum_{i=1}^p(-1)^{i+q}c(x_1,\dots,m(x_i,x_{i+1}),\dots,x_{p+1})\\&\phantom{=}+(-1)^{p+1+q}m(c(x_1,\dots,x_p),x_{p+1});
    \end{split}
  \end{align}
  compare with the definition of the differential in a brace algebra with
  multiplication given in \Cref{defprop:braces-Gerstenhaber}.
\end{notation}

\begin{notation}
  We also define
  \begin{equation}
    \label{eq:multiplication_brace}
    \braces{m}{c_1,c_2}\coloneqq (m\bullet_1c_1)\bullet_{p+1} c_2=(-1)^{|c_1||c_2|}(m\bullet_2 c_2)\bullet_1 c_1,
  \end{equation}
  so that, if $W_1=X$ and $W_2=Y$,
  \[
    \braces{m}{c_1,c_2}(x_1,\dots,x_p,y_1,\dots,y_q)\coloneqq(-1)^{\maltese}m(c_1(x_1,\dots,x_p),c_2(y_1,\dots,y_q)),
  \]
  with the sign given by
  \begin{align*}
    \maltese&\coloneqq\textstyle(|c_1|-1)+\vdeg{c_2}\left(\vdeg{c_1}+\sum_{i=1}^{\hdeg{c_1}}|x_i|\right)\\
            &=\textstyle(p+q-1)+t\left(p+\sum_{i=1}^p|x_i|\right).
  \end{align*}
\end{notation}

\begin{remark}
  The number of inputs of $\braces{m}{c_1,c_2}$ is $p+s$, as expected from
  \Cref{def:brace_algebra}. Also,
  \begin{align}
    \label{eq:generic-cup_product}
    (-1)^{|\s c_1|}\braces{m}{c_1,c_2}=(-1)^{t\left(p+\sum_{i=1}^p|x_i|\right)}m(c_1(x_1,\dots,x_p),c_2(y_1,\dots,y_q)),
  \end{align}
  where $|\s c_1|\coloneqq|c_1|-1$, compare with the definition of the cup
  product in a brace algebra with multiplication given in
  \Cref{defprop:braces-Gerstenhaber}.
\end{remark}

\subsection{The Hochschild cochain complex}
\label{subsec:Hochschild-algebras}

Let $A$ be a graded algebra.

\begin{definition}
  \label{def:HC}
  The \emph{Hochschild cochain complex of $A$} is the bigraded vector space
  $\HC{A}$ with the bihomogeneous components
  \begin{equation}
    \label{eq:HC}
    \HC[p]<q>{A}\coloneqq\dgHom[\kk]{A^{\otimes p}}{A}[q],\qquad p\geq 0,\
    q\in\ZZ,
  \end{equation}
  As is customary, given a cochain $c\in\HC[p]<q>{A}$, we refer to the
  horizontal degree $\hdeg{c}=p$ as the \emph{Hochschild degree} and to the
  vertical degree $\vdeg{c}=q$ as the \emph{internal degree}.
\end{definition}

\begin{notation}
  We set
  \begin{align*}
    \HC*[p]{A}&\coloneqq\prod_{q\in\ZZ}\HC[p]<q>{A}=\dgHom[\kk]{A^{\otimes p}}{A},\qquad p\geq0.
  \end{align*}
\end{notation}

The Hochschild cochain complex of $A$ is a brace algebra (see
\Cref{rmk:HCA-operad}), and hence we may apply
\Cref{defprop:braces-Gerstenhaber} to describe its algebraic structure.

Firstly, the bidegree $(-1,0)$ \emph{pre-Lie product} is given by
\begin{align}
  \label{eq:pre-Lie_product-HC}
  \begin{split}
    \HC[p]<q>{A}\otimes \HC[s]<t>{A}\longrightarrow \HC[p+s-1]<q+t>{A}\\
    c_1\otimes c_2\longmapsto \preLie{c_1}{c_2}\coloneqq\braces{c_1}{c_2},
  \end{split}
\end{align}
see \Cref{eq:binary_brace} for the definition of $\braces{c_1}{c_2}$. It follows
from \Cref{defprop:braces-Gerstenhaber} that $\HC{A}$ is a shifted graded Lie
algebra with the bidegree $(-1,0)$ \emph{Gerstenhaber (Lie) bracket}
\begin{equation}
  \label{eq:Gerstenhaber_bracket-HC}
  [c_1,c_2]\coloneqq\preLie{c_1}{c_2}-(-1)^{(p+q-1)(s+t-1)}\preLie{c_2}{c_1},
\end{equation}
with respect to the total degree. If $\chark(\kk)=2$ or $p+q$ is even, we also
consider the \emph{Gerstenhaber square} operation
\begin{equation}
  \label{eq:Gerstenhaber_square-HC}
  \Sq\colon\HC[p]<q>{A}\longrightarrow\HC[2p-1]<2q>{A},\qquad c\longmapsto\Sq(c)\coloneqq\preLie{c}{c}.
\end{equation}

The Hochschild cochain complex $\HC{A}$ is also endowed with the multiplication
\[
  \Astr<2>\in\HC[2]<0>{A}=\dgHom[\kk]{A^{\otimes 2}}{A}[0]=\Hom[\kk]{A^{\otimes
      2}}{A}
\]
given by the multiplication operation
\[
  \Astr<2>\colon A\otimes A\longrightarrow A,\qquad x\otimes y\longmapsto xy,
\]
of the graded algebra $A$. Using that $\Astr<2>$ has the bidegree $(2,0)$, the
associativity of the multiplication yields
\begin{align*}
  (\preLie{\Astr<2>}{\Astr<2>})(x_1,x_2,x_3)&=-\Astr<2>(\Astr<2>(x_1,x_2),x_3)+\Astr<2>(x_1,\Astr<2>(x_2,x_3))=0,
\end{align*}
so that $\Astr<2>\in\HC[2]<0>{A}$ is a brace algebra multiplication in the sense
of \Cref{defprop:braces-Gerstenhaber}. Consequently, $\HC{A}$ is endowed with
the bidegree $(1,0)$ \emph{Hochschild differential}
\begin{align}
  \label{eq:Hd}
  \begin{split}
    \Hd\colon\HC[p]<q>{A}&\longrightarrow\HC[p+1]<q>{A}\\
    c&\longmapsto[\Astr<2>,c]=\preLie{\Astr<2>}{c}-(-1)^{p+q-1}\preLie{c}{\Astr<2>},
  \end{split}
\end{align}
and hence $\HC{A}$ is a differential bigraded vector space. Explicitly, using
\Cref{eq:generic-Hd} we see that the action of the Hochschild differential on a
cochain $c\in\HC[p]<q>{A}$ is given by the formula
\begin{align}
  \label{eq:Hd-full}
  \begin{split}
    \Hd(c)(x_1,\dots x_{p+1})&\coloneqq(-1)^{q|x_1|+q}x_1c(x_2,\dots,x_{p+1})\\&\phantom{=}+\sum_{i=1}^p(-1)^{i+q}c(x_1,\dots,x_ix_{i+1},\dots,x_{p+1})\\&\phantom{=}+(-1)^{p+1+q}c(x_1,\dots,x_p)x_{p+1}.
  \end{split}
\end{align}

\begin{definition}
  \label{def:HHA}
  The \emph{Hochschild cohomology of $A$} is the bigraded vector space
  \begin{equation}
    \label{eq:HH}
    \HH{A}\coloneqq\H[\bullet,*]{\HC{A},\Hd}.
  \end{equation}
\end{definition}

\begin{notation}
  We set
  \begin{align*}
    \HH*[p]{A}&\coloneqq\prod_{q\in\ZZ}\HH[p]<q>{A}[A],\qquad p\geq0.
  \end{align*}
\end{notation}

\begin{remark}
  Endowed with the Hochschild differential, $\HC{A}$ is a shifted dg Lie algebra
  with the Gerstenhaber bracket, and $\HH{A}$ is a shifted graded Lie algebra,
  both with respect to the total degree.
\end{remark}

Appealing again to \Cref{defprop:braces-Gerstenhaber}, the multiplication
$\Astr<2>\in\HC{A}$ yields the bidegree $(0,0)$ \emph{cup product}
\begin{align}
  \label{eq:cup_product-HC}
  \begin{split}
    \HC[p]<q>{A}\otimes\HC[s]<t>{A}&\longrightarrow\HC[p+s]<q+t>{A}\\
    c_1\otimes c_2&\longmapsto\cupp{c_1}{c_2}\coloneqq(-1)^{|\s c_1|}\braces{\Astr<2>}{c_1,c_2},
  \end{split}
\end{align}
so that $\HC{A}$ is also a dg algebra and $\HH{A}$ is a Gerstenhaber algebra with
the induced operations, both with respect to the total degree. Explicitly, we
see from \Cref{eq:generic-cup_product} that
\begin{multline}
  \label{eq:cup_product}
  (\cupp{c_1}{c_2})(x_1,\dots,x_{p+s})\coloneqq\\(-1)^{t(p+\sum_{i=1}^p|x_i|)}c_1(x_1,\dots,x_p)c_2(x_{p+1},\dots,x_{p+s}).  
\end{multline}

\begin{remark}
  \label{rmk:HCA-operad}
  The brace algebra structure on $\HC{A}$ is obtained from the \emph{shifted}
  endomorphism graded operad of $A$, which is isomorphic to the endomorphism
  graded operad of the shifted graded vector space $A[1]$ by means of a
  non-trivial isomorphism, see for example~\cite[Remarks~2.5 and 2.7]{Mur16}.
\end{remark}

\begin{remark}
  \label{rmk:functoriality-isos-HC}
  The Hochschild cochain complex is functorial under isomorphisms of graded
  algebras: Given an isomorphism of graded algebras $\varphi\colon
  A\stackrel{\sim}{\to} B$, the isomorphism of graded vector spaces
  \[
    \Phi_{p,q}\colon\HC[p]<q>{A}\stackrel{\sim}{\longrightarrow}\HC[p]<q>{B},\qquad
    c\longmapsto c^\varphi,
  \]
  where
  \[
    c^\varphi(b_1,\dots,b_p)\coloneqq
    \varphi(c(\varphi^{-1}(b_1)),\dots,\varphi^{-1}(b_p)),
  \]
  assemble into an isomorphism of differential bigraded vector spaces
  \[
    \Phi\colon\HC{A}\stackrel{\sim}{\longrightarrow}\HC{B}
  \]
  that is strictly compatible with all of the algebraic structure described
  above. In particular, the induced isomorphism of bigraded vector spaces
  \[
    \H[\bullet,*]{\Phi}\colon\HH{A}\stackrel{\sim}{\longrightarrow}\HH{B}
  \]
  is an isomorphism of Gerstenhaber algebras with respect to the total degree.
\end{remark}

\begin{remark}
  The Hochschild cohomology $\HH{A}$ can be interpreted in conceptual terms as
  \[
    \HH[p]<q>{A}\cong\H[p]{\RHom*[A^e]<\bullet,q>{A}{A}}=\Ext[A^e][p,q]{A}{A},\quad
    p\geq0,\ q\in\ZZ,
  \]
  where we regard $A$ as an object of the derived category of graded
  $A$-bimodules\footnote{Which should not be confused with the derived category
    of dg $A$-bimodules, with $A$ viewed as a dg algebra with vanishing
    differential.}. Here,
  \[
    \RHom*[A^e]<\bullet,q>{A}{A}\coloneqq\RHom[A^e]{A}{A(q)},\qquad q\in\ZZ,
  \]
  is the derived $\operatorname{Hom}$ functor in the derived category of graded
  $A$-bimodules and
  \[
    \Ext[A^e][p,q]{A}{A}\coloneqq\Ext[A^e][p]{A}{A(q)},\quad p\geq0,\ q\in\ZZ.
  \]
  The above identifications stem from isomorphisms of cochain complexes of
  vector spaces
  \[
    \HC<q>{A}\cong\Hom[A^e]{\BB{A}}{A(q)},\qquad q\in\ZZ,
  \]
  where $\BB{A}$ is the bar resolution of $A$, which is a graded projective
  resolution of the latter as a graded $A$-bimodule. In terms of this
  identification, the cup product on $\HH{A}$ corresponds to the (bigraded)
  composition of endomorphisms in the derived category of graded $A$-bimodules
  (equivalently, to the Yoneda product of extensions). On the other hand, to our
  knowledge, there is no known interpretation of the Gerstenhaber bracket or of
  the Gerstenhaber square in similar terms, although both admit an
  interpretation in terms of `loop operations' in the category of extensions,
  see~\cite{Sch98a} where the case of ungraded algebras is treated.
\end{remark}

\begin{remark}
  \label{rmk:HC-gvsp}
  It is also useful to consider the Hochschild cochain complex when $A$ is a
  plain graded vector space, rather than a graded algebra. In this case the
  pre-Lie product, the Gerstenhaber bracket and the Gerstenhaber square are
  still defined, by the same formulas, as these only depend on the infinitesimal
  composition of multilinear maps. In this case the name `Hochschild cochain
  complex' is a misnomer for it does not have a differential; notwithstanding,
  we do not introduce any alternative terminology to distinguish this case.
\end{remark}

\section{Bimodule Hochschild cohomology}
\label{sec:Hochschild-bimodules}

In this section we recall the definition of the bimodule Hochschild cohomology of a
graded bimodule that is introduced in~\cite{JM25}. We also study the effect of
the passage to shifts and linear duals on this cohomology, and study the
case of the diagonal bimodule in more detail.

\subsection{The bimodule Hochschild cochain complex}
\label{subsec:RelBimHC}

Let $A$ be a graded algebra and $M$ a graded $A$-bimodule. We begin with an
auxiliary definition.

\begin{definition}
  The \emph{bimodule cochain complex} $\BimHC{M}$ is the bigraded vector space
  with the bihomogeneous components
  \begin{equation}
    \label{eq:BimHC}
    \BimHC[n]<r>{M}\coloneqq\bigoplus_{p+q=n}\BimHC*[p][q]<r>{M},\qquad n\geq0,\
    r\in\ZZ,
  \end{equation}
  where
  \[
    \BimHC*[p][q]<r>{M}\coloneqq\dgHom[\kk]{A^{\otimes p}\otimes M\otimes
      A^{\otimes q}}{M}[r].
  \]
\end{definition}

\begin{remark}
  \label{rmk:RelBimHC-arity}
  The horizontal degree $p+q$ of a cochain $c\in\BimHC*[p][q]<r>{M}$ does
  \emph{not} agree with the `arity degree' $p+1+q$.
\end{remark}

\begin{remark}
  Endowed with a suitable bidegree $(1,0)$ differential, $\BimHC[n]<r>{M}$ is a
  differential bigraded vector space whose cohomology
  \begin{equation}
    \label{eq:BimHH}
    \H[\bullet,*]{\BimHC{M}}=\BimHH{M}
  \end{equation}
  computes the bigraded vector space of graded-bimodule self-extensions of $M$.
  This conceptual interpretation stems from isomorphisms of cochain complexes of
  vector spaces
  \[
    \BimHC<r>{M}\cong\dgHom[A^e]{\BB{A}\otimes_AM\otimes_A\BB{A}}{M(r)},\qquad
    r\in\ZZ,
  \]
  where $\BB{A}$ is the bar resolution of $A$. Moreover, $\BimHH{M}$ is endowed
  with an associative cup product that corresponds to the Yoneda product of
  extensions, as well as with a natural graded $\HH{A}$-bimodule structure
  induced, roughly speaking, by the monoidal structure on the derived category
  of graded $A$-bimodules given by the derived tensor product over $A$. To keep
  the exposition as brief as possible, we do not recall these structures
  separately but rather deduce them from the algebraic structure of the bimodule
  Hochschild cochain complex of $M$, see \Cref{defprop:BimHC}.
\end{remark}

\begin{definition}
  \label{def:RelBimHC}
  The \emph{bimodule Hochschild cochain complex} is the bigraded vector space
  $\RelBimHC{A}{M}$ with the bihomogeneous components
  \begin{equation}
    \label{eq:RelBimHC}
    \RelBimHC[n]<r>{A}{M}\coloneqq\HC[n]<r>{A}\oplus\BimHC[n-1]<r>{M},\qquad n\geq0,\ r\in\ZZ.
  \end{equation}
\end{definition}

\begin{remark}
  Let $n\geq0$. Viewed as a homogeneous element of $\RelBimHC{A}{M}$, the
  horizontal degree of a cochain $c\in\BimHC*[p][q]<r>{M}$ with $p+q=n-1$ is
  given by the expected `arity degree' $n=p+1+q$, compare with
  \Cref{rmk:RelBimHC-arity}.
\end{remark}

\begin{notation}
  We set
  \[
    \HC*[n]{A\,|\,M}\coloneqq\prod_{r\in\ZZ}\RelBimHC[n]<r>{A}{M},\qquad n\geq0.
  \]
\end{notation}

Similar to $\HC{A}$, the bimodule Hochschild cochain complex $\RelBimHC{A}{M}$
is a brace algebra with multiplication (see \Cref{rmk:RelBimHC-operad}). It
is endowed with the bidegree $(-1,0)$ \emph{bimodule pre-Lie product}
\begin{align}
  \label{eq:pre-Lie_product-RelBimHC}
  \begin{split}
    \RelBimHC{A}{M}\otimes\RelBimHC{A}{M}&\longrightarrow\RelBimHC[\bullet-1]{A}{M}\\
    c_1\otimes c_2&\longrightarrow\preLie{c_1}{c_2}\coloneqq\braces{c_1}{c_2},
  \end{split}
\end{align}
where $c_1,c_2$ lie in either $\HC{A}$ or $\BimHC[\bullet-1]{M}$, see
\Cref{eq:binary_brace} for the definition of $\braces{c_1}{c_2}$. It is useful
to describe explicitly the bimodule pre-Lie product in the four natural cases:
\begin{itemize}
\item If $c_1,c_2\in\HC{A}$, then $\preLie{c_1}{c_2}\in\HC[\bullet-1]{A}$ is
  given by the pre-Lie product in the Hochschild cochain complex $\HC{A}$
  defined in \Cref{eq:pre-Lie_product-HC}.
\item If $c_1\in\HC{A}$ and $c_2\in\BimHC[\bullet-1]{M}$, then
  $\preLie{c_1}{c_2}=0$.
\item If $c_1\in\BimHC*[p][q]<r>{M}$ and $c_2\in\HC[s]<t>{A}$, then
  \[
    \preLie{c_1}{c_2}\in\BimHC*[p+s-1][q]<r>{M}\oplus\BimHC*[p][q+s-1]<r>{M}
  \]
  is the cochain
  \begin{equation}
    \label{eq:pre-Lie_product-RelBimHC-split}
    \preLie{c_1}{c_2}=\underbrace{\,\sum_{i=1}^pc_1\bullet_ic_2\,}_{\in\BimHC*[p+s-1][q]<r>{M}}+\underbrace{\,\sum_{j=1}^qc_1\bullet_{p+1+j} c_2\,}_{\in\BimHC*[p][q+s-1]<r>{M}}.
  \end{equation}
\item Finally, if $c_1\in\BimHC*[p][q]<r>{M}$ and $c_2\in\BimHC*[s][t]<u>{M}$,
  then
  \[
    \preLie{c_1}{c_2}= c_1\bullet_{p+1}c_2\in\BimHC*[p+s][q+t]<r+u>{M};
  \]
  explicitly, using \Cref{eq:infinitesimal_composition} we see that
  \begin{multline}
    \label{eq:associative-preLie-product}
    (\preLie{c_1}{c_2})(x_1,\dots,x_{p+s},m,y_1,\dots,y_{q+t})\coloneqq\\
    (-1)^{\maltese}c_1(x_1\dots,x_p,c_2(x_{p+1},\dots,x_{p+s},m,y_1,\dots,y_t),y_{1+t},\dots,y_{q+t}),
  \end{multline}
  with the sign given by
  \[
    \textstyle\maltese\coloneqq(s+t)q+u\left(p+q+\sum_{i=1}^p|x_i|\right).
  \]
\end{itemize}

\begin{remark}
  The pre-Lie product as defined by \Cref{eq:pre-Lie_product-RelBimHC} is
  ultimately determined by the infinitesimal composition operation defined by
  \Cref{eq:infinitesimal_composition}. Consequently, there is an inconsistency
  in the special case when $M=A$ is the diagonal bimodule, since infinitesimal
  compositions of the form $c_1\bullet_ic_2$ with $c_1\in\HC{A}$ and
  ${c_2\in\BimHC{A}}$ do not automatically vanish, which is meant to be the case
  in this context. This discrepancy can be fixed by replacing the diagonal
  bimodule by a weighted copy, say $A\cdot \iota$ where $\iota$ is a formal
  symbol, so that $A\neq A\cdot\iota$. Having clarified this important subtlety,
  in the sequel we do not make this distinction explicit in order to not
  overload the notation.
\end{remark}

We now describe the algebraic structure on the Hochschild bimodule complex
$\RelBimHC{A}{M}$ as prescribed by \Cref{defprop:braces-Gerstenhaber}. Firstly,
the bidegree $(-1,0)$ \emph{bimodule Gerstenhaber bracket}
\begin{align}
  \label{eq:Gerstenhaber_bracket-RelBimHC}
  \begin{split}
    \RelBimHC{A}{M}\otimes\RelBimHC{A}{M}&\longrightarrow\RelBimHC[\bullet-1]{A}{M}\\
    c_1\otimes c_2&\longmapsto [c_1,c_2]\coloneqq\preLie{c_1}{c_2}-(-1)^{|\s c_1||\s c_2|}\preLie{c_2}{c_1},
  \end{split}
\end{align}
endows $\RelBimHC{A}{M}$ with the structure of a shifted graded Lie algebra with
respect to the total degree. If $\chark(\kk)=2$ or $n+r$ is even, we also
consider the \emph{bimodule Gerstenhaber square} operation
\begin{equation}
  \label{eq:Gerstenhaber_square-RelBimHC}
  \Sq\colon\RelBimHC[n]<r>{A}{M}\longrightarrow\RelBimHC[2n-1]<2r>{A}{M},\qquad c\longmapsto\Sq(c)\coloneqq\preLie{c}{c}.
\end{equation}

\begin{remark}
  It follows from the definition of the pre-Lie product on $\RelBimHC{A}{M}$
  that $\BimHC{M}\subseteq\RelBimHC{A}{M}<1>$ is a graded Lie ideal (defined in
  the obvious way) and hence also a graded Lie subalgebra. Similarly,
  $\HC{A}<1>\subseteq\RelBimHC{A}{M}<1>$ is a graded Lie subalgebra.
\end{remark}

\begin{example}
  \label{ex:bracket-1M}
  Let $c\in\HC{A}$. For every $\iota\in\HC[0]{M}$, we have
  \[
    \preLie{c}{\iota}=0=\preLie{\iota}{c}
  \]
  and hence
  \[ [c,\iota]=\preLie{c}{\iota}-(-1)^{|\s c|}\preLie{\iota}{c}.
  \]
  In particular, $[c,\id[M]]=0$.
\end{example}

The bimodule Hochschild cochain complex $\RelBimHC{A}{M}$ is also endowed with
the brace algebra multiplication
\begin{equation}
  \label{eq:square-zero_product-AM}
  \Astr<2>[A\ltimes M]\coloneqq
  \Astr<2>+(\Astr<1,0>[M]+\Astr<0,1>[M])\in\HC[2]<0>{A}\oplus(\BimHC*[1][0]<0>{M}\oplus\BimHC*[0][1]<0>{M}),
\end{equation}
where $\Astr<2>$ is the multiplication of $A$ and
\begin{align*}
  \Astr<1,0>[M]\colon A\otimes M&\longrightarrow M,\qquad x\otimes m\longmapsto xm,\intertext{and}
                                  \Astr<0,1>[M]\colon M\otimes A&\longrightarrow M,\qquad m\otimes y\longmapsto my,
\end{align*}
are the action maps of the graded $A$-bimodule $M$. Equivalently, $m_2^{A\ltimes
  M}$ is the square-zero product on the graded vector space $A\oplus M$:
\begin{equation}
  (x,m)(x',m')\coloneqq (xx',xm'+mx'),\qquad x,x'\in A,\ m,m'\in M.
\end{equation}
Similar to the case of the multiplication $\Astr<2>\in\HC{A}$, the fact that
$\Astr<2>[A\ltimes M]\in\RelBimHC{A}{M}$ is a brace-algebra multiplication is a
consequence of the associativity of the square-zero product. We obtain the
bidegree $(1,0)$ \emph{bimodule Hochschild differential}
\begin{equation}
  \label{eq:Hd-RelBimHC}
  \dRelBim\colon\RelBimHC{A}{M}\longrightarrow\RelBimHC[\bullet+1]{A}{M},\qquad
  c\longmapsto \dRelBim(c)\coloneqq[\Astr<2>[A\ltimes M],c],
\end{equation}
and hence $\RelBimHC{A}{M}$ is a differential bigraded vector space.

\begin{definition}
  The \emph{bimodule Hochschild cohomology of $M$} is the cohomology of the
  bimodule Hochschild cochain complex:
  \begin{equation}
    \label{eq:RelBimHH}
    \RelBimHH{A}{M}\coloneqq\H[\bullet,*]{\RelBimHC{A}{M}}. 
  \end{equation}
\end{definition}

\begin{notation}
  We set
  \[
    \HH*[n]{A\,|\,M}\coloneqq\prod_{r\in\ZZ}\RelBimHH[n]<r>{A}{M},\qquad n\geq0.
  \]
\end{notation}

\begin{remark}
  Endowed with the bimodule Hochschild differential, $\RelBimHC{A}{M}$ is a
  shifted dg Lie algebra and $\RelBimHH{A}{M}$ is a shifted graded algebra, both
  with respect to the total degree.
\end{remark}

Finally, appealing once again to \Cref{defprop:braces-Gerstenhaber}, the
multiplication $\Astr<2>[A\ltimes M]\in\RelBimHC[2]<0>{A}{M}$ also induces the
bidegree $(0,0)$ \emph{bimodule cup product}
\begin{align}
  \label{eq:cup_product-RelBimHC}
  \begin{split}
    \RelBimHC{A}{M}\otimes\RelBimHC{A}{M}&\longrightarrow\RelBimHC{A}{M}\\
    c_1\otimes c_2&\longmapsto\cupp{c_1}{c_2}\coloneqq(-1)^{|\s c_1|}\braces{\Astr<2>[A\ltimes M]}{c_1,c_2},
  \end{split}
\end{align}
where $c_1,c_2$ lie in either $\HC{A}$ or $\BimHC[\bullet-1]{M}$. Keeping
\Cref{eq:multiplication_brace} in mind, let us describe the cochain
\[
  \braces{m_2^{A\ltimes
      M}}{c_1,c_2}=\braces{\Astr<2>}{c_1,c_2}+\braces{\Astr<1,0>[M]}{c_1,c_2}+\braces{\Astr<0,1>[M]}{c_1,c_2}
\]
in the four natural cases:
\begin{itemize}
\item If $c_1\in\HC[p]<q>{A}$ and $c_2\in\HC[s]<t>{A}$, then
  \begin{align*}
    \braces{\Astr<2>}{c_1,c_2}&=(\Astr<2>\bullet_1c_1)\bullet_{p+1}c_2=(-1)^{\vdeg{c_1}\vdeg{c_2}}(\Astr<2>\bullet_2 c_2)\bullet_1 c_1\\
    \braces{\Astr<1,0>[M]}{c_1,c_2}&=0\\
    \braces{\Astr<0,1>[M]}{c_1,c_2}&=0.
  \end{align*}
  Consequently, the cup product $\cupp{c_1}{c_2}$ is the cup product in the
  Hochschild cochain complex $\HC{A}$ defined in \Cref{eq:cup_product}.
\item If $c_1\in\HC[p]<q>{A}$ and $c_2\in\BimHC*[s][t]<u>{M}$, then
  \begin{align}
    \label{eq:cup_product-RelBimHC-HC-BimHC}
    \begin{split}
      \braces{\Astr<2>}{c_1,c_2}&=0\\
      \braces{\Astr<1,0>[M]}{c_1,c_2}&=(\Astr<1,0>[M]\bullet_1c_1)\bullet_{p+1}c_2\\
      \braces{\Astr<0,1>[M]}{c_1,c_2}&=0.
    \end{split}
  \end{align}
  Consequently, the cup product $\cupp{c_1}{c_2}$ lies in
  $\BimHC*[p+s][t]<q+u>{M}$ in this case and, using
  \Cref{eq:generic-cup_product}, we see that it is given explicitly by
  \begin{multline*}
    (\cupp{c_1}{c_2})(x_1,\dots,x_{p+s},m,y_1\dots,y_t)\coloneqq\\
    (-1)^{u\left(p+\sum_{i=1}^p|x_i|\right)}\underbrace{c_1(x_1,\dots,x_p)}_{\in
      A}\underbrace{c_2(x_{p+1},\dots,x_{p+s},m,y_1\dots,y_t)}_{\in M}.
  \end{multline*}
\item If $c_1\in\BimHC*[p][q]<r>{M}$ and $c_2\in\HC[s]<t>{A}$, then
  \begin{align}
    \label{eq:cup_product-RelBimHC-BimHC-HC}
    \begin{split}
      \braces{\Astr<2>}{c_1,c_2}&=0\\
      \braces{\Astr<1,0>[M]}{c_1,c_2}&=0\\
      \braces{\Astr<0,1>[M]}{c_1,c_2}&=(\Astr<0,1>[M]\bullet_1c_1)\bullet_{p+1+q+1}c_2
    \end{split}
  \end{align}
  Consequently, the cup product $\cupp{c_1}{c_2}$ lies in
  $\BimHC*[p][q+s]<r+t>{M}$ in this case and, using
  \Cref{eq:generic-cup_product}, we see that it is given explicitly by
  \begin{multline*}
    (\cupp{c_1}{c_2})(x_1,\dots,x_p,m,y_1\dots,y_{q+s})\coloneqq\\
    (-1)^{\maltese}\underbrace{c_1(x_1,\dots,x_p,m,y_1,\dots,y_q)}_{\in
      M}\underbrace{c_2(y_{q+1},\dots,y_{q+s})}_{\in A},
  \end{multline*}
  with the sign given by
  \[
    \textstyle\maltese\coloneqq
    t\left(p+1+q+\sum_{i=1}^p|x_i|+|m|+\sum_{j=1}^q|y_j|\right).
  \]
\item Finally, if $c_1\in\BimHC*[p][q]<r>{M}$ and $c_2\in\BimHC*[s][t]<u>{M}$,
  then
  \begin{align}
    \label{eq:BimHC-square-zero}
    \begin{split}
      \braces{\Astr<2>}{c_1,c_2}&=0\\
      \braces{\Astr<1,0>[M]}{c_1,c_2}&=0\\
      \braces{\Astr<0,1>[M]}{c_1,c_2}&=0.
    \end{split}
  \end{align}
  Consequently, $\cupp{c_1}{c_2}=0$ in this case.
\end{itemize}

With this structure, $\RelBimHC{A}{M}$ is a dg algebra and its cohomology
$\RelBimHH{A}{M}$ is a Gerstenhaber algebra, in both cases with respect to the
total degree. Notice also that $\BimHC{M}<-1>\subseteq\RelBimHC{A}{M}$ is a
square-zero dg ideal.

\begin{remark}
  \label{rmk:RelBimHC-operad}
  The brace algebra structure on $\RelBimHC{A}{M}$ is obtained from the
  infinitesimal composition operations of the shifted linear graded endomorphism
  operad~\cite{BM09} of the pair $(A,M)$, which is isomorphic to the linear
  endomorphism graded operad of the pair $(A,M(1))$.
\end{remark}

\begin{remark}
  \label{rmk:RelBimHC-under-isos}
  The bimodule Hochschild cochain complex $\RelBimHC{A}{M}$ is functorial under
  isomorphisms of graded $A$-bimodules. More precisely, given an isomorphism of
  graded $A$-bimodules $\varphi\colon M\stackrel{\sim}{\longrightarrow}N$ the
  maps
  \begin{align*}
    \Phi_{p,q}^{\varphi}\colon\BimHC*[p][q]<r>{M}&\stackrel{\sim}{\longrightarrow}\BimHC*[p][q]<r>{N},\qquad p,q\geq0,\ r\in\ZZ,\\
    c&\longmapsto c^\varphi,
  \end{align*}
  where, for $x_1,\dots,x_p,y_1,\dots,y_q\in A$ and $n\in N$,
  \[
    c^\varphi(x_1,\dots,x_p,n,y_1,\dots,y_q)\coloneqq\varphi(c(x_1,\dots,x_p,\varphi^{-1}(n),y_1,\dots,y_q)),
  \]
  assemble into an isomorphism of bigraded vector spaces
  \[
    \Phi\coloneqq\id[\HC{A}]\oplus\Phi^{\varphi}\colon\RelBimHC{A}{M}\stackrel{\sim}{\longrightarrow}\RelBimHC{A}{N}
  \]
  that is strictly compatible with all of the algebraic structure described
  above. Observe that the cochain $c^\varphi$ is determined by the commutativity
  of the following diagram of homogeneous morphisms of graded vector spaces:
  \begin{equation}
    \label{eq:Phi}
    \begin{tikzcd}
      A^{\otimes p}\otimes M\otimes A^{\otimes q}\dar[swap]{\id\otimes\varphi\otimes\id}\rar{c}&M\dar{\varphi}\\
      A^{\otimes p}\otimes N\otimes A^{\otimes q}\rar{c^{\varphi}}&N.
    \end{tikzcd}
  \end{equation}
\end{remark}

\begin{remark}
  \label{rmk:RelBimHC-gvsp}
  It is also useful to consider the bimodule Hochschild cochain complex when $A$
  and $M$ are plain graded vector spaces. In this case the pre-Lie product,
  the Gerstenhaber bracket and the Gerstenhaber square are still defined, by the
  same formulas, as they only depend on the infinitesimal composition of
  multilinear maps (compare with \Cref{rmk:HC-gvsp}).
\end{remark}

We now describe in more detail the algebraic structure on the bimodule cochain
complex $\BimHC{M}$ and on the bimodule Hochschild cochain complex
$\RelBimHC{A}{M}$. Firstly, notice that the bimodule Hochschild differential
decomposes as
\begin{equation}
  \label{eq:Hd-RelBimHC-sum}
  \dRelBim(c)=[\Astr<2>,c]+[\Astr<1,0>,c]+[\Astr<0,1>,c],\qquad c\in\RelBimHC{A}{M}.
\end{equation}
Thus, if $c\in\HC{A}$, then $[\Astr<2>,c]=\Hd(c)$ and
\begin{align}
  \label{eq:Hd-RelBimHC-sum-HC}
  \begin{split}
    [\Astr<1,0>,c]&=\preLie{\Astr<1,0>[M]}{c}-(-1)^{|\s c|}\preLie{c}{\Astr<1,0>[M]}=\Astr<1,0>[M]\bullet_1c,\\
    [\Astr<0,1>,c]&=\preLie{\Astr<0,1>[M]}{c}-(-1)^{|\s
                    c|}\preLie{c}{\Astr<0,1>[M]}=\Astr<0,1>[M]\bullet_2c.
  \end{split}
\end{align}
Moreover,
\begin{align*}
  \Astr<1,0>[M]\bullet_1c&=(\Astr<1,0>[M]\bullet_1c)\bullet_{p+1}\id[M]=\braces{\Astr<1,0>[M]}{c,\id[M]}=(-1)^{|\s c|}\cupp{c}{\id[M]},\\
  \Astr<0,1>[M]\bullet_2c&=(\Astr<0,1>[M]\bullet_1\id[M])\bullet_2 c=\braces{\Astr<0,1>[M]}{\id[M],c}=\cupp{\id[M]}{c},
\end{align*}
where we use that the shifted total degree of $\id[M]\in\BimHC*[0][0]<0>{M}$ is
\[
  |\s \id[M]|=|\id[M]|-1=1-1=0.
\]
Finally, if we identify
\[
  \BimHC{M}=(\BimHC{M}<-1>)[1],
\]
we can consider the map
\begin{align}
  \label{eq:delta}
  \begin{split}
    \delta\colon\HC{A}&\longrightarrow\BimHC{M}\\
    c&\longmapsto\cupp{\s\id[M]}{c}-\cupp{c}{\s\id[M]}=\s(\cupp{\id[M]}{c}-(-1)^{|c|}\cupp{c}{\id[M]}),
  \end{split}
\end{align}
where we use the shifted graded $\HC{A}$-bimodule structure on $\BimHC{M}$. On
the other hand, if $c\in\BimHC[\bullet-1]{M}$, then
\begin{align}
  \label{eq:dRelBim-sum}
  \begin{split}
    [\Astr<2>,c]&=\preLie{\Astr<2>}{c}-(-1)^{|\s c|}\preLie{c}{\Astr<2>}=(-1)^{|c|}\preLie{c}{\Astr<2>}\in\BimHC{M}\\
    [\Astr<1,0>[M],c]&=\preLie{\Astr<1,0>[M]}{c}-(-1)^{|\s c|}\preLie{c}{\Astr<1,0>[M]}\\
                &=\Astr<1,0>[M]\bullet_2 c-(-1)^{|\s c|}c\bullet_{p+1}\Astr<1,0>[M]\in\BimHC{M}\\
    [\Astr<0,1>[M],c]&=\preLie{\Astr<0,1>[M]}{c}-(-1)^{|\s
                       c|}\preLie{c}{\Astr<0,1>[M]}\\
                &=\Astr<0,1>[M]\bullet_1c-(-1)^{|\s
                  c|}c\bullet_{p+1}\Astr<0,1>[M]\in\BimHC{M}.
  \end{split}
\end{align}
It follows that the bimodule Hochschild differential $\dRelBim$ has the form
\begin{align}
  \label{eq:dRelBim-matrix}
  \left(\begin{smallmatrix}
    \Hd&0\\
    \delta&\dBim
  \end{smallmatrix}\right)\colon\HC{A}\oplus\BimHC[\bullet-1]{M}\longrightarrow\HC[\bullet+1]{A}\oplus\BimHC[\bullet]{M},
\end{align}
where the bidegree $(1,0)$ \emph{bimodule differential}
\begin{equation*}
  \label{eq:dBim1}
  \dBim\colon\BimHC[\bullet-1]{M}\longrightarrow\BimHC{M}
\end{equation*}
is the restriction of the bimodule Hochschild differential $\dRelBim$ to its
differential bigraded subspace $\BimHC[\bullet-1]{M}\subseteq\RelBimHC{A}{M}$.
Explicitly, using \Cref{eq:generic-Hd} we see that
\begin{equation}
  \label{eq:dBim}
  \dBim(c)=\dh(c)+\dv(c),\qquad c\in\BimHC*[p][q]<r>{M}
\end{equation}
with the \emph{horizontal bimodule differential}
\begin{equation*}
  \dh(c)\colon\BimHC*[p][q]<r>{M}\longrightarrow\BimHC*[p+1][q]<r>{M}
\end{equation*}
given by
\begin{align}
  \label{eq:dh}
  \begin{split}
    \dh(c)(x_1,\dots,x_{p+1},&m,y_1,\dots,y_q)\coloneqq(-1)^{r|x_1|+r}x_1c(x_2,\dots,x_{p+1},m,y_1,\dots,y_q)\\
                             &\phantom{=}+\sum_{i=1}^p(-1)^{i+r}c(x_1,\dots,x_ix_{i+1},\dots,x_{p+1},m,y_1,\dots,y_q)\\
                             &+(-1)^{p+1+r}c(x_1,\dots,x_p,x_{p+1}m,y_1,\dots,y_q),
  \end{split}
\end{align}  
and with the \emph{vertical bimodule differential}
\begin{equation*}
  \dv(c)\colon\BimHC*[p][q]<r>{M}\longrightarrow\BimHC*[p][q+1]<r>{M}
\end{equation*}
given by
\begin{align}
  \label{eq:dv}
  \begin{split}
    \dv(c)(x_1,\dots,x_p,&m,y_1,\dots,y_{q+1})\coloneqq(-1)^{p+1+r}c(x_1,\dots,x_p,my_1,y_2\dots,y_{q+1})\\
                         &\phantom{=}+\sum_{j=1}^q(-1)^{p+1+j+r}c(x_1,\dots,x_p,m,y_1,\dots,y_jy_{j+1},\dots,y_{q+1})\\
                         &+(-1)^{p+q+r}c(x_1,\dots,x_p,m,y_1,\dots,y_q)y_{q+1}.
  \end{split}
\end{align}

\begin{remark}
  \label{rmk:transpose-bimodule-complex}
  Let $r\in\ZZ$. The cochain complex $\BimHC[\bullet-1]<r>{M}[N]$ is the
  totalisation of the apparent double cochain complex with anti-commuting
  differentials:
  \[
    \dv\dh+\dh\dv=0.
  \]
\end{remark}

The following proposition summarises certain computational aspects of bimodule
Hochschild cohomology that can be derived from the previous discussion. Although
not all of these are needed in the sequel, we list some of them here to aid the
reader's intuition and refer to ~\cite[Section~3]{JM25} for a more
thorough discussion.

\begin{proposition}[{\cite[Remark~3.3.9, Lemma~3.3.12, Proposition~3.4.8, and
    Corollary~3.4.9]{JM25}}]
  \label{prop:RelBimHC-all}
  Let $r\in\ZZ$. The following statements hold:
  \begin{enumerate}
  \item As a cochain complex, $\RelBimHC<r>{A}{M}$ is the standard mapping
    cocone of the cochain map
    \[
      \delta\colon\HC<r>{A}\longrightarrow\BimHC<r>{M},\qquad
      c\longmapsto\cupp{\s\id[M]}{c}-\cupp{c}{\s\id[M]}.
    \]
    Moreover, the graded algebra structure on $\RelBimHC{A}{M}$ is precisely
    that of the square-zero extension of $\HC{A}$ by the $\HC{A}$-bimodule
    $\BimHC{M}<-1>$.
  \item There is an exact triangle in the derived category of dg vector spaces
    \[
      \begin{tikzcd}[column sep=small]
        \BimHC{M}<-1>\rar{i}&\RelBimHC{A}{M}\rar{p}&\HC{A}\rar{\delta}&\BimHC{M}.
      \end{tikzcd}
    \]
    Moreover,
    \begin{itemize}
    \item The map $p[1]\colon\RelBimHC{A}{M}<1>\to\HC{A}<1>$ is a quotient map
      of dg Lie algebras, and hence
      $i[1]\colon\BimHC{M}<-1>\hookrightarrow\RelBimHC{A}{M}$ is the inclusion
      of a dg Lie ideal and also of a dg Lie subalgebra.
    \item The map $p\colon\RelBimHC{A}{M}\to\HC{A}$ is a quotient map of dg
      algebras, and hence the map
      $i\colon\BimHC{M}<-1>\hookrightarrow\RelBimHC{A}{M}$ is the inclusion of a
      square-zero dg ideal and also of a dg $\HC{A}$-bimodule.
    \end{itemize}
  \item There is a long exact sequence of vector spaces
    \[
      \begin{tikzcd}[column sep=small,row sep=small]
        \cdots\rar{\delta}&\BimHH[n-1]<r>{M}\rar{i}&\RelBimHH[n]<r>{A}{M}\rar{p}&\HH[n]<r>{A}\ar[out=-10,in=170,overlay]{dll}[description]{\delta}\\
        &\BimHH[n]<r>{M}\rar{i}&\RelBimHH[n+1]<r>{A}{M}\rar{p}&\HH[n+1]<r>{A}\rar{\delta}&\cdots
      \end{tikzcd}
    \]
    Moreover:
    \begin{itemize}
    \item The map $p\colon\RelBimHH{A}{M}\to\HH{A}$ is a morphism of
      Gerstenhaber algebras (defined in the obvious way).
    \item The map $i\colon\BimHH{M}<-1>\to\RelBimHH{A}{M}$ is a morphism of
      graded $\RelBimHH{A}{M}$-bimodules, where $\BimHH{M}$ is viewed as a
      graded $\RelBimHC{A}{M}$-bimodule via the quotient map $p$ (equivalently,
      through the shifted dg $\RelBimHH{A}{M}$-bimodule structure).
    \item The connecting homomorphism $\delta$ vanishes if and only if
      $\BimHH{M}$ is symmetric as a graded $\HH{A}$-bimodule:
      \[
        \cupp{\s\id[M]}{c}=\cupp{c}{\s\id[M]}\in\BimHH{M},\qquad c\in\HH{A},
      \]
      where we use that $|\s\id[M]|=|\id[M]|-1=0$ since
      $\id[M]\in\BimHC*[0][0]<0>{M}$.
    \end{itemize}
  \end{enumerate}
\end{proposition}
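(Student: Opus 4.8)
The plan is to extract all three statements from the single structural observation that $\RelBimHC{A}{M}$ is a brace algebra with multiplication $\Astr<2>[A\ltimes M]$, so that \Cref{defprop:braces-Gerstenhaber} applies verbatim, together with the explicit formulas for the differential and cup product recorded above in the four natural cases. For statement (1), I would start from the matrix form \eqref{eq:dRelBim-matrix} of the bimodule Hochschild differential. Reading off the blocks, the upper-left entry is the Hochschild differential $\Hd$ on $\HC{A}$, the lower-right entry is the bimodule differential $\dBim$ on $\BimHC{M}<-1>$, and the lower-left entry is precisely the map $\delta$ of \eqref{eq:delta}; this last identification is exactly the computation carried out in \eqref{eq:Hd-RelBimHC-sum-HC} and the lines following it. A differential of this lower-triangular shape exhibits $\RelBimHC<r>{A}{M}$, for each fixed $r$, as the standard mapping cocone of $\delta$. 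Here $\delta$ is automatically a cochain map: the identity $\dRelBim\circ\dRelBim=0$ — which holds because $\Astr<2>[A\ltimes M]$ satisfies $\preLie{\Astr<2>[A\ltimes M]}{\Astr<2>[A\ltimes M]}=0$ — forces the mixed block of $\dRelBim^2$ to vanish, and this is exactly the cochain-map condition for $\delta\colon\HC<r>{A}\to\BimHC<r>{M}$ under the identification $\BimHC{M}=(\BimHC{M}<-1>)[1]$. The square-zero extension claim is then read off the cup-product case analysis: on $\HC{A}\otimes\HC{A}$ the product is the Hochschild cup product, the mixed cases \eqref{eq:cup_product-RelBimHC-HC-BimHC} and \eqref{eq:cup_product-RelBimHC-BimHC-HC} supply the two-sided $\HC{A}$-action on $\BimHC{M}<-1>$, and \eqref{eq:BimHC-square-zero} shows that the product of any two elements of $\BimHC{M}<-1>$ vanishes.

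Statement (2) is then largely formal. The exact triangle is the canonical one attached to a mapping cocone, with $i$ the inclusion of $\BimHC{M}<-1>$ and $p$ the projection onto $\HC{A}$. That $p$ is a quotient map of dg algebras, and (after the shift) of dg Lie algebras, follows because the $\HC{A}$-component of any product, respectively bracket, in $\RelBimHC{A}{M}$ depends only on the $\HC{A}$-components of the factors; equivalently, the kernel $\BimHC{M}<-1>$ is an ideal for both operations, which is again visible in the case analysis of the cup product and of the pre-Lie product. Dually, $i$ is the inclusion of a square-zero dg ideal and of a dg Lie ideal, and hence carries the compatible one-sided module structures asserted.

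For statement (3), I would apply the cohomology functor $\H[\bullet,*]{-}$ to the triangle of (2) to obtain the long exact sequence, with connecting homomorphism induced by $\delta$. The facts that $p$ descends to a morphism of Gerstenhaber algebras and that $i$ descends to a morphism of graded bimodules are immediate from the corresponding cochain-level statements in (2). The final criterion is obtained by unwinding the definition of $\delta$: on cohomology $\H{\delta}$ sends a class $[c]\in\HH{A}$ to $\cupp{\s\id[M]}{[c]}-\cupp{[c]}{\s\id[M]}\in\BimHH{M}$, so $\H{\delta}=0$ precisely when these two one-sided actions agree for every $[c]$, which is the assertion that $\BimHH{M}$ is graded-symmetric as an $\HH{A}$-bimodule.

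I expect no conceptual obstacle here: the construction of $\RelBimHC{A}{M}$ as a brace algebra with multiplication was engineered precisely so that these statements reduce to bookkeeping. The only genuine care required is with the Koszul and shift signs — in particular, verifying that the lower-left block of \eqref{eq:dRelBim-matrix} really equals $\delta$ under the identification $\BimHC{M}=(\BimHC{M}<-1>)[1]$, and that the shifted $\HC{A}$-bimodule structure on $\BimHC{M}$ used to define $\delta$ is compatible with the signs appearing in the mixed cup products. This is exactly the sign-tracking that \Cref{defprop:braces-Gerstenhaber} and the interlude on infinitesimal compositions were set up to handle.
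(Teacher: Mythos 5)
Your proposal is correct and follows exactly the route the paper intends: \Cref{prop:RelBimHC-all} is stated there as a summary imported from \cite{JM25} with the remark that it ``can be derived from the previous discussion'', and your derivation---reading the mapping-cocone and square-zero-extension structure off the lower-triangular differential \eqref{eq:dRelBim-matrix} together with the cup-product and pre-Lie case analysis, then passing to cohomology to obtain the long exact sequence and the symmetry criterion for the vanishing of $\H{\delta}$---is precisely that derivation. The one point you rightly single out as needing care, namely that the lower-left block of \eqref{eq:dRelBim-matrix} agrees with $\delta$ of \eqref{eq:delta} under the identification $\BimHC{M}=(\BimHC{M}<-1>)[1]$, is exactly the sign computation carried out in \eqref{eq:Hd-RelBimHC-sum-HC} and the lines following it, so no gap remains.
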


We also record the following result that is of independent interest.

\begin{defprop}[{\cite[Lemma~3.2.12]{JM25}}]
  \label{defprop:BimHC}
  The following statements hold:
  \begin{enumerate}
  \item Endowed with the shifted bimodule differential $\dBim[1]$ and the
    \emph{Yoneda product}\footnote{Here, we identify $\BimHC{M}$ with a
      differential bigraded subspace of $\RelBimHC{A}{M}<1>$. Notice also that
      \[
        |\s c_1\circ\s c_2|=|\s(\preLie{c_1}{c_2})|=|c_1|+|c_2|-2=|\s c_1|+|\s c_2|,
      \]
    so that the Yoneda product is indeed graded.}
    \begin{align}
      \label{eq:Yoneda_product-BimHC}
      \begin{split}
        \BimHC{M}&\longrightarrow\BimHC{M}\\
        \s c_1\otimes\s c_2&\longmapsto\YonedaProd{\s c_1}{\s c_2}\coloneqq\s(\preLie{c_1}{c_2}),
      \end{split}
    \end{align}
    see \Cref{eq:associative-preLie-product}, the bimodule cochain complex
    $\BimHC{M}$ is a dg algebra with respect to the total degree.
  \item The graded commutator
    \begin{equation}
      \label{eq:Lie_bracket-BimHC}
      [\s c_1,\s c_2]=\YonedaProd{\s c_1}{\s c_2}-(-1)^{|\s c_1||\s
        c_2|}\YonedaProd{\s c_2}{\s c_1}
    \end{equation}
    of the Yoneda product agrees with the restriction of the bimodule
    Gerstenhaber bracket to the dg Lie ideal
    $\BimHC{M}\subseteq\RelBimHC{A}{M}<1>$.
  \item Endowed with the graded $\HC{A}$-bimodule structure given by the shifted
    bimodule cup product
    \begin{align*}
      \HC{A}\otimes\BimHC{M}\otimes\HC{A}&\longrightarrow\BimHC{M}\\
      c_1\otimes\s c_2\otimes c_3&\longmapsto(-1)^{|c_1|}\s(\cupp{c_1}{\cupp{c_2}{c_3}}),
    \end{align*}
    the bimodule cochain complex $\BimHC{M}$ is a dg $\HC{A}$-bimodule with
    respect to the total degree. Consequently, the bigraded vector space of
    bimodule self-extensions $\BimHH{M}$ is a graded $\HH{A}$-bimodule with
    respect to the total degree.
  \end{enumerate}
\end{defprop}

\begin{remark}
  \label{rmk:tensor_action}
  The action of $\HH{A}$ on the bigraded bimodule self-extension space
  $\BimHC{M}$ described in \Cref{defprop:BimHC} can be understood in conceptual
  terms as follows. Recall that the derived category of graded $A$-bimodules is
  a monoidal category with respect to the derived tensor product $(M,N)\mapsto
  M\Lotimes[A]N$. The tensor product of morphisms induces maps
  \begin{align*}
    \RHom[A^e]{A}{A}\otimes\RHom[A^e]{M}{M}&\longrightarrow\RHom[A^e]{A\Lotimes[A]M}{A\Lotimes[A]M}\\
    \RHom[A^e]{M}{M}\otimes\RHom[A^e]{A}{A}&\longrightarrow\RHom[A^e]{M\Lotimes[A]A}{M\Lotimes[A]A}.
  \end{align*}
  Applying the structural isomorphisms $A\Lotimes[A]{M}\cong M$ and
  $M\Lotimes[A]{A}\cong M$, we obtain maps
  \begin{align*}
    \RHom[A^e]{A}{A}\otimes\RHom[A^e]{M}{M}&\longrightarrow\RHom[A^e]{M}{M}\\
    \RHom[A^e]{M}{M}\otimes\RHom[A^e]{A}{A}&\longrightarrow\RHom[A^e]{M}{M}
  \end{align*}
  that induce the required action maps
  \begin{align*}
    \HH{A}\otimes\BimHH{M}[M]&\longrightarrow\BimHH{M}[M]\\
    \BimHH{M}[M]\otimes\HH{A}&\longrightarrow\BimHH{M}[M].
  \end{align*}
  Since the derived tensor product over $A$ is not symmetric in general, the
  left and right actions of $\HH{A}$ on $\BimHH{M}[M]$ might be quite different;
  more precisely $\BimHH{M}[M]$ need not be a symmetric $\HH{A}$-bimodule (compare
  \Cref{prop:RelBimHC-all}). It is also worth mentioning that $L_\infty$-algebra
  (=strong homotopy Lie algebra) structures on mapping cones have been
  considered previously in the literature, see~\cite{FM07} and the references
  therein.
\end{remark}

\begin{remark}
  \label{rmk:BimHC-MN}
  Let $M$ and $N$ be graded $A$-bimodules. The bimodule cochain complex admits
  an evident generalisation
  \[
    \BimHC[n]<r>{M}[N]\coloneqq\bigoplus_{p+q=n}\dgHom[\kk]{A^{\otimes p}\otimes
      M\otimes A^{\otimes q}}{N},\qquad n\geq 1,\ r\in\ZZ,
  \]
  whose cohomology is the bigraded vector space of graded-bimodule extensions,
  \[
    \H[\bullet,*]{\BimHC{M}[N]}=\BimHH{M}[N].
  \]
  The formula for the bimodule differential is entirely analogous, and so are
  the formulas for the left and right actions of the Hochschild cochain complex
  $\HC{A}$. In particular, $\BimHH{M}[N]$ is a graded $\HH{A}$-bimodule. Notice,
  however, that $\BimHC{M}[N]$ does not admit a Yoneda product when $M\not\cong
  N$, although there is a Yoneda product of the form
  \[
    \BimHC{M}[N]\otimes\BimHC{L}[M]\longrightarrow\BimHC{L}[N],
  \]
  where $L$ is a further graded $A$-bimodule, which corresponds to the bigraded
  composition of morphisms in the derived category of graded $A$-bimodules. As
  with the bimodule Hochschild cochain complex, we also consider the bimodule
  complex when $A$, $M$ and $N$ plain graded vector spaces, in which case
  there is no natural differential.
\end{remark}

\subsection{Passage to vertical shifts of a graded module}

Let $A$ be a graded algebra and $M$ a graded $A$-bimodule. Fix $n\in\ZZ$. We
wish to show that the bimodule Hochschild cochain complex is invariant under the
passage $M\mapsto M(n)$ from $M$ to its (vertical) shift. For $p,q\geq0$ and
$r\in\ZZ$, we introduce the isomorphism of vector spaces
\begin{align*}
  \Psi_{p,q,r}^{(n)}\colon\BimHC*[p][q]<r>{M}&\stackrel{\sim}{\longrightarrow}\BimHC*[p][q]<r>{M(n)}\\
  c&\longmapsto c^{(n)},
\end{align*}
where, for $x_1\dots,x_p,y_1,\dots,y_q\in A$ and $m\in M$,
\begin{multline}
  \label{eq:cn}
  c^{(n)}(x_1,\dots,x_p,\s[n]m,y_1,\dots
  y_q)\coloneqq\\(-1)^{n(\vdeg{c}+\sum_{i=1}^p|x_i|)}\s[n]c(x_1\dots,x_p,m,y_1,\dots,y_q).
\end{multline}
In other words, the cochain $c^{(n)}\in\BimHC*[p][q]<r>{M(n)}$ is defined by the
commutativity of the following diagram, in which the top horizontal map is an
instance of the isomorphism of dg vector spaces in \Cref{ex:UVnW}:
\[
  \begin{tikzcd}
    A^{\otimes p}\otimes M(n)\otimes A^{\otimes
      q}\rar{\sim}\dar{c^{(n)}}&(A^{\otimes p}\otimes M\otimes
    A^{\otimes q})(n)\dar{c(n)}\\
    M(n)&M(n)\lar[equals]
  \end{tikzcd}
\]
We then define the map
\begin{equation}
  \label{eq:Psi}
  \Psi\coloneqq\id[\HC{A}]\oplus\Psi^{(n)}\colon\RelBimHC{A}{M}\longrightarrow\RelBimHC{A}{M(n)},
\end{equation}
where
\[
  \Psi^{(n)}\colon\BimHC[m-1]<r>{M}\longrightarrow\BimHC[m-1]<r>{M(n)},\qquad
  m\geq0,\ r\in\ZZ,
\]
acts on a cochain $c=(c_{p,q})_{p+1+q=m}\in\BimHC[m-1]<r>{M}$ by
\[
  \Psi^{(n)}(c)\coloneqq(c_{p,q}^{(n)})_{p+1+q=m}.
\]

\begin{remark}
  \label{rmk:Psi}
  For a cochain $c\in\BimHC*[0][0]{M}=\dgHom[\kk]{M}{M}$, we have
  $c^{(n)}=c[n]$, see \Cref{not:s}.
\end{remark}

\begin{proposition}
  \label{prop:shifting-cochains}
  Let $n\in\ZZ$. The isomorphism of bigraded vector spaces
  \begin{align*}
    \Psi\colon\RelBimHC{A}{M}\stackrel{\sim}{\longrightarrow}\RelBimHC{A}{M(n)},
  \end{align*}
  is strictly compatible with the following algebraic structures on both source
  and target:\footnote{In fact, $\Psi$ is an isomorphism of brace algebras with
    multiplication.} the bimodule pre-Lie product, the bimodule Gerstenhaber
  bracket, the bimodule Gerstenhaber square, the brace algebra multiplication,
  the bimodule Hochschild differential, and the bimodule cup product.
  Consequently, the induced map
  \[
    \H[\bullet,*]{\Psi}\colon\RelBimHH{A}{M}\stackrel{\sim}{\longrightarrow}\RelBimHH{A}{M(n)}
  \]
  is an isomorphism of Gerstenhaber algebras with respect to the total degree.
\end{proposition}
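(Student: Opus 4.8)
The plan is to reduce the entire statement to a single structural fact: that $\Psi$ is compatible with the suspended infinitesimal composition operation $\bullet_i$ of \cref{eq:infinitesimal_composition}, since every other operation in sight (the pre-Lie product $\braces{-}{-}$, the Gerstenhaber bracket, the Gerstenhaber square, the cup product, and the differential $\dRelBim=[\Astr<2>[A\ltimes M],-]$) is built from $\bullet_i$ together with fixed structure maps. Thus the bulk of the work is a bookkeeping lemma about signs. First I would observe that $\Psi$ is the identity on the $\HC{A}$-summand, so the only nontrivial verifications concern cochains in the $\BimHC[\bullet-1]{M}$-summand; in particular the pure-$\HC{A}$ cases of each operation are immediate. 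The remaining cases all factor through the defining diagram for $c^{(n)}$ in \Cref{ex:UVnW}, which expresses $c^{(n)}$ as $c(n)$ precomposed with the canonical isomorphism $A^{\otimes p}\otimes M(n)\otimes A^{\otimes q}\xrightarrow{\sim}(A^{\otimes p}\otimes M\otimes A^{\otimes q})(n)$.

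The key step is to establish that for homogeneous $c_1,c_2$ (in either summand) one has $(c_1\bullet_i c_2)^{(n)}=c_1^{(n)}\bullet_i c_2^{(n)}$, with $\bullet_i$ on the right interpreted in the shifted complex $\RelBimHC{A}{M(n)}$. I would verify this by expanding both sides using \cref{eq:cn} and tracking the Koszul signs introduced by the shift operator $\s[n]$ of internal degree $-n$. The crucial point is that \Cref{ex:UVnW} is \emph{natural}: the commuting square therein says precisely that the vertical-shift isomorphism intertwines $f\otimes g[n]\otimes h$ with $(f\otimes g\otimes h)[n]$ for arbitrary homogeneous maps. Applying this naturality to the structure maps $\Astr<2>$, $\Astr<1,0>[M]$, $\Astr<0,1>[M]$—each of bidegree with vertical part $0$, which keeps the extra sign factors under control—shows that $\Psi$ commutes with infinitesimal composition against the brace multiplication $\Astr<2>[A\ltimes M]$. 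This yields compatibility with $\dRelBim$ and with the cup product (via \cref{eq:cup_product-RelBimHC}), and the remaining sign matches $(-1)^{n(\vdeg{c}+\sum|x_i|)}$ in \cref{eq:cn} exactly cancel because $\vdeg{\Astr<2>[A\ltimes M]}=0$.

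The main obstacle I anticipate is sign reconciliation: the shift sign $(-1)^{n(\vdeg{c}+\sum_{i=1}^p|x_i|)}$ depends on the elements $x_1,\dots,x_p$ to the \emph{left} of the shifted slot, and one must check that inserting $c_2^{(n)}$ into the $i$-th slot of $c_1^{(n)}$ produces exactly the accumulated sign predicted by $(c_1\bullet_ic_2)^{(n)}$. Here the position of $M$ relative to the composition point matters: one must split into the subcases $c_1,c_2\in\BimHC[\bullet-1]{M}$ (the shifted slot lies inside $c_2$, which alters the running sum $\sum|x_i|$) versus $c_1\in\BimHC[\bullet-1]{M}$, $c_2\in\HC{A}$ (where $c_2^{(n)}=c_2$ and the shift sign is merely transported). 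Once the single identity $(c_1\bullet_ic_2)^{(n)}=c_1^{(n)}\bullet_ic_2^{(n)}$ is in hand, strict compatibility with all the listed operations follows formally, and passing to cohomology gives the claimed isomorphism of Gerstenhaber algebras with respect to the total degree, completing the proof.
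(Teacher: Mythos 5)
Your proposal is correct and follows essentially the same route as the paper's proof: reduce everything to the single identity $\Psi(c_1\bullet_ic_2)=\Psi(c_1)\bullet_i\Psi(c_2)$ (checked case-by-case according to which summand $c_1,c_2$ lie in), and then verify separately that $\Psi$ carries the brace multiplication $\Astr<2>[A\ltimes M]$ to $\Astr<2>[A\ltimes M(n)]$ — which, exactly as you note, works because the action maps have vertical degree $0$, so the sign in \cref{eq:cn} reduces to the one in \cref{eq:Mn}. The paper's proof is precisely this two-step argument, with the infinitesimal-composition compatibility left as a "straightforward case-by-case computation" that your sign analysis fills in.
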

\begin{proof}
  Firstly, let $c_1,c_2$ lie in either $\HC{A}$ or $\BimHC[\bullet-1]{M}$. A
  straightforward case-by-case computation shows that
  \[
    \Psi(c_1\bullet_i c_2)=\Psi(c_1)\bullet_i\Psi(c_2),\qquad 1\leq i\leq\hdeg{c_1}.
  \]
  Since the algebraic structure on the bimodule Hochschild cochain complex is
  defined completely in terms of the infinitesimal compositions, it only remains
  to show that $\Psi$ preserves the brace algebra multiplication:
  \[
    \Psi(\Astr<2>[A\ltimes M])=m_2^{A\ltimes M(n)}.
  \]
  Indeed,
  \[
    \Psi(\Astr<2>[A\ltimes
    M])\stackrel{\eqref{eq:square-zero_product-AM}}{=}\Psi(\Astr<2>+\Astr<1,0>[M]+\Astr<0,1>[M])\stackrel{\eqref{eq:Phi}}{=}\Astr<2>+\Psi(\Astr<1,0>[M])+\Psi(\Astr<0,1>[M]).
  \]
  Moreover, since $\vdeg{\Astr<1,0>[M]}=0$, for $x\in A$ and $m\in M$,
  \begin{align*}
    \Psi(\Astr<1,0>[M])(x,\s[n]m)&\stackrel{\eqref{eq:Phi}}{=}(-1)^{n|x|}\s[n]\Astr<1,0>[M](x,m)\\
                                 &=(-1)^{n|x|}\s[n]xm\\
                                 &\stackrel{\eqref{eq:Mn}}{=}m_{1,0}^{M(n)}(x,\s[n]m).
  \end{align*}
  Similarly, $\Psi(\Astr<0,1>[M])=m_{0,1}^{M(n)}$. Therefore,
  \[
    \Psi(\Astr<2>[A\ltimes
    M])=\Astr<2>+m_{1,0}^{M(n)}+m_{0,1}^{M(n)}\stackrel{\eqref{eq:square-zero_product-AM}}{=}m_2^{A\ltimes
      M(n)}.
  \]
  This finishes the proof.
\end{proof}

\begin{remark}
  \label{rmk:shifting-cochains}
  \Cref{prop:shifting-cochains} remains valid when $A$ and $M$ are plain graded
  vector spaces, keeping in mind that the brace algebra multiplication,
  Hochschild differential and cup product are not defined in this case.
\end{remark}

\begin{variant}
  \label{variant:shifting-cochains}
  Let $A$, $M$ and $N$ be graded vector spaces (for example a graded algebra and
  two graded bimodules over it). For $p,q\geq0$ we introduce the isomorphism of
  vector spaces
  \[
    \Psi_{p,q,r}^{(n)}\colon\dgHom[\kk]{A^{\otimes p}\otimes M\otimes A^{\otimes
        q}}{N}[r]\longrightarrow\dgHom[\kk]{A^{\otimes p}\otimes M(n)\otimes
      A^{\otimes q}}{N(n)}[r],
  \]
  defined using \Cref{eq:cn}. We then have
  \[
    \Psi(c_1\bullet_i c_2)=\Psi(c_1)\bullet_i\Psi(c_2),\qquad 1\leq i\leq\hdeg{c_1},
  \]
  whenever both infinitesimal compositions vanish, but also in the following
  cases:
  \begin{itemize}
  \item $c_1\in\dgHom[\kk]{A^{\otimes p}\otimes M\otimes A^{\otimes q}}{N}[r]$
    and $c_2\in\HC{A}$.
  \item $c_1\in\dgHom[\kk]{A^{\otimes p}\otimes M\otimes A^{\otimes q}}{N}[r]$
    and $c_2\in\BimHC{M}$.
  \item $c_1\in\BimHC{N}$ and $c_2\in\dgHom[\kk]{A^{\otimes p}\otimes M\otimes
      A^{\otimes q}}{N}[r]$.
  \end{itemize}
\end{variant}

\subsection{Passage to the linear dual of a graded bimodule}

Let $A$ be a graded algebra and $M$ a graded $A$-bimodule. We now analyse how
the bimodule cochain complex interacts with the passage from $M$ to its
linear dual $DM$. For ${p,q\geq0}$ and $r\in\ZZ$, we introduce the morphism of
vector spaces (notice the transposition $(p,q,r)\mapsto(q,p,r)$ of the first two
components)
\begin{align*}
  \Theta_{p,q,r}^{D}\colon\BimHC*[p][q]<r>{M}&\longrightarrow\BimHC*[q][p]<r>{DM},\qquad
                                               p,q\geq 0,\ r\in\ZZ,\\
  c&\longmapsto c^{D},
\end{align*}
where, for $x_1\dots,x_p,y_1,\dots,y_q\in A$, $f\in DM$ and $m\in M$,
\begin{equation}
  \label{eq:cD}
  c^{D}(y_1,\dots
  y_q,f,x_1,\dots,x_p)(m)\coloneqq(-1)^{\maltese}f(c(x_1,\dots,x_p,m,y_1,\dots
  y_q)),
\end{equation}
and
\[
  \textstyle
  \maltese\coloneqq(p+1)(q+1)+\vdeg{c}|f|+(\sum_{j=1}^q|y_j|)(|f|+\sum_{i=1}^p|x_i|+|m|).
\]
In other words, the cochain $c^D\in\BimHC*[q][p]<r>{DM}$ is defined by the
commutativity, up to the sign $(-1)^{(p+1)(q+1)}$, of the following diagram of
homogeneous morphisms of graded vector spaces, in which the top horizontal
isomorphism uses the braiding corresponding to the permutation
$(\begin{smallmatrix}1&2&3&4\\4&1&2&3)\end{smallmatrix}$:
\[
  \begin{tikzcd}[column sep=small] A^{\otimes q}\otimes DM\otimes A^{\otimes
      p}\otimes M\ar{rr}{\sim}\ar{d}{c^{D}\otimes\id}&&DM\otimes A^{\otimes
      p}\otimes M\otimes
    A^{\otimes q}\dar{\id\otimes c}\\
    DM\otimes M\rar{\ev}&\kk&DM\otimes M\lar[swap]{\ev}
  \end{tikzcd}
\]
We then define the map
\begin{equation}
  \label{eq:Theta}
  \Theta\coloneqq\id[\HC{A}]\oplus\Theta^D\colon\RelBimHC{A}{M}\longrightarrow\RelBimHC{A}{DM},
\end{equation}
where
\[
  \Theta^D\colon\BimHC[n-1]<r>{M}\longrightarrow\BimHC[n-1]<r>{DM},\qquad
  n\geq0,\ r\in\ZZ,
\]
acts on a cochain $c=(c_{p,q})_{p+1+q=n}\in\BimHC[n-1]<r>{M}$ by
\[
  \Theta^D(c)\coloneqq(c_{p,q}^D)_{q+1+p=n}.
\]

\begin{remark}
  \label{rmk:Theta}
  For a cochain $c\in\BimHC*[0][0]{M}=\dgHom[\kk]{M}{M}$, we have $c^{D}=-c^*$
  (notice the minus sign), see \Cref{ex:dual map}. For example, if $d\colon M\to
  M$ is a degree $1$ differential, for $f\in DM$,
  \[
    d^D(f)(m)=-(-1)^{|f|}f(d(m)),\qquad m\in M,
  \]
  compare with \Cref{ex:DV}.
\end{remark}

\begin{lemma}
  \label{lemma:Theta}
  The morphism of bigraded vector spaces
  \begin{align*}
    \Theta\colon\RelBimHC{A}{M}\longrightarrow\RelBimHC{A}{DM},
  \end{align*}
  satisfies the following compatibilities with respect to the infinitesimal
  compositions:
  \begin{enumerate}
  \item\label{eq:Theta-HC-HC} If $c_1,c_2\in\HC{A}$, then
    \[
      \Theta(c_1\bullet_ic_2)=c_1\bullet_ic_2=\Theta(c_1)\bullet_i\Theta(c_2),\qquad
      1\leq i\leq\hdeg{c_1}.
    \]
  \item\label{eq:Theta-HC-BimHC} If $c_1\in\HC{A}$ and
    $c_2\in\BimHC[\bullet-1]{M}$, then
    \[
      \Theta(c_1\bullet_ic_2)=0=\Theta(c_1)\bullet_i\Theta(c_2),\qquad 1\leq
      i\leq\hdeg{c_1}.
    \]
  \item\label{eq:Theta-BimHC-HC} If $c_1\in\BimHC*[p][q]{M}$ and $c_2\in\HC{A}$,
    then
    \[
      \Theta(c_1\bullet_{p+1+j}c_2)=\Theta(c_1)\bullet_j\Theta(c_2),\qquad 1\leq
      j\leq q,
    \]
    and
    \[
      \Theta(c_1\bullet_ic_2)=\Theta(c_1)\bullet_{q+1+i}\Theta(c_2),\qquad 1\leq
      i\leq p.
    \]
  \item\label{eq:Theta-BimHC-BimHC} If $c_1\in\BimHC*[p][q]<r>{M}$ and
    $c_2\in\BimHC*[s][t]<u>{M}$, then
    \[
      \Theta(c_1\bullet_{p+1}c_2)=-(-1)^{|\s c_1||\s
        c_2|}\Theta(c_2)\bullet_{t+1}\Theta(c_1),
    \]
    where $|\s c_1|=p+q+r-1$ is the shifted total degree, and similarly for $|\s
    c_2|$.
  \end{enumerate}
\end{lemma}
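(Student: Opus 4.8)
The four compatibilities will each be checked by evaluating both sides on homogeneous elements, using the defining formula \eqref{eq:cD} for $\Theta^D$ and the formula \eqref{eq:infinitesimal_composition} for the infinitesimal composition; the entire content is a comparison of Koszul signs. Parts \eqref{eq:Theta-HC-HC} and \eqref{eq:Theta-HC-BimHC} require no computation. Since $\Theta$ restricts to the identity on the summand $\HC{A}$, the first equality in \eqref{eq:Theta-HC-HC} is a tautology. In \eqref{eq:Theta-HC-BimHC} both compositions vanish for type reasons: the output of $c_2$ is an element of $M$ (respectively $DM$), which cannot be fed into an $A$-valued input slot of $c_1\in\HC{A}$, in accordance with the convention that keeps the bimodule formally distinct from the algebra.

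For part \eqref{eq:Theta-BimHC-HC}, the structural point is that $\Theta^D$ transposes the horizontal bidegree $(p,q)\mapsto(q,p)$, interchanging the $p$ left and $q$ right $A$-input slots of $c_1$ while dualising the bimodule input and the output. Inserting $c_2\in\HC{A}$ into an $A$-slot does not interact with the dualised bimodule slot, so the insertion commutes past the transposition: the $j$-th right slot of $c_1$ (global position $p{+}1{+}j$) becomes the $j$-th left slot of $c_1^D$, and the $i$-th left slot of $c_1$ becomes the $i$-th right slot of $c_1^D$ (global position $q{+}1{+}i$), which accounts for the two index shifts in the statement. I would then evaluate both sides on a generic tuple, using \eqref{eq:cD} for $c_1^D$ and \eqref{eq:infinitesimal_composition} for the insertion, and confirm that composing-then-dualising and dualising-then-composing produce the same sign; here the relevant Koszul factors involve only the $A$-inputs that $c_2$ consumes, so the match is routine.

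Part \eqref{eq:Theta-BimHC-BimHC} carries the real content and I expect it to be the main obstacle. The composition $c_1\bullet_{p+1}c_2$ plugs $c_2$ into the single bimodule slot of $c_1$; under the linear dual this becomes a Yoneda-type composition of $c_1^D$ and $c_2^D$, but---because dualisation is \emph{contravariant}---with the order reversed, landing on $c_2^D\bullet_{t+1}c_1^D$ (note that $t{+}1$ is exactly the position of the dualised slot of $c_2^D$). Concretely, I would expand the left-hand side $(c_1\bullet_{p+1}c_2)^D$ by applying \eqref{eq:infinitesimal_composition} inside the pairing and then \eqref{eq:cD} to the composite, and expand the right-hand side by applying \eqref{eq:cD} to each of $c_1$ and $c_2$ before composing; both reduce, through the counit $\ev$, to the same scalar $f\bigl(c_1(\dots,c_2(\dots,m,\dots),\dots)\bigr)$, so only the prefactor is at stake. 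The difficulty is that the parity terms $(p{+}1)(q{+}1)$, $(s{+}1)(t{+}1)$ and $(p{+}s{+}1)(q{+}t{+}1)$ from the three applications of \eqref{eq:cD}, the vertical-degree terms $\vdeg{c}|f|$, the quadratic contributions $(\sum_j|y_j|)(\cdots)$ from each dualisation, and the two composition signs from \eqref{eq:infinitesimal_composition} all enter with the inputs $f$, $m$, the $x$'s and the $y$'s reordered between the two pictures. Collecting these modulo $2$, I expect them to telescope to exactly $-(-1)^{|\s c_1||\s c_2|}$, the Koszul factor recording the transposition of the two shifted cochains together with the residual sign from the contravariant reversal; verifying this cancellation is the step that demands the most careful bookkeeping.
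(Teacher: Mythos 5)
Your proposal follows essentially the same route as the paper's proof: parts \eqref{eq:Theta-HC-HC} and \eqref{eq:Theta-HC-BimHC} are dismissed for exactly the structural/type reasons the paper gives, and parts \eqref{eq:Theta-BimHC-HC} and \eqref{eq:Theta-BimHC-BimHC} are handled, as in the paper, by evaluating both sides on homogeneous elements via \eqref{eq:cD} and \eqref{eq:infinitesimal_composition} and comparing Koszul signs, with the index shifts, the contravariant order reversal, and the final sign $-(-1)^{|\s c_1||\s c_2|}$ all identified correctly. The one caveat is that you defer the sign bookkeeping as ``routine''/``expected'' whereas the paper carries it out explicitly, and that bookkeeping is the entire mathematical content of the lemma, so your proposal is a correct plan rather than a finished verification.
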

\begin{proof}
  The first two statements are obvious since $\Theta$ acts as the identity on
  the subspace $\HC{A}\subseteq\RelBimHC{A}{M}$, see \Cref{eq:Theta}.

  We continue by establishing statement \eqref{eq:Theta-BimHC-HC}. Let
  $c_1\in\BimHC*[p][q]<r>{M}$ and $c_2\in\HC[s]<t>{A}$ and $1\leq j\leq q$. We
  compute, for $x_1,\dots,x_p,y_1,\dots,y_{q+s}\in A$, $f\in DM$ and $m\in M$,
  \begin{align*}
    (\Theta&(c_1)\bullet_jc_2)(y_1,\dots,y_{q+s},f,x_1,\dots,x_p)(m)\\&\stackrel{\eqref{eq:infinitesimal_composition}}{=}(-1)^{\maltese_1}c_1^D(y_1,\dots,y_{j-1},c_2(y_j,\dots,y_{j+s-1}),y_{j+s},\dots,y_{q+s},f,x_1,\dots,x_p)(m)\\
           &\stackrel{\eqref{eq:cD}}{=}(-1)^{\maltese_1+\maltese_2}f(c_1(x_1,\dots,x_p,m,y_1,\dots,y_{j-1},c_2(y_j,\dots,y_{j+s-1}),y_{j+s},\dots,y_{q+s})).
  \end{align*}
  where the signs are given by
  \begin{align*}
    \maltese_1&\stackrel{\eqref{eq:infinitesimal_composition}}{=}\textstyle(s-1)(p+1+q-j)+\vdeg{c_2}\left(p+q+\sum_{i=1}^{j-1}|y_i|\right)\\
              &=\textstyle(s-1)(p+1)+(s-1)(q-j)+\vdeg{c_2}\left(p+q+\sum_{i=1}^{j-1}|y_i|\right)\\
    \maltese_2&\stackrel{\eqref{eq:cD}}{=}\textstyle(p+1)(q+1)+|f|\vdeg{c_1}+\left(\vdeg{c_2}+\sum_{k=1}^{q+s}|y_k|\right)\left(|f|+\sum_{\ell=1}^{p}|x_\ell|+|m|\right).
  \end{align*}
  On the other hand,
  \begin{align*}
    \Theta&(c_1\bullet_{p+1+j}c_2)(y_1,\dots,y_{q+s},f,x_1,\dots,x_p)(m)\\
          &\stackrel{\eqref{eq:cD}}{=}(-1)^{\maltese_3}f((c_1\bullet_{p+1+j}c_2)(x_1,\dots,x_p,m,y_1,\dots,y_{q+s}))\\
          &\stackrel{\eqref{eq:infinitesimal_composition}}{=}(-1)^{\maltese_3+\maltese_4}f(c_1(x_1,\dots,x_p,m,y_1,\dots,y_{j-i},c_2(y_j,\dots,y_{j+s-1}),y_{j+s},\dots,y_{q+s})),
  \end{align*}
  where the signs are given by
  \begin{align*}
    \maltese_3&\stackrel{\eqref{eq:cD}}{=}\textstyle(p+1)(q+s)+|f|(\vdeg{c_1}+\vdeg{c_2})+\left(\sum_{k=1}^{q+s}|y_k|\right)\left(|f|+\sum_{\ell=1}^p|x_\ell|+|m|\right)\\
    \maltese_4&\stackrel{\eqref{eq:infinitesimal_composition}}{=}\textstyle(s-1)(q-j)+\vdeg{c_2}\left(p+q+\sum_{\ell=1}^{p}|x_\ell|+|m|+\sum_{i=1}^{j-i}|y_i|\right).
  \end{align*}
  Comparing the signs we see that
  \[
    \Theta(c_1\bullet_{p+1+j}c_2)=\Theta(c_1)\bullet_jc_2=\Theta(c_1)\bullet_j\Theta(c_2).
  \]
  An entirely analogous computation shows that
  \[
    \Theta(c_1\bullet_ic_2)=\Theta(c_1)\bullet_{q+1+i}\Theta(c_2),\qquad 1\leq i\leq
    p.
  \]

  We finish the proof by establishing statement \eqref{eq:Theta-BimHC-BimHC}.
  Let $c_1\in\BimHC*[p][q]<r>{M}$ and $c_2\in\BimHC*[s][t]<u>{M}$. For
  $x_1,\dots,x_{p+s},y_1\dots,y_{q+t}\in A$, $f\in DM$ and $m\in M$ we compute
  \begin{align*}
    \Theta&(c_1\bullet_{p+1}c_2)(y_1\dots,y_{q+t},f,x_1,\dots,x_{p+s})(m)\\
          &\stackrel{\eqref{eq:cD}}{=}(-1)^{\maltese_5}f((c_1\bullet_{p+1}c_2)(x_1,\dots,x_{p+s},m,y_1,\dots,y_{q+t}))\\
          &\stackrel{\eqref{eq:infinitesimal_composition}}{=}(-1)^{\maltese_5+\maltese_6}f(c_1(x_1,\dots,x_p,c_2(x_{p+1}\dots,x_{p+s},m,y_1\dots,y_t),y_{1+t},\dots,y_{q+t})),
  \end{align*}
  where the signs are given by
  \begin{align*}
    \maltese_5&\stackrel{\eqref{eq:cD}}{=}(q+t+1)(p+s+1)+\textstyle|f|(\vdeg{c_1}+\vdeg{c_2})+\left(\sum_{j=1}^{q+t}|y_j|\right)\left(|f|+\sum_{i=1}^{p+s}|x_i|+|m|\right)\\
    \maltese_6&\stackrel{\eqref{eq:infinitesimal_composition}}{=}\textstyle(s+t)q+\vdeg{c_2}\left(p+q+\sum_{k=1}^p|x_k|\right).
  \end{align*}
  On the other hand,
  \begin{align*}
    (\Theta&(c_2)\bullet_{t+1}\Theta(c_1))(y_1\dots,y_{q+t},f,x_1,\dots,x_{p+s})(m)\\
           &\stackrel{\eqref{eq:infinitesimal_composition}}{=}(-1)^{\maltese_7}c_2^D(y_1\dots,y_t,c_1^D(y_{1+t},\dots,y_{q+t},f,x_1,\dots,x_p),x_{p+1}\dots,x_{p+s})(m)\\
           &\stackrel{\eqref{eq:cD}}{=}(-1)^{\maltese_7+\maltese_8}c_1^D(y_{1+t},\dots,y_{q+t},f,x_1,\dots,x_p)(c_2(x_{p+1}\dots,x_{p+s},m,y_1\dots,y_t))\\
           &\stackrel{\eqref{eq:cD}}{=}(-1)^{\maltese_7+\maltese_8+\maltese_9}f(c_1(x_1,\dots,x_p,c_2(x_{p+1}\dots,x_{p+s},m,y_1\dots,y_t),y_{1+t},\dots,y_{q+t})),
  \end{align*}
  where the signs are given by
  \begin{align*}
    \maltese_7&\stackrel{\eqref{eq:infinitesimal_composition}}{=}\textstyle(p+q)s+\vdeg{c_1}\left(s+t+\sum_{i=1}^{t}|y_i|\right)\\
    \maltese_8&\stackrel{\eqref{eq:cD}}{=}\textstyle(s+1)(t+1)+\vdeg{c_2}\left(\vdeg{c_1}+\sum_{j=1}^{q}|y_{j+t}|+|f|+\sum_{k=1}^p|x_k|\right)\\&\textstyle\phantom{=}+\left(\sum_{i=1}^t|y_i|\right)\left(\vdeg{c_1}+\sum_{j=1}^{q}|y_{j+t}|+|f|+\sum_{\ell=1}^{p+s}|x_\ell|+|m|\right)\\
    \maltese_9&\stackrel{\eqref{eq:cD}}{=}\textstyle(p+1)(q+1)+\vdeg{c_1}|f|\\&\phantom{=}\textstyle+\left(\sum_{j=1}^{q}|y_{j+t}|\right)\left(|f|+\vdeg{c_2}+\sum_{\ell=1}^{p+s}|x_\ell|+|m|+\sum_{i=1}^{t}|y_i|\right).
  \end{align*}
  Comparing the signs we see that
  \[
    \Theta(c_1\bullet_{p+1}c_2)=-(-1)^{\maltese}\Theta(c_2)\bullet_{t+1}\Theta(c_1),
  \]
  with,
  \begin{align*}
    \maltese&=\vdeg{c_1}\vdeg{c_2}+\vdeg{c_1}(s+t)+\vdeg{c_2}(p+q)+(p+q)(s+t)\\
            &=\vdeg{c_1}\vdeg{c_2}+\vdeg{c_1}(\hdeg{c_2}-1)+\vdeg{c_2}(\hdeg{c_1}-1)+(\hdeg{c_1}-1)(\hdeg{c_2}-1)\\
            &=(\hdeg{c_1}+\vdeg{c_1}-1)(\hdeg{c_2}+\vdeg{c_2}-1)\\
            &=|\s c_1||\s c_2|.
  \end{align*}
  This finishes the proof.
\end{proof}

\begin{proposition}
  \label{prop:dualising-cochains}
  The morphism of bigraded vector spaces
  \begin{align*}
    \Theta\colon\RelBimHC{A}{M}\longrightarrow\RelBimHC{A}{DM},
  \end{align*}
  is a morphism of differential bigraded vector spaces. Moreover, $\Theta$ is
  strictly compatible with the brace algebra multiplication, the bimodule
  Gerstenhaber bracket and the bimodule Gerstenhaber square. In particular,
  $\Theta$ induces a morphism of shifted graded Lie algebras
  \[
    \H[\bullet,*]{\Theta}\colon\RelBimHH{A}{M}\longrightarrow\RelBimHH{A}{DM}
  \]
  with respect to the total degree that also preserves the induced Gerstenhaber
  square operation.
\end{proposition}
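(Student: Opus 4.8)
The plan is to reduce every assertion to the compatibilities of $\Theta$ with the infinitesimal compositions recorded in \Cref{lemma:Theta}, since all the relevant operations are built from those compositions. First I would check that $\Theta$ carries the brace algebra multiplication to the brace algebra multiplication, namely
\[
  \Theta(\Astr<2>[A\ltimes M])=m_2^{A\ltimes DM}.
\]
As $\Theta$ restricts to the identity on $\HC{A}$ we have $\Theta(\Astr<2>)=\Astr<2>$, so by \eqref{eq:square-zero_product-AM} it remains to verify that $\Theta$ interchanges the two action maps, i.e.\ $\Theta(\Astr<1,0>[M])=m_{0,1}^{DM}$ and $\Theta(\Astr<0,1>[M])=m_{1,0}^{DM}$. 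This is a short direct computation from \cref{eq:cD}: in this low-arity situation both signs in \cref{eq:cD} vanish, and the transposition $(p,q)\mapsto(q,p)$ built into $\Theta^D$ matches precisely the interchange of the left and right actions that occurs upon dualisation, as recorded in the formula for the $A$-bimodule $DM$ in \Cref{ex:DA}.

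The heart of the argument is compatibility with the bimodule Gerstenhaber bracket, which I would establish case by case according to whether $c_1,c_2$ lie in $\HC{A}$ or in $\BimHC[\bullet-1]{M}$, summing the relevant identities of \Cref{lemma:Theta} over all admissible slots. The cases in which at least one argument lies in $\HC{A}$ are immediate from parts \eqref{eq:Theta-HC-HC}--\eqref{eq:Theta-BimHC-HC} together with the vanishing $\preLie{c}{c'}=0$ for $c\in\HC{A}$ and $c'\in\BimHC[\bullet-1]{M}$. The crucial and most delicate case is $c_1,c_2\in\BimHC[\bullet-1]{M}$, where the pre-Lie product collapses to the single term $\preLie{c_1}{c_2}=c_1\bullet_{p+1}c_2$. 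Here \Cref{lemma:Theta}\eqref{eq:Theta-BimHC-BimHC} exhibits $\Theta$ as an \emph{anti}-homomorphism, rather than a homomorphism, for the pre-Lie product, up to a Koszul sign:
\[
  \Theta(\preLie{c_1}{c_2})=-(-1)^{|\s c_1||\s c_2|}\preLie{\Theta(c_2)}{\Theta(c_1)}.
\]
Substituting this into $\bracket{c_1}{c_2}=\preLie{c_1}{c_2}-(-1)^{|\s c_1||\s c_2|}\preLie{c_2}{c_1}$ and using that $\Theta$ preserves the shifted total degree (the transposition leaves both the arity degree $p+1+q$ and the internal degree $r$ unchanged), the two offending signs cancel and one obtains $\Theta(\bracket{c_1}{c_2})=\bracket{\Theta(c_1)}{\Theta(c_2)}$. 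This sign bookkeeping is the main obstacle; it is also exactly the reason why $\Theta$ cannot respect the pre-Lie product or the cup product on its own, so that only the bracket, square, and multiplication are asserted.

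With the multiplication and the bracket under control, the remaining claims follow formally. Since the bimodule Hochschild differential is $\dRelBim(c)=\bracket{\Astr<2>[A\ltimes M]}{c}$, the two compatibilities already obtained give $\Theta\circ\dRelBim=\dRelBim<DM>\circ\Theta$, so $\Theta$ is a morphism of differential bigraded vector spaces. For the Gerstenhaber square I would decompose a generic element as $c=c_{\HC}+c_{\BimHC}$ along $\RelBimHC[n]<r>{A}{M}=\HC[n]<r>{A}\oplus\BimHC[n-1]<r>{M}$; both summands share the total degree of $c$, so whenever $\Sq(c)$ is defined so are $\Sq(c_{\HC})$ and $\Sq(c_{\BimHC})$, and the additivity relation \eqref{eq:Gerstehaber_square-relations} reduces the problem to the two pure cases plus the already-established bracket identity. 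The pure $\HC{A}$ case is trivial, while for $c\in\BimHC[\bullet-1]{M}$ applying \Cref{lemma:Theta}\eqref{eq:Theta-BimHC-BimHC} with $c_1=c_2=c$ produces the factor $-(-1)^{|\s c|}$ in $\Theta(\Sq(c))=-(-1)^{|\s c|}\Sq(\Theta(c))$; this factor equals $+1$ precisely because $\Sq$ is defined only when the total degree of $c$ is even, that is when $|\s c|$ is odd (and all signs are vacuous in characteristic $2$). Finally, passing to cohomology produces the asserted morphism $\H[\bullet,*]{\Theta}$ of shifted graded Lie algebras that additionally preserves the Gerstenhaber square, which completes the proof.
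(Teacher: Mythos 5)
Your proof is correct and follows essentially the same route as the paper's: preservation of the brace algebra multiplication by direct computation from \eqref{eq:cD}, the case-by-case verification of the bracket and square via \Cref{lemma:Theta} (with the sign cancellation in the $\BimHC{M}$--$\BimHC{M}$ case and the parity of $|\s c|$ for the square), and the differential compatibility deduced formally from the multiplication and the bracket. One minor imprecision: for $\Theta(\Astr<0,1>[M])$ the Koszul sign $(-1)^{|y|(|f|+|m|)}$ arising from \eqref{eq:cD} does \emph{not} vanish---rather it matches exactly the sign in the $A$-bimodule structure of $DM$ from \Cref{ex:DA}, which is what your appeal to that example implicitly uses.
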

\begin{proof}
  We verify first that $\Theta$ preserves the brace algebra multiplication.
  Indeed,
  \begin{align*}
    \Theta(\Astr<2>[A\ltimes M])\stackrel{\eqref{eq:square-zero_product-AM}}{=}\Theta(\Astr<2>+\Astr<1,0>[M]+\Astr<0,1>[M])\stackrel{\eqref{eq:Theta}}{=}\Astr<2>+\Theta(\Astr<1,0>[M])+\Theta(m_{0,1}^{M}).
  \end{align*}
  Moreover, since $\Astr<1,0>[M]\in\BimHC*[1][0]<0>{M}$, for $x\in A$, $f\in DM$
  and $m\in M$,
  \begin{align*}
    \Theta(\Astr<1,0>[M])(f,x)(m)\stackrel{\eqref{eq:cD}}{=}f(\Astr<1,0>[M](x,m))=f(xm)\stackrel{\eqref{eq:Mn}}{=}m_{0,1}^{DM}(f,x)(m).
  \end{align*}
  Similarly, keeping in mind that $\vdeg{\Astr<0,1>[M]}=0$, for $y\in A$, $f\in
  DM$ and $m\in M$ we have
  \begin{align*}
    \Theta(\Astr<0,1>[M])(y,f)(m)&\stackrel{\eqref{eq:cD}}{=}(-1)^{|y|(|f|+|m|)}f(\Astr<0,1>[M](m,y))\\&=(-1)^{|y|(|f|+|m|)}f(my)\\&\stackrel{\eqref{eq:Mn}}{=}m_{0,1}^{DM}(y,f)(m).
  \end{align*}
  Therefore,
  \[
    \Theta(\Astr<2>[A\ltimes
    M])=\Astr<2>+m_{0,1}^{DM}+m_{1,0}^{DM}\stackrel{\eqref{eq:square-zero_product-AM}}{=}\Astr<2>[A\ltimes
    DM],
  \]
  as required.

  We continue the proof by showing that $\Theta$ is strictly compatible with the
  bimodule Gerstenhaber bracket and the bimodule Gerstenhaber square. Since
  $\Theta$ acts as the identity on $\HC{A}\subseteq\RelBimHC{A}{M}$, see
  \Cref{eq:Theta}, it is obvious that
  \begin{align*}
    \Theta([c_1,c_2])&=[\Theta(c_1),\Theta(c_2)]&c_1,c_2&\in\HC{A},\\
    \Theta(\Sq(c))&=\Sq(\Theta(c))&c&\in\HC{A}.
  \end{align*}
  We now treat the two remaining cases (recall that the Lie bracket is graded
  anti-symmetric with respect to the shifted total degree):
  \begin{itemize}
  \item For $c_1\in\BimHC*[p][q]{M}$ and $c_2\in\HC{A}$,
    \begin{align*}
      \Theta([c_1,c_2])&\stackrel{\eqref{eq:Gerstenhaber_bracket-RelBimHC}}{=}\Theta(\preLie{c_1}{c_2}-(-1)^{|\s c_1||\s c_2|}\underbrace{\preLie{c_2}{c_1}}_{=0})\\
                       &\stackrel{\eqref{eq:pre-Lie_product-RelBimHC-split}}{=}\textstyle\Theta(\sum_{i=1}^pc_1\bullet_ic_2+\sum_{j=1}^qc_1\bullet_{p+1+q}c_2)\\
                       &\stackrel{\ref{lemma:Theta}\eqref{eq:Theta-BimHC-HC}}{=}\textstyle\sum_{i=1}^p\Theta(c_1)\bullet_{q+1+i}\Theta(c_2)+\sum_{j=1}^q\Theta(c_1)\bullet_{j}\Theta(c_2)\\
                       &\stackrel{\eqref{eq:pre-Lie_product-RelBimHC-split}}{=}\preLie{\Theta(c_1)}{\Theta(c_2)}-(-1)^{|\s c_1||\s c_2|}\underbrace{\preLie{\Theta(c_2)}{\Theta(c_1)}}_{=0}\\
                       &\stackrel{\eqref{eq:Gerstenhaber_bracket-RelBimHC}}{=}[\Theta(c_1),\Theta(c_2)].
    \end{align*}
  \item For $c_1\in\BimHC*[p][q]{M}$ and $c_2\in\BimHC*[s][t]{M}$,
    \begin{align*}
      \Theta([c_1,c_2])&\stackrel{\eqref{eq:Gerstenhaber_bracket-RelBimHC}}{=}\Theta(\preLie{c_1}{c_2}-(-1)^{|\s c_1||\s c_2|}\preLie{c_2}{c_1})\\
                       &\stackrel{\eqref{eq:pre-Lie_product-RelBimHC}}{=}\Theta(c_1\bullet_{p+1}c_2-(-1)^{|\s c_1||\s c_2|}c_2\bullet_{s+1}c_1)\\
                       &\stackrel{\ref{lemma:Theta}\eqref{eq:Theta-BimHC-BimHC}}{=}-(-1)^{|\s c_1||\s c_2|}\Theta(c_2)\bullet_{t+1}\Theta(c_1)+\Theta(c_1)\bullet_{q+1}\Theta(c_2)\\
                       &\stackrel{\eqref{eq:pre-Lie_product-RelBimHC}}{=}\preLie{\Theta(c_1)}{\Theta(c_2)}-(-1)^{|\s c_1||\s c_2|}\preLie{\Theta(c_2)}{\Theta(c_1)}\\
                       &\stackrel{\eqref{eq:Gerstenhaber_bracket-RelBimHC}}{=}[\Theta(c_1),\Theta(c_2)].
    \end{align*}
  \item For $c\in\BimHC*[p][q]{M}$, when $\chark(\kk)=2$ or $|c|=|\s c|+1$ is
    even,
    \begin{align*}
      \Theta(\Sq(c))&\stackrel{\eqref{eq:Gerstenhaber_square-RelBimHC}}{=}\Theta(\preLie{c}{c})\\
                    &\stackrel{\eqref{eq:pre-Lie_product-RelBimHC}}{=}\Theta(c\bullet_{p+1}c)\\
                    &\stackrel{\ref{lemma:Theta}\eqref{eq:Theta-BimHC-BimHC}}{=}-(-1)^{|\s c|}\Theta(c)\bullet_{q+1}\Theta(c)\\
                    &=\Theta(c)\bullet_{q+1}\Theta(c)\\
                    &\stackrel{\eqref{eq:pre-Lie_product-RelBimHC}}{=}\preLie{\Theta(c)}{\Theta(c)}\\
                    &\stackrel{\eqref{eq:Gerstenhaber_square-RelBimHC}}{=}\Sq(\Theta(c)).
    \end{align*}
  \end{itemize}
 
  Finally, we prove that $\Theta$ commutes with the bimodule Hochschild
  differential. Indeed, for $c\in\RelBimHC{A}{M}$,
  \begin{align*}
    \Theta(\dRelBim(c))&\stackrel{\eqref{eq:Hd-RelBimHC}}{=}\Theta([\Astr<2>[A\ltimes M],c])\\
                       &=[\Theta(\Astr<2>[A\ltimes M]),\Theta(c)]\\
                       &=[\Astr<2>[A\ltimes DM],\Theta(c)]\\
                       &\stackrel{\eqref{eq:Hd-RelBimHC}}{=}\dRelBim<DM>(\Theta(c)).
  \end{align*}
  This finishes the proof.
\end{proof}

\begin{remark}
  \label{rmk:dualising-cochains}
  \Cref{lemma:Theta} and \Cref{prop:dualising-cochains} remain valid when $A$
  and $M$ are plain graded vector spaces, keeping in mind that the brace algebra
  multiplication, Hochschild differential and cup product are not defined in this
  case.
\end{remark}

\begin{variant}
  \label{variant:dualising-cochains}
  Let $A$, $M$ and $N$ be graded vector spaces (for example a graded algebra and
  two graded bimodules over it). For $p,q\geq0$ we introduce the morphism
  \[
    \Theta_{p,q,r}^{D}\colon\dgHom[\kk]{A^{\otimes p}\otimes M\otimes A^{\otimes
        q}}{N}[r]\longrightarrow\dgHom[\kk]{A^{\otimes p}\otimes DN\otimes
      A^{\otimes q}}{DM}[r],
  \]
  defined using \Cref{eq:cD}. Then, the obvious variants of the compatibilities
  in \Cref{lemma:Theta} hold, compare with \Cref{variant:shifting-cochains}.
\end{variant}

\subsection{Bimodules isomorphic to their vertically-shifted linear dual}
\label{interlude:shifted-duals}

Let $A$ be a graded algebra and $M$ a graded $A$-bimodule. We are interested in
the case when $M(n)\cong DM$ for some $n\in\ZZ$. We begin with a basic
observation concerning such graded bimodules. We refer the reader
to~\cite[Sec.~2.6]{Kel08}, Van den Bergh's~\cite[Appendix~A]{Boc08}
and~\cite{RRZ17} for closely related discussions concerning Serre functors on
triangulated categories in the sense of~\cite{BK89}.

\begin{proposition}
  \label{prop:pairing-MnDM}
  Suppose that there exists an isomorphism of graded $A$-bimodules
  \[
    \varphi\colon M(n)\stackrel{\sim}{\longrightarrow} DM
  \]
  for some $n\in\ZZ$. Then, the induced nondegenerate bilinear pairing
  \[
    \pairing<-,->=\pairing<-,->_\varphi\colon M(n)\times
    M\longrightarrow\kk,\qquad (\s[n]m_1,m_2)\longmapsto\varphi(\s[n]m_1)(m_2)
  \]
  satisfies the following equation for all $x,y\in A$ and $ m_1,m_2\in M$:
  \begin{align*}
    \pairing<\s[n]xm_1y,m_2>=(-1)^{|x|(|m_1|+|y|+|m_2|)}\pairing<\s[n]m_1,ym_2x>.
  \end{align*}
\end{proposition}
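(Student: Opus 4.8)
The plan is to reduce the asserted identity to a short direct computation that uses nothing beyond the $A$-bimodule equivariance of $\varphi$ together with the explicit bimodule structures on the shift $M(n)$ (\Cref{ex:Mn}) and on the linear dual $DM$ (\Cref{ex:DA}). Under the identification of graded vector spaces with bigraded vector spaces concentrated in horizontal degree $0$, the vertical shift $(n)$ is the shift of \Cref{ex:Mn}, so the action on $M(n)$ reads $x\cdot\s[n]m\cdot y=(-1)^{n|x|}\s[n](xmy)$, while the action on $DM$ is $(x\cdot f\cdot y)(m)=(-1)^{|x|(|f|+|y|+|m|)}f(ymx)$. Since $\varphi$ is a morphism of \emph{graded} $A$-bimodules it is in particular homogeneous of degree $0$, so setting $f\coloneqq\varphi(\s[n]m_1)\in DM$ we have $|f|=|\s[n]m_1|=|m_1|-n$.

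First I would rewrite the shifted element on the left-hand side in terms of the bimodule action: the displayed action on $M(n)$ gives $\s[n]xm_1y=(-1)^{n|x|}\,x\cdot\s[n]m_1\cdot y$. Applying $\varphi$ and invoking its bimodule equivariance yields $\varphi(\s[n]xm_1y)=(-1)^{n|x|}\,x\cdot f\cdot y$. Evaluating this functional at $m_2$ via the $DM$-action formula produces $\pairing<\s[n]xm_1y,m_2>=(-1)^{n|x|+|x|(|f|+|y|+|m_2|)}f(ym_2x)$, whereas by definition $\pairing<\s[n]m_1,ym_2x>=f(ym_2x)$.

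The only genuine content is then the sign bookkeeping, which I expect to be the sole (minor) obstacle; everything else is formal unwinding of definitions. Substituting $|f|=|m_1|-n$ into the exponent gives $n|x|+|x|(|m_1|-n+|y|+|m_2|)=|x|(|m_1|+|y|+|m_2|)$, so the two occurrences of $n|x|$ cancel and the total sign collapses to exactly $(-1)^{|x|(|m_1|+|y|+|m_2|)}$, which is the desired factor. This establishes $\pairing<\s[n]xm_1y,m_2>=(-1)^{|x|(|m_1|+|y|+|m_2|)}\pairing<\s[n]m_1,ym_2x>$. Finally, non-degeneracy of $\pairing<-,->$ is immediate, since $\varphi$ is an isomorphism and hence $\s[n]m_1\mapsto\varphi(\s[n]m_1)$ identifies $M(n)$ with the full linear dual $DM$.
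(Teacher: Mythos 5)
Your proposal is correct and follows essentially the same argument as the paper's proof: rewrite $\s[n]xm_1y$ via the shift action of \Cref{ex:Mn}, use the bimodule equivariance of $\varphi$, evaluate with the dual action formula of \Cref{ex:DA}, and observe that the two occurrences of $n|x|$ cancel. The sign bookkeeping you carry out (substituting $|f|=|m_1|-n$) is exactly the computation in the paper.
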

\begin{proof}
  Indeed,
  \begin{align*}
    \pairing<\s[n]xm_1y,m_2>&=\varphi(\s[n]xm_1y)(m_2)\\
                            &=(-1)^{n|x|}\varphi(x\s[n]m_1y)(m_2)\\\
                            &=(-1)^{n|x|+|x|(|m_1|-n+|y|+|m_2|)}\varphi(\s[n]m_1)(ym_2x)\\
                            &=(-1)^{|x|(|m_1|+|y|+|m_2|)}\pairing<\s[n]m_1,ym_2x>.\qedhere
  \end{align*}    
\end{proof}

\begin{remark}
  In \Cref{prop:pairing-MnDM}, notice that whenever $|m_1|+|m_2|\neq n$ we must
  have
  \[
    \pairing<\s[n]m_1,m_2>=\varphi(\s[n]m_1)(m_2)=0.
  \]
  Indeed, $\s[n]m_1\in M(n)^{|m_1|-n}$ and hence
  \[
    \varphi(\s[n]m_1)\in (DM)^{|m_1|-n}=D(M^{n-|m_1|}).
  \]
\end{remark}

\begin{corollary}
  \label{cor:graded-symmetric-pairing}
  Suppose that there exists an isomorphism of graded $A$-bimodules
  \[
    \varphi\colon A(n)\stackrel{\sim}{\longrightarrow} DA
  \]
  for some $n\in\ZZ$. Then, the induced nondegenerate bilinear pairing
  \[
    \pairing<-,->=\pairing<-,->_\varphi\colon A(n)\times
    A\longrightarrow\kk,\qquad (\s[n]x,y)\longmapsto\varphi(\s[n]x)(y)
  \]
  is associative,
  \begin{align*}
    \pairing<\s[n]xy,z>&=\pairing<\s[n]x,yz>&x,y,z&\in A,\intertext{and graded-symmetric with respect to the degree in $A$,}
                                                    \pairing<\s[n]x,y>&=(-1)^{|x||y|}\pairing<\s[n]y,x>&x,y&\in A.
  \end{align*}
\end{corollary}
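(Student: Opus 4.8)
The plan is to obtain both properties as immediate specialisations of \Cref{prop:pairing-MnDM} applied to the diagonal bimodule $M=A$, feeding in the unit $1\in A$ to annihilate the various sign exponents. For $M=A$ that proposition reads
\[
  \pairing<\s[n]xm_1y,m_2>=(-1)^{|x|(|m_1|+|y|+|m_2|)}\pairing<\s[n]m_1,ym_2x>,\qquad x,y,m_1,m_2\in A,
\]
so the whole argument reduces to choosing $x$, $y$, or $m_1$ to be $1$ in a suitable order, using $|1|=0$.

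First I would read off associativity. Setting $x=1$ makes the sign trivial and collapses the right-hand occurrence of $x$, leaving $\pairing<\s[n]m_1y,m_2>=\pairing<\s[n]m_1,ym_2>$; after relabelling the free variables this is exactly the associativity identity. Conceptually this merely records that $\varphi$ is a morphism of right $A$-modules, but extracting it from \Cref{prop:pairing-MnDM} spares us re-deriving that compatibility by hand.

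Next I would deduce graded-symmetry, which is the only part needing a second step. Setting $y=1$ and $m_1=1$ in the displayed identity and using $|1|=0$ gives
\[
  \pairing<\s[n]x,m_2>=(-1)^{|x||m_2|}\pairing<\s[n]1,m_2x>.
\]
Independently, the associativity just established, specialised so that the left factor is $1$, yields $\pairing<\s[n]m_2,x>=\pairing<\s[n]1,m_2x>$. Substituting this into the previous line produces $\pairing<\s[n]x,m_2>=(-1)^{|x||m_2|}\pairing<\s[n]m_2,x>$, which is the claimed graded-symmetry after renaming $m_2$ to $y$.

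I do not expect a genuine obstacle here: all of the Koszul- and shift-sign bookkeeping is already absorbed into \Cref{prop:pairing-MnDM}, and what remains is purely formal substitution together with the one combination of associativity with the specialised identity in the graded-symmetry step. The single point meriting explicit mention is that the specialisations at $1$ are legitimate—namely that the unit is homogeneous of degree $0$, so that every exponent involving $|1|$ vanishes—which is immediate.
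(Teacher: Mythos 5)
Your proposal is correct and follows essentially the same route as the paper: both deduce associativity from \Cref{prop:pairing-MnDM} by specialising the outer variables to the unit, and both obtain graded-symmetry by inserting $1$ into the pairing, applying the proposition, and then invoking the just-established associativity to rewrite $\pairing<\s[n]1,yx>$ as $\pairing<\s[n]y,x>$. The paper merely presents this as a single three-step chain of equalities rather than as two separate specialisations, so there is no substantive difference.
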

\begin{proof}
  Both claims follow readily from \Cref{prop:pairing-MnDM}. Associativity is
  clear and to exhibit the graded-symmetry we compute
  \[
    \pairing<\s[n]x,y>=\pairing<\s[n]x\cdot
    1,y>=(-1)^{|x|(|1|+|y|)}\pairing<\s[n]1,yx>=(-1)^{|x||y|}\pairing<\s[n]y,x>.\qedhere
  \]
\end{proof}

\begin{remark}
  \label{rmk:phi-psi}
  Suppose that there exists an isomorphism of graded $A$-bimodules
  \[
    \varphi\colon M(n)\stackrel{\sim}{\longrightarrow} DM
  \]
  for some $n\in\ZZ$. The reader can easily verify, using
  \Cref{ex:DAn} or by direct computation, that the map
  \[
    \psi\colon M\stackrel{\sim}{\longrightarrow} D(M(n)),\qquad m\longmapsto
    (-1)^{n|m|}\varphi(\s[n]m)\s[-n],
  \]
  is also an isomorphism of graded $A$-bimodules. By adjunction, we obtain a
  nondegenerate bilinear pairing
  \[
    \pairing<-,->_{\psi}\colon M\times
    M(n)\longrightarrow\kk,\qquad(m_1,m_2)\longmapsto\psi(m_1)(\s[n]m_2),
  \]
  that by construction satisfies
  \[
    \pairing<m_1,\s[n]m_2>_{\psi}=(-1)^{n|m_1|}\varphi(\s[n]m_1)(m_2)=(-1)^{n|m_1|}\pairing<\s[n]m_1,m_2>_{\varphi},
  \]
  so that the values of both pairings agree whenever $n$ is even (note that the
  construction of these two pairings only needs that $M$ is a graded vector
  space). When $M=A$ is the diagonal $A$-bimodule, if $n$ is odd the new pairing
  is only associative up to a sign:
  \[
    \pairing<xy,\s[n]z>_{\psi}=(-1)^{|x|+|y|}\pairing<\s[n]xy,z>_{\varphi}\stackrel{\ref{cor:graded-symmetric-pairing}}{=}(-1)^{|x|+|y|}\pairing<\s[n]x,yz>_\varphi=(-1)^{|y|}\pairing<x,\s[n]yz>_\psi.
  \]
  Still for $n$ odd, the new pairing is anti-symmetric with respect to the total
  degree,
  \[
    \pairing<x,\s[n] y>_{\psi}+(-1)^{|\s x||\s y|}\pairing<y,\s[n]x>_{\psi}=0,
  \]
  compare with~\cite[Section~2]{Cho08}. Indeed,
  \begin{align*}
    \pairing<x,\s[n] y>_{\psi}&=(-1)^{|x|}\pairing<\s[n]x,y>_{\varphi}\\
                              &\stackrel{\ref{cor:graded-symmetric-pairing}}{=}(-1)^{|x|+|x||y|}\pairing<\s[n]y,x>_{\varphi}\\
                              &=(-1)^{|x|+|x||y|+|y|}\pairing<y,\s[n]x>_{\psi}\\
                              &=(-1)^{(|x|+1)(|y|+1)-1}\pairing<y,\s[n]x>_{\psi}\\
                              &=-(-1)^{|\s x||\s y|}\pairing<y,\s[n]x>_{\psi}.
  \end{align*}
\end{remark}

\begin{defprop}
  \label{defprop:upsilon}
  Suppose that there exists an isomorphism of graded $A$-bimodules
  \[
    \varphi\colon M(n)\stackrel{\sim}{\longrightarrow}DM
  \]
  for some $n\in\ZZ$, and consider the isomorphism
  \[
    \psi\colon M\stackrel{\sim}{\longrightarrow} D(M(n)),\qquad m\longmapsto
    m^*\coloneqq(-1)^{n|m|}\varphi(\s[n]m)\s[-n],
  \]
  defined in \Cref{rmk:phi-psi}. We introduce the morphism of differential
  bigraded vector spaces
  \[
    \begin{tikzcd}
      \RelBimHC{A}{M}\dar[dotted]{\Upsilon}\rar{\Psi}&\RelBimHC{A}{M(n)}\dar{\Theta}\\
      \RelBimHC{A}{M}&\RelBimHC{A}{D(M(n))}\lar{\Phi}
    \end{tikzcd}
  \]
  obtained as the composite of the morphisms defined by
  \Cref{eq:Phi,eq:Theta,eq:Psi} (the latter for the inverse isomorphism
  $\psi^{-1}$). Explicitly, $\Upsilon$ acts as the identity on the subspace
  $\HC{A}\subseteq\RelBimHC{A}{M}$ and, to a cochain $c\in\BimHC*[p][q]<r>{M}$
  it associates the cochain (notice the transposition $(p,q,r)\mapsto(q,p,r)$ of
  the first two components)
  \[
    \Upsilon(c)\in\BimHC*[q][p]<r>{M}
  \]
  that is uniquely determined by the requirement that the following equality
  holds for all homogeneous elements $x_1,\dots,x_p,y_1,\dots,y_q\in A$ and
  $m,m'\in M$:
  \begin{multline}
    \label{eq:Upsilon}
    \pairing<\s[n]\Upsilon(c)(y_1,\dots,y_q,m,x_1,\dots,x_p),m'>_{\varphi}\\=(-1)^{(p+1)(q+1)+\maltese}\pairing<\s[n]c(x_1,\dots,x_p,m',y_1,\dots,y_q),m>_{\varphi},
  \end{multline}
  where
  \[
    \maltese\coloneqq\textstyle(|m|+\sum_{j=1}^q|y_j|)(|m'|+\sum_{i=1}^p|x_i|).
  \]
  The morphism $\Upsilon$ is strictly compatible with the Gerstenhaber bracket and
  the Gerstenhaber square operation.
\end{defprop}
\begin{proof}
  Indeed, for $c\in\BimHC*[p][q]<r>{M}$ the cochain
  $\Upsilon(c)\in\BimHC*[q][p]<r>{M}$ is determined by the requirement that
  \begin{align*}
    \pairing<\Upsilon(c)(y_1\dots,y_q,m,x_1,\dots,x_p),\s[n]m'>_{\psi}=(c^{(n)})^D(y_1\dots,y_q,m^*,x_1,\dots,x_p)(\s[n]m').
  \end{align*}
  The left-hand side can be written instead as
  \begin{align*}
    \pairing<\Upsilon(c)(y_1\dots,y_q,m,x_1,\dots,x_p),\s[n]m'>_{\psi}=(-1)^{\maltese_0}\pairing<\s[n]\Upsilon(c)(y_1\dots,y_q,m,x_1,\dots,x_p),m'>_{\varphi},
  \end{align*}
  where
  \[
    \maltese_0=n(\vdeg{c}+\sum_{i=1}^p|x_i|+|m|+\sum_{j=1}^q|y_j|).
  \]
  The right-hand side is given by
  \begin{align*}
    (c^{(n)})^D&(y_1\dots,y_q,m^*,x_1,\dots,x_p)(\s[n]m')\\
               &=(-1)^{\maltese_1}m^*(c^{(n)}(x_1,\dots,x_p,\s[n]m',y_1,\dots,y_q))\\
               &=(-1)^{\maltese_1+\maltese_2}m^*(\s[n]c(x_1,\dots,x_p,m',y_1,\dots,y_q))\\
               &=(-1)^{\maltese_1+\maltese_2}\pairing<m,\s[n]c(x_1,\dots,x_p,m',y_1,\dots,y_q)>_{\psi}\\
               &=(-1)^{\maltese_1+\maltese_2+n|m|}\pairing<\s[n]m,c(x_1,\dots,x_p,m',y_1,\dots,y_q)>_{\varphi}\\
               &=(-1)^{\maltese_1+\maltese_2+\maltese_3+n|m|}\pairing<\s[n]c(x_1,\dots,x_p,m',y_1,\dots,y_q),m>_{\varphi},
  \end{align*}
  where
  \begin{align*}
    \maltese_1&=\textstyle(p+1)(q+1)+\vdeg{c}|m|+\left(\sum_{j=1}^q|y_j|\right)\left(|m|+\sum_{i=1}^p|x_i|-n+|m'|\right)\\
    \maltese_2&=\textstyle n\left(\vdeg{c}+\sum_{i=1}^p|x_i|\right)\\
    \maltese_3&=\textstyle|m|\left(\vdeg{c}+\sum_{i=1}^p|x_i|+|m'|+\sum_{j=1}^q|y_j|\right).
  \end{align*}
  The claim follows since, modulo $2$,
  \[
    \maltese=\maltese_0+\maltese_1+\maltese_2+\maltese_3+n|m|.
  \]
  The strict compatibility between $\Upsilon$ and the Gerstenhaber bracket and
  the Gerstenhaber square operation follows immediately from
  \Cref{rmk:RelBimHC-under-isos} and
  \Cref{prop:shifting-cochains,prop:dualising-cochains}. This finishes the
  proof.
\end{proof}

\begin{remark}
  \Cref{defprop:upsilon} remains valid when $A$ and $M$ are graded vector spaces
  and $\varphi$ is only an isomorphism of such, see also
  \Cref{rmk:shifting-cochains,rmk:dualising-cochains}.
\end{remark}

\subsection{The bimodule Hochschild cohomology of the diagonal bimodule}

Let $A$ be a graded algebra. In this article we are chiefly interested in the
bimodule Hochschild cochain complex $\RelBimHC{A}{A}$, which turns out to admit
a very natural description.

\begin{definition}
  We introduce the bigraded vector space
  \[
    \HC{A}<\varepsilon>\coloneqq\HC{A}\oplus(\HC{A}\cdot\varepsilon),
  \]
  where $\varepsilon$ is a bidegree $(1,0)$ element, so that we may identify
  \[
    \HC{A}\cdot\varepsilon=\HC{A}<-1>
  \]
  as bigraded vector spaces. We endow $\HC{A}{[\varepsilon]}$ with the unique
  bidegree $(1,0)$ differential $d$ that extends the Hochschild differential on
  $\HC{A}$ and satisfies $d(\varepsilon)=0$ and
  \[
    d(\cupp{c}{\varepsilon})=\cupp{\Hd(c)}{\varepsilon},\qquad c\in\HC{A}.
  \]
  We also extend the dg algebra structure on $\HC{A}$, given by the cup product
  defined by \Cref{eq:cup_product-HC}, to $\HC{A}<\varepsilon>$ by stipulating
  that $\varepsilon^2=0$ and that $\varepsilon$ is central with respect to the
  total degree:
  \[
    \cupp{\varepsilon}{c}=(-1)^{|c|}\cupp{c}{\varepsilon},\qquad c\in\HC{A}.
  \]
  In this way, $\HC{A}<\varepsilon>$ becomes a dg algebra with respect to the
  total degree. Moreover, by construction, the cohomology of
  $\HC{A}{[\varepsilon]}$ is the graded algebra
  \[
    \H[\bullet,*]{\HC{A}<\varepsilon>}=\HH{A}<\varepsilon>/(\varepsilon^2),\qquad
    |\varepsilon|=1
  \]
  of dual numbers with coefficients in $\HH{A}$, where $ \varepsilon$ is a
  central element.
\end{definition}

We define a morphism of bigraded vector spaces
\[
  \kappa\colon\RelBimHC{A}{A}\longrightarrow\HC{A}<\varepsilon>
\]
as follows: For $c\in\HC{A}$ we set $\kappa(c)\coloneqq c$ and, for
$c\in\BimHC*[p][q]<r>{A}$ with $n=p+1+q$ we set
\[
  \kappa(c)\coloneqq\cupp{(c\bullet_{p+1}1_A)}{\varepsilon}\in\HC[n-1]{A}\cdot\varepsilon,
\]
where $1_A\in\HC[0]<0>{A}=\Hom[\kk]{\kk}{A}$ is the unit of the graded algebra
$A$. Explicitly,
\begin{equation}
  \label{eq:kappa}
  \kappa(c)(x_1,\dots,x_{p+q})\stackrel{\eqref{eq:infinitesimal_composition}}{=}(-1)^qc_i(x_1,\dots,x_p,1_A,x_{p+1},\dots,x_{p+q}).
\end{equation}

\begin{example}
  \label{ex:kappa-id}
  For the identity morphism $\id[A]\in\BimHC*[0][0]<0>{A}$,
  \[
    \kappa(\id[A])=\cupp{1_A}{\varepsilon}.
  \]
\end{example}

\begin{remark}
  The following description of the morphism $\kappa$ is also useful: For a
  cochain
  \[
    c=(c_i)_{i=1}^n\in\BimHC[n-1]{A}=\bigoplus_{i=1}^n\BimHC*[i-1][n-i]{A}
  \]
  we have
  \begin{equation}
    \label{eq:kappa-sum}
    \kappa(c)(x_1,\dots,x_{n-1})=\sum_{i=1}^n(-1)^{n-i}c_i(x_1,\dots,x_{i-1},1_A,x_i,\dots,x_{n-1}).
  \end{equation}
\end{remark}

\begin{proposition}
  \label{prop:RelBimHC-A}
  The morphism of bigraded vector spaces
  \[
    \kappa\colon\RelBimHC{A}{A}\longrightarrow\HC{A}<\varepsilon>
  \]
  is a quasi-isomorphism of dg algebras with respect to the total degree, so
  that it induces an isomorphism of graded algebras
  \[
    \H[\bullet,*]{\kappa}\colon\RelBimHH{A}{A}\stackrel{\sim}{\longrightarrow}\HH{A}<\varepsilon>.
  \]
\end{proposition}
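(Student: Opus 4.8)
The plan is to regard $\kappa$ as a morphism of square-zero extensions and then to reduce the quasi-isomorphism statement to a standard comparison of projective bimodule resolutions of the diagonal. By \Cref{prop:RelBimHC-all}, the source $\RelBimHC{A}{A}$ is the square-zero extension of $\HC{A}$ by the ideal $\BimHC{A}<-1>$, with differential of the triangular shape \eqref{eq:dRelBim-matrix}, whereas $\HC{A}<\varepsilon>$ is, by construction, the square-zero extension of $\HC{A}$ by $\HC{A}\cdot\varepsilon=\HC{A}<-1>$ whose differential is \emph{split} ($d\varepsilon=0$). Since $\kappa$ is the identity on the quotient copy of $\HC{A}$ and sends the ideal into $\HC{A}\cdot\varepsilon$ through the unit-insertion map $\kappa_1$ of \eqref{eq:kappa-sum}, checking that $\kappa$ is a morphism of dg algebras reduces to three assertions about $\kappa_1$: that it intertwines the internal differentials $\dBim$ and $\Hd$, that it respects the cup product, and---most importantly---that it annihilates the connecting map, $\kappa_1\circ\delta=0$.

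For the differential and multiplicativity I would expand the defining formulas and evaluate the distinguished bimodule slot at $1_A$. The mixed-case compatibilities $\kappa(\cupp{c_1}{c_2})=\cupp{\kappa(c_1)}{\kappa(c_2)}$ with $c_1\in\HC{A}$ and $c_2\in\BimHC{A}$ (and symmetrically) drop out of \eqref{eq:cup_product-RelBimHC-HC-BimHC} and \eqref{eq:cup_product-RelBimHC-BimHC-HC} once $1_A$ is inserted, while the case $c_1,c_2\in\BimHC{A}$ is trivial: $\cupp{c_1}{c_2}=0$ by \eqref{eq:BimHC-square-zero} and, on the target, $\varepsilon^2=0$. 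The essential identity is $\kappa_1\circ\delta=0$: writing $\delta(c)=\cupp{\s\id[A]}{c}-\cupp{c}{\s\id[A]}$ and inserting $1_A$ into the bimodule slot collapses both the left and the right multiplication-by-$c$ cochains to $c$ itself, since $1_A$ is a two-sided unit, so the two contributions cancel. This vanishing is precisely the cochain-level incarnation of the graded-symmetry of $\BimHH{A}\cong\HH{A}$ as an $\HH{A}$-bimodule (\Cref{prop:RelBimHC-all}), and it is what allows $\kappa$ to land in the \emph{split} extension $\HC{A}<\varepsilon>$.

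To prove that $\kappa$ is a quasi-isomorphism I would compare the short exact sequences of complexes
\[
  0\to\BimHC{A}<-1>\to\RelBimHC{A}{A}\to\HC{A}\to 0
  \qquad\text{and}\qquad
  0\to\HC{A}\cdot\varepsilon\to\HC{A}<\varepsilon>\to\HC{A}\to 0
\]
along $\kappa$. As $\kappa$ induces the identity on the quotients $\HC{A}$, the five lemma applied to the two long exact cohomology sequences reduces the claim to showing that $\kappa_1\colon\BimHC{A}<-1>\to\HC{A}\cdot\varepsilon$ is a quasi-isomorphism. Under the identifications $\HC{A}\cong\Hom[A^e]{\BB{A}}{A}$ and $\BimHC{A}\cong\Hom[A^e]{\BB{A}\otimes_A\BB{A}}{A}$ recalled before the proposition, the summation formula \eqref{eq:kappa-sum} exhibits $\kappa_1$ as $\Hom[A^e]{\iota}{A}$, where $\iota\colon\BB{A}\to\BB{A}\otimes_A\BB{A}$ is the unit-insertion comparison map that distributes a bar word into two and places $1_A$ in the resulting middle slot. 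Since $\iota$ is a quasi-isomorphism between projective resolutions of $A$ as a graded $A$-bimodule, the functor $\Hom[A^e]{-}{A}$ carries it to a quasi-isomorphism; hence $\kappa_1$, and therefore $\kappa$, is a quasi-isomorphism. The induced map $\H[\bullet,*]{\kappa}$ is then automatically an isomorphism of graded algebras.

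The main obstacle is bookkeeping rather than conceptual: tracking the Koszul and operadic signs through the differential- and cup-product verifications, and in particular confirming that $\kappa_1\circ\delta$ vanishes \emph{on the nose}---not merely up to homotopy---so that $\kappa$ is a \emph{strict} morphism of dg algebras. Once these strict identities are secured, the quasi-isomorphism is a formal consequence of the comparison of resolutions.
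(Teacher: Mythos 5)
Your proposal is correct and follows essentially the same route as the paper's proof: the same case-by-case verification that $\kappa$ is a strict morphism of dg algebras (including the cancellation $\kappa_1\circ\delta=0$ obtained by inserting $1_A$ into the bimodule slot), and the same identification of $\kappa_1$ with the map induced under $\Hom[A^e]{-}{A}$ by the bar-resolution comultiplication $\BB{A}\to\BB{A}\otimes_A\BB{A}$, followed by a five-lemma comparison of the two extensions. The only point to phrase carefully is that $\Hom[A^e]{-}{A}$ preserves this comparison map because it is a chain \emph{homotopy} equivalence (a morphism of bounded-below complexes of projective graded bimodules compatible with the augmentations), not because the functor preserves arbitrary quasi-isomorphisms.
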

\begin{proof}
  We first prove that $\kappa$ is a morphism of differential bigraded vector
  spaces. Indeed, for a cochain $c\in\HC[n]<r>{A}$,
  \begin{align*}
    \dRelBim<A>(c)&\stackrel{\eqref{eq:Hd-RelBimHC-sum}}{=}[\Astr<2>,c]+[\Astr<1,0>,c]+[\Astr<0,1>,c]\\
                  &\stackrel{\eqref{eq:Hd-RelBimHC-sum-HC}}{=}\underbrace{[\Astr<2>,c]}_{\in\HC{A}}+\underbrace{\Astr<1,0>\bullet_1 c}_{\in\BimHC*[n][0]{A}}+\underbrace{\Astr<0,1>\bullet_2c}_{\in\BimHC*[0][n]{A}}
  \end{align*}
  and hence
  \begin{align*}
    \kappa(\dRelBim<A>(c))&=[m_2,c]+\underbrace{\kappa(\Astr<1,0>\bullet_1c)+\kappa(\Astr<0,1>\bullet_2c)}_{=0?}=\Hd(\kappa(c)),
  \end{align*}
  provided that $\kappa(\Astr<1,0>\bullet_1c)+\kappa(\Astr<0,1>\bullet_2c)=0$,
  which is the case since
  \begin{align*}
    \kappa(\Astr<1,0>\bullet_1c)(x_1,\dots,x_n)\stackrel{\eqref{eq:kappa}}{=}(\Astr<1,0>\bullet_1c)(x_1,\dots,x_n,1_A)
  \end{align*}
  and
  \begin{align*}
    \kappa(\Astr<0,1>\bullet_2c)(x_1,\dots,x_n)\stackrel{\eqref{eq:kappa}}{=}-(\Astr<0,1>\bullet_2c)(x_1,\dots,x_n)1_A.
  \end{align*}
  We now consider the case $c\in\BimHC*[p][q]<r>{A}$ with $p+1+q=n$. In this
  case
  \begin{align*}
    \dRelBim<A>(c)=\underbrace{\dh(c)}_{\BimHC*[p+1][q]<r>{A}}+\underbrace{\dv(c)}_{\BimHC*[p][q+1]<r>{A}},
  \end{align*}
  see \Cref{eq:dRelBim-sum}. Using \Cref{eq:kappa} while keeping in mind that
  $\dh(c)\in\BimHC*[p+1][q]{A}$, we first compute
  \begin{align*}
    \kappa(\dh(c))&(x_1,\dots,x_{n+1})\stackrel{\eqref{eq:dh}}{=}(-1)^{q}\Big(-1)^{r|x_1|+r}x_1c(x_2,\dots,x_{p+1},1_A,x_{p+2},\dots,x_{n+1})\\
                  &\phantom{=}+\sum_{i=1}^p(-1)^{i+r}c(x_1,\dots,x_ix_{i+1},\dots,x_{p+1},1_A,x_{p+2},\dots,x_{n+1})\\
                  &+(-1)^{p+1+r}c(x_1,\dots,x_p,x_{p+1}1_A,x_{p+2},\dots,x_{n+1})\Big),
  \end{align*}
  and, since $\dv(c)\in\BimHC*[p][q+1]{A}$,
  \begin{align*}
    \kappa(\dv(c))&(x_1,\dots,x_{n+1})\stackrel{\eqref{eq:dv}}{=}(-1)^{q+1}\Big((-1)^{p+1+r}c(x_1,\dots,x_p,1_Ax_{p+1},\dots,x_{n+1})\\
                  &\phantom{=}+\sum_{j=1}^q(-1)^{p+1+j+r}c(x_1,\dots,x_p,1_A,x_{p+1},\dots,x_{p+j}x_{p+j+1},\dots,x_{n+1})\\
                  &+(-1)^{p+q+r}c(x_1,\dots,x_p,1_A,x_{p+1},\dots,x_n)x_{n+1}\Big),
  \end{align*}
  so that
  \begin{align*}
    \kappa(\dRelBim<A>(c))&(x_1,\dots,x_{n+1})=(-1)^{r|x_1|+r+q}x_1c(x_2,\dots,x_{p+1},1_A,x_{p+2}\dots,x_{n+1})\\
                          &\phantom{=}+\sum_{i=1}^p(-1)^{i+r+q}c(x_1,\dots,x_ix_{i+1},\dots,x_{p+1}1_A,x_{p+2},\dots,x_{n+1})\\
                          &\phantom{=}+\sum_{j=1}^q(-1)^{p+j+r+q}c(x_1,\dots,x_p1_A,x_{p+2},\dots,x_{n+1})\\
                          &\phantom{=}(-1)^{p+1+r}c(x_1,\dots,x_p,1_A,x_{p+1},\dots,x_n)x_{n+1}.
  \end{align*}
  On the other hand, since $n=p+1+q$,
  \begin{align*}
    \Hd(\kappa(c))&(x_1,\dots,x_{n+1})\stackrel{\eqref{eq:Hd-full}}{=}(-1)^{r|x_1|+r}x_1\kappa(c)(x_2,\dots,x_{n+1})\\
                  &\phantom{=}+\sum_{i=1}^{n}(-1)^{i+r}\kappa(c)(x_1,\dots,x_ix_{i+1},\dots,x_{n+1})\\
                  &\phantom{=}+(-1)^{n+1+r}\kappa(c)(x_1,\dots,x_n)x_{n+1}\\
                  &\stackrel{\eqref{eq:kappa}}{=}(-1)^{r|x_1|+r+q}x_1c(x_2,\dots,x_{p+1},1_A,x_{p+2},\dots,x_{n+1})\\
                  &\phantom{=}+\sum_{i=1}^{p}(-1)^{i+r+q}\kappa(c)(x_1,\dots,x_ix_{i+1},\dots,x_{p+1},1_A,x_{p+2},\dots,x_{n+1})\\
                  &\phantom{=}+\sum_{j=1}^{q}(-1)^{p+j+r+q}\kappa(c)(x_1,\dots,x_p,1_A,x_{p+1},\dots,x_{p+j}x_{p+j+1}\dots,x_{n+1})\\
                  &\phantom{=}+(-1)^{p+1+r}c(x_1,\dots,x_p,1_A,x_{p+1},\dots,x_n)x_{n+1}.
  \end{align*}
  It follows that $\kappa(\dRelBim<A>(c))=\Hd(\kappa(c))$, as required.

  We now prove that $\kappa$ is compatible with the corresponding algebra
  structures. It is obvious that
  \[
    \kappa(\cupp{c_1}{c_2})=\cupp{\kappa(c_1)}{\kappa(c_2)},\qquad
    c_1,c_2\in\HC{A}
  \]
  since $\kappa$ acts as the identity on $\HC{A}\subseteq\RelBimHC{A}{A}$.
  Similarly,
  \[
    \kappa(\underbrace{\cupp{c_1}{c_2}}_{=0})=0=\cupp{\kappa(c_1)}{\kappa(c_2)},\qquad
    c_1,c_2\in\BimHC{A};
  \]
  the vanishing of the expression on the right-hand side is clear since
  $\varepsilon^2=0$, while the vanishing of the expression on the left-hand side
  follows from~\Cref{eq:BimHC-square-zero}. We now consider the remaining cases:
  \begin{itemize}
  \item If $c_1\in\HC[p]<q>{A}$ and $c_2\in\BimHC*[s][t]<u>{A}$, then
    $\cupp{c_1}{c_2}\in\BimHC*[p+s][q]<u>{A}$ and hence
    \begin{align*}
      \cupp{\kappa(c_1)}{\kappa(c_2)}&\stackrel{\eqref{eq:kappa}}{=}\cupp{c_1}{(\cupp{(c_2\bullet_{s+1}1_A)}{\varepsilon})}\\
                                     &=\cupp{(\cupp{c_1}{(c_2\bullet_{s+1}1_A)})}{\varepsilon}\\
                                     &\stackrel{\eqref{eq:cup_product-RelBimHC-HC-BimHC}}{=}\cupp{((\Astr<2>\bullet_1c_1)\bullet_{p+1}(c_2\bullet_{s+1}1_A))}{\varepsilon}\\
                                     &\stackrel{\eqref{eq:infinitesimal_composition-associative}}{=}\cupp{((\Astr<1,0>\bullet_1c_1)\bullet_{p+1}c_2)\bullet_{p+s+1}1_A)}{\varepsilon}\\
                                     &\stackrel{\eqref{eq:cup_product-RelBimHC-HC-BimHC}}{=}\cupp{((\cupp{c_1}{c_2})\bullet_{p+s+1}1_A)}{\varepsilon}\\
                                     &\stackrel{\eqref{eq:kappa}}{=}\kappa(\cupp{c_1}{c_2}),
    \end{align*}
    where in the fourth equality we use that $\Astr<1,0>=\Astr<2>$ as morphisms
    of graded vector spaces.
  \item If $c_1\in\BimHC*[p][q]<r>{A}$ and $c_2\in\HC[s]<t>{A}$, then
    $\cupp{c_1}{c_2}\in\BimHC*[p][q+s]<r+t>{A}$ and hence
    \begin{align*}
      \cupp{\kappa(c_1)}{\kappa(c_2)}&\stackrel{\eqref{eq:kappa}}{=}\cupp{(\cupp{(c_1\bullet_{p+1}1_A)}{\varepsilon})}{c_2}\\
                                     &=(-1)^{s+t}\cupp{(\cupp{(c_1\bullet_{p+1}1_A)}{c_2})}{\varepsilon}\\
                                     &\stackrel{\eqref{eq:cup_product-RelBimHC-BimHC-HC}}{=}(-1)^{s+t+p+q-1}\cupp{((\Astr<2>\bullet_1(c_1\bullet_{p+1} 1_A))\bullet_{p+q+1}c_2)}{\varepsilon}\\
                                     &\stackrel{\eqref{eq:infinitesimal_composition-associative}}{=}(-1)^{s+t+p+q-1+(s+t-1)}\cupp{(((\Astr<2>\bullet_1c_1)\bullet_{p+1+q+1}c_2)\bullet_{p+1}1_A)}{\varepsilon}\\
                                     &=(-1)^{p+q}\cupp{(((\Astr<0,1>\bullet_1c_1)\bullet_{p+1+q+1}c_2)\bullet_{p+1}1_A)}{\varepsilon}\\
                                     &\stackrel{\eqref{eq:cup_product-RelBimHC-BimHC-HC}}{=}\cupp{((\cupp{c_1}{c_2})\bullet_{p+1}1_A)}{\varepsilon}\\
                                     &\stackrel{\eqref{eq:kappa}}{=}\kappa(\cupp{c_1}{c_2});
    \end{align*}
    here, in the fourth equality we use that $1_A\in\HC[0]<0>{A}$ and in the
    fifth equality we use that $\Astr<1,0>=\Astr<2>$ as morphisms of graded
    vector spaces.
  \end{itemize}

  In order to show that $\kappa$ is a quasi-isomorphism we argue as follows.
  Recall that the bar resolution of $A$ is the chain complex of graded
  $A$-bimodules with components
  \[
    \BB{A}[n]\coloneqq A\otimes A^{\otimes n}\otimes A,\qquad n\geq0
  \]
  and the degree $-1$ differential
  \[
    d(x_0\otimes\cdots\otimes
    x_{n+1})\coloneqq\sum_{i=0}^n(-1)^ix_0\otimes\cdots\otimes
    x_ix_{i+1}\otimes\cdots x_{n+1}.
  \]
  The augmentation map
  \[
    \BB{A}[0]\longrightarrow A,\qquad x_0\otimes x_1\longmapsto x_0x_1,
  \]
  exhibits $\BB{A}$ as a graded projective resolution of the diagonal bimodule.
  The bar resolution is also equipped with the comultiplication map
  \[
    \BB{A}\longrightarrow\BB{A}\otimes_A\BB{A}
  \]
  whose components
  \[
    \BB{A}[n]\longrightarrow\bigoplus_{i=0}^n\BB{A}[i]\otimes_A\BB{A}[n-i],
  \]
  are given by
  \[
    (x_0\otimes\cdots\otimes
    x_{n+1})\longmapsto\sum_{i=0}^n(a_0\otimes\cdots\otimes a_i\otimes
    1_A)\otimes(1_A\otimes a_{i+1}\otimes\cdots\otimes a_{n+1}).
  \]
  Moreover,
  \[
    \BB{A}\otimes_A\BB{A}\cong\BB{A}\otimes_AA\otimes_A\BB{A}
  \]
  is also a graded projective resolution of the diagonal bimodule, as witnessed
  by the augmentation map
  \[
    \BB{A}[0]\otimes_AA\otimes_A\BB{A}[0]\longrightarrow A,\qquad (x_0\otimes
    x_1)\otimes y\otimes (z_0\otimes z_1)\longmapsto(x_0x_1yz_0z_1).
  \]
  Since the comultiplication map is a morphism of chain complexes of projective
  graded $A$-bimodules and is compatible with the corresponding augmentation, it
  is a chain homotopy equivalence. It follows that the induced map
  \begin{align*}
    \widetilde{\kappa}\colon\Hom[A^e]{\BB{A}\otimes_AA\otimes_A\BB{A}}{A}&\longrightarrow\Hom[A^e]{\BB{A}}{A},
  \end{align*}
  given explicitly by
  \[
    \kappa(c)(x_0,\dots,x_{n+1})=\sum_{i=0}^n
    c(x_0,\dots,x_i,1_A,x_{i+1},\dots,x_{n+1})
  \]
  for $c\in\HC[n]{A}$, is also a homotopy equivalence. Under the identifications
  \begin{align*}
    \BimHC{A}&\cong\Hom[A^e]{\BB{A}\otimes_AA\otimes_A\BB{A}}{A}
               \intertext{and}
               \HC{A}&\cong\Hom[A^e]{\BB{A}}{A}
  \end{align*}
  induced by the $\otimes$-$\operatorname{Hom}$ adjunction, the homotopy
  equivalence $\widetilde{\kappa}$ corresponds to the restriction of $\kappa[1]$
  to $\BimHC{A}\subseteq\RelBimHC{A}{A}$, and hence the latter map is also a
  homotopy equivalence. We finish the proof by observing that the following
  diagram of morphisms of differential bigraded vector spaces commutes:
  \[
    \begin{tikzcd}
      \RelBimHC{A}{A}\dar{\kappa}\rar{p}&\HC{A}\dar{\id[\HC{A}]}\rar{\delta}&\BimHC{A}\dar{\kappa[1]}\\
      \HC{A}<\varepsilon>\rar{p}&\HC{A}\rar{0}&\HC{A}
    \end{tikzcd}
  \]
  Indeed, the leftmost square commutes by the definition of $\kappa$ since the
  horizontal maps are the canonical projections. That the rightmost square
  commutes is a consequence of the fact that $\kappa$ is a morphism of
  differential bigraded vector spaces. The induced long exact sequence in
  cohomology yields the desired fact that $\kappa$ is a quasi-isomorphism,
  compare with \Cref{prop:RelBimHC-all}.
\end{proof}

\begin{corollary}
  \label{coro:kappa}
  The isomorphism of graded algebras
  \[
    \H[\bullet,*]{\kappa}\colon\RelBimHH{A}{A}\stackrel{\sim}{\longrightarrow}\HH{A}<\varepsilon>/(\varepsilon^2)
  \]
  induces a shifted Lie bracket and a Gerstenhaber square operation on
  $\HH{A}<\varepsilon>$, and hence an isomorphic Gerstenhaber algebra structure.
  Moreover, the following statements hold:
  \begin{enumerate}
  \item The induced shifted Lie bracket extends the Gerstenhaber bracket on the
    shifted graded Lie subalgebra $\HH{A}\subseteq\HC{A}<\varepsilon>$ and is
    uniquely determined by the following additional requirements:
    \begin{align*} [\varepsilon,\varepsilon]&=0&[c,\varepsilon]&=0&c&\in\HH{A}.
    \end{align*}
    From these it follows that the following equations must hold:
    \begin{align*}
      [c_1,\cupp{c_2}{\varepsilon}]&=\cupp{[c_1,c_2]}{\varepsilon}&c_1,c_2&\in\HH{A},\\
      [\cupp{c_1}{\varepsilon},\cupp{c_2}{\varepsilon}]&=0&c_1,c_2&\in\HH{A}.
    \end{align*}
  \item If $\chark(\kk)=2$, then the induced Gerstenhaber square on
    $\HH{A}<\varepsilon>$ extends the Gerstenhaber square on $\HH{A}$ and is
    more generally given by the formula
    \[
      \Sq(c_1+\cupp{c_2}{\varepsilon})=\Sq(c_1)+\cupp{(c_2^2+[c_1,c_2])}{\varepsilon},\qquad
      c_1,c_2\in\HH{A}.
    \]
    If $\chark(\kk)\neq2$, then the Gerstenhaber square is defined in even
    degrees as in~\Cref{rmk:Gerstenhaber_square-char}.
  \end{enumerate}
\end{corollary}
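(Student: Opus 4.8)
The plan is to derive the whole statement by transport of structure along the algebra isomorphism $\H[\bullet,*]{\kappa}$ of \Cref{prop:RelBimHC-A}, and then to pin down the transported operations by computing the bracket and the Gerstenhaber square of a single distinguished cochain. Since $\RelBimHH{A}{A}$ is a Gerstenhaber algebra with respect to the total degree by \Cref{defprop:braces-Gerstenhaber} and $\H[\bullet,*]{\kappa}$ is an isomorphism of graded algebras, transporting the shifted Lie bracket and the Gerstenhaber square along $\H[\bullet,*]{\kappa}$ equips $\HH{A}<\varepsilon>$ with a shifted Lie bracket and a Gerstenhaber square for which $\H[\bullet,*]{\kappa}$ is tautologically an isomorphism of Gerstenhaber algebras; this already yields the first assertion.

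To describe the bracket I would first note that the inclusion of the first summand $\HC{A}\hookrightarrow\RelBimHC{A}{A}$ is a morphism onto a sub-shifted-graded-Lie-algebra, since the pre-Lie product of two elements of $\HC{A}$ is computed inside $\HC{A}$, and that $\kappa$ restricts to the identity on it. Passing to cohomology, the transported bracket therefore restricts to the Gerstenhaber bracket on $\HH{A}\subseteq\HH{A}<\varepsilon>$. Next I would identify the distinguished class: by \Cref{ex:kappa-id} one has $\kappa(\id[A])=\cupp{1_A}{\varepsilon}$, so $\H[\bullet,*]{\kappa}([\id[A]])=\varepsilon$ because $[1_A]$ is the unit of $\HH{A}$. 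The two vanishing relations now follow from computations in $\RelBimHC{A}{A}$: since $\id[A]\in\BimHC*[0][0]<0>{A}$ has odd total degree (hence even shifted total degree), the shifted-Lie axiom forces $[\id[A],\id[A]]=0$, whence $[\varepsilon,\varepsilon]=0$; and \Cref{ex:bracket-1M} gives $[c,\id[A]]=0$ for every $c\in\HC{A}$, whence $[c,\varepsilon]=0$ for all $c\in\HH{A}$. The two consequence formulas then follow formally: applying the Gerstenhaber relation \cref{eq:Gerstenhaber_relation} with $z=\varepsilon$ and using $[c_1,\varepsilon]=0$ gives $[c_1,\cupp{c_2}{\varepsilon}]=\cupp{[c_1,c_2]}{\varepsilon}$, and a second application, combined with graded antisymmetry and $\varepsilon^2=0$, gives $[\cupp{c_1}{\varepsilon},\cupp{c_2}{\varepsilon}]=0$. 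Uniqueness is immediate, since the Gerstenhaber relation makes the bracket a biderivation for the cup product, so its values on the algebra generators $\HH{A}$ and $\varepsilon$ of $\HH{A}<\varepsilon>$ determine it entirely.

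For the Gerstenhaber square in characteristic $2$ I would first compute $\Sq(\varepsilon)$. As $\Sq$ is transported along $\H[\bullet,*]{\kappa}$, one has $\Sq(\varepsilon)=\H[\bullet,*]{\kappa}(\Sq(\id[A]))$, and a direct evaluation of the infinitesimal composition gives $\Sq(\id[A])=\preLie{\id[A]}{\id[A]}=\id[A]\bullet_1\id[A]=\id[A]$, whence $\Sq(\varepsilon)=\varepsilon$. Feeding this into the product rule for the Gerstenhaber square (the second equation of \cref{eq:Gerstehaber_square-relations}) applied to the pair $(c_2,\varepsilon)$, and using $\varepsilon^2=0$ together with $[c_2,\varepsilon]=0$, collapses all but one summand and yields $\Sq(\cupp{c_2}{\varepsilon})=\cupp{c_2^2}{\varepsilon}$. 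Finally, the additivity rule $\Sq(x+y)=\Sq(x)+\Sq(y)+[x,y]$ applied to $x=c_1$ and $y=\cupp{c_2}{\varepsilon}$, combined with the already-established identity $[c_1,\cupp{c_2}{\varepsilon}]=\cupp{[c_1,c_2]}{\varepsilon}$, produces the asserted formula. When $\chark{\kk}\neq2$ nothing further is needed, since there the square in even total degree equals $\tfrac12[x,x]$ by \Cref{rmk:Gerstenhaber_square-char} and is thus already determined by the bracket described above.

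The argument is essentially formal once the single structural input of \Cref{prop:RelBimHC-A} is in hand, so the only genuine work is the bookkeeping of shifted Koszul signs. The one point deserving real care is the normalisation $\Sq(\id[A])=\id[A]$ (equivalently $\Sq(\varepsilon)=\varepsilon$), since an incorrect sign or composition index there would propagate into the characteristic $2$ formula; I expect this verification, rather than any conceptual difficulty, to be the main obstacle.
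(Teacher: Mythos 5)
Your proposal is correct and follows essentially the same route as the paper's own proof: transport of structure along $\H[\bullet,*]{\kappa}$, the identification $\varepsilon=\kappa(\id[A])$ together with the computations $[\id[A],\id[A]]=0$ (odd total degree), $[c,\id[A]]=0$, and $\Sq(\id[A])=\id[A]$, followed by the Gerstenhaber relations to produce the remaining bracket and square formulas. The only addition is your explicit biderivation argument for uniqueness, which the paper leaves implicit; this is a welcome but minor refinement rather than a different approach.
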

\begin{proof}
  Since $\kappa$ acts as the identity on the shifted graded Lie subalgebra
  $\HH{A}\subseteq\RelBimHH{A}{A}$, where the bimodule Gerstenhaber bracket is
  given by the usual Gerstenhaber bracket, we see that the transported shifted
  Lie bracket must also extend the Gerstenhaber bracket on
  $\HH{A}\subseteq\HH{A}<\varepsilon>$. In order to establish the remaining
  constraints on the transported shifted Lie bracket, recall from
  \Cref{ex:kappa-id} that $\kappa(\id[A])=\cupp{1_A}{\varepsilon}$. Hence,
  \[
    [\varepsilon,\varepsilon]=[\kappa(\id[A]),\kappa(\id[A])]=\kappa([\id[A],\id[A]])=0,
  \]
  since $[\id[A],\id[A]]=0$, for $\id[A]\in\BimHC*[0][0]<0>{A}$ has odd degree
  $|\id[A]|=1$ given by its arity. Similarly, for all $c\in\HH{A}$ we have
  $[c,\id[A]]=0$ (see \Cref{ex:bracket-1M}), and hence
  \[
    [c,\varepsilon]=[c,\cupp{1_A}{\varepsilon}]=[\kappa(c),\kappa(\id[A])]=\kappa([c,\id[A]])=0,\qquad
    c_1,c_2\in\HH{A}.
  \]
  Consequently, by the Gerstenhaber relation,
  \begin{align*}
    [c_1,\cupp{c_2}{\varepsilon}]&\stackrel{\eqref{eq:Gerstenhaber_relation}}{=}\cupp{[c_1,c_2]}{\varepsilon}\pm\underbrace{\cupp{c_2}{[c_1,\varepsilon]}}_{=0}=\cupp{[c_1,c_2]}{\varepsilon}.
  \end{align*}
  Similarly,
  \begin{align*}
    [\cupp{c_1}{\varepsilon},\cupp{c_2}{\varepsilon}]&\stackrel{\eqref{eq:Gerstenhaber_relation}}{=}\cupp{[\cupp{c_1}{\varepsilon},c_2]}{\varepsilon}\pm\cupp{c_2}{[\cupp{c_1}{\varepsilon},\varepsilon]}\\
                                                     &\stackrel{\eqref{eq:Gerstenhaber_relation}}{=}\underbrace{\cupp{(\pm\cupp{[c_1,c_2]}{\varepsilon})}{\varepsilon}}_{=0}\pm\cupp{c_2}{(\cupp{\underbrace{[\varepsilon,c_1]}_{=0}}{\varepsilon}\pm\cupp{c_1}{\underbrace{[\varepsilon,\varepsilon]}_{=0}})}=0.
  \end{align*}

  When $\chark(\kk)=2$, the transported Gerstenhaber square on
  $\HH{A}{[\varepsilon]}/(\varepsilon^2)$ must extend that of $\HH{A}$ since $\kappa$
  acts as the identity on $\HH{A}\subseteq\RelBimHH{A}{A}$. Moreover, since in
  $\RelBimHH{A}{A}$ we have
  \[
    \Sq(\id[A])=\preLie{\id[A]}{\id[A]}=\id[A]\bullet_1\id[A]=\id[A],
  \]
  we must have
  \[
    \Sq(\varepsilon)=\Sq(\kappa(\id[A]))=\kappa(\Sq(\id[A]))=\kappa(\id[A])=1_A\cdot\varepsilon=\varepsilon.
  \]
  Finally, the relations satisfied by the Gerstenhaber square in
  $\RelBimHH{A}{A}$ yield, for $c\in\HC{A}$,
  \[
    \Sq(\cupp{c}{\varepsilon})=\Sq(c)\cdot\underbrace{\varepsilon^2}_{=0}+\cupp{c}{\cupp{\underbrace{[c,\varepsilon]}_{=0}}{\varepsilon}}+\cupp{c^2}{\Sq(\varepsilon)}=\cupp{c^2}{\varepsilon}
  \]
  and then also, for $c_1,c_2\in\HC{A}$,
  \begin{align*}
    \Sq(c_1+\cupp{c_2}{\varepsilon})&=\Sq(c_1)+\Sq(\cupp{c_1}{\varepsilon})+[c_1,\cupp{c_2}{\varepsilon}]\\&=\Sq(c_1)+\cupp{c_2^2}{\varepsilon}+\cupp{[c_1,c_2]}{\varepsilon}\\
                                    &=\Sq(c_1)+\cupp{(c_2^2+[c_1,c_2])}{\varepsilon}.
  \end{align*}
  This finishes the proof.
\end{proof}

\begin{remark}
  \label{rmk:BV-motivation}
  We record the following formula for later use. Suppose that there exists an
  isomorphism of graded $A$-bimodules
  \[
    \varphi\colon A(n)\stackrel{\sim}{\longrightarrow}DA
  \]
  for some $n\in\ZZ$. Given a cochain
  \[
    c=(c_i)_{i=1}^m\in\BimHC[m-1]{A}=\bigoplus_{i=1}^m\BimHC*[i-1][m-i]{A},
  \]
  we have
  \[
    \Upsilon(c)_i=\Upsilon(c_{m-i+1})\in\BimHC*[i-1][m-i]{A}
  \]
  and hence
  \[
    \kappa(\Upsilon(c))(x_1,\dots,x_{m-1})\stackrel{\eqref{eq:kappa-sum}}{=}\sum_{i=1}^m(-1)^{m-i}\kappa(\Upsilon(c)_i)(x_1,\dots,x_{i-1},1_A,x_i,\dots,x_{m-1})
  \]
  Consequently, $\kappa(\Upsilon(c))$ is uniquely determined by the requirement
  that, for all $x_1,\dots,x_m\in A$,
  \begin{align*}
    \pairing<\s[n]&\kappa(\Upsilon(c))(x_1,\dots,x_{m-1}),x_m>_{\varphi}\\
                  &=\sum_{i=1}^m(-1)^{m-i}\pairing<\s[n]\Upsilon(c)_i(x_1,\dots,x_{i-1},1_A,x_i,\dots,x_{m-1}),x_m>_\varphi\\
                                     &\stackrel{\eqref{eq:Upsilon}}{=}(-1)^m\sum_{i=1}^m(-1)^{i(m-1)+\maltese}\pairing<\s[n]c_{m-i+1}(x_i,\dots,x_{m-1},x_m,x_1,\dots,x_{i-1}),1_A>_\varphi.
  \end{align*}
  where
  \[
    \maltese=\textstyle\left(\sum_{j=1}^{i-1}|x_j|\right)\Big(\sum_{k=i}^m|x_k|\Big).
  \]
\end{remark}

\Cref{rmk:BV-motivation} motivates the introduction of the following operator in
our context. We shall return to it in later sections.

\begin{definition}[{\cite{Tra08}}]
  \label{def:BV}
  Suppose that there exists an isomorphism of graded $A$-bimodules
  \[
    \varphi\colon A(n)\stackrel{\sim}{\longrightarrow}DA
  \]
  for some $n\in\ZZ$. The \emph{Batalin--Vilkovisky (BV) operator} is the
  bidegree $(-1,0)$ operator
  \[
    \BV=\BV_\varphi\colon\HC{A}\longrightarrow\HC[\bullet-1]{A}
  \]
  uniquely characterised by the property that, for a cochain $c\in\HC[m]<r>{A}$,
  the cochain $\BV(c)\in\HC[m-1]<r>{A}$ satisfies
  \begin{multline*}
    \pairing<\s[n]\BV(c)(x_1,\dots,x_{m-1}),x_m>_{\varphi}=\\\textstyle(-1)^{m+r+1}\sum_{i=1}^{m}(-1)^{i(m-1)+\maltese_i}\pairing<\s[n]c(x_i,\dots,x_m,x_1,\dots,x_{i-1}),1_A>_{\varphi},
  \end{multline*}
  for all $x_1,\dots,x_m\in A$, where
  \[
    \maltese_i=\textstyle\left(\sum_{j=1}^{i-1}|x_j|\right)\Big(\sum_{k=i}^m|x_k|\Big).
  \]
\end{definition}

\begin{remark}
  \label{rmk:BV-under-algebra-isos}
  Suppose that there exists an isomorphism of graded $A$-bimodules
  \[
    \varphi\colon A(n)\stackrel{\sim}{\longrightarrow}DA
  \]
  for some $n\in\ZZ$. Given an isomorphism of graded algebras
  \[
    \gamma\colon B\stackrel{\sim}{\longrightarrow}A,
  \]
  we obtain an induced isomorphism of graded $B$-bimodules
  \[
    \psi\colon B(n)\stackrel{\sim}{\longrightarrow}DB,\qquad \s[n]b\longmapsto\varphi(\s[n]\gamma(b))(\gamma(-))
  \]
  whose associated pairing is therefore
  \[
    \pairing<\s[n]b_1,b_2>_\psi=\pairing<\s[n]\gamma(b_1),\gamma(b_2)>_\varphi,\qquad b_1,b_2\in B.
  \]
  It is then straightforward to verify that the following diagram commutes:
  \[
    \begin{tikzcd}
      \HC{B}[B]\dar[swap]{\BV_\psi}\rar{(-)^\gamma}&\HC{A}[A]\dar{\BV_\varphi}\\
      \HC[\bullet-1]{B}[B]\rar{(-)^\gamma}&\HC[\bullet-1]{A}[A],
    \end{tikzcd}
  \]
  where the map $c\mapsto c^\gamma$ is defined in \Cref{rmk:functoriality-isos-HC}.
\end{remark}

In the case $n=0$, Tradler~\cite{Tra08a} shows that the Batalin--Vilkovisky
operator (\Cref{def:BV}) is obtained by dualising Connes' boundary operator on
the (normalised) Hochschild chain complex. Below we show that this is also the
case in this slightly more general setting, if only to explain how this
procedure interacts with our sign conventions.

\begin{definition}
  The \emph{Hochschild chain complex} of $A$ is the differential bigraded vector
  space with the components
  \[
    \HoHC[m]<r>{A}[A]=(A\otimes A^{\otimes m})^r,\qquad m\geq0,\quad r\in\ZZ,
  \]
  endowed with the differential of homological bidegree $(-1,0)$
  \begin{equation}
    \label{eq:HoHd}
    \HoHd\colon\HoHC[m]{A}[A]\longrightarrow\HoHC[m-1]{A}[A]
  \end{equation}
  given by
  \begin{align*}
    \HoHd(x_0\otimes x_1\otimes\cdots\otimes x_m)&=x_0x_1\otimes\cdots\otimes x_m\\
                                                 &\phantom{=}+\sum_{i=1}^{m-1}(-1)^ix_0\otimes x_1\otimes\cdots\otimes x_ix_{i+1}\otimes\cdots\otimes x_m\\
                                                 &\phantom{=}+(-1)^{m+\maltese_1}x_mx_0\otimes x_1\otimes\cdots\otimes x_{m-1},\notag
  \end{align*}
  where $x_0,x_1,\dots,x_m\in A$ and
  \[
    \textstyle\maltese_1\coloneqq|x_m|\left(\sum_{i=0}^{m-1}|x_i|\right)
  \]
  Its homology $\HoHH{A}[A]$ is the \emph{Hochschild homology} of $A$.
\end{definition}

Suppose that there exists an isomorphism of graded $A$-bimodules
\[
  \varphi\colon A(n)\stackrel{\sim}{\longrightarrow}DA
\]
for some $n\in\ZZ$, which we fix until the end of this section. We introduce the
isomorphisms of vector spaces
\begin{equation}
  \label{eq:Xi}
  \Xi=\Xi_\varphi\colon\HC[m]<r>{A}[A]\stackrel{\sim}{\longrightarrow}D(\HoHC[m]<r>{A}[A](n)),\qquad
  m\in\ZZ,\quad r\in\ZZ,
\end{equation}
where, for $c\in\HC[m]<r>{A}[A]$ and $x_0,x_1,\dots,x_m\in A$,
\[
  \Xi(c)(\s[n](x_0\otimes x_1\cdots\otimes
  x_m)\coloneqq(-1)^{nr+m+\maltese_2}\pairing<\s[n]c(x_1,\dots,x_m),x_0>_\varphi,
\]
and
\[
  \textstyle\maltese_2\coloneqq|x_0|\left(\sum_{i=1}^m|x_i|\right).
\]

\begin{proposition}
  \label{prop:Xi-iso}
  The isomorphisms of vector spaces \eqref{eq:Xi} assemble into an isomorphism
  of differential bigraded vector spaces
  \[
    \Xi=\Xi_\varphi\colon\HC{A}[A]\stackrel{\sim}{\longrightarrow}D(\HoHC{A}[A](n)).
  \]
\end{proposition}
\begin{proof}
  Indeed, for $c\in\HC[m]<r>{A}[A]$ and $x_0,x_1,\dots,x_m,x_{m+1}\in A$,
  \begin{align*}
    \Xi(\Hd(c))&(\s[n](x_0\otimes x_1\otimes\cdots\otimes x_{m+1}))\\
               &\stackrel{\eqref{eq:Xi}}{=}(-1)^{nr+m+1+\maltese_1}\pairing<\s[n]\Hd(c)(x_1,\dots,x_m,x_{m+1}),x_0>_\varphi\\
               &\stackrel{\eqref{eq:Hd-full}}{=}(-1)^{nr+m+1+\maltese_1}\Big((-1)^{r|x_1|+r}\pairing<\s[n]x_1c(x_2,\dots,x_m,x_{m+1}),x_0>_\varphi\\
               &\phantom{=}+\sum_{i=1}^m(-1)^{i+r}\pairing<\s[n]c(x_1,\dots,x_ix_{i+1},\dots,x_{m+1}),x_0>_\varphi\\
               &\phantom{=}+(-1)^{m+1+r}\pairing<\s[n]c(x_1,\dots,x_m)x_{m+1},x_0>_\varphi\Big)\\
               &=(-1)^{nr+m+1+r+\maltese_1}\Big((-1)^{r|x_1|}\pairing<\s[n]x_1c(x_2,\dots,x_m,x_{m+1}),x_0>_\varphi\\
               &\phantom{=}+\sum_{i=1}^m(-1)^{i}\pairing<\s[n]c(x_1,\dots,x_ix_{i+1},\dots,x_{m+1}),x_0>_\varphi\\
               &\phantom{=}+(-1)^{m+1}\pairing<\s[n]c(x_1,\dots,x_m)x_{m+1},x_0>_\varphi\Big),
  \end{align*}
  where
  \[
    \textstyle\maltese_1=|x_0|\left(\sum_{i=1}^{m+1}|x_i|\right).
  \]
  On the other hand, keeping in mind that $\Xi(c)$ is a functional of degree
  $r-n$ and using the formula
  \[
    b(n)^*(\Xi(c))\s[n]=(-1)^{r+1}\Xi(c)\s[n]b,
  \]
  we have
  \begin{align*}
    b(n)^*(\Xi(c))&(\s[n](x_0\otimes x_1\otimes\cdots\otimes x_{m+1}))\\
                  &=(-1)^{r+1}\Xi(c)(\s[n]b(x_0\otimes x_1\otimes\cdots\otimes x_{m+1}))\\
                  &\stackrel{\eqref{eq:HoHd}}{=}(-1)^{r+1}\Big(\Xi(c)(\s[n](x_0x_1\otimes\cdots\otimes x_{m+1}))\\
                  &\phantom{=}+\sum_{i=1}^m(-1)^i\Xi(c)(\s[n](x_0\otimes x_1\otimes\cdots\otimes x_ix_{i+1}\otimes\cdots\otimes x_{m+1}))\\
                  &\phantom{=}+(-1)^{m+1+\maltese_2}\Xi(c)(\s[n](x_{m+1}x_0\otimes x_1\otimes\cdots\otimes x_m))\Big)\\
                  &\stackrel{\eqref{eq:Xi}}{=}(-1)^{r+1+nr+m}\Big((-1)^{\maltese_3}\pairing<\s[n]c(x_2,\dots,x_{m+1}),x_0x_1>_\varphi
    \\&\phantom{=}+\sum_{i=1}^m(-1)^{i+\maltese_1}\pairing<\s[n] c(x_1,\dots,x_ix_{i+1},\dots,x_{m+1}),x_0>_\varphi\\
                  &\phantom{=}+(-1)^{m+1+\maltese_2+\maltese_4}\pairing<\s[n]c(x_1\otimes\cdots\otimes x_m),x_{m+1}x_0>_\varphi\Big)\\
                  &\stackrel{\eqref{cor:graded-symmetric-pairing}}{=}(-1)^{r+1+nr+m}\Big((-1)^{\maltese_3+\maltese_5}\pairing<\s[n]x_1c(x_2,\dots,x_{m+1}),x_0>_\varphi
    \\&\phantom{=}+\sum_{i=1}^m(-1)^{i+\maltese_1}\pairing<\s[n] c(x_1,\dots,x_ix_{i+1},\dots,x_{m+1}),x_0>_\varphi\\
                  &\phantom{=}+(-1)^{m+1+\maltese_2+\maltese_4}\pairing<\s[n]c(x_1\otimes\cdots\otimes x_m),x_{m+1}x_0>_\varphi\Big),
  \end{align*}
  where
  \begin{align*}
    \maltese_2&\textstyle=|x_{m+1}|\left(\sum_{j=0}^m|x_j|\right),&\qquad\maltese_3&\textstyle=(|x_0|+|x_1|)\left(\sum_{j=2}^{m+1}|x_j|\right),\\
    \maltese_4&\textstyle=(|x_{m+1}|+|x_0|)\left(\sum_{j=1}^m|x_j|\right),&\maltese_5&\textstyle=|x_1|\left(r+|x_0|+\sum_{j=2}^{m+2}|x_j|\right).
  \end{align*}
  The claim follows since, modulo $2$,
  \[
    \maltese_2+\maltese_4=\maltese_1\qquad\text{and}\qquad\maltese_3+\maltese_5=r|x_1|+\maltese_1.\qedhere
  \]
\end{proof}

\begin{definition}
  We let $\nHC{A}[A]\subseteq\HC{A}[A]$ be the subcomplex of \emph{normalised
    Hochschild cochains}, that is cochains $c\in\HC{A}[A]$ that vanish on all
  inputs containing the unit element of $A$:
  \[
    c(\dots,1_A,\dots)=0.
  \]
  It is well known that the canonical inclusion map
  $\nHC{A}[A]\hookrightarrow\HC{A}[A]$ is a quasi-isomorphism, see for
  example~\cite[\S1.5.7]{Lod98}. Similarly, we let $\overline{A}\coloneqq
  A/\kk1_A$ and consider the \emph{normalised Hochschild chain complex}
  \[
    \nHoHC[m]<r>{A}[A]\coloneqq (A\otimes\overline{A}^{\otimes m})^r,\qquad
    m\geq0,\quad r\in\ZZ,
  \]
  which is the quotient of $\HoHC{A}[A]$ by its subcomplex spanned by all
  elementary tensors of the form
  \[
    x\otimes x_1\otimes\cdots\otimes 1_A\otimes\cdots x_m,
  \]
  that is such that at least one of $x_1,\dots,x_m$ is the unit element of $A$.
  The canonical quotient map $\HoHC{A}[A]\twoheadrightarrow\nHoHC{A}[A]$ is a
  quasi-isomorphism, see for example~\cite[Proposition~1.1.15]{Lod98}.
\end{definition}

\begin{remark}
  \label{rmk:Xi-normalised}
  The isomorphism in \Cref{prop:Xi-iso} induces an isomorphism of differential
  bigraded vector spaces
  \[
    \Xi=\Xi_\varphi\colon\nHC{A}[A]\stackrel{\sim}{\longrightarrow}D(\nHoHC{A}[A](n)).
  \]
\end{remark}

Recall that \emph{Connes' (normalised) boundary operator} is the homological
bidegree (1,0) operator
\begin{equation}
  \label{eq:B}
  B\colon\nHoHC[m-1]{A}[A]\longrightarrow\nHoHC[m]{A}[A]
\end{equation}
given by
\[
  B(x_1\otimes\cdots\otimes
  x_m)\coloneqq\sum_{i=1}^m(-1)^{i(m-1)+\maltese_i}1_A\otimes
  x_i\otimes\cdots\otimes x_m\otimes x_1\otimes\cdots x_{i-1},
\]
where $x_1,\dots,x_m\in A$ and
\[
  \textstyle\maltese_i\coloneqq\left(\sum_{j=1}^{i-1}|x_j|\right)\Big(\sum_{k=i}^m|x_k|\Big).
\]
It satisfies $B^2=0$ and $Bb+bB=0$, see for example~\cite[Section~2.1]{Lod98}.

\begin{proposition}
  The following diagram commutes:
  \[
    \begin{tikzcd}
      \HC{A}[A]\rar{\Xi}\dar{\BV}&D(\HoHC[m]{A}[A](n))\dar{B(n)^*}\\
      \HC[\bullet-1]<r>{A}[A]\rar{\Xi}&D(\HoHC[\bullet-1]{A}[A](n)).
    \end{tikzcd}
  \]
  Consequently, $\BV^2=0$ and $\BV\Hd+\Hd\BV=0$ on the (quasi-isomorphic)
  subcomplex $\nHC{A}[A]\subseteq\HC{A}[A]$ of normalised Hochschild cochains.
  In particular, the Batalin--Vilkovisky operator descends to a bidegree
  $(-1,0)$ differential
  \[
    \BV\colon\HH{A}[A]\longrightarrow\HH[\bullet-1]{A}[A]
  \]
  that participates in a commutative diagram of the form
  \[
    \begin{tikzcd}
      \HH{A}[A]\rar{\sim}\dar{\BV}&D(\HoHH[m]{A}[A](n))\dar{B(n)^*}\\
      \HH[\bullet-1]<r>{A}[A]\rar{\sim}&D(\HoHH[\bullet-1]{A}[A](n)).
    \end{tikzcd}
  \]
\end{proposition}
\begin{proof}
  Indeed, for $c\in\HC[m]<r>{A}[A]$ and $x_1,\dots,x_m\in A$,
  \begin{align*}
    \Xi(\BV(c))&(\s[n](x_m\otimes x_1\otimes\cdots\otimes x_{m-1}))\\
               &\stackrel{\eqref{eq:Xi}}{=}(-1)^{nr+m-1+\maltese_1}\pairing<\s[n]\BV(c)(x_1\otimes\cdots\otimes x_{m-1}),x_m>_\varphi\\
               &\stackrel{\ref{def:BV}}{=}(-1)^{nr+m-1+\maltese_1+m+r+1}\\
               &\phantom{====}\sum_{i=1}^m(-1)^{i(m-1)+\maltese_i}\pairing<\s[n]c(x_i,\dots,x_m,x_1,\dots,x_{i-1}),1_A>_\varphi,
  \end{align*}
  where
  \[
    \textstyle\maltese=|x_m|\left(\sum_{j=1}^{m-1}|x_j|\right)\qquad\text{and}\qquad\maltese_i=\textstyle\left(\sum_{j=1}^{i-1}|x_j|\right)\Big(\sum_{k=i}^m|x_k|\Big).
  \]
  On the other hand, since $\Xi(c)$ is a functional of degree $r-n$, and keeping
  the formula
  \[
    B(n)^*(\Xi(c))\s[n]=(-1)^{|\Xi(c)|+n}\Xi(c)\s[n]B=(-1)^{m+r}\Xi(c)\s[n]B
  \]
  in mind, we have
  \begin{align*}
    B(n)^*(\Xi(c))&(\s[n](x_m\otimes x_1\otimes\cdots\otimes x_{m-1}))\\
                  &=(-1)^{m+r+\maltese}\Xi(c)(\s[n]B(x_1\otimes\cdots\otimes x_{m-1}\otimes x_m))\\
                  &\stackrel{\eqref{eq:B}}{=}(-1)^{m+r}\Big(\sum_{i=1}^m(-1)^{i(m-1)+\maltese_i}\\
                  &\phantom{====}\Xi(c)(\s[n](1_A\otimes x_i\otimes\cdots\otimes x_m\otimes x_1\otimes x_{i-1}))\Big)\\
                  &\stackrel{\eqref{eq:Xi}}{=}(-1)^{m+r+nr+m+\maltese}\Big(\sum_{i=1}^m(-1)^{i(m-1)+\maltese_i}\\
                  &\phantom{====}\pairing<\s[n]c(x_i,\dots,x_m,x_1,\dots,x_{i-1}),1_A>_\varphi\Big).
  \end{align*}
  Therefore, $\Xi\circ\BV=B(n)^*\circ\Xi$, as required. The remaining claims
  follow by transporting the identities $B^2=0$ and $Bb+bB=0$ along $\Xi$, which
  hold on $\nHoHC{A}[A]$, see also \Cref{rmk:Xi-normalised}.
\end{proof}

\begin{remark}
  The Batalin--Vilkovisky operator introduced in \Cref{def:BV} is expected to satisfy an
  identity of the form
  \[
    [x,y]=\pm(\BV(x\cdot y)-\BV(x)\cdot
    y-(-1)^{|x|}x\cdot\Delta(y)),\qquad x,y\in\HH{A}[A].
  \]
  In the case $n=0$, such an identity is established in~\cite[Theorem~2]{Tra08}.
  Since we only use this identity once---in the proof of
  \Cref{prop:BV-non-zero}---in the case $n=0$ and under the additional
  assumption that $\chark{\kk}=2$, we do not pursue this question any further in
  this article.
\end{remark}

\begin{remark}
  In this article, we are exclusively interested in applying the
  Batalin--Vilkovisky operator to normalised Hochschild cochains that are part
  a strictly unital minimal $A_\infty$-structure on a suitable graded algebra,
  see the proof of \Cref{prop:CY-Ai-aux}.
\end{remark}

\section{\texorpdfstring{$A_\infty$-algebras and $A_\infty$-bimodules}{A-infinity
  algebras and A-infinity bimodules}}
\label{sec:Ai-stuff}

In this section we first review several important aspects concerning
$A_\infty$-algebras and $A_\infty$-bimodules that are needed to prove
\Cref{thm:CY-Kadeikshvili} and its variant for minimal $A_\infty$-algebras
(\Cref{thm:CY-Kadeishvili-Ai}).

\subsection{Prelude: More on infinitesimal compositions}

The following considerations concerning infinitesimal compositions are useful
for dealing with $A_\infty$-equations in a convenient way.

\begin{example}
  \label{ex:partial-via-braces}
  Let $(V,d_V)$ and $(W,d_W)$ be dg vector spaces. Then,
  \[
    \partial_{V^{\otimes
        n},W}(c)=\braces{d_W}{c}+(-1)^{n+r}\braces{c}{d_V},\qquad
    c\in\dgHom[\kk]{V^{\otimes n}}{W}[r].
  \]
  Indeed,
  \begin{align*}
    \braces{d_W}{c}+\braces{c}{d_V}&=d_W\bullet_1 c+(-1)^{n+r}\sum_{i=1}^nc\bullet_id_V\\
                                   &\stackrel{\eqref{eq:infinitesimal_composition}}{=}d_W\circ c+(-1)^{n+r}\sum_{i=1}^n(-1)^{n-1}c\circ(\id[V]^{\otimes(i-1)}\otimes d_V\otimes\id[V]^{\otimes(n-i)})\\
                                   &=d_W\circ c-(-1)^r c\circ\left(\sum_{i=1}^n\id[V]^{\otimes(i-1)}\otimes d_V\otimes\id[V]^{\otimes(n-i)}\right)\\
                                   &\stackrel{\eqref{eq:partial}}{=}d_W\circ c-(-1)^rc\circ d_{V^{\otimes n}}.
  \end{align*}
  In particular, if the total degree $n+r$ of $c$ is even, then
  \[
    \partial_{V^{\otimes n},W}(c)=\braces{d_W}{c}+\braces{c}{d_V}.
  \]
\end{example}

We introduce the following variant of the operation defined by
\Cref{eq:binary_brace}.

\begin{notation}
  \label{notation:completed-brace}
  Suppose given countable tuples
  \[
    c^{(1)}=(c^{(1)}_{1},c^{(1)}_{2},c^{(1)}_{3},\dots,c^{(1)}_{n},\dots)\qquad\text{and}\qquad
    c^{(2)}=(c^{(2)}_{1},c^{(2)}_{2},c^{(2)}_{4},\dots,c^{(2)}_{n},\dots),
  \]
  where
  \[
    c^{(1)}_{n}\colon X_{1,n}\otimes\cdots\otimes X_{n,n}\longrightarrow X_n
  \]
  and
  \[
    c^{(2)}_{n}\colon Y_{1,n}\otimes\cdots\otimes Y_{n,n}\longrightarrow Y_n
  \]
  are homogeneous morphisms of graded vector spaces. We define
  $\braces{c_1}{c_2}$ to be the tuple with the components
  \begin{equation}
    \label{eq:binary_brace-pq}
    \sum_{p+q-1=n}\braces{c_p^{(1)}}{c_q^{(2)}}=\sum_{p+q-1=n}\sum_{i=1}^pc_p^{(1)}\bullet_i c_q^{(2)},\qquad n\geq1,
  \end{equation}
  where the first sum ranges over all ordered decompositions $p+q-1=n$ with
  $p,q\geq1$ and the second sum ranges over all ordered decompositions $r+s+t=n$
  with $r,t\geq0$ and $s\geq1$. Notice that
  \begin{equation}
    \label{eq:binary_brace-rst}
    \sum_{p+q-1=n}\sum_{i=1}^pc_p^{(1)}\bullet_i
    c_q^{(2)}=\sum_{r+s+t=n}c^{(1)}_{r+1+t}\bullet_{r+1}c^{(2)}_{s},\qquad n\geq1,
  \end{equation}
  which is another convenient expression for $\braces{c_1^{(1)}}{c_2^{(2)}}$.
\end{notation}

\subsection{$A_\infty$-algebras and $A_\infty$-morphisms between them}

We recall the definition of an $A_\infty$-algebra, for which we adopt the sign
conventions in~\cite{Lef03}. We also recommend the articles~\cite{Kel01,Kel02a},
but warn the reader that the sign conventions used therein do not agree with
those in \cite{Lef03}. Finally, we remind the reader of \Cref{rmk:HC-gvsp} and
\Cref{notation:completed-brace}.

\begin{definition}
  An \emph{$A_\infty$-algebra} is a pair $(A,\Astr)$ consisting of a graded
  vector space $A$ and a cochain
  \[
    \Astr=(\Astr<1>,\Astr<2>,\Astr<3>,\dots,\Astr<n>,\dots)\in\prod_{n\geq1}\HC[n]<2-n>{A},
  \]
  called an \emph{$A_\infty$-algebra structure}, such that the
  $A_\infty$-equation is satisfied:
  \[
    \braces{\Astr}{\Astr}=0.
  \]
  We say that an $A_\infty$-algebra $(A,\Astr)$ is \emph{minimal} if $m_1=0$.
\end{definition}

\begin{remark}
  \label{rmk:strict_unitality}
  There are various notions of unitality for $A_\infty$-algebras,
  see~\cite[Section~I.2]{Sei08} for a discussion of this issue and its
  resolution. The notion of unitality that is most relevant for us is that of
  \emph{strict unitality}, that is the existence of a degree $0$ cocycle $1$
  which is a unit for the binary operation and such that
  \[
    \Astr<n>[](\dots,1,\dots)=0,\qquad n\geq3.
  \]
  We shall remind the reader of this assumption whenever it becomes relevant in
  \Cref{prop:CY-Ai-aux}, \Cref{thm:CY-Kadeishvili-Ai} and \Cref{rmk:BimHMC-HMC}.
\end{remark}

We now discuss alternative presentations of the $A_\infty$-equation(s). Our
motivation for this is two-fold: On the one hand, we wish to compare our sign
conventions with those in the literature; on the other hand, different
presentations of these equations suggest different interpretations that are
relevant for us in the sequel.

\begin{remark}
  Let $(A,\Astr)$ be an $A_\infty$-algebra. According to
  \Cref{eq:binary_brace-rst}, the components of the $A_\infty$-equation
  $\braces{\Astr}{\Astr}=0$ are given by
  \begin{multline}
    \label{eq:Lef03}
    0=\sum_{r+s+t=n}\Astr<r+1+t>\bullet_{r+1}\Astr<s>\\\stackrel{\eqref{eq:infinitesimal_composition}}{=}\sum_{r+s+t=n}(-1)^{rs+t}\Astr<r+1+t>\circ(\id[A]^{\otimes
      r}\otimes\Astr<s>\otimes\id[A]^{\otimes t}),
  \end{multline}
  where $n\geq1$ and the sum ranges over all ordered partitions $r+s+t=n$ with
  $s\geq1$ and $r,t\geq0$. This is the form of the $A_\infty$-equations used in
  \cite[Definition~1.2.1.1]{Lef03}.
\end{remark}

\begin{remark}
  \label{rmk:Tradler}
  Let $(A,\Astr)$ be an $A_\infty$-algebra. According to
  \Cref{eq:binary_brace-rst}, the components of the $A_\infty$-equation
  $\braces{\Astr}{\Astr}=0$ are
  \begin{align}
    \label{eq:A-infty-equations-operadic}
    \begin{split}
      0&=\sum_{p+q-1=n}\sum_{i=1}^{p}\Astr<p>\bullet_{i}\Astr<q>\\
       &\stackrel{\eqref{eq:infinitesimal_composition}}{=}\sum_{p+q-1=n}\sum_{i=1}^{p}(-1)^{p+q+i(q+1)}\Astr<p>\circ(\id[A]^{\otimes(i-1)}\otimes\Astr<q>\otimes \id[A]^{\otimes(n-i)})\\
       &=\sum_{p+q-1=n}\sum_{i=1}^{p}(-1)^{1+n+i(q+1)}\Astr<p>\circ(\id[A]^{\otimes(i-1)}\otimes\Astr<q>\otimes \id[A]^{\otimes(n-i)})\\
       &=\sum_{p+q-1=n}\sum_{i=1}^{p}(-1)^{(i-1)(q+1)+n-q}\Astr<p>\circ(\id[A]^{\otimes(i-1)}\otimes\Astr<q>\otimes \id[A]^{\otimes(n-i)}),\\
    \end{split}
  \end{align}
  where the first sum ranges over all ordered decompositions $p+q-1=n$ with
  $p,q\geq1$, see~\Cref{notation:completed-brace}. The last line in
  \eqref{eq:A-infty-equations-operadic} is the form of the $A_\infty$-equations
  used in~\cite[Proposition~1.4]{Tra08}.
\end{remark}

\begin{remark}
  \label{rmk:Ai-equations}
  Let $(A,\Astr)$ be an $A_\infty$-algebra. Let us review the meaning of the
  $A_\infty$-equations for small values of $n$. We remind the reader of
  \Cref{ex:partial-via-braces}.
  \begin{enumerate}
  \item The $A_\infty$-equation for $n=1$ is
    \[
      0=\Astr<1>\bullet_1\Astr<1>,
    \]
    so that $(A,\Astr<1>)$ is a dg vector space. Below we write
    $\partial=\partial_{A,A}$ for the differential of the dg vector space
    $\dgHom[\kk]{(A,\Astr<1>)}{(A,\Astr<1>)}$.
  \item The $A_\infty$-equation for $n=2$ is
    \[
      0=\Astr<1>\bullet_1\Astr<2>+\Astr<2>\bullet_1\Astr<1>+\Astr<2>\bullet_2\Astr<1>\stackrel{\eqref{ex:partial-via-braces}}{=}\partial(\Astr<2>),
    \]
    so that
    \[
      \Astr<2>\colon(A,\Astr<1>)\otimes(A,\Astr<1>)\longrightarrow(A,\Astr<1>),
    \]
    is a morphism of dg vector spaces.
  \item The $A_\infty$-equation for $n=3$ is
    \begin{align*}
      0&=\overbrace{\Astr<1>\bullet_1\Astr<3>+\Astr<3>\bullet_1\Astr<1>+\Astr<3>\bullet_2\Astr<1>+\Astr<3>\bullet_3\Astr<1>}^{\stackrel{\eqref{ex:partial-via-braces}}{=}\partial(\Astr<3>)}\\
       &\phantom{=}+\Astr<2>\bullet_1\Astr<2>+\Astr<2>\bullet_2\Astr<2>,
    \end{align*}
    which, using \eqref{eq:infinitesimal_composition}, \eqref{eq:binary_brace} and
    \Cref{ex:partial-via-braces}, can be written as
    \begin{align}
      \label{eq:Ai-associativity}
      0&=\partial(\Astr<3>)+\Astr<2>\circ(\id[A]\otimes\Astr<2>-\Astr<2>\otimes\id[A]),
    \end{align}
    so that $\H{A,\Astr<1>}$ is a (non-unital) associative graded algebra.
    Notice also that, if $\partial(\Astr<3>)=0$, then the binary multiplication
    $\Astr<2>$ is already associative (for example if $\Astr<1>=0$ or
    $\Astr<3>=0$). In particular, a minimal $A_\infty$-algebra structure is
    equipped with a (non-unital) associative multiplication.
  \end{enumerate}
  In general, using the formula in the penultimate line
  in~\Cref{eq:A-infty-equations-operadic} and \Cref{ex:partial-via-braces}, we
  see that the $A_\infty$-equation for $n\geq1$ can also be written as
  \begin{align}
    \label{eq:minimal-A-infty-equation-partial}
    \begin{split}
      \partial(\Astr<n>)&=-\sum_{p+q-1=n}\braces{\Astr<p>}{\Astr<q>}\\
                        &\stackrel{\eqref{eq:A-infty-equations-operadic}}{=}\sum_{p+q-1=n}\sum_{i=1}^p(-1)^{i(q+1)+n}\Astr<p>\circ(\id[A]^{\otimes(i-1)}\otimes\Astr<q>\otimes\id[A]^{\otimes(n-i)}),
    \end{split}
  \end{align}
  where the first sum ranges over all ordered decompositions $p+q-1=n$ with
  $p,q\geq 2$. Notice that \eqref{eq:minimal-A-infty-equation-partial} is the form of the
  $A_\infty$-equations used in~\cite{Mar06}.
\end{remark}

\begin{remark}
  It follows from \Cref{rmk:Ai-equations} that an $A_\infty$-algebra $(A,\Astr)$
  with $\Astr<n>=0$ for $n\geq3$ is a dg algebra. Conversely, a dg algebra gives
  rise to an $A_\infty$-algebra with $\Astr<n>=0$ for $n\geq3$. In this way we
  may and we will identify dg algebras with $A_\infty$-algebras whose operations
  of arity $3$ or higher vanish.
\end{remark}

We now discuss the $A_\infty$-equations in the case of minimal $A_\infty$-algebras.

\begin{notation}
  For a minimal $A_\infty$-algebra $(A,\Astr)$, we let
  \[
    \Astr*\coloneqq\Astr-\Astr<2>=(0,0,\Astr<3>,\Astr<4>,\dots,\Astr<n>,\dots)\in\prod_{n\geq1}\HC[n]<2-n>{A}.
  \]
  We also write $(A,\Astr*)=(A,\Astr)$ when we wish to view $A$ as being
  equipped with the associative algebra structure given by $\Astr<2>$, rather
  than as a mere graded vector space.
\end{notation}

\begin{remark}
  \label{rmk:minimal-A-infinity-equations}
  Let $(A,\Astr)$ be a minimal $A_\infty$-algebra. The $A_\infty$-equations for
  $n=1,2$ are trivial, and for $n\geq3$ they take the form
  \begin{equation}
    \label{eq:minimal-A-infinity-equations}
    0=\sum_{p+q-1=n}\braces{\Astr<p>}{\Astr<q>};
  \end{equation}
  here, the sum ranges over all ordered decompositions $p+q-1=n$ with $p,q,\geq
  2$. In particular, the $A_\infty$-equation for $n=3$ is simply the
  associativity of the binary operation:
  \[
    \braces{\Astr<2>}{\Astr<2>}\stackrel{\eqref{eq:Ai-associativity}}{=}\Astr<2>\circ(\id[A]\otimes\Astr<2>-\Astr<2>\otimes\id[A])=0.
  \]
\end{remark}

\begin{remark}
  Let $(A,m^A)$ be a minimal $A_\infty$-algebra. Observe that\footnote{Here we
    use the fact that the brace relations for the Hochschild cochain complex
    extend to its natural completion with respect to the arity degree.}
  \begin{align*}
    \braces{\Astr}{\Astr}=\Sq(\Astr)&=\Sq(\Astr<2>+\Astr*)\\
              &\stackrel{\eqref{eq:Gerstenhaber_square-relations}}{=}\underbrace{\Sq(\Astr<2>)}_{=0}+\Sq(\Astr*)+[\Astr<2>,\Astr*]\\
              &\stackrel{\eqref{eq:Hd}}{=}\Hd(\Astr*)+\Sq(\Astr*),
  \end{align*}
  where $\Sq(\Astr<2>)=0$ since the multiplication $\Astr<2>$ is associative. In
  summary, the equation of a minimal $A_\infty$-algebra is the
  \emph{Maurer--Cartan equation}\footnote{If $\chark(\kk)\neq2$,
  the Maurer--Cartan equation takes the more familiar form
    \[
      \Hd(\Astr*)+\tfrac{1}{2}[\Astr*,\Astr*]=0,
    \]
  see \Cref{rmk:Gerstenhaber_square-char}.}
\begin{equation}
  \label{eq:MC-equation}
    \Hd(\Astr*)+\Sq(\Astr*)=0,
  \end{equation}
  where the Hochschild differential is the one for the associative algebra
  $(A,\Astr<2>)$.
\end{remark}

\begin{definition}
  Let $(A,\Astr)$ and $(B,\Astr[B])$ be $A_\infty$-algebras. An
  \emph{$A_\infty$-morphism} $f\colon(A,\Astr)\to(B,\Astr[B])$ is a tuple
  \[
    (f_1,f_2,f_3,\dots,f_n,\dots)\in\prod_{n\geq1}\dgHom[\kk]{A^{\otimes n}}{B}[1-n]
  \]
  such that the \emph{$A_\infty$-morphism equations} hold for each $n\geq1$:
  \begin{equation}
    \label{eq:Ai-morphism}
    \sum_{p+q-1=n}\braces{f_p}{\Astr<q>[A]}=\sum_{r=1}^n\sum\braces{\Astr<r>[B]}{f_{i_1},\dots,f_{i_r}},
  \end{equation}
  where the sum on the left-hand side runs over all ordered decompositions
  $p+q-1=n$ with $p,q\geq 1$, the internal sum on the right-hand side ranges
  over all ordered decompositions $i_1+\cdots+i_r=n$, and
  \begin{multline}
    \label{eq:morphism-brace}
    \braces{\Astr<r>[B]}{f_{i_1},\dots,f_{i_r}}\coloneqq\\
    (\cdots((\Astr<r>[B]\bullet_{1}f_{i_1})\bullet_{i_1+1}f_{i_2})\cdots)\bullet_{i_1+i_2+\cdots+i_{r-1}+1}f_{i_r}.
  \end{multline} 
  The degree $0$ morphism $f_1\colon A\to B$ is called the \emph{linear part}
  of $f$. By the $A_\infty$-morphism equation for $n=1$,
  \[
    f_1\bullet_1\Astr<1>[A]=\Astr<1>[B]\bullet_{1}f_1,
  \]
  so that $f_1\colon(A,\Astr<1>)\to(B,\Astr<1>[B])$ is a morphism of dg vector
  spaces. An $A_\infty$-morphism $f$ as above is
  \begin{itemize}
  \item an \emph{$A_\infty$-isomorphism} if the linear part $f_1$ is an
    isomorphism of graded vector spaces.
  \item a \emph{gauge $A_\infty$-isomorphism} if the linear part $f_1$ is the
    identity (hence we must have $A=B$).
  \item an \emph{$A_\infty$-quasi-isomorphism} if the linear part $f_1$ is a
    quasi-isomorphism of dg vector spaces.
  \item \emph{strict} if $f_n=0$ for $n\geq2$.
  \end{itemize}
\end{definition}

\begin{remark}
  Every $A_\infty$-quasi-isomorphism between $A_\infty$-algebras admits an
  inverse up to a suitable notion of homotopy, see for
  example~\cite[Theorem~2.9]{Sei08}.
\end{remark}

\begin{remark}
  Every $A_\infty$-algebra (for example a dg algebra) is
  $A_\infty$-quasi-isomorphic to a minimal one, called a minimal model, which is
  unique up to non-unique $A_\infty$-isomorphism,
  see~\cite{Mer99,KS01,Mar04,Mar06} among many others. We review the
  construction of minimal models of dg algebras in \Cref{subsec:minimods}.
\end{remark}

\begin{remark}
  Let $f\colon(A,\Astr)\to(B,\Astr[B])$ be a strict $A_\infty$-morphism between
  $A_\infty$-algebras. In this case, the $A_\infty$-morphism equations simplify
  to
  \begin{align}
    \label{eq:strict_A-infty-morphism}
    \begin{split}
    f_1\circ\Astr<n>[A]&=(\cdots((\Astr<n>\bullet_1f_1)\bullet_2f_1)\cdots)\bullet_nf_1\\
                       &\stackrel{\eqref{eq:infinitesimal_composition}}{=}\Astr<n>[B]\circ(\underbrace{f_1\otimes\cdots\otimes f_1}_{n\text{ times}}),
    \end{split}
  \end{align}
  so that $f_1\colon A\to B$ is strictly compatible with the corresponding
  $A_\infty$-structures.
\end{remark}

\begin{remark}
  \label{rmk:perturbation}
  Let $(A,\Astr)$ be an $A_\infty$-algebra and $f_1\colon
  A\stackrel{\sim}{\longrightarrow}B$ an isomorphism of graded vector spaces. It
  follows from \Cref{eq:strict_A-infty-morphism} that $f_1$ determines a strict
  $A_\infty$-isomorphism if we endow $B$ with the $A_\infty$-algebra structure
  \[
    \Astr<n>[B]\coloneqq f_1\circ(\Astr<n>\circ(f_1^{-1}\otimes\cdots\otimes
    f_1^{-1})),\qquad n\geq1,
  \]
  which is of course uniquely determined.
  More generally, \emph{any} tuple of morphisms
  \[
    f=(f_1,f_2,f_3,\dots,f_n,\dots)\in\prod_{n\geq1}\dgHom[\kk]{A^{\otimes n}}{B}[1-n]
  \]
  such that $f_1$ is an isomorphism of graded vector spaces determines a unique
  $A_\infty$-algebra structure $\Astr[B]$ on $B$ such that
  $f\colon(A,\Astr)\to(B,\Astr[B])$ is an $A_\infty$-isomorphism (the claimed
  $A_\infty$-structure on $B$ is constructed inductively by taking the
  $A_\infty$-morphism equations as an Ansatz). This flexibility of
  $A_\infty$-algebra structures is both a curse and a blessing: On the positive
  side, the notion of $A_\infty$-algebra is invariant under homotopy equivalences
  of dg vector spaces~\cite{Mar04} while that of dg algebra is not; on the
  negative side, it is very difficult to recognise $A_\infty$-isomorphic
  $A_\infty$-algebras and to define useful invariants of these in terms of their
  higher operations.
\end{remark}


\begin{remark}
  \label{rmk:minimal-Ai-morphism-algebra-map}
  Let $f\colon (A,\Astr)\to(B,\Astr[B])$ be an $A_\infty$-morphism between
  minimal $A_\infty$-algebras. The $A_\infty$-morphism equation for $n=2$
  simplifies to
  \[
    f_1\bullet_1\Astr<2>\stackrel{\eqref{eq:binary_brace}}{=}\braces{f_1}{\Astr<2>}=\braces{\Astr<2>[B]}{f_1,f_1}\stackrel{\eqref{eq:morphism-brace}}{=}(\Astr<2>[B]\bullet_1f_1)\bullet_2f_1.
  \]
  Explicitly,
  \[
    f_1(\Astr<2>(x,y))=\Astr<2>[B](f_1(x),f_1(y)),\qquad x,y\in A,
  \]
  so that $f_1\colon A\to B$ is a morphism of (non-unital) associative algebras.
\end{remark}

In this article we are chiefly interested in $A_\infty$-algebras that are sparse
in the following sense.

\begin{definition}
  \label{def:sparse}
  Let $d\geq1$. A graded vector space $V$ is \emph{$d$-sparse} if $V^i\neq0$
  implies $i\in d\ZZ$ (this condition is of course vacuous for $d=1$). We say
  that an $A_\infty$-algebra is \emph{$d$-sparse} if its underlying graded
  vector space is $d$-sparse. Similarly, we say that an $A_\infty$-algebra (for
  example a dg algebra) is \emph{cohomologically $d$-sparse} if its cohomology
  (with respect to $\Astr<1>[]$) is a $d$-sparse graded vector space.
\end{definition}

\begin{remark}
  \label{rmk:sparse-operations}
  Let $(A,\Astr)$ be a $d$-sparse $A_\infty$-algebra. For degree reasons, if
  $n\geq 1$ and $\Astr<n>\neq0$, then we must have $n\in 2+d\ZZ$ (recall that
  $\Astr<n>$ has degree $2-n$), see for example~\cite[Proposition~4.3.4]{JKM22}.
  Thus, the only possibly non-zero operations in such a $d$-sparse
  $A_\infty$-algebra are $\Astr<1>$ (if $d=1$) and $\Astr<kd+2>$, $k\geq0$. In
  particular, $d$-sparse $A_\infty$-algebras with $d\geq2$ are necessarily
  minimal (but cohomologically $d$-sparse ones need not be).
\end{remark}

\begin{remark}
  \label{rmk:sparse-morphisms}
  Let $f\colon(A,\Astr)\to(B,\Astr[B])$ be an $A_\infty$-morphism between
  $d$-sparse algebras, $d\geq2$. For degree reasons, $f_n\neq0$ then we must
  have $n\in 1+d\ZZ$ (recall that $f_n$ has degree $1-n$), see for
  example~\cite[Proposition~4.4.4]{JKM22}. Thus, the only possible non-zero
  components of a morphism between such $d$-sparse algebras are $f_{kd+1}$,
  $k\geq0$.
\end{remark}

\subsection{$A_\infty$-bimodules and $A_\infty$-morphisms between them}

We remind the reader of \Cref{notation:completed-brace}.

\begin{definition}
  Let $A=(A,\Astr)$ be an $A_\infty$-algebra. An \emph{$A_\infty$-bimodule over
    $A$} is a pair $(M,\Astr[M])$ consisting of a graded vector space $M$ and a
  cochain
  \[
    \Astr[M]=(\Astr<1>[M],\Astr<2>[M],\Astr<3>[M],\dots,\Astr<n>[M],\dots)\in\prod_{n\geq1}\BimHC[n-1]<2-n>{M}
  \]
  with
  \[
    \Astr<n>[M]=(\Astr<p,q>[M]\colon A^{\otimes p}\otimes M\otimes A^{\otimes
      q}\longrightarrow M)_{p+1+q=n},
  \]
  called an \emph{$A_\infty$-bimodule structure}, such that the
  \emph{$A_\infty$-bimodule equation} holds:
  \[
    \braces{\Astr[M]}{\Astr}+\braces{\Astr[M]}{\Astr[M]}=0.
  \]
  Notice that, if $p+1+q=n$, then the cochain $\Astr<p,q>[M]$ has the vertical degree $2-n=1-p-q$.
\end{definition}

\begin{remark}
  Under the name `bipolydule', a slightly implicit definition of
  $A_\infty$-bimodule is given in~\cite[Section~2.5.1]{Lef03} where the
  corresponding derived category is also constructed. Explicit (equivalent)
  definitions of $A_\infty$-bimodule including the complete set of equations are
  given in \cite[Section~1.10]{Mar92} and \cite[Section~2]{Tra08}, see also
  \cite{GJ90} where the definition is given using the formalism of dg
  bicomodules over dg bicoalgebras (also used in \cite{Tra08}). An operadic
  approach to the derived category of $A_\infty$-bimodules can be extracted from
  the very general results in~\cite{BM09}. The more general notion of
  $A_\infty$-bimodule over an $A_\infty$-category is studied in~\cite{LM08}.
\end{remark}

\begin{remark}
  \label{rmk:Tradler-bim}
  Let $(A,\Astr)$ be an $A_\infty$-algebra and $(M,\Astr[M])$ an
  $A_\infty$-bimodule. According to the conventions in
  \Cref{notation:completed-brace}, the components of the $A_\infty$-bimodule
  equation can be written as
  \begin{align}
    \label{eq:Ai-bimodule}
    0=\sum_{p+q+s=n}\braces{\Astr<p,q>[M]}{\Astr<s>}+\sum_{p+q+s+t+1=n}\Astr<p,q>[M]\bullet_{p+1}\Astr<s,t>[M],\qquad n\geq1,
  \end{align}
  where the sum on the left term ranges over all ordered decompositions
  $p+q+s=n$ with $s\geq1$ and $p+q\geq1$, and the sum on the right term ranges
  over all ordered decompositions $p+q+s+t+1=n$ with $p,q,s,t\geq0$. The left
  term decomposes further as
  \[
    \sum_{p+q+s=n}\braces{\Astr<p,q>[M]}{\Astr<s>}\stackrel{\eqref{eq:infinitesimal_composition}}{=}\sum_{p+q+s=n}\sum_{i=1}^p\Astr<p,q>[M]\bullet_i\Astr<s>+\sum_{p+q+s=n}\sum_{j=1}^q\Astr<p,q>[M]\bullet_{p+1+j}\Astr<s>,
  \]
  compare with the form of the $A_\infty$-bimodule equations given
  in~\cite[Proposition~2.7]{Tra08}. Notice also that, for a fixed value of $n$,
  the $A_\infty$-bimodule equation \eqref{eq:Ai-bimodule} in fact encodes a
  system of $n$ linearly independent equations, for it takes place in the vector
  space
  \[
    \BimHC[n-1]<2-n>{A}[M]=\bigoplus_{i=1}^n\BimHC*[i-1][n-i]<2-n>{A}[M].
  \]
\end{remark}

In this article we are mainly interested in the following canonical
$A_\infty$-bimodule.

\begin{defprop}[{\cite[Lemma~4.1]{Tra08}}]
  \label{defprop:diaognal-Ai-bimodule}
  Let $(A,\Astr)$ be an $A_\infty$-algebra. The \emph{diagonal
    $A_\infty$-bimodule} $(A(0),\Astr[A[0]])$ is the $A_\infty$-bimodule with
  underlying graded vector space $A$ and $A_\infty$-bimodule structure with
  operations\footnote{We denote the diagonal $A_\infty$-bimodule structure by
    $\Astr[A[0]]$ to avoid confusing it with the given $A_\infty$-algebra
    structure $\Astr$ on $A$.}
  \[
    \Astr<n>[A[0]]=(\Astr<p,q>[A[0]])_{p+1+q=n}\in\BimHC[n-1]<2-n>{A},\qquad
    n\geq1,
  \]
  where
  \[
    \Astr<p,q>[A[0]]\coloneqq\Astr<p+1+q>[A],\qquad p+1+q=n.
  \]
\end{defprop}

We remind the reader of the following elementary fact: Given an algebra $A$ and
a vector space $M$, the set of $A$-bimodule structures on $M$ is in bijection
with the set of algebra structures on the vector space $A\oplus M$ that extend
the given algebra structure on $A$ and such that $M\subseteq A\oplus M$ is a
square-zero ideal. This observation admits the following generalisation to
$A_\infty$-algebras and $A_\infty$-bimodules.

\begin{proposition}
  \label{prop:Ai-bimodule-vs-Ai-algebra}
  Let $(A,\Astr)$ be an $A_\infty$-algebra and $M$ a graded vector space. The
  association $\Astr[M]\mapsto \Astr+\Astr[M]$ induces bijections between the
  following:
  \begin{enumerate}
  \item The set of $A_\infty$-bimodule structures $\Astr[M]$ over $(A,\Astr)$.
  \item The set of cochains
    \[
      \Astr[A\ltimes M]=\Astr+\Astr[M]\in\prod_{n\geq1}\RelBimHC[n]<2-n>{A}{M}
    \]
    such that $\braces{\Astr[A\ltimes M]}{\Astr[A\ltimes M]}=0$, that is such
    that $(A\oplus M,\Astr[A\ltimes M])$ is an $A_\infty$-algebra.
  \end{enumerate}
\end{proposition}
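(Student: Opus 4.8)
The plan is to exploit the fact that both the $A_\infty$-algebra equation on $A\oplus M$ and the $A_\infty$-bimodule equation on $M$ are packaged by the single identity $\braces{-}{-}=0$ applied to the relevant cochain, combined with the brace-algebra structure on $\RelBimHC{A}{M}$ described in \Cref{rmk:RelBimHC-operad}. The key point is that the brace operations on $\RelBimHC{A}{M}$ restrict to the brace operations on $\HC{A}$ (on the summand $\HC{A}$) and are defined case-by-case on the summand $\BimHC[\bullet-1]{M}$ exactly so as to mirror the infinitesimal compositions appearing in the $A_\infty$-bimodule equation. Since the association $\Astr[M]\mapsto\Astr+\Astr[M]$ is patently a bijection of underlying cochain tuples (with the first summand recording $\Astr$ and the second recording $\Astr[M]$), the entire content of the proposition is the single equation-matching assertion
\[
  \braces{\Astr[A\ltimes M]}{\Astr[A\ltimes M]}=\braces{\Astr}{\Astr}+\Big(\braces{\Astr[M]}{\Astr}+\braces{\Astr[M]}{\Astr[M]}\Big),
\]
where the first term lands in $\HC{A}$ and the parenthesised term lands in $\BimHC[\bullet-1]{M}$.

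First I would expand $\braces{\Astr+\Astr[M]}{\Astr+\Astr[M]}$ using bilinearity of the completed brace operation (\Cref{notation:completed-brace}) into the four terms $\braces{\Astr}{\Astr}$, $\braces{\Astr}{\Astr[M]}$, $\braces{\Astr[M]}{\Astr}$, and $\braces{\Astr[M]}{\Astr[M]}$. The crucial observation is that two of these four terms behave in a degenerate way dictated by the case analysis of the bimodule pre-Lie product in \Cref{subsec:RelBimHC}. Namely, when $c_1\in\HC{A}$ and $c_2\in\BimHC[\bullet-1]{M}$ we have $\preLie{c_1}{c_2}=0$, so by the definition of the brace via infinitesimal compositions the term $\braces{\Astr}{\Astr[M]}$ vanishes identically (an element of $\HC{A}$ has no input slot into which a bimodule cochain can be inserted). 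Thus the mixed cross-term collapses, and $\braces{\Astr}{\Astr}$ is the only term contributing to the $\HC{A}$-component, while $\braces{\Astr[M]}{\Astr}+\braces{\Astr[M]}{\Astr[M]}$ comprises the $\BimHC[\bullet-1]{M}$-component. Here $\braces{\Astr[M]}{\Astr}$ uses the infinitesimal compositions of the third bullet of the pre-Lie description (a bimodule cochain receiving an algebra cochain in one of its algebra slots, as in \eqref{eq:pre-Lie_product-RelBimHC-split}), and $\braces{\Astr[M]}{\Astr[M]}$ uses the fourth bullet (the associative-type product $c_1\bullet_{p+1}c_2$ of two bimodule cochains).

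The second step is simply to read off the equivalences. The $A_\infty$-algebra equation $\braces{\Astr[A\ltimes M]}{\Astr[A\ltimes M]}=0$ holds if and only if both its $\HC{A}$-component and its $\BimHC[\bullet-1]{M}$-component vanish; the former is precisely the $A_\infty$-equation $\braces{\Astr}{\Astr}=0$ for $(A,\Astr)$ (which is part of the hypothesis that $(A,\Astr)$ is an $A_\infty$-algebra), and the latter is precisely the $A_\infty$-bimodule equation $\braces{\Astr[M]}{\Astr}+\braces{\Astr[M]}{\Astr[M]}=0$ of the defining \Cref{subsec:RelBimHC} (compare the explicit expansion in \Cref{rmk:Tradler-bim}). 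Modulo the bookkeeping, this establishes the bijection: given $\Astr[M]$ with $(M,\Astr[M])$ an $A_\infty$-bimodule, the cochain $\Astr+\Astr[M]$ squares to zero; conversely, any such square-zero cochain decomposes uniquely as $\Astr+\Astr[M]$ and forces the bimodule equation.

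The main obstacle, and the only genuinely non-formal part, is verifying that the brace-algebra signs on $\RelBimHC{A}{M}$ (which are governed by the \emph{shifted} total degree and carry the operadic-suspension corrections discussed after \eqref{eq:infinitesimal_composition}) agree term-by-term with the signs in the $A_\infty$-bimodule equation as written in \eqref{eq:Ai-bimodule}. Concretely, I would need to check that the sign on each $\Astr<p,q>[M]\bullet_i\Astr<s>$ and on each $\Astr<p,q>[M]\bullet_{p+1}\Astr<s,t>[M]$ produced by $\braces{\Astr[M]}{\Astr}+\braces{\Astr[M]}{\Astr[M]}$ matches the prescribed sign in the bimodule equation. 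But this sign-matching is already implicitly guaranteed by the construction in \Cref{rmk:RelBimHC-operad}: the brace algebra $\RelBimHC{A}{M}$ is defined precisely as the one whose Maurer--Cartan elements are $A_\infty$-structures on the pair $(A,M(1))$, so the signs are engineered to coincide. I would therefore relegate the explicit sign verification to a routine case check (or cite \cite[Section~2.5]{JM25}), and present the structural decomposition above as the substance of the proof.
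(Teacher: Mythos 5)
Your proof is correct and takes essentially the same route as the paper's: expand $\braces{\Astr+\Astr[M]}{\Astr+\Astr[M]}$ bilinearly into four terms, kill the cross term $\braces{\Astr}{\Astr[M]}$ by the infinitesimal-composition rules (an algebra cochain has no slot for a bimodule input), use $\braces{\Astr}{\Astr}=0$ from the hypothesis, and identify the remainder $\braces{\Astr[M]}{\Astr}+\braces{\Astr[M]}{\Astr[M]}$ with the $A_\infty$-bimodule equation, invoking the compatibility of the brace structure on $\RelBimHC{A}{M}$ with that on $\HC{A\oplus M}$. Your closing worry about sign-matching is moot: the paper \emph{defines} the $A_\infty$-bimodule equation in exactly this brace formalism, so the identification is term-by-term equality by definition rather than a sign verification to be carried out.
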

\begin{proof}
  Recall that $\braces{\Astr}{\Astr}=0$ since $\Astr$ is an $A_\infty$-algebra
  structure. Hence, given a cochain
  \[
    \Astr[M]\in\prod_{n\geq1}\RelBimHC[n]<2-n>{A}{M},
  \]
  for $\Astr[A\ltimes M]=\Astr+\Astr[M]$ we have
  \[
    \braces{\Astr[A\ltimes
      M]}{\Astr[A\ltimes M]}=\underbrace{\braces{\Astr}{\Astr}}_{=0}+\underbrace{\braces{\Astr}{\Astr[M]}}_{\stackrel{\eqref{eq:infinitesimal_composition}}{=}0}+\braces{\Astr[M]}{\Astr}+\braces{\Astr[M]}{\Astr[M]}.
  \]
  The claim follows from the above equation together with the observation that
  there is an inclusion of bigraded vector spaces
  $\RelBimHC{A}{M}\subseteq\HC{A\oplus M}$ that is compatible with the infinitesimal
  compositions.
\end{proof}

\Cref{prop:Ai-bimodule-vs-Ai-algebra} motivates the following notation.

\begin{notation}
  Let $(A,\Astr)$ be an $A_\infty$-algebra and $(M,\Astr[M])$ an
  $A_\infty$-bimodule. We set
  \[
    \Astr[A\ltimes
    M]\coloneqq\Astr+\Astr[M]\in\prod_{n\geq1}\RelBimHC[n][2-n]{A}{M}.
  \]
\end{notation}

\begin{remark}
  Let $(A,\Astr)$ be a minimal $A_\infty$-algebra and $(M,\Astr[M])$ an
  $A_\infty$-bimodule over it. Notice that the associated $A_\infty$-algebra
  $(A\oplus M,\Astr[A\ltimes M])$ is also minimal. It follows from
  \Cref{rmk:minimal-A-infinity-equations} that the $A_\infty$-bimodule equations
  for $n=1,2$ are trivial, and for $n=3$ they yield the fact that the
  square-zero product $\Astr<2>[A\ltimes M]$ is associative, so that $M$ is a
  (non-unital) graded $A$-bimodule in the associative sense.
\end{remark}

The upshot of \Cref{prop:Ai-bimodule-vs-Ai-algebra} is that we can reduce
several aspects of the theory of $A_\infty$-bimodules to that of
$A_\infty$-algebras (with appropriate modifications). For example, the notion of
morphism between $A_\infty$-bimodules can be formulated in the following manner.
We remind the reader of \Cref{rmk:BimHC-MN}.

\begin{definition}
  \label{def:Ai-bimodule-morphism}
  Let $(A,\Astr)$ be an $A_\infty$-algebra and $(M,\Astr[M])$ and $(N,\Astr[N])$
  two $A_\infty$-bimodules over it. An \emph{$A_\infty$-morphism of
    $A_\infty$-bimodules} ${f\colon(M,\Astr[M])\to(N,\Astr[N])}$ is a cochain
  \[
    f=(f_1,f_2,f_3,\dots,f_n,\dots)\in\prod_{n\geq 1}\BimHC[n-1]<1-n>{M}[N]
  \]
  such that
  \[
    \widetilde{f}\coloneqq\id[A]\oplus f\colon(A\oplus M,\Astr[A\ltimes M])\to(A\oplus N,\Astr[A\ltimes N])
  \]
  is a morphism of $A_\infty$-algebras; notice that the cochain $f_n$ is in fact a tuple
  \[
    f_n=(f_{p,q}\colon A^{\otimes p}\otimes M\otimes A^{\otimes
      q}\longrightarrow N)_{p+1+q=n},
  \]
  of homogeneous morphisms of vector spaces if (vertical) degree $1-n=-p-q$.
  The degree $0$ morphism $f_1=f_{0,0}\colon M\to N$ is called the \emph{linear part}
  of $f$. By the $A_\infty$-morphism equation for $n=1$,
  \[
    f_1\bullet_1\Astr<1>[M]=\Astr<1>[n]\bullet_{1}f_1,
  \]
  so that $f_1\colon(M,\Astr<1>[M])\to(N,\Astr<1>[N])$ is a morphism of dg vector
  spaces. An $A_\infty$-morphism of $A_\infty$-bimodules $f$ as above is
  \begin{itemize}
  \item an \emph{$A_\infty$-isomorphism} if the linear part $f_1$ is an
    isomorphism of graded vector spaces.
  \item a \emph{gauge $A_\infty$-isomorphism} if the linear part $f_1$ is the
    identity (hence we must have $M=N$).
  \item an \emph{$A_\infty$-quasi-isomorphism} if the linear part $f_1$ is a
    quasi-isomorphism of dg vector spaces.
  \item \emph{strict} if $f_n=0$ for $n\geq2$.
  \end{itemize}
\end{definition}

\begin{remark}
  Let $(A,\Astr)$ be a minimal $A_\infty$-algebra and
  $f\colon(M,\Astr[M])\to(N,\Astr[N])$ an $A_\infty$-morphism between
  minimal $A_\infty$-bimodules over it. It follows from
  \Cref{rmk:minimal-Ai-morphism-algebra-map} that
  \[
    \widetilde{f}=\id[A]\oplus f_1\colon(A\oplus M,\Astr[A\ltimes M])\longrightarrow(A\oplus
    N,\Astr[A\ltimes N])
  \]
  is a morphism of (non-unital) associative algebras. Equivalently,
  \[
    f_1\colon(M,\Astr[M])\longrightarrow(N,\Astr[N])
  \]
  is a morphism of (non-unital) associative graded $A$-bimodules.
\end{remark}

\begin{remark}
  Every $A_\infty$-bimodule is $A_\infty$-quasi-isomorphic to a minimal one,
  called a minimal model, which is unique up to non-unique
  $A_\infty$-isomorphism. This follows from the existence of the projective
  model structure on dg operads, since the bimodule Hochschild complex arises
  from the linear endomorphism operad from~\cite{BM09}. In \Cref{subsec:minimods}
  we explain how to use \Cref{prop:Ai-bimodule-vs-Ai-algebra} together with the
  well-known formulas for transferred $A_\infty$-algebra structures~\cite{Mar06}
  to obtain explicit formulas in the case of dg bimodules over dg algebras.
\end{remark}

In this article we are chiefly interested in sparse minimal
$A_\infty$-bimodules, compare with \Cref{def:sparse}.

\begin{definition}
  Let $d\geq1$. An $A_\infty$-bimodule over an $A_\infty$-algebra is
  \emph{$d$-sparse} if its underlying graded vector space is $d$-sparse (this
  condition is vacuous if $d=1$). More generally, an $A_\infty$-bimodule (for
  example a dg bimodule) is
  \emph{cohomologically $d$-sparse} if its cohomology (with respect to $m_1$) is
  a $d$-sparse graded vector space.
\end{definition}

\begin{remark}
  Clearly, \Cref{rmk:sparse-operations,rmk:sparse-morphisms} apply equally well
  to $d$-sparse $A_\infty$-bimodules and morphisms between them. In particular,
  the first possibly non-trivial higher operation in a $d$-sparse
  $A_\infty$-bimodule is
  \[
    \Astr<d+2>[]=(\Astr<p,q>[])_{p+1+q=d+2}.
  \]
\end{remark}

\subsection{Shifts and linear duals of $A_\infty$-bimodules}

We now turn our attention to shifts and linear duals of $A_\infty$-bimodules.
We recall the relevant definitions.

\begin{defprop}
  \label{defprop:shift_Ai-bimodule}
  Let $(A,\Astr)$ be an $A_\infty$-algebra and $(M,\Astr[M])$ an
  $A_\infty$-bimodule over it.
  Given $n\in\ZZ$, the graded vector space
  $M(n)$ carries a canonical $A_\infty$-bimodule structure with operations
  \[
    \Astr<k>[M(n)]\coloneqq(\Astr<p,q>[M(n)])_{p+1+q=k}\in\BimHC[k-1]<2-k>{M(n)},\qquad k\geq1,
  \]
  where
  \[
    \Astr<p,q>[M(n)]\coloneqq(\Astr<p,q>[M])^{(n)},\qquad p+1+q=k,
  \]
  is defined by the formula in \Cref{eq:cn}.
\end{defprop}
\begin{proof}
  The fact that $\Astr[M(n)]$ is an $A_\infty$-bimodule structure follows from
  \Cref{prop:shifting-cochains}, as the latter yields the fact that
  $\Astr[A\ltimes M(n)]=\Astr+\Astr[M(n)]$ is an $A_\infty$-algebra structure
  that satisfies the conditions in \Cref{prop:Ai-bimodule-vs-Ai-algebra}:
  \[
    \braces{\Astr[A\ltimes M(n)]}{\Astr[A\ltimes M(n)]}\stackrel{\eqref{eq:Psi}}{=}\braces{\Psi(\Astr[A\ltimes M])}{\Psi(\Astr[A\ltimes M])}\stackrel{\ref{prop:shifting-cochains}}{=}\Psi(\underbrace{\braces{\Astr[A\ltimes M]}{\Astr[A\ltimes M]}}_{=0})=0.\qedhere
  \]
\end{proof}

\begin{remark}
  Let $(A,\Astr)$ be an $A_\infty$-algebra and $(M,\Astr[M])$ a cohomologically
  $d$-sparse $A_\infty$-bimodule over it, $d\geq1$. Clearly, $(M(n),\Astr[M(n)])$ is
  cohomologically $d$-sparse if and only if $n\in d\ZZ$.
\end{remark}

The following basic result can be deduced, using \cite[Section~3.8]{LM08}, from
the description of $A_\infty$-bimodules as suitable $A_\infty$-functors with
values in the dg category of dg vector spaces given in \emph{op.~cit.} We give a
direct proof for the convenience of the reader.

\begin{proposition}
  \label{prop:shifting-Ai-morphisms}
  Let $(A,\Astr)$ be an $A_\infty$-algebra and $(M,\Astr[M])$ and $(N,\Astr[N])$
  two $A_\infty$-bimodules over it. Let $n\in\ZZ$ and suppose given an
  $A_\infty$-morphism of $A_\infty$-bimodules
  \[
    \varphi\colon(M,\Astr[M])\longrightarrow(N,\Astr[N]).
  \]
  Then, the degree $-p-q$ maps
  \[
    \varphi_{p,q}^{(n)}=\Psi(\varphi_{p,q})\colon A^{\otimes p}\otimes M(n)\otimes A^{\otimes
      q}\longrightarrow N(n),\qquad k\geq1,\qquad p+1+q=k,
  \]
  defined as in \Cref{variant:shifting-cochains}, assemble into an
  $A_\infty$-morphism of $A_\infty$-bimodules
  \[
    \varphi^{(n)}\colon(M(n),\Astr[M(n)])\longrightarrow(N(n),\Astr[N(n)]).
  \]
\end{proposition}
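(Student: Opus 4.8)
The plan is to reduce the claim to the analogous statement for $A_\infty$-algebras via \Cref{prop:Ai-bimodule-vs-Ai-algebra} and \Cref{def:Ai-bimodule-morphism}, and then to transport the $A_\infty$-morphism equations along the shift map $\Psi$ of \Cref{prop:shifting-cochains}. By \Cref{def:Ai-bimodule-morphism}, showing that $\varphi^{(n)}$ is an $A_\infty$-morphism of $A_\infty$-bimodules $(M(n),\Astr[M(n)])\to(N(n),\Astr[N(n)])$ amounts to showing that ${\widetilde{\varphi^{(n)}}\coloneqq\id[A]\oplus\varphi^{(n)}}$ is an $A_\infty$-morphism of $A_\infty$-algebras from $(A\oplus M(n),\Astr[A\ltimes M(n)])$ to $(A\oplus N(n),\Astr[A\ltimes N(n)])$. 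Dually, the hypothesis that $\varphi$ is an $A_\infty$-morphism of $A_\infty$-bimodules means precisely that $\widetilde{\varphi}=\id[A]\oplus\varphi$ satisfies the $A_\infty$-morphism equations \eqref{eq:Ai-morphism} with respect to $\Astr[A\ltimes M]$ and $\Astr[A\ltimes N]$.

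First I would record two identifications. On the one hand, $\Psi(\widetilde\varphi)=\widetilde{\varphi^{(n)}}$, since $\Psi$ restricts to the identity on $\HC{A}$---hence fixes the arity-one component $\id[A]$ and the vanishing higher algebra components of $\widetilde\varphi$---while acting on the bimodule components by $\varphi_{p,q}\mapsto\Psi(\varphi_{p,q})=\varphi^{(n)}_{p,q}$. On the other hand, the computation carried out in the proof of \Cref{prop:shifting-cochains} (see also \Cref{defprop:shift_Ai-bimodule}) gives $\Psi(\Astr[A\ltimes M])=\Astr[A\ltimes M(n)]$, and identically $\Psi(\Astr[A\ltimes N])=\Astr[A\ltimes N(n)]$.

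The heart of the argument is then to apply $\Psi$ to both sides of each equation \eqref{eq:Ai-morphism} for $\widetilde\varphi$. Every term is assembled from iterated infinitesimal compositions $\bullet_i$: the left-hand side through the braces $\braces{\widetilde\varphi_p}{\Astr<q>[A\ltimes M]}$, and the right-hand side through the morphism braces $\braces{\Astr<r>[A\ltimes N]}{\widetilde\varphi_{i_1},\dots,\widetilde\varphi_{i_r}}$ of \eqref{eq:morphism-brace}. Invoking the strict compatibility of $\Psi$ with infinitesimal compositions---as established in \Cref{prop:shifting-cochains} for compositions internal to $\HC{A}$, $\RelBimHC{A}{M}$ and $\RelBimHC{A}{N}$, and in \Cref{variant:shifting-cochains} for those involving the morphism-type cochains $M\to N$ (all remaining compositions vanishing, hence trivially compatible)---I would move $\Psi$ inside every brace. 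Together with the two identifications above, this transforms the $A_\infty$-morphism equations for $\widetilde\varphi$ into exactly those for $\widetilde{\varphi^{(n)}}$, as desired.

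The main obstacle is the bookkeeping required to see that $\Psi$ commutes with the iterated, multi-input morphism brace \eqref{eq:morphism-brace}, rather than merely with a single binary composition. One must verify that each intermediate infinitesimal composition arising when \eqref{eq:morphism-brace} is unfolded falls into a case covered by \Cref{prop:shifting-cochains} or \Cref{variant:shifting-cochains}; this follows by induction on the number of factors using the associativity relations \eqref{eq:infinitesimal_composition-associative}, which $\Psi$ respects factor by factor. No signs beyond those already built into the definition \eqref{eq:cn} of $\Psi$---and thus into the cited compatibility statements---appear, so that once this compatibility is secured the proof is a purely formal transport of equations.
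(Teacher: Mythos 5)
Your proposal is correct and takes essentially the same route as the paper's own proof: both reduce to the algebra-level statement by forming $\widetilde{\varphi}=\id[A]\oplus\varphi$, apply $\Psi$ to the $A_\infty$-morphism equations \eqref{eq:Ai-morphism}, and push $\Psi$ inside the braces using the compatibilities of \Cref{variant:shifting-cochains} together with the identification $\Psi(\Astr[A\ltimes M])=\Astr[A\ltimes M(n)]$ (and likewise for $N$). The only difference is that you spell out the inductive unfolding of the iterated morphism braces \eqref{eq:morphism-brace} via \eqref{eq:infinitesimal_composition-associative}, a bookkeeping step the paper leaves implicit.
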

\begin{proof}
  We set
  \[
    \widetilde{\varphi}\coloneq\id[A]\oplus\varphi\colon(A\oplus
    M,\Astr[A\ltimes M])\longrightarrow(A\oplus N,\Astr[A\ltimes N]),
  \]
  which is an $A_\infty$-morphism of $A_\infty$-algebras by assumption.
  Similarly, we set
  \[
    \widetilde{\varphi}(n)\coloneq\id[A]\oplus\varphi(n)\colon(A\oplus
    M(n),\Astr[A\ltimes M(n)])\longrightarrow(A\oplus N(n),\Astr[A\ltimes
    N(n)]).
  \]
  Using the compatibilities stated in \Cref{variant:shifting-cochains}, we
  compute, for $n\geq1$,
  \begin{align*}
    \sum_{p+q-1=n}\braces{\widetilde{\varphi}_p(n)}{\Astr<q>[A\ltimes M(n)]}&=\sum_{p+q-1=n}\braces{\Psi(\widetilde{\varphi}_p)}{\Psi(\Astr<q>[A\ltimes M])}\\
                                                                            &=\sum_{p+q-1=n}\Psi(\braces{\widetilde{\varphi}_p}{\Astr<q>[A\ltimes M]})\\
                                                                            &\stackrel{\eqref{eq:Ai-morphism}}{=}\sum_{r=1}^n\sum\Psi(\braces{\Astr<r>[A\ltimes M]}{\widetilde{\varphi}_{i_1},\dots,\widetilde{\varphi}_{i_r}})\\
                                                                            &=\sum_{r=1}^n\sum\braces{\Psi(\Astr<r>[A\ltimes M])}{\Psi(\widetilde{\varphi}_{i_1}),\dots,\Psi(\widetilde{\varphi}_{i_r})}\\
                                                                            &=\sum_{r=1}^n\sum\braces{\Astr<r>[A\ltimes M(n)]}{\widetilde{\varphi}_{i_1}(n),\dots,\widetilde{\varphi}_{i_r}(n)},
  \end{align*}
  where all the non-indexed internal sums range over all ordered
  decompositions $i_1+\cdots+i_r=n$. This shows that $\widetilde{\varphi}(n)$ is
  a morphism of $A_\infty$-algebras, as required.
\end{proof}

\begin{defprop}[{\cite[Lemma~2.9]{Tra08}}]
  \label{defprop:dual_Ai-bimodule}
  Let $(A,\Astr)$ be an $A_\infty$-algebra and $(M,\Astr[M])$ an
  $A_\infty$-bimodule over it. The linear dual $DM=\dgHom[\kk]{M}{\kk}$ carries
  a canonical $A_\infty$-bimodule structure with operations
  \[
    \Astr<n>[DM]\coloneqq(\Astr<p,q>[DM])_{p+1+q=n}\in\BimHC[n-1]<2-n>{DM},\qquad
    n\geq1,
  \]
  where
  \[
    \Astr<p,q>[DM]\coloneqq(\Astr<q,p>[M])^{D},\qquad p+1+q=n,
  \]
  is defined by the formula in \Cref{eq:cD}. In particular, the differential
  $\Astr<1>[DM]$ on $DM$ is given, in terms of the differential $\Astr<1>[M]$ on
  $M$, by the expected formula
  \[
    \Astr<1>[DM](f)\stackrel{\eqref{eq:cD}}{=}(-1)^{(0+1)(0+1)+|f|\vdeg{\Astr<1>[M]}}f\circ\Astr<1>[M]=(-1)^{|f|+1}f\circ\Astr<1>[M]\stackrel{\ref{ex:DV}}{=}\partial_{M,\kk}(f).
  \]
\end{defprop}
\begin{proof}
  The fact that $\Astr[DM]$ is an $A_\infty$-bimodule structure follows from
  \Cref{prop:dualising-cochains}, as the latter yields the fact that
  $\Astr[A\ltimes DM]=\Astr+\Astr[DM]$ is an $A_\infty$-algebra structure that
  satisfies the conditions in \Cref{prop:Ai-bimodule-vs-Ai-algebra}:
  \[
    \braces{\Astr[A\ltimes DM]}{\Astr[A\ltimes
      DM]}\stackrel{\eqref{eq:Theta}}{=}\braces{\Theta(\Astr[A\ltimes
      M])}{\Theta(\Astr[A\ltimes
      M])}\stackrel{\ref{prop:dualising-cochains}}{=}\Theta(\underbrace{\braces{\Astr[A\ltimes
        M]}{\Astr[A\ltimes M]}}_{=0})=0.\qedhere
  \]
\end{proof}

\begin{remark}
  Let $(A,\Astr)$ be an $A_\infty$-algebra and $(M,\Astr[M])$ a cohomologically
  $d$-sparse $A_\infty$-bimodule over it, $d\geq1$. Clearly, $(DM,\Astr[DM])$ is
  also cohomologically $d$-sparse.
\end{remark}

\begin{remark}
  The attentive reader will notice, when comparing the formula in \Cref{eq:cD}
  that defines the cochain $\Astr<p,q>[DM]$ in \Cref{defprop:dual_Ai-bimodule}
  with the corresponding formula in~\cite[Lemma~2.9]{Tra08}, that the signs are
  off by the factor $(-1)^{(p+1)(q+1)}$. Indeed, this missing factor has been
  added in the erratum~\cite{Tra11} to \emph{op.~cit.}
\end{remark}

\begin{proposition}
  \label{prop:dualising-Ai-morphisms}
  Let $(A,\Astr)$ be an $A_\infty$-algebra and $(M,\Astr[M])$ and $(N,\Astr[N])$
  two $A_\infty$-bimodules over it. Let $n\in\ZZ$ and suppose given an
  $A_\infty$-morphism of $A_\infty$-bimodules
  \[
    \varphi\colon(M,\Astr[M])\longrightarrow(N,\Astr[N]).
  \]
  Then, the degree $-p-q$ maps
  \[
    \varphi_{p,q}^{D}=\Theta(\varphi_{p,q})\colon A^{\otimes p}\otimes DN\otimes A^{\otimes
      q}\longrightarrow DM,\qquad k\geq1,\qquad p+1+q=k,
  \]
  defined as in \Cref{variant:dualising-cochains}, assemble into an $A_\infty$-morphism of $A_\infty$-bimodules
  \[
    \varphi^{D}\colon(DN,\Astr[DN])\longrightarrow(DM,\Astr[DM]).
  \]
\end{proposition}
\begin{proof}
  Recall that $|\varphi|=1$ and $|\Astr[]|=2$. Using the compatibilities stated
  in \Cref{variant:dualising-cochains}, we compute, for $p,q\geq0$,
  \begin{align*}
    \braces{\Theta(\varphi_{p,q})}{\Astr<s>[A]}&\stackrel{\eqref{eq:binary_brace}}{=}\Theta(\braces{\varphi_{p,q}}{\Astr<s>[A]})\\
    \braces{\Theta(\varphi_{p,q})}{\Astr<t,s>[DN]}&\stackrel{\eqref{eq:binary_brace}}{=}\Theta(\varphi_{p,q})\bullet_{q+1}\Theta(\Astr<s,t>[N])\\&\stackrel{\ref{lemma:Theta}}{=}-\Theta(\braces{\Astr<t,s>[N]}{\varphi_{p,q}})\\
                                               &=-\Theta(\braces{\Astr<s,t>[N]}{\id[A],\stackrel{s}{\dots},\id[A],\varphi_{p,q},\id[A],\stackrel{t}{\dots},\id[A]}).
  \end{align*}
  Similarly,
  \begin{align*}
    \braces{\Theta(\Astr<t,s>[DM])}{\id[A],\stackrel{q}{\dots},\id[A],\Theta(\varphi_{p,q}),\id[A],\stackrel{s}{\dots},\id[A]}
    &=\braces{\Theta(\Astr<t,s>[DM])}{\Theta(\varphi_{p,q})}\\
    &=-\Theta(\braces{\varphi_{p,q}}{\Astr<s,t>[M]}).
  \end{align*}
  Thanks to the additional minus signs, we see that applying $\Theta$ to the
  $A_\infty$-morphism equation satisfied by $\varphi$ yields the
  $A_\infty$-morphism equation for $\varphi^D=\Theta(\varphi)$. This finishes
  the proof.
\end{proof}

We establish certain compatibilities between shifts and linear duals that are
needed in the sequel. These are rather elementary, but we did not find
suitable references in the literature. We remind the reader of
\Cref{defprop:upsilon}.

\begin{proposition}
  \label{prop:DM-shift}
  Let $(A,\Astr)$ be an $A_\infty$-algebra and $(M,\Astr[M])$ an
  $A_\infty$-bimodule over it. Fix $n\in\ZZ$. The following statements hold:
  \begin{enumerate}
  \item\label{it:DM-shift} The map
    \begin{equation}
      \label{eq:prop:DM-shift:it:DM-shift}
      \alpha\colon(DM)(n)\stackrel{\sim}{\longrightarrow}D(M(-n)),\qquad
      \s[n]f\longmapsto(-1)^{n|f|}f\s[n],
    \end{equation}
    see \Cref{ex:DAn}, yields a strict $A_\infty$-isomorphism of
    $A_\infty$-bimodules
    \[
      \alpha\colon((DM)(n),\Astr[(DM)[-n]])\stackrel{\sim}{\longrightarrow}(D(M(-n)),\Astr[D(M(-n))]).
    \]
  \item\label{it:upsilon} Suppose $(M,\Astr[M])$ is minimal and that there
    exists an isomorphism of graded $A$-bimodules
    \[
      \varphi\colon M(n)\stackrel{\sim}{\longrightarrow}DM.
    \]
    Then, the isomorphism of graded $A$-bimodules
    \begin{equation}
      \label{eq:prop:DM-shift:it:upsilon}
      \psi\colon M\stackrel{\sim}{\longrightarrow}D(M(n)),\qquad m\longmapsto
      (-1)^{n|m|}\varphi(\s[n]m)\s[-n],
    \end{equation}
    yields a strict $A_\infty$-isomorphism of $A_\infty$-bimodules
    \[
      \psi\colon(M,\Upsilon(\Astr[M]))\stackrel{\sim}{\longrightarrow}(D(M(n)),\Astr[D(M(n))]),
    \]
    compare with \Cref{rmk:phi-psi}.
  \end{enumerate}
\end{proposition}
\begin{proof}
  \eqref{it:DM-shift} Let $k\geq1$ and $p+1+q=k$. For
  $x_1,\dots,x_p,y_1,\dots,y_q\in A$, $f\in DM$ and $m\in M$, on the one hand,
  \begin{align*}
    \Astr<q,p>[D(M(-n))]&(y_1,\dots,y_q,\alpha(\s[n]f),x_1,\dots,x_p)(\s[-n]m)\\
                        &\stackrel{\eqref{eq:cD}}{=}(-1)^{\maltese_1}\alpha(\s[n]f)(\Astr<p,q>[M[-n]](x_1,\dots,x_p,\s[-n]m,y_1,\dots,y_q))\\
                        &=(-1)^{\maltese_1+n|f|}f(\s[n](\Astr<p,q>[M[-n]](x_1,\dots,x_p,\s[-n]m,y_1,\dots,y_q))\\
                        &\stackrel{\eqref{eq:cn}}{=}(-1)^{\maltese_1+n|f|+\maltese_2}f(\s[n](\s[-n]\Astr<p,q>[M](y_1,\dots,y_q,m,y_1,\dots,y_q)))\\
                        &=(-1)^{\maltese_3}f(\Astr<p,q>[M](x_1,\dots,x_p,m,y_1,\dots,y_q)),
  \end{align*}
  where
  \begin{align*}
    \maltese_1&=\textstyle(p+1)(q+1)+(|f|-n)(p+1+q)\\&\textstyle\phantom{=}+\left(\sum_{j=1}^q|y_j|\right)\left(\sum_{i=1}^p|x_i|+|f|+|m|\right)\\
    \maltese_2&=\textstyle n\left(p+1+q+\sum_{i=1}^p|x_i|\right)\\
    \maltese_3&=\textstyle (p+1)(q+1)+|f|(p+1+q)+n\left(|f|+\sum_{i=1}^p|x_i|\right)\\
              &\textstyle\phantom{=}+\left(\sum_{j=1}^q|y_j|\right)\left(|f|+\sum_{i=1}^p|x_i|+|m|\right).
  \end{align*}
  On the other hand,
  \begin{align*}
    \alpha(\Astr<q,p>[(DM)(n)])&(y_1,\dots,y_q,\s[n]f,x_1,\dots,x_p))(\s[-n]m)\\
                               &\stackrel{\eqref{eq:cn}}{=}(-1)^{\maltese_4}\alpha(\s[n]\Astr<q,p>[DM](y_1,\dots,y_q,f,x_1,\dots,x_p))(\s[-n]m)\\
                               &=(-1)^{\maltese_4+\maltese_5}(\Astr<q,p>[DM](y_1,\dots,y_q,f,x_1,\dots,x_p)\s[n])(\s[-n]m)\\
                               &=(-1)^{\maltese_4+\maltese_5}(\Astr<q,p>[DM](y_1,\dots,y_q,f,x_1,\dots,x_p))(m)\\
                               &\stackrel{\eqref{eq:cD}}{=}(-1)^{\maltese_4+\maltese_5+\maltese_6}f(\Astr<p,q>[M](x_1,\dots,x_p,m,y_1,\dots,y_q)),
  \end{align*}
  where
  \begin{align*}
    \maltese_4&=\textstyle n\left(p+1+q+\sum_{j=1}^q|y_j|\right)\\
    \maltese_5&=\textstyle n(p+1+q+\sum_{j=1}^q|y_j|+|f|+\sum_{i=1}^p|x_i|)\\
    \maltese_6&=\textstyle(p+1)(q+1)+|f|(p+1+q)\\
              &\phantom{=}\textstyle+\left(\sum_{j=1}^q|y_j|\right)\left(|f|+\sum_{i=1}^p|x_i|+|m|\right).
  \end{align*}
  Since $\maltese_3=\maltese_4+\maltese_5+\maltese_6$, the claim follows.

  \eqref{it:upsilon} Using and \Cref{defprop:shift_Ai-bimodule} and
  \Cref{defprop:dual_Ai-bimodule} to define the induced $A_\infty$-structure on
  the $n$-fold shift and the linear dual of $M$, respectively, this is immediate
  from the construction of $\Upsilon$ given in \Cref{defprop:upsilon}, compare
  with \Cref{rmk:RelBimHC-under-isos,rmk:perturbation}.
\end{proof}

\subsection{Restriction of scalars}

We now study the operation of restriction of scalars for $A_\infty$-bimodules and
its compatibility with the passage to shifts and linear duals. Let $(A,\Astr)$
and $(B,\Astr[B])$ be $A_\infty$-algebras and fix an $A_\infty$-morphism of
$A_\infty$-algebras
\[
  \varphi\colon(A,\Astr)\longrightarrow(B,\Astr[B]).
\]

\begin{defprop}[{\emph{cf.~}\cite[Section~6.2]{Kel01}}]
  \label{defprop:restriction-of-scalars}
  Let $(M,\Astr[M])$ be an $A_\infty$-bimodule over $(B,\Astr[B])$. For
  $p,q\geq0$ and $n\coloneqq p+1+q$, we define
  \[
    \Astr<p,q>[\varphi^*M]\coloneqq\sum_{r+1+s=n}\sum\braces{\Astr<r,s>[M]}{\varphi_{i_1},\dots,\varphi_{i_r},\id[M],\varphi_{j_1},\dots,\varphi_{j_s}},
  \]
  where the first sum ranges over all ordered decompositions $r+1+s=n$
  with $r,s\geq0$, and the second sum ranges over all ordered
  decompositions ${i_1+\cdots+i_r}=p$ and ${j_1+\cdots+j_s}=q$ with positive
  components. Then, the cochains
  \[
    \Astr<n>[\varphi^*M]\coloneqq(\Astr<p,q>[\varphi^*M])_{p+1+q=n}\in\bigoplus_{p+1+q=n}\BimHC[n-1]<2-n>{A}[M]
  \]
  assemble into an $A_\infty$-bimodule structure
  \[
    \Astr[\varphi^*M]\coloneqq(\Astr<1>[\varphi^*M],\Astr<2>[\varphi^*M],\Astr<3>[\varphi^*M],\dots,\Astr<n>[\varphi^*M],\dots)\in\prod_{n\geq1}\BimHC[n-1]<2-n>{A}[M]
  \]
  on $M$ over $(A,\Astr)$.
\end{defprop}
\begin{proof}
  The proof is straightforward, but somewhat tedious, and is left to the
  reader.\footnote{The proof can be streamlined by using the brace relation
    recalled in \Cref{eq:brace_relation}.}
  We only observe that the vertical degree of a cochain of the form
  \[
    \braces{\Astr<r,s>[M]}{\varphi_{i_1},\dots,\varphi_{i_r},\id[M],\varphi_{j_1},\dots,\varphi_{j_s}}
  \]
  is equal to the required vertical degree for $\Astr<p,q>[\varphi^*M]$; indeed, the
  vertical degree of such cochain is
  \begin{align*}
    \underbrace{\vdeg{\Astr<r,s>[M]}}_{=1-r-s}+\underbrace{\sum_{a=1}^r\vdeg{\varphi_{i_a}}}_{=r-p}+\underbrace{\sum_{b=1}^s\vdeg{\varphi_{j_b}}}_{=s-q}=1-p-q=\vdeg{\Astr<p,q>[\varphi^*M]}.
  \end{align*}\qedhere
\end{proof}

We remind the reader of the following elementary fact. Let $A$ and $B$ be
algebras and $M$ a $B$-bimodule. Then, given a morphism of algebras
$\varphi\colon A\to B$, there is a unique $A$-bimodule structure on $M$, denoted
$\varphi^*M$, such that the map
\[
  \widetilde{\varphi}=\varphi\oplus\id[M]\colon A\oplus\varphi^*M\longrightarrow B\oplus M
\]
is a morphism of algebras where both source and target are endowed with the
corresponding square-zero product. Indeed, for $x,y\in A$ and $m\in M$ one has $x\cdot m\cdot y\in M$ and therefore
\[
  x\cdot m\cdot y=\widetilde{\varphi}(x\cdot m\cdot
  y)=\widetilde{\varphi}(x)\cdot\widetilde{\varphi}(m)\cdot\widetilde{\varphi}(y)=\varphi(x)\cdot
  m\cdot\varphi(y).
\]
This observation admits the following generalisation to $A_\infty$-algebras and
$A_\infty$-bimodules.

\begin{proposition}
  \label{prop:restriction-of-scalars-as-algebra-map}
  Let $(M,\Astr[M])$ be an $A_\infty$-bimodule over $(B,\Astr[B])$. Then,
  $\Astr[\varphi^*M]$ is the unique $A_\infty$-bimodule structure on $M$ over
  $(A,\Astr)$ such that
  \[
    \widetilde{\varphi}\coloneqq\varphi\oplus\id[M]\colon(A\oplus
    M,\Astr[A\ltimes\varphi^*M])\longrightarrow(B\oplus M,\Astr[B\ltimes M])
  \]
  is an $A_\infty$-morphism of $A_\infty$-algebras.
\end{proposition}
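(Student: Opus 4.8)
The plan is to reduce the statement to the square-zero dictionary of \Cref{prop:Ai-bimodule-vs-Ai-algebra}. By that proposition, an $A_\infty$-bimodule structure $\Astr[N]$ on $M$ over $(A,\Astr)$ is the same datum as a cochain $\Astr[A\ltimes N]=\Astr+\Astr[N]$ satisfying $\braces{\Astr[A\ltimes N]}{\Astr[A\ltimes N]}=0$; accordingly it suffices to prove that there is a \emph{unique} choice of $\Astr[N]$ for which $\widetilde{\varphi}=\varphi\oplus\id[M]$ is an $A_\infty$-morphism, and that this choice is $\Astr[\varphi^*M]$. First I would spell out the components of $\widetilde{\varphi}$: since it restricts to $\varphi$ on the subalgebra $A$ and to the identity on $M$ with no higher module components, its only nonzero components are the $\varphi_n$ (all inputs in $A$, output in $B$) and the linear map $\id[M]$ (one input in $M$, output in $M$).

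Next I would split the $A_\infty$-morphism equation \eqref{eq:Ai-morphism} for $\widetilde{\varphi}$ according to the number of tensor factors taken from $M$. All the operations involved respect this $M$-count: the algebra structures $\Astr,\Astr[B]$ and the maps $\varphi_n$ have $M$-count $0$, whereas $\Astr[M]$, $\Astr[N]$ and $\id[M]$ carry $M$-count $1$ to $M$-count $1$; as $M$ is a square-zero ideal on both sides, every term of $M$-count $\geq 2$ vanishes. The equation therefore decomposes into a count-$0$ and a count-$1$ component. The count-$0$ component is exactly the $A_\infty$-morphism equation for $\varphi\colon(A,\Astr)\to(B,\Astr[B])$, which holds by hypothesis and places no constraint on $\Astr[N]$.

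It then remains to analyse the count-$1$ component, in which a single input and the output lie in $M$. On the left-hand side $\sum_{p+q-1=n}\braces{\widetilde{\varphi}_p}{\Astr<q>[A\ltimes N]}$, the outer factor must be $M$-valued, which forces $p=1$ and its module component $\id[M]$; the only surviving term is then $\id[M]\bullet_1\Astr<n>[N]$, and since $\id[M]$ has arity degree $1$ this infinitesimal composition carries no sign and equals $\Astr<n>[N]$ (for $p\geq2$ the inner $M$-output would have to be fed into $\varphi_p$, whose module components vanish). On the right-hand side the outer factor must be a bimodule operation $\Astr<r,s>[M]$, its unique $M$-slot must receive $\id[M]$, and each remaining, $B$-valued slot must receive one of the $\varphi$-components; summing over all such fillings reproduces verbatim the defining expression for $\Astr<n>[\varphi^*M]=(\Astr<p,q>[\varphi^*M])_{p+1+q=n}$ in \Cref{defprop:restriction-of-scalars}. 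Thus the count-$1$ component reads $\Astr<n>[N]=\Astr<n>[\varphi^*M]$ for every $n\geq1$. Since the right-hand side does not involve $\Astr[N]$, this both determines $\Astr[N]$ uniquely and, because $\Astr[\varphi^*M]$ is already a bona fide $A_\infty$-bimodule structure by \Cref{defprop:restriction-of-scalars}, exhibits it as a solution; hence $\widetilde{\varphi}$ is an $A_\infty$-morphism precisely when $\Astr[N]=\Astr[\varphi^*M]$.

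The step requiring the most care is the sign bookkeeping in the count-$1$ analysis: one must check that $\id[M]$ enters with unit coefficient and without a stray Koszul sign (using $|\s\id[M]|=0$, since $\id[M]\in\BimHC*[0][0]<0>{M}$) and that no lower operations $\Astr<m>[N]$ with $m<n$ leak onto the left-hand side. Once this is settled, identifying the surviving right-hand side with the brace formula of \Cref{defprop:restriction-of-scalars} is purely formal, since both are built from the same infinitesimal compositions \eqref{eq:infinitesimal_composition}.
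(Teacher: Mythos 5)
Your proposal is correct and follows essentially the same route as the paper: the paper's proof likewise fixes an arbitrary bimodule structure $\Astr[]$, observes that (given that $\varphi$ is an $A_\infty$-morphism and $\widetilde{\varphi}_{0,0}=\id[M]$) the $A_\infty$-morphism equation for $\widetilde{\varphi}$ reduces to $\Astr<p,q>[]=\braces{\widetilde{\varphi}_{0,0}}{\Astr<p,q>[]}$ being equal to the brace expression defining $\Astr<p,q>[\varphi^*M]$, and concludes uniqueness from the fact that the right-hand side does not involve $\Astr[]$ and existence from \Cref{defprop:restriction-of-scalars}. Your $M$-count decomposition and sign check for $\id[M]\bullet_1\Astr<n>[N]$ simply make explicit the "translation" step the paper leaves to the reader.
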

\begin{proof}
  Let $\Astr[]$ be an $A_\infty$-bimodule structure on $M$ over $(A,\Astr)$.
  Consider now the candidate map
  \[
    \widetilde{\varphi}\coloneqq\varphi\oplus\id[M]\colon(A\oplus
    M,\Astr+\Astr[])\longrightarrow(B\oplus M,\Astr[B\ltimes M])
  \]
  Keeping in mind that $\varphi$ is an $A_\infty$-morphism of
  $A_\infty$-algebras by assumption and that $\widetilde{\varphi}_{0,0}=\id[M]$, the
  $A_\infty$-morphism equation for $\widetilde{\varphi}$ translates to the
  requirement that, for
  $p,q\geq0$ and $n\coloneqq p+1+q$,
  \[
    \Astr<p,q>[]=\braces{\widetilde{\varphi}_{0,0}}{\Astr<p,q>[]}=\sum_{r+1+s=n}\sum\braces{\Astr<r,s>[M]}{\varphi_{i_1},\dots,\varphi_{i_r},\id[M],\varphi_{j_1},\dots,\varphi_{j_s}},
  \]
  where the first sum ranges over all ordered decompositions $r+1+s=n$
  with $r,s\geq0$, and the second sum ranges over all ordered
  decompositions ${i_1+\cdots+i_r}=p$ and ${j_1+\cdots+j_s}=q$ with positive
  components. Uniqueness is then clear, and existence is precisely the statement
  of \Cref{defprop:restriction-of-scalars}.
\end{proof}

\Cref{prop:restriction-of-scalars-as-algebra-map} admits the following
variants.

\begin{proposition}
  \label{prop:bimaps-vs-algmaps}
  Let $(M,\Astr[M])$ be an $A_\infty$-bimodule over $(A,\Astr)$, and $(N,\Astr[N])$
  an $A_\infty$-bimodule over $(B,\Astr[B])$. Suppose given a cochain
  \[
    \psi=(\psi_1,\psi_2,\psi_3,\dots,\psi_n,\dots)
  \]
  with
  \[
    \psi_n=(\psi_{p,q}\colon A^{\otimes p}\otimes M\otimes A^{\otimes
      q}\longrightarrow N)_{p+1+q=n}.
  \]
  Then,
  \[
    \psi\colon(M,\Astr[M])\longrightarrow(N,\Astr[\varphi^*N])
  \]
  is an $A_\infty$-morphism of $A_\infty$-bimodules over $(A,\Astr)$ if and only
  if
  \[
    \varphi\oplus\psi\colon(A\oplus M,\Astr[A\ltimes M])\longrightarrow(B\oplus
    N,\Astr[B\ltimes N])
  \]
  is an $A_\infty$-morphism of $A_\infty$-algebras
\end{proposition}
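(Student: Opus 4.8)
The plan is to reduce the statement to the previously established \Cref{prop:restriction-of-scalars-as-algebra-map} by rephrasing both sides as $A_\infty$-morphism equations for the associated square-zero extensions. By \Cref{def:Ai-bimodule-morphism}, saying that $\psi\colon(M,\Astr[M])\to(N,\Astr[\varphi^*N])$ is an $A_\infty$-morphism of $A_\infty$-bimodules over $(A,\Astr)$ means exactly that
\[
  \id[A]\oplus\psi\colon(A\oplus M,\Astr[A\ltimes M])\longrightarrow(A\oplus N,\Astr[A\ltimes \varphi^*N])
\]
is an $A_\infty$-morphism of $A_\infty$-algebras. Thus the proposition amounts to the equivalence between $\id[A]\oplus\psi$ being an $A_\infty$-morphism and $\varphi\oplus\psi$ being an $A_\infty$-morphism with target $(B\oplus N,\Astr[B\ltimes N])$, and I would establish this by comparing the two systems of $A_\infty$-morphism equations \eqref{eq:Ai-morphism} term by term.

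First I would decompose each morphism equation according to the type of its output ($A$- or $B$-valued versus $N$-valued) and the number of $M$-inputs (zero or one, since two or more contribute nothing as $M$ sits inside $A\oplus M$ as a square-zero ideal). For $\id[A]\oplus\psi$ the pure-$A$, $A$-valued part is the trivially satisfied equation expressing that $\id[A]$ is an $A_\infty$-endomorphism of $(A,\Astr)$ (the $A$-algebra structure is unchanged by restriction of scalars); for $\varphi\oplus\psi$ the corresponding pure-$A$, $B$-valued part is precisely the equation that $\varphi\colon(A,\Astr)\to(B,\Astr[B])$ is an $A_\infty$-morphism, which holds by hypothesis. Hence these parts are automatic in both cases and play no role in the equivalence; all the content lives in the $N$-valued, one-$M$-input components.

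Next I would show that these remaining $N$-valued components of the two morphism equations coincide. On the left-hand side of \eqref{eq:Ai-morphism}, the relevant terms only involve braces of the single $N$-valued component $\psi$ with the source operations $\Astr[M]$ and $\Astr$; since these data are common to both maps, and the $A$-direction of the source map ($\id[A]$ versus $\varphi$) never feeds an $N$-valued output on this side, the two left-hand sides agree verbatim. On the right-hand side, the $N$-valued terms are $\sum\braces{\Astr<r,s>[\varphi^*N]}{\id[A],\dots,\psi,\dots,\id[A]}$ for $\id[A]\oplus\psi$ and $\sum\braces{\Astr<r,s>[N]}{\varphi_{i_1},\dots,\psi,\dots,\varphi_{j_s}}$ for $\varphi\oplus\psi$. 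Substituting the defining formula for the restricted structure from \Cref{defprop:restriction-of-scalars} and using that $\id[A]$ and $\id[N]$ are two-sided units for the infinitesimal compositions, the brace relation \eqref{eq:brace_relation} collapses the nested braces in the first expression into the second. This identity is a direct consequence of \Cref{defprop:restriction-of-scalars} (and is implicit in the proof of \Cref{prop:restriction-of-scalars-as-algebra-map}); it makes the two $N$-valued equation systems literally equal, which yields the desired equivalence.

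The main obstacle is the sign-careful verification of this last brace identity: one must check that inserting $\id[A]$ into the $A$-slots of each $\varphi_{i}$ returns $\varphi_i$, that inserting $\psi$ into the $\id[N]$-slot returns $\psi$, and that the reindexing of the interlaced sums in \eqref{eq:brace_relation} matches the Koszul signs dictated by \eqref{eq:infinitesimal_composition} on both sides. This is routine but tedious, and can be streamlined for one direction by the purely formal cochain identity $\varphi\oplus\psi=(\varphi\oplus\id[N])\circ(\id[A]\oplus\psi)$ for the $A_\infty$-composition of cochains: the forward implication then follows at once because $\varphi\oplus\id[N]$ is an $A_\infty$-morphism by \Cref{prop:restriction-of-scalars-as-algebra-map} and composites of $A_\infty$-morphisms are again $A_\infty$-morphisms. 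The reverse implication does not follow from this composition alone, since the linear part of $\varphi$ need not be injective, so the uniform term-by-term comparison above is what I would ultimately record.
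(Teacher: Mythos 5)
Your proposal is correct and takes essentially the same route as the paper: the paper's own proof is precisely the one-line observation that both conditions amount to the validity of the exact same set of $A_\infty$-morphism equations, which your decomposition into pure-algebra parts (automatic) and $N$-valued, one-$M$-input parts (identified via \Cref{defprop:restriction-of-scalars} and the brace relation) spells out in detail. The composition trick $\varphi\oplus\psi=(\varphi\oplus\id[N])\circ(\id[A]\oplus\psi)$ is a nice aside for the forward direction, but as you note the equivalence ultimately rests on the term-by-term comparison, exactly as in the paper.
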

\begin{proof}
  The claim follows by noticing that both statements correspond to the validity
  of the exact same set of $A_\infty$-morphism equations.
\end{proof}

We now turn our attention to the compatibility between restriction of scalars
and the passage to shifts and linear duals of $A_\infty$-bimodules.

\begin{proposition}
  \label{prop:compatibility-restriction}
  Let $(M,\Astr[M])$ be an $A_\infty$-bimodule over $(B,\Astr[B])$. Then
  \[
    (M(n),\Astr[\varphi^*(M)(n)])=(M(n),\Astr[\varphi^*(M(n))]),\qquad n\in\ZZ,
  \]
  and
  \[
    (DM,\Astr[D(\varphi^*M)])=(DM,\Astr[\varphi^*DM]).
  \]
\end{proposition}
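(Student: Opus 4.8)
The plan is to prove both identities by direct computation, using the explicit formula for restriction of scalars in \Cref{defprop:restriction-of-scalars} together with the descriptions of the shifted and dualised $A_\infty$-bimodule structures from \Cref{defprop:shift_Ai-bimodule} and \Cref{defprop:dual_Ai-bimodule} as the images of $\Astr[M]$ under the operators $\Psi$ and $\Theta$, respectively. The two key structural inputs are the strict compatibility of $\Psi$ and $\Theta$ with infinitesimal compositions recorded in \Cref{variant:shifting-cochains} and \Cref{variant:dualising-cochains}, and the fact that both operators fix the purely algebra-valued components $\varphi_i$ of $\varphi$ (these involve no copy of $M$, so $\Psi(\varphi_i)=\varphi_i$ and $\Theta(\varphi_i)=\varphi_i$). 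I would also record at the outset that $\Psi(\id[M])=\id[M(n)]$ (\Cref{rmk:Psi}) and $\Theta(\id[M])=-\id[DM]$ (\Cref{rmk:Theta}), since the linear part $\id[M]$ appears as the distinguished middle argument in the restriction formula. An alternative, more conceptual route is available through the uniqueness clause of \Cref{prop:restriction-of-scalars-as-algebra-map}: one applies $\Psi$ (resp.\ $\Theta$) to the $A_\infty$-morphism equation witnessing that $\varphi\oplus\id[M]$ realises $\varphi^*M$, and reads off that $\varphi\oplus\id[M(n)]$ (resp.\ $\varphi\oplus\id[DM]$) realises the restriction of the shifted (resp.\ dualised) bimodule.

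For the shift identity I would argue as follows. By \Cref{defprop:shift_Ai-bimodule}, $\Astr[\varphi^*(M)(n)]=\Psi(\Astr[\varphi^*M])$, while the restriction formula applied to $(M(n),\Astr[M(n)])=(M(n),\Psi(\Astr[M]))$ expresses each component $\Astr<p,q>[\varphi^*(M(n))]$ as a sum of multi-braces $\braces{\Astr<r,s>[M(n)]}{\varphi_{i_1},\dots,\id[M(n)],\dots,\varphi_{j_s}}$, see \cref{eq:morphism-brace}. Since these multi-braces are built from iterated infinitesimal compositions, the compatibility in \Cref{variant:shifting-cochains} lets me pull $\Psi$ outside, and using $\Psi(\Astr<r,s>[M])=\Astr<r,s>[M(n)]$, $\Psi(\varphi_i)=\varphi_i$ and $\Psi(\id[M])=\id[M(n)]$ I recover precisely $\Psi$ applied to the corresponding sum computing $\Astr<p,q>[\varphi^*M]$. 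Hence $\Astr[\varphi^*(M(n))]=\Psi(\Astr[\varphi^*M])=\Astr[\varphi^*(M)(n)]$. This case involves no signs beyond the Koszul signs already packaged into the infinitesimal compositions, and is essentially the computation carried out in the proof of \Cref{prop:shifting-Ai-morphisms}.

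The dual identity is proved the same way, now with $\Theta$ in place of $\Psi$: by \Cref{defprop:dual_Ai-bimodule} one has $\Astr[D(\varphi^*M)]=\Theta(\Astr[\varphi^*M])$ and $\Astr[DM]=\Theta(\Astr[M])$, and I would expand $\Astr<p,q>[\varphi^*DM]$ through the restriction formula and transport $\Theta$ across the multi-braces using \Cref{variant:dualising-cochains}. The essential geometric point is that $\Theta$ interchanges the left and right arities via the transposition $(p,q)\mapsto(q,p)$ built into \cref{eq:cD}, which is exactly the swap that matches the two orderings of $\varphi$-arguments on either side of the middle slot in the restriction formula. The hard part will be the sign and ordering bookkeeping: by \Cref{lemma:Theta} the operator $\Theta$ reverses the order of infinitesimal compositions of bimodule cochains and introduces Koszul signs with respect to the shifted total degree, and moreover $\Theta(\id[M])=-\id[DM]$ carries an extra minus sign. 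I expect these contributions to cancel so that $\Theta$ converts the sum computing $\Astr<q,p>[\varphi^*M]$ into the sum computing $\Astr<p,q>[\varphi^*DM]$ on the nose, yielding $\Astr[\varphi^*DM]=\Theta(\Astr[\varphi^*M])=\Astr[D(\varphi^*M)]$; verifying that the global sign is indeed $+1$ is the one genuinely delicate step, and it should be dispatched by the same sign analysis already used in the proofs of \Cref{defprop:dual_Ai-bimodule} and \Cref{prop:dualising-Ai-morphisms}.
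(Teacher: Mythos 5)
Your proposal is correct and takes essentially the same approach as the paper: the paper's own proof is precisely this direct verification from the defining formula for restriction of scalars, transporting $\Psi$ and $\Theta$ across the multi-braces via the compatibilities of \Cref{variant:shifting-cochains,variant:dualising-cochains} (the paper states it in two lines and leaves the details to the reader). The one step you flag as delicate does close up as you expect: the $\varphi$-insertions pass through $\Theta$ with no sign at all by \Cref{lemma:Theta}\eqref{eq:Theta-BimHC-HC}, and the minus sign in $\Theta(\id[M])=-\id[DM]$ cancels exactly against the sign $-(-1)^{|\s c_1||\s c_2|}$ of \Cref{lemma:Theta}\eqref{eq:Theta-BimHC-BimHC}, since $\id[M]$ has shifted total degree $0$ in the arity-based convention, so the global sign is indeed $+1$.
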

\begin{proof}
  Both claims are immediate from the definition. We leave the details to the
  reader.
\end{proof}

\begin{proposition}
  \label{prop:n-well-def}
   Let $(M,\Astr[M])$ be an $A_\infty$-bimodule over $(A,\Astr)$, and $(N,\Astr[N])$
  an $A_\infty$-bimodule over $(B,\Astr[B])$.  Suppose given a cochain
  \[
    \psi=(\psi_1,\psi_2,\psi_3,\dots,\psi_n,\dots),
  \]
  with
  \[
    \psi_n=(\psi_{p,q}\colon A^{\otimes p}\otimes M\otimes A^{\otimes q}\longrightarrow N)_{p+1+q=n},
  \]
  that satisfies the equivalent conditions in \Cref{prop:bimaps-vs-algmaps}.
  Then,
  \[
     \psi^{(n)}=(\psi_1^{(n)},\psi_2^{(n)},\psi_3^{(n)},\dots,\psi_n^{(n)},\dots),
  \]
  also satisfies the equivalent conditions in \Cref{prop:bimaps-vs-algmaps}.
\end{proposition}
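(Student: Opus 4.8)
The plan is to prove the statement by assembling three results established earlier in this section, thereby avoiding any direct manipulation of the $A_\infty$-morphism equations. By hypothesis $\psi$ satisfies the equivalent conditions of \Cref{prop:bimaps-vs-algmaps}; in particular,
\[
  \psi\colon(M,\Astr[M])\longrightarrow(N,\Astr[\varphi^*N])
\]
is an $A_\infty$-morphism of $A_\infty$-bimodules over $(A,\Astr)$, where $\varphi^*N$ is the restriction of scalars of \Cref{defprop:restriction-of-scalars}. The starting observation is that the components $\psi_{p,q}^{(n)}=\Psi(\psi_{p,q})$ defining $\psi^{(n)}$ are exactly the components produced by the shift construction of \Cref{prop:shifting-Ai-morphisms} applied to this bimodule morphism, since in both cases one applies the operation $\Psi$ of \Cref{variant:shifting-cochains} componentwise.

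First I would invoke \Cref{prop:shifting-Ai-morphisms}, taking the ambient $A_\infty$-algebra to be $(A,\Astr)$ and the two $A_\infty$-bimodules over it to be $M$ and $\varphi^*N$, to conclude that
\[
  \psi^{(n)}\colon(M(n),\Astr[M(n)])\longrightarrow((\varphi^*N)(n),\Astr[(\varphi^*N)(n)])
\]
is again an $A_\infty$-morphism of $A_\infty$-bimodules over $(A,\Astr)$. Next I would apply \Cref{prop:compatibility-restriction}, which furnishes the identity of $A_\infty$-bimodules $(\varphi^*N)(n)=\varphi^*(N(n))$; consequently $\psi^{(n)}$ is an $A_\infty$-morphism $M(n)\to\varphi^*(N(n))$ over $(A,\Astr)$.

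It then remains only to translate this back through \Cref{prop:bimaps-vs-algmaps}. Applying that proposition with $M$ replaced by $M(n)$, with $N$ replaced by $N(n)$, and with the same fixed $A_\infty$-morphism $\varphi\colon(A,\Astr)\to(B,\Astr[B])$, the fact that $\psi^{(n)}\colon M(n)\to\varphi^*(N(n))$ is an $A_\infty$-morphism of $A_\infty$-bimodules is \emph{equivalent} to the assertion that
\[
  \varphi\oplus\psi^{(n)}\colon(A\oplus M(n),\Astr[A\ltimes M(n)])\longrightarrow(B\oplus N(n),\Astr[B\ltimes N(n)])
\]
is an $A_\infty$-morphism of $A_\infty$-algebras, which is precisely the statement that $\psi^{(n)}$ satisfies the equivalent conditions of \Cref{prop:bimaps-vs-algmaps}.

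I do not anticipate a genuine obstacle, since every ingredient is already available; the only point that requires care is the bookkeeping identifying the shift $\psi\mapsto\psi^{(n)}$ of the present proposition with the shift of the bimodule morphism $\psi\colon M\to\varphi^*N$, and the linchpin making the two descriptions agree is the commutation of the vertical shift with restriction of scalars recorded in \Cref{prop:compatibility-restriction}. Should a self-contained argument be preferred, the same conclusion can be obtained by applying $\Psi$ directly to the $A_\infty$-morphism equations satisfied by $\varphi\oplus\psi$ and invoking the brace compatibilities of \Cref{variant:shifting-cochains}, exactly as in the proofs of \Cref{prop:shifting-Ai-morphisms,prop:dualising-Ai-morphisms}.
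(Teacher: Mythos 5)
Your proposal is correct and follows exactly the paper's own argument: apply \Cref{prop:shifting-Ai-morphisms} to $\psi$ viewed as an $A_\infty$-morphism $(M,\Astr[M])\to(N,\Astr[\varphi^*N])$ over $(A,\Astr)$, identify $(\varphi^*N)(n)=\varphi^*(N(n))$ via \Cref{prop:compatibility-restriction}, and conclude through \Cref{prop:bimaps-vs-algmaps}. The additional bookkeeping remarks and the sketched self-contained alternative are consistent with, but not needed beyond, what the paper does.
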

\begin{proof}
  Indeed, \Cref{prop:shifting-Ai-morphisms} shows that
  \[
    \psi^{(n)}\colon(M(n),\Astr[M(n)])\longrightarrow(N(n),\Astr[\varphi^*(N)(n)])\stackrel{\ref{prop:compatibility-restriction}}{=}(N(n),\Astr[\varphi^*(N(n))])
  \]
  is an $A_\infty$-morphism of $A_\infty$-bimodules.
\end{proof}

\begin{proposition}
  \label{prop:D-well-def}
  Let $(M,\Astr[M])$ be an $A_\infty$-bimodule over $(A,\Astr)$, and $(N,\Astr[N])$
  an $A_\infty$-bimodule over $(B,\Astr[B])$. Suppose given a cochain
  \[
    \psi=(\psi_1,\psi_2,\psi_3,\dots,\psi_n,\dots),
  \]
  with
  \[
    \psi_n=(\psi_{p,q}\colon A^{\otimes p}\otimes N\otimes A^{\otimes
      q}\longrightarrow M)_{p+1+q=n},
  \]
  such that \[
    \psi\colon(N,\Astr[\varphi^*N])\longrightarrow(M,\Astr[\varphi^*M])
  \]
  is an $A_\infty$-morphism of $A_\infty$-bimodules over $(A,\Astr)$. Then,
  \[
    \psi^{D}=(\psi_1^{D},\psi_2^{D},\psi_3^{D},\dots,\psi_n^{D},\dots),
  \]
  satisfies the equivalent conditions in \Cref{prop:bimaps-vs-algmaps}.
\end{proposition}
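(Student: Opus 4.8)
The plan is to mirror the one-line proof of \Cref{prop:n-well-def}, substituting the passage to linear duals for the passage to vertical shifts. The heavy lifting has already been carried out in \Cref{prop:dualising-Ai-morphisms,prop:compatibility-restriction}, so the argument should be purely formal, with no new sign or $A_\infty$-equation manipulations required.

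First I would record the correct reading of the hypothesis: the cochain $\psi$ is an $A_\infty$-morphism of $A_\infty$-bimodules over the single algebra $(A,\Astr)$, with source-module $N$ and target-module $M$, each carrying its relevant $A$-bimodule structure (obtained by restriction along $\varphi$ where necessary). This is precisely the input required by \Cref{prop:dualising-Ai-morphisms}. Next I would apply that proposition to $\psi$. Since linear dualization reverses the direction of an $A_\infty$-morphism and transports the bimodule structures $\Astr[P]\mapsto\Astr[DP]$ through the map $\Theta$ of \Cref{prop:dualising-cochains}, this yields an $A_\infty$-morphism $\psi^D=\Theta(\psi)$ of $A_\infty$-bimodules over $(A,\Astr)$ whose components are exactly $\psi_{p,q}^D=\Theta(\psi_{p,q})$, as in the statement; its source-module is $DM$ and its target-module is $D(\varphi^*N)$.

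Finally I would invoke the compatibility $D(\varphi^*N)=\varphi^*(DN)$ of \Cref{prop:compatibility-restriction} to rewrite the target as the restriction along $\varphi$ of the dual $A_\infty$-bimodule $DN$ over $(B,\Astr[B])$. After this identification, $\psi^D\colon(DM,\Astr[DM])\to(DN,\Astr[\varphi^*DN])$ is an $A_\infty$-morphism of $A_\infty$-bimodules over $A$, which is the first of the two equivalent conditions in \Cref{prop:bimaps-vs-algmaps}; hence $\psi^D$ satisfies those conditions, and in particular $\varphi\oplus\psi^D$ is an $A_\infty$-morphism of $A_\infty$-algebras. The only delicate point is bookkeeping: one must track how the direction reversal in \Cref{prop:dualising-Ai-morphisms} swaps the roles of source and target, so that the asymmetric restriction pattern of \Cref{prop:bimaps-vs-algmaps}—genuine $A$-structure on the source, restricted structure on the target—is correctly reproduced after dualizing. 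All sign computations and $A_\infty$-equation verifications have been absorbed into the cited results, so this is the entire content of the argument.
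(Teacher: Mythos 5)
Your proposal is correct and matches the paper's proof essentially verbatim: the paper likewise applies \Cref{prop:dualising-Ai-morphisms} to obtain $\psi^D\colon(DM,\Astr[DM])\to(DN,\Astr[D(\varphi^*N)])$ and then identifies $D(\varphi^*N)=\varphi^*(DN)$ via \Cref{prop:compatibility-restriction}, concluding that $\psi^D$ satisfies the conditions of \Cref{prop:bimaps-vs-algmaps}. Your remark about tracking the direction reversal under dualization is exactly the right (and only) bookkeeping point.
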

\begin{proof}
  Indeed, \Cref{prop:dualising-Ai-morphisms} shows that
  \[
    \psi^D\colon
    (DM,\Astr[DM])\longrightarrow(DN,\Astr[D\varphi^*(M)])\stackrel{\ref{prop:compatibility-restriction}}{=}(DN,\Astr[\varphi^*(DN)])
  \]
  is an $A_\infty$-morphism of $A_\infty$-bimodules.
\end{proof}

\begin{remark}
  The previous discussion can be understood in conceptual terms as follows.
  Consider the category $\AllBims$ of \emph{all} $A_\infty$-bimodules. Its
  objects are pairs $((A,\Astr),(M,\Astr[M]))$ consisting of an
  $A_\infty$-algebra $(A,\Astr)$ and an $A_\infty$-bimodule $(M,\Astr[M])$ over
  it. A morphism
  \[
    ((A,\Astr),(M,\Astr[M]))\longrightarrow((B,\Astr[B]),(N,\Astr[N]))
  \]
  between such pairs is itself a pair $(\varphi,\psi)$ consisting of an
  $A_\infty$-morphism of $A_\infty$-algebras
  \[
    \varphi\colon(A,\Astr)\longrightarrow(B,\Astr[B])
  \]
  and an $A_\infty$-morphism of $A_\infty$-bimodules
  \[
    \psi\colon(M,\Astr[M])\longrightarrow(N,\Astr[\varphi^*N])
  \]
  over $(A,\Astr)$. Composition is induced by the composition law of
  $A_\infty$-morphisms (which we did not define in this article). There is also
  a contravariant version $\AllBims*$ in which $\psi$ has instead the form
  \[
    \psi\colon(M,\Astr[M])\longleftarrow(N,\Astr[\varphi^*N]).
  \]
  Then, there is a globally-defined $n$-fold shift functor
  \begin{align*}
    [n]\colon\AllBims&\stackrel{\sim}{\longrightarrow}\AllBims\\
    ((A,\Astr),(M,\Astr[M]))&\longmapsto((A,\Astr),(M(n),\Astr[M(n)])),
  \end{align*}
  defined using \Cref{defprop:shift_Ai-bimodule} and
  \Cref{prop:shifting-Ai-morphisms}, and a globally-defined linear dual functor
  \begin{align*}
    D\colon\AllBims*&\longrightarrow\AllBims\\
    ((A,\Astr),(M,\Astr[M]))&\longmapsto((A,\Astr),(DM,\Astr[DM])),
  \end{align*}
  defined using \Cref{defprop:dual_Ai-bimodule} and
  \Cref{prop:dualising-Ai-morphisms}. We have effectively verified that these
  functors are well defined on objects and on morphisms, but we did not analyse
  the compatibility with the composition law as this is not strictly necessary
  for our purposes. As a side remark, we observe that $\AllBims$ and $\AllBims*$
  are the two variants of the Grothendieck construction of the $2$-functor
  \[
    \AllAlgs^{\op}\longrightarrow\operatorname{CAT}_\kk,\qquad
    (A,\Astr)\longmapsto\AllBims(A,\Astr),
  \]
  from the category of all $A_\infty$-algebras to the (very large) $2$-category
  of large $\kk$-categories, which associates to an $A_\infty$-algebra its
  category of $A_\infty$-bimodules and to an $A_\infty$-morphism of
  $A_\infty$-algebras the corresponding restriction of scalars.
\end{remark}

\subsection{Minimal models of dg algebras and dg bimodules}
\label{subsec:minimods}

We now review the construction of minimal models for dg algebras. Since explicit
formulas for obtaining minimal models of dg bimodules do not seem to be
available in the literature, we explain how to use
\Cref{prop:Ai-bimodule-vs-Ai-algebra} together with the well-known formulas for
the homotopy transfer of dg algebra structures to obtain them.\footnote{One can
  also establish the existence of minimal models of dg bimodules using Quillen model
  category techniques applied to the category of dg operads---we take this
  alternative approach in~\cite[Section~2]{JM25}.} We use the sign conventions in
\cite{Mar06}, as these are compatible with our conventions for $A_\infty$-structures
(see the last part of~\Cref{rmk:Ai-equations}).

\begin{definition}[{\cite{Kad82}}] Let $A$ be a dg algebra, viewed as an
  $A_\infty$-algebra $(A,\Astr)$ with vanishing higher operations $\Astr<n>$,
  $n\geq3$. A \emph{minimal model} for $A$ is a triple
  $(\H{A},\Astr[\H{A}],\varphi)$ consisting of
  \begin{itemize}
  \item a minimal $A_\infty$-algebra $(\H{A},\Astr[\H{A}])$
  \item together with an $A_\infty$-quasi-isomorphism of $A_\infty$-algebras
    \[
      \varphi\colon
      (\H{A},\Astr[\H{A}])\stackrel{\sim}{\longrightarrow}(A,\Astr)
    \]
    whose linear part $\varphi_1\colon\H{A}\to A$ is a \emph{cocycle selection
      map}, that is
    \[
      \forall x\in\H{A},\qquad [\varphi_1(x)]=x.
    \]
  \end{itemize}
\end{definition}

Since we work over a ground field, minimal models for dg algebras (and even for
$A_\infty$-algebras) always exist. The proof of this fact, known as the Homotopy
Transfer Theorem, goes back to Kadeishvili~\cite{Kad82} and there are beautiful
explicit formulas for the transferred $A_\infty$-algebra structure given in
terms of trees ~\cite{Mer99,KS01,Mar04,Mar06}. We begin by recalling the
necessary terminology.

\begin{definition}
  We define \emph{planar binary (rooted) trees (PBTs)} recursively as follows:
  The empty set is a PBT, which we denote by $()$. If $T_1$ and $T_2$ are PBTs,
  then
  \[
    T\coloneqq\graft{T_1}{T_2}\coloneqq(T_1,T_2)
  \]
  is a PBT, with the \emph{left subtree} $L(T)\coloneqq T_1$ and the \emph{right
    subtree $R(T)\coloneqq T_2$}; by convention, the empty PBT does not have
  (proper) subtrees. Given a PBT, say $T$, its \emph{arity (=number of leaves)}
  is defined recursively as
  \[
    a(T)\coloneqq\begin{cases}
      1& T=()\\
      a(L(T))+a(R(T))&T\neq(),
    \end{cases}
  \]
  and, when $T\neq()$, its \emph{signature} is
  \[
    \sigma(T)\coloneqq a(L(T))(a(R(T))+1).
  \]
  We denote the set of PBTs of arity $n$ by $\PBT[n]$.
\end{definition}

\begin{remark}
  Of course, PBTs have a natural pictorial presentation, with the tree
  $\graft{()}{()}$ depicted as
  \[
    \begin{tikzpicture}
      \BinaryTree[local bounding box=INIT,label distance={2pt},scale=0.2]{%
        l,r}{3}
    \end{tikzpicture}
  \]
  and with a PBT with at least one non-empty subtree depicted by `grafting' its
  left and right subtrees onto the left and the right leaves of the above PBT,
  respectively. For example, the PBT $\graft{(\graft{()}{()})}{()}$ is depicted
  as
  \[
    \begin{tikzpicture}
      \BinaryTree[label distance={2pt},math labels,scale=0.2,font={\tiny}]{%
        :r!l!l!l:1,!l:e!r!r:2,!r!r!r:3}{3}
    \end{tikzpicture}
  \]
  \emph{et cetera}. We could give rigorous definitions of `leaves', `internal
  vertices', `internal edges' and `root vertex' of a PBT, as well as of the
  canonical linear order on the leaves; since these concepts are quite
  intuitive, we do not do so here and trust that the reader will nonetheless be
  able to follow the forthcoming discussion. For example, the above tree has
  three leaves (no pun intended) ordered $1<2<3$, a single internal (unlabelled)
  vertex, one internal edge $e$ and the root vertex $r$. It is elementary to
  verify that the number of internal edges in a non-empty PBT, say $T$, is
  $a(T)-2$ (compare with the degree of an operation in an $A_\infty$-algebra
  structure).
\end{remark}

\begin{construction}
  \label{construction:Markl}
  Let $A$ be a dg algebra with binary operation $\Astr<2>$. Since we work over a
  field, we can find a \emph{strong deformation retraction} of $A$ onto $\H{A}$,
  that is a diagram
  \[
    \begin{tikzcd}
      A\rar[shift left]{p}\ar[loop left]{l}{h}&\H{A}\lar[shift left]{i}
    \end{tikzcd}
  \]
  of homogeneous morphisms of graded vector spaces of degrees
  \[
    |i|=0,\quad |p|=0,\quad |h|=-1,
  \]
  such that $pi=\id[\H{A}]$ and which also satisfy the following equations:
  \begin{align*}
    \partial(i)&=0,&\partial(p)&=0,&\partial(h)&=\id[A]-ip,&ph&=0,&hi&=0,&h^2&=0,
  \end{align*}
  see for example~\cite[Section~1.4]{Val14}. The last three equations are called
  \emph{side conditions} (\cite[Remark~4]{Mar06}) and can always be achieved
  after appropriately modifying the morphisms $i$, $p$, and $h$.
  Following~\cite{Mar06}, we define a minimal $A_\infty$-algebra structure on
  $\H{A}$ with operations
  \[
    \Astr<n>[\H{A}]\coloneqq\sum_{T\in\PBT[n]}p^A\bullet_1\left(\braces{\AstrAux<T>[A]}{i^A,\stackrel{n}{\dots},i^A}\right),\qquad
    n\geq2,
  \]
  where the auxiliary degree $2-n$ operations
  \[
    \AstrAux<T>[A]\colon A^{\otimes n}\longrightarrow A
  \]
  are defined recursively as follows:
  \[
    \AstrAux<T>[A]\coloneqq\begin{cases}
      \Astr<2>&L(T)=(),\ R(T)=()\\
      \Astr<2>[A]\bullet_1(h^A\bullet_1\AstrAux<L(T)>[A])&L(T)\neq(),\ R(T)=()\\
      \Astr<2>[A]\bullet_2(h^A\bullet_1\AstrAux<R(T)>[A])&L(T)=(),\ R(T)\neq()\\
      (\Astr<2>[A]\bullet_1(h^A\bullet_1\AstrAux<L(T)>[A]))\bullet_{a(L(T))+1}(h^A\bullet_1\AstrAux<R(T)>[A])&\text{otherwise}.
    \end{cases}
  \]
  A straightforward computation using \Cref{eq:infinitesimal_composition} shows
  that, in all four cases,
  \begin{align*}
    (p\bullet_1\left(\braces{\AstrAux<T>[A]}{i,\stackrel{n}{\dots},i}\right))=
    (-1)^{\sigma(T)}p\circ\Astr<2>\circ((h^A\circ\AstrAux<L(T)>[A])\otimes(h^A\circ\AstrAux<R(T)>[A]))\circ
    i^{\otimes n},
  \end{align*}
  where we set $\AstrAux<T>[A]\coloneqq\id[A]$ if $T=()$. Let us show this in
  the fourth case, which is the most complicated (but still trivial). Firstly,
  it follows immediately from \Cref{eq:infinitesimal_composition} that post
  composition with a unary operation does not contribute any signs, and neither
  does the pre-composition with unary operations of (vertical) degree $0$.
  Secondly, it also turns out that the infinitesimal composition
  \[
    \Astr<2>[A]\bullet_1(h^A\bullet_1\AstrAux<L(T)>[A])
  \]
  also has no contribution:
  \[
    (a(L(T))-1)(2-1)+(1-a(L(T)))(2-1)=0,
  \]
  where we use that $h^A\bullet_1\AstrAux<L(T)>[A]$ has degree
  $(2-a(L(T)))-1=1-a(L(T))$. Finally, pre-composition with
  $h^A\bullet_1\AstrAux<R(T)>[A]$ in position $a(L(T)+1)$ contributes the sign
  determined by the parity of the expression
  \begin{multline*}
    \Big(a(R)-1\Big)\underbrace{\Big(a(L(T))+1-(a(L(T))+1)\Big)}_{=0}\\+\Big(1-a(R(T))\Big)\Big(a(L(T))+1-1\Big)=(1-a(R(T)))a(L(T)),
  \end{multline*}
  which is equal to the signature $\sigma(T)$ modulo $2$.
\end{construction}

\begin{remark}
  At this juncture, is worth noting that, taken modulo $2$, the signature of a
  PBT is precisely the sign associated with the root vertex in
  \cite[Section~4]{Mar06}, so that our sign conventions agree with those in
  \emph{op.~cit.} We have chosen the above presentation as it relies only on our
  standing sign conventions concerning infinitesimal compositions.
\end{remark}

\begin{remark}
  \label{rmk:decorated_PBTs}
  \Cref{construction:Markl} is better visualised using PBTs that are decorated
  by the data of the chosen strong deformation retraction as follows: Leaves are
  decorated with $i^A$, internal edges are decorated with $h^A$, and internal
  vertices are understood as the application of the binary operation
  $\Astr<2>[A]$. Finally, the root vertex is decorated by $p^A$. The tree is
  then read as a flowchart for describing the corresponding summand of the
  operation. For example, the decorated PBT
  \[
    \begin{tikzpicture}
      \BinaryTree[label distance={2pt},math labels,scale=0.2,font={\tiny}]{%
        :p^A!l!l!l:i^A,!l:h^A!r!r:i^A,!r!r!r:i^A}{3}
    \end{tikzpicture}
  \]
  describes the operation
  \[
    p^A\circ(\Astr<2>[A]\bullet_1(h^A\bullet_1\Astr<2>[A]))\circ(i^A\otimes
    i^A\otimes i^A).
  \]
\end{remark}

\begin{theorem}[{\cite{Mar06}}]
  \label{thm:Markl}
  Let $A$ be a dg algebra. The system of operations $\Astr[\H{A}]$ described in
  \Cref{construction:Markl} endows the cohomology $\H{A}$ of $A$ with the
  structure of a minimal $A_\infty$-algebra. Moreover, the morphisms $i$ and $p$
  extend to mutually homotopy-inverse $A_\infty$-quasi-isomorphisms, so that
  $(\H{A},\Astr[\H{A}])$ yields a minimal model for $A$.
\end{theorem}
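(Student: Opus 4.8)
The plan is to deduce the theorem from the Homological Perturbation Lemma applied to the bar-construction incarnation of the strong deformation retraction, and then to check that the abstract output agrees with the explicit tree formula of \Cref{construction:Markl}. Recall that an $A_\infty$-algebra structure on a graded vector space $V$ is the same datum as a square-zero degree $+1$ coderivation of the reduced tensor coalgebra $\bar{\mathrm{T}}(V[1])=\bigoplus_{n\geq1}V[1]^{\otimes n}$, whose corestriction to $V[1]$ recovers the operations $\Astr<n>[]$ once the operadic-suspension signs already folded into our infinitesimal compositions (see \Cref{rmk:HCA-operad}) are taken into account. In these terms the dg algebra $A$ furnishes such a coderivation $b_A=b_1+b_2$ on $\bar{\mathrm{T}}(A[1])$, where $b_1$ is induced by $\Astr<1>[A]$ and the perturbation $b_2$ is induced by the multiplication $\Astr<2>[A]$; the identity $b_A^2=0$ is exactly associativity together with the graded Leibniz rule.

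First I would promote the retraction $(i,p,h)$ to a strong deformation retraction of $(\bar{\mathrm{T}}(A[1]),b_1)$ onto $(\bar{\mathrm{T}}(\H{A}[1]),0)$ by the \emph{tensor trick}: the structure maps are the tensor powers $I^{\otimes}$ and $P^{\otimes}$ of $i$ and $p$, and the contracting homotopy $H$ is the standard telescoping sum built from $h$, $i$, and $p$, see~\cite[Section~1.4]{Val14}. The side conditions $ph=0$, $hi=0$, $h^2=0$ are precisely what ensure that $H$ is again a homotopy satisfying its own side conditions, so that we obtain a genuine contraction. Since $b_2$ strictly decreases tensor-length, it is locally nilpotent against this filtration, the series below converge, and the Homological Perturbation Lemma applies. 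It produces a perturbed (hence square-zero) coderivation $b_{\H{A}}=P^{\otimes}b_2\,(1-H b_2)^{-1}I^{\otimes}$ on $\bar{\mathrm{T}}(\H{A}[1])$ — the candidate transferred structure — together with perturbed coalgebra maps $I_\infty$, $P_\infty$ lifting $i$, $p$ and a perturbed homotopy $H_\infty$, and it carries contractions to contractions.

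The main obstacle is the bookkeeping identifying this output with \Cref{construction:Markl}. Expanding $(1-H b_2)^{-1}=\sum_{k\geq0}(H b_2)^k$ and reading the corestriction to $\H{A}[1]$, the contribution with $k=n-2$ is a sum over $\PBT[n]$: the $n-1$ applications of $b_2$ are the internal vertices decorated by $\Astr<2>[A]$, the $n-2$ interleaved copies of $H$ decorate the internal edges by $h^A$, the outer $P^{\otimes}$ and inner $I^{\otimes}$ supply the root decoration $p^A$ and the leaf decorations $i^A$, and the recursion $T=\graft{L(T)}{R(T)}$ is matched by the recursive definition of the auxiliary operation $\AstrAux<T>[A]$. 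What must then be verified by hand is that the Koszul signs produced by the tensor trick, combined with the operadic-suspension signs in the $\bullet_i$, reproduce the signature $\sigma(T)$; this is exactly the sign computation already carried out at the end of \Cref{construction:Markl}, which matches the root-vertex sign of~\cite[Section~4]{Mar06}. Granting the identification, $\Astr[\H{A}]$ is the transferred structure.

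Finally I would assemble the conclusions. The $A_\infty$-equation $\braces{\Astr[\H{A}]}{\Astr[\H{A}]}=0$ is the identity $b_{\H{A}}^2=0$ guaranteed by the perturbation lemma, so $(\H{A},\Astr[\H{A}])$ is an $A_\infty$-algebra; it is minimal because $b_2$ strictly lowers tensor-length, whence $b_{\H{A}}$ has no length-preserving component, i.e.\ $\Astr<1>[\H{A}]=0$. The coalgebra morphisms $I_\infty$ and $P_\infty$ are, by definition, $A_\infty$-morphisms extending $i$ and $p$; their linear parts are quasi-isomorphisms (indeed $pi=\id[\H{A}]$ and $p$ is a cocycle-selection map inducing the identity on cohomology), so both are $A_\infty$-quasi-isomorphisms in our sense. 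Since the perturbation lemma preserves the contraction, $P_\infty I_\infty=\id[\H{A}]$ on the nose, while $H_\infty$ realises $I_\infty P_\infty\simeq\id$; hence $i$ and $p$ extend to mutually homotopy-inverse $A_\infty$-quasi-isomorphisms and $(\H{A},\Astr[\H{A}],I_\infty)$ is a minimal model of $A$, as claimed.
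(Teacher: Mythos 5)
Your proposal is correct in outline, but it takes a genuinely different route from the paper: the paper offers no proof of \Cref{thm:Markl} at all---the theorem is imported from \cite{Mar06}, where the operations of \Cref{construction:Markl} are written down directly and the $A_\infty$-identities, together with the extensions of $i$ and $p$ to $A_\infty$-quasi-isomorphisms, are established by explicit analysis of the tree formulas; the paper's own contribution is limited to the sign comparison at the end of \Cref{construction:Markl}. You instead derive everything from the Homological Perturbation Lemma applied to the reduced tensor coalgebra $\bar{\mathrm{T}}(\H{A}[1])$ and then match the perturbation series against the tree sum. What this buys is uniformity: square-zeroness of the transferred structure, minimality (every term of $P^{\otimes}b_2(1-Hb_2)^{-1}I^{\otimes}$ strictly lowers tensor length, so $\Astr<1>[\H{A}]=0$), the extensions $I_\infty,P_\infty$ of $i,p$, and the homotopy $H_\infty$ all fall out of a single formalism instead of separate inductive verifications. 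What it costs is one point you assert but the plain Perturbation Lemma does \emph{not} give: that the perturbed differential is again a \emph{coderivation} and that $I_\infty,P_\infty$ are morphisms of \emph{coalgebras}---without this the output is not an $A_\infty$-structure and the lifted maps are not $A_\infty$-morphisms. That statement is the coalgebra refinement of the lemma (the ``tensor trick'' theorem of Gugenheim--Lambe--Stasheff and Huebschmann--Kadeishvili), and it is exactly where the side conditions $ph=0$, $hi=0$, $h^2=0$ are used, both to make $H$ compatible with the coalgebra structure and to kill the degenerate, non-tree terms in the expansion of $(1-Hb_2)^{-1}$; you should invoke that refinement explicitly rather than the bare lemma. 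Two small slips: in the paper's definition of minimal model it is $i$ (not $p$) that must be a cocycle-selection map, and the operadic-suspension signs of \Cref{rmk:HCA-operad} must be tracked once when identifying your corestriction with $\Astr<n>[\H{A}]$---though, as you say, that is precisely the computation already carried out at the end of \Cref{construction:Markl}.
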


We now explain how to leverage \Cref{construction:Markl} to construct minimal
models of dg bimodules.

\begin{definition}
  \label{def:minimal-model-AM}
  Let $A$ be a dg algebra and $M$ a dg $A$-bimodule. A \emph{minimal model} of
  the pair $(A,M)$ consists of
  \begin{itemize}
  \item a minimal model $(\H{A},\Astr[\H{A}],\varphi)$ of $A$,
  \item a minimal $A_\infty$-bimodule $(\H{M},\Astr[\H{M}])$ over
    $(\H{A},\Astr[\H{A}],\varphi)$,
  \item together with a cochain
    \[
      \psi=(\psi_1,\psi_2,\psi_3,\dots,\psi_n,\dots)
    \]
    in
    \[
      \prod_{n\geq1}\bigoplus_{p+1+q=n}\dgHom[\kk]{A^{\otimes
          p}\otimes\H{M}\otimes A^{\otimes q}}{M}
    \]
    whose linear part $\psi_1\colon\H{M}\to M$ is a cocycle selection map and
    such that the component-wise direct sum
    \[
      \varphi\oplus\psi\colon(\H{A}\oplus\H{M},\Astr[\H{A}\ltimes\H{M}])\stackrel{\sim}{\longrightarrow}(A\oplus
      M,\Astr<2>[A\ltimes M])
    \]
    is an $A_\infty$-quasi-isomorphism of $A_\infty$-algebras, where
    $\Astr<2>[A\ltimes M]$ is the square-zero product on $A\oplus M$.
  \end{itemize}
\end{definition}

\begin{construction}
  \label{construction:Markl-bimodules}
  Let $A$ be a dg algebra and $M$ a dg $A$-bimodule. Choose strong deformation
  retractions
  \[
    \begin{tikzcd}
      A\rar[shift left]{p^A}\ar[loop left]{l}{h^A}&\H{A}\lar[shift left]{i^A}
    \end{tikzcd}\qquad\text{and}\qquad
    \begin{tikzcd}
      M\rar[shift left]{p^M}\ar[loop left]{l}{h^M}&\H{M}\lar[shift left]{i^M}
    \end{tikzcd}
  \]
  By forming their direct sum, these determine a strong deformation retraction
  \[
    \begin{tikzcd}
      A\oplus M\rar[shift left]{p^{A\ltimes M}}\ar[loop left]{l}{h^{A\ltimes
          M}}&\H{A}\oplus\H{M}\lar[shift left]{i^{A\ltimes M}},
    \end{tikzcd}
  \]
  where we now view $A\oplus M$ as a dg algebra with the square-zero product
  $\Astr<2>[A\ltimes M]$. \Cref{construction:Markl} then yields a minimal
  $A_\infty$-algebra structure $\Astr[\H{A}\ltimes\H{M}]$ on the graded vector
  space $\H{A}\oplus\H{M}$ which, by construction, satisfies the conditions in
  \Cref{prop:Ai-bimodule-vs-Ai-algebra}\footnote{Indeed, the square-zero product
    $\Astr<2>[A\ltimes M]$ exhibits $M\subseteq A\oplus M$ as a square-zero
    ideal.} and hence it encodes a minimal $A_\infty$-bimodule structure
  $\Astr[\H{M}]$ on $\H{M}$ over the minimal $A_\infty$-algebra
  $(\H{A},\Astr[\H{A}])$. Moreover, it follows immediately from the explicit
  formulas for extending
  \[
    i^{A\ltimes M}= i^A\oplus i^M\qquad\text{and}\qquad p^{A\ltimes
      M}=p^A\oplus p^M
  \]
  to mutually homotopy-inverse $A_\infty$-quasi-isomorphisms established
  in~\cite{Mar06} that in this way one obtains a minimal model of the pair
  $(A,M)$ in the sense of \Cref{def:minimal-model-AM}.
\end{construction}

\begin{remark}
  \label{rmk:decorated_PBTs-bimodules}
  \Cref{rmk:decorated_PBTs} of course also applies to the operations described
  in \Cref{construction:Markl-bimodules}, with an obvious caveat: Each PBT is
  now \emph{marked}, in the sense that exactly one of its leaves corresponds to
  an input from the dg $A$-bimodule $M$, and all possible markings should be
  considered. Furthermore, the decorations should involve data from the strong
  deformation retracts of $A$ and $M$ and either the binary operation
  $\Astr<2>[A]$ or one of the action maps $\Astr<1,0>[M]$ or $\Astr<0,1>[M]$,
  depending on whether the marked leaf flows into that part of the tree or not.
  Also, the root vertex now needs to be labelled by $p^M$ instead of $p^A$. For
  example, the marked decorated PBT
  \[
    \begin{tikzpicture}
      \BinaryTree[local bounding box=INIT,label distance={2pt},math
      labels,scale=0.2,font={\tiny}]{%
        :p^M!l!l!l:{\color{red}i^M},!l:h^M!r!r:i^A,!r!r!r:i^A}{3}
    \end{tikzpicture}
  \]
  describes the operation
  \[
    p^M\circ(\underbrace{\Astr<0,1>[M]\bullet_1(h^M\bullet_1\Astr<0,1>[M])}_{=\AstrAux<T,0,2>[\H{M}]})\circ(i^M\otimes
    i^A\otimes i^A),
  \]
  which is a summand of $\Astr<0,2>[\H{M}]$. Notice also that the number of left
  and right inputs from $A$ (in this case $0$ and $2$) do not correspond to the
  arities of the left and right subtrees (in this case $2$ and $1$), as the
  former depend exclusively on the position of the marked leaf.
\end{remark}

\begin{example}
  \label{ex:minimod-diagonal}
  Let $A$ be a dg algebra. Choose a strong deformation retraction
  \[
    \begin{tikzcd}
      A\rar[shift left]{p^A}\ar[loop left]{l}{h^A}&\H{A}\lar[shift left]{i^A}
    \end{tikzcd}
  \]
  It is straightforward to verify, for example using the description in
  \Cref{rmk:decorated_PBTs-bimodules}, that \Cref{construction:Markl-bimodules}
  applied to the above data (taken twice) yields a minimal model of the diagonal
  $A$-bimodule that agrees with the $A_\infty$-bimodule structure from
  \Cref{defprop:diaognal-Ai-bimodule}. Given $n\in\ZZ$, using \Cref{prop:n-well-def}, we see that
  $(\H{A}(n),\Astr[A[n]])$ is part of a minimal model for the dg $A$-bimodule $A(n)$ and,
  using \Cref{prop:D-well-def}, we see that $(D\H{A},\Astr[DA])$ is part of a minimal model
  for the dg $A$-bimodule $DA$.
\end{example}

\section{Kadeishvili-type theorems for bimodule Calabi--Yau algebras}
\label{sec:Kadeishvili}

In this section, using one of the main results in~\cite{JM25},
\Cref{thm:B-bimodules}, we give a proof of \Cref{thm:CY-Kadeikshvili} and
its variant for minimal $A_\infty$-algebras (\Cref{thm:CY-Kadeishvili-Ai}). For
this, we first recall the notions of universal Massey products for
$A_\infty$-algebras and $A_\infty$-bimodules, as well as the companion
Hochschild--Massey cohomology theories.

\subsection{Universal Massey products for minimal $A_\infty$-algebras}

Let $(A,\Astr)$ be a minimal $A_\infty$-algebra. We make the following
observations:
The $A_\infty$-equation for $n=3$ is
\[
  0=\braces{\Astr<2>}{\Astr<2>}\stackrel{\eqref{eq:Gerstenhaber_square-HC}}{=}\Sq(\Astr<2>),
\]
which is simply the associativity of the binary operation,
see~\Cref{rmk:Ai-equations}. The $A_\infty$-equation for $n=4$ is
\[
  0=\braces{\Astr<2>}{\Astr<3>}+\braces{\Astr<3>}{\Astr<2>}\stackrel{\eqref{eq:Gerstenhaber_bracket-HC}}{=}[\Astr<2>,\Astr<3>]\stackrel{\eqref{eq:Hd}}{=}\Hd(\Astr<3>),
\]
so that $\Astr<3>\in\HC[3]<-1>{A}$ is a \emph{cocycle}. Moreover, the
$A_\infty$-equation for $n=5$ is
\begin{align*}
  0&=\braces{\Astr<2>}{\Astr<4>}+\braces{\Astr<4>}{\Astr<2>}+\braces{\Astr<3>}{\Astr<3>}\\
   &=[\Astr<2>,\Astr<4>]+\Sq(\Astr<3>)\\
   &=\Hd(\Astr<4>)+\Sq(\Astr<3>),
\end{align*}
so that
\[
  0=\Sq(\Hclass{\Astr<3>})\in\HH[5]<-2>{A}.
\]
More generally, assume that the minimal $A_\infty$-algebra $(A,\Astr)$ is
$d$-sparse, $d\geq1$. In this case the first possibly non-zero higher operation
is $\Astr<d+2>$, and the $A_\infty$-equation for $n=d+3$ is
\[
  0=\braces{\Astr<2>}{\Astr<d+2>}+\braces{\Astr<d+2>}{\Astr<2>}\stackrel{\eqref{eq:Gerstenhaber_bracket-HC}}{=}[\Astr<2>,\Astr<d+2>]\stackrel{\eqref{eq:Hd}}{=}\Hd(\Astr<d+2>),
\]
so that $\Astr<d+2>\in\HC[d+2]<-d>{A}$ is a cocycle. The next non-trivial
$A_\infty$-equation is that for $n=2d+3$, which yields
\begin{align*}
  0&=\braces{\Astr<2>}{\Astr<2d+2>}+\braces{\Astr<2d+2>}{\Astr<2>}+\braces{\Astr<d+2>}{\Astr<d+2>}\\
   &=[\Astr<2>,\Astr<2d+2>]+\Sq(\Astr<d+2>)\\
   &=\Hd(\Astr<2d+2>)+\Sq(\Astr<d+2>),
\end{align*}
so that
\[
  0=\Sq(\Hclass{\Astr<d+2>})\in\HH[2d+3]<-2d>{A}.
\]
These considerations suggest that the following definition is meaningful.

\begin{definition}
  \label{def:UMP}
  Let $(A,\Astr)$ be a $d$-sparse minimal $A_\infty$-algebra, $d\geq1$. The
  \emph{universal Massey product (UMP) of length $d+2$} is the Hochschild class
  \[
    \Hclass{\Astr<d+2>}\in\HH[d+2]<-d>{A}.
  \]
  The Gerstenhaber square of the UMP vanishes: $\Sq(\Hclass{\Astr<d+2>})=0$.
\end{definition}

\begin{remark}
  Let $(A,\Astr)$ be a $d$-sparse minimal $A_\infty$-algebra, $d\geq1$. It
  follows from the $A_\infty$-morphism equations that its UMP of length $d+2$ is
  invariant under gauge $A_\infty$-isomorphisms,
  see~\cite[Proposition~4.4.4]{JKM22}.
\end{remark}

\begin{remark}
  The UMP of length $d+2$ is also considered in \cite[Ch.~3]{Sei15}. For $d=1$,
  it has been investigated for example in \cite{BKS04,Kad82} and plays a crucial
  role in \cite{Mur22,JKM22} as well. An operadic generalisation is also one of
  the key concepts used in~\cite{JM25}.
\end{remark}

\subsection{Hochschild--Massey cohomology}

Let $(A,\Astr)$ be a $d$-sparse minimal $A_\infty$-algebra. Recall from
\Cref{def:UMP} its UMP of length $d+2$
\[
  \Hclass{\Astr<d+2>}\in\HH[d+2]<-d>{A},\qquad \Sq(\Hclass{\Astr<d+2>})=0.
\]
The relations for the Gerstenhaber bracket yield
\[
  [\Hclass{\Astr<d+2>},[\Hclass{\Astr<d+2>},x]]\stackrel{\eqref{eq:Gerstenhaber_square-relations}}{=}[\underbrace{\Sq(\Hclass{\Astr<d+2>})}_{=0},x]=0,\qquad
  x\in\HH{A}.
\]
This observation serves as motivation for the following definition.

\begin{definition}[{\cite[Definition~5.2.5]{JKM22}}]
  \label{def:HMC}
  The \emph{Hochschild--Massey cochain complex of $(A,\Hclass{\Astr<d+2>})$} is
  the bigraded vector space with the components
  \begin{equation}
    \label{eq:HMC}
    \HMC[p]<q>{A}\coloneqq\HH[p]<q>{A},\qquad p\geq2,\ q\in\ZZ,
  \end{equation}
  which is equipped with the bidegree $(d+1,-d)$ \emph{Hochschild--Massey
    differential}
  \begin{align}
    \label{eq:HMd}
    \begin{split}
      \HMd\colon\HMC[p]<q>{A}&\longrightarrow\HMC[p+d+1]<q-d>{A}\\
      x&\longmapsto[\Hclass{\Astr<d+2>},x],
    \end{split}
  \end{align}
  except in bidegree $(d+1,-d)$ where it is given by\footnote{Recall that, when
    $\chark(\kk)\neq2$, we have $x^2=0$ for elements of odd total degree by the
    graded commutativity of the cup product in $\HH{A}$. In any case, the
    Hochschild--Massey differential always squares to zero,
    see~\cite[Remark~5.2.6.]{JKM22}.}
  \begin{align}
    \label{eq:HMd-2}
    \begin{split}
      \HMd\colon\HMC[d+1]<-d>{A}&\longrightarrow\HMC[2d+2]<-2d>{A}\\
      x&\longmapsto[\Hclass{\Astr<d+2>},x]+x^2.
    \end{split}
  \end{align}
  The \emph{Hochschild--Massey cohomology of $(A,\Hclass{\Astr<d+2>})$} is
  \begin{equation}
    \label{eq:HMH}
    \HMH{A}\coloneqq\H[\bullet,*]{\HMC{A}}.
  \end{equation}
\end{definition}

The following Kadeishvili-type theorem, which is implicit in \cite{Mur22} in the
case $d=1$, is our main motivation for considering the Hochschild--Massey
cohomology of a sparse minimal $A_\infty$-algebra. Its proof relies crucially on
the obstruction theory developed by the second-named author in \cite{Mur20b},
see also \cite[Section~5]{JKM22}. It is also worth mentioning that the
Hochschild--Massey cochain complex is embedded in an extension of a
Bousfield--Kan fringed spectral sequence introduced in \cite{Mur20b}, which
explains why its differential has a somewhat unusual expression in bidegree
$(d+1,-d)$, where it is not given by a linear map.

\begin{theorem}[{\cite[Theorem~B]{JKM22}}]
  \label{thm:B}
  Let $(A,\Astr)$ be a $d$-sparse minimal $A_\infty$-algebra, $d\geq1$. Suppose
  that
  \[
    \HMH[p+2]<-p>{A}=0,\qquad p>d.
  \]
  If $(A,m)$ is any minimal $A_\infty$-algebra structure with the same
  underlying associative algebra structure, $m_2=\Astr<2>$, and with the same
  UMP of length $d+2$,
  \[
    \Hclass{m_{d+2}}=\Hclass{\Astr<d+2>}\in\HH[d+2]<-d>{A},
  \]
  then there exists a gauge $A_\infty$-isomorphism
  $(A,\Astr)\stackrel{\sim}{\to}(A,m)$.
\end{theorem}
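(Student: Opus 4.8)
The plan is to construct the gauge $A_\infty$-isomorphism $f=(f_1,f_2,f_3,\dots)$, with linear part $f_1=\id[A]$, by an obstruction-theoretic induction on arity, clearing the successive obstructions by means of the hypothesis on Hochschild--Massey cohomology. Since both structures are $d$-sparse, \Cref{rmk:sparse-operations,rmk:sparse-morphisms} force every operation other than the $\Astr<kd+2>$ (resp.\ $m_{kd+2}$) and every component of $f$ other than the $f_{kd+1}$ to vanish, so it suffices to produce $f_{d+1},f_{2d+1},f_{3d+1},\dots$, and the only instances of the $A_\infty$-morphism equation \eqref{eq:Ai-morphism} that carry information are those in arities $Nd+2$, $N\geq1$. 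I would organise the induction through successive gauge transformations: having chosen $f_{d+1},\dots,f_{(N-1)d+1}$, these transport $\Astr$ to a $d$-sparse minimal structure $\mu=(\mu_n)_n$ with $\mu_2=\Astr<2>=m_2$ and $\mu_{kd+2}=m_{kd+2}$ for every $k<N$; the task at stage $N$ is to arrange in addition $\mu_{Nd+2}=m_{Nd+2}$.

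The first point is that $\mu_{Nd+2}-m_{Nd+2}$ is automatically a Hochschild cocycle. Indeed, comparing the arity-$(Nd+3)$ components of the $A_\infty$-equations $\braces{\mu}{\mu}=0$ and $\braces{m}{m}=0$, every summand other than $\Hd(\mu_{Nd+2})$ and $\Hd(m_{Nd+2})$ is a brace of two operations of arity strictly below $Nd+2$, and these already coincide; hence $\Hd(\mu_{Nd+2}-m_{Nd+2})=0$. This produces an obstruction class $\xi_N\coloneqq[\mu_{Nd+2}-m_{Nd+2}]\in\HH[Nd+2]<-Nd>{A}$. Inspecting \eqref{eq:Ai-morphism} in arity $Nd+2$ shows that adjoining the single component $f_{Nd+1}=g$ alters $\mu_{Nd+2}$ exactly by $\Hd(g)$ while leaving all lower arities intact, so the step succeeds precisely when $\xi_N=0$. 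The base case $N=1$ is immediate: there are no earlier choices, $\mu=\Astr$, and $\xi_1=\Hclass{\Astr<d+2>}-\Hclass{m_{d+2}}=0$ by the hypothesis that the two universal Massey products of length $d+2$ agree.

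For $N\geq2$ the class $\xi_N$ need not vanish outright, and here the Hochschild--Massey complex of \Cref{def:HMC} enters. The crux is to prove two things: first, that $\xi_N$ is a cocycle for the Hochschild--Massey differential $\HMd=[\Hclass{\Astr<d+2>},-]$ (a Bianchi-type identity extracted from the next $A_\infty$-equation); and second, that the residual gauge freedom in the penultimate component acts on $\xi_N$ through $\HMd$. Concretely, replacing $f_{(N-1)d+1}$ by $f_{(N-1)d+1}+\eta$ for a Hochschild cocycle $\eta\in\HH[(N-1)d+1]<-(N-1)d>{A}$ does not disturb the agreement already secured in arity $(N-1)d+2$, since it perturbs $\mu_{(N-1)d+2}$ only by $\Hd(\eta)=0$, yet it changes $\xi_N$ precisely by the coboundary $\HMd(\eta)$, with the additional quadratic term $\eta^2$ of \eqref{eq:HMd-2} appearing exactly when $N=2$, where $\eta$ lies in bidegree $(d+1,-d)$. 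The bidegrees are consistent: $\HMd$ carries $((N-1)d+1,-(N-1)d)$ to $(Nd+2,-Nd)$. Thus the genuine obstruction is the class of $\xi_N$ in $\HMH[Nd+2]<-Nd>{A}=\HMH[p+2]<-p>{A}$ with $p=Nd>d$, which vanishes by hypothesis; after adjusting $f_{(N-1)d+1}$ by a suitable cocycle, $\xi_N$ becomes trivial, so that $\mu_{Nd+2}-m_{Nd+2}$ is a Hochschild coboundary and we may choose $f_{Nd+1}$ to cancel it.

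The main obstacle is exactly the second claim above --- that perturbing one component of $f$ by a cocycle propagates, through the nonlinear equations \eqref{eq:Ai-morphism}, to a change of the next obstruction given by the Hochschild--Massey differential, quadratic correction included. This delicate bookkeeping is precisely what the enhanced obstruction theory of \cite{Mur20b} is built to handle, and I would run the induction inside the Bousfield--Kan fringed spectral sequence of the moduli space of $d$-sparse minimal $A_\infty$-structures, whose identification (recalled before the statement) realises the Hochschild--Massey cochain complex as the invariant controlling the tower; the vanishing $\HMH[p+2]<-p>{A}=0$ for $p>d$ then says that there are no obstructions above the stage pinned down by the universal Massey product. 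Finally, the iteration converges: the component $f_{Nd+1}$ is introduced at stage $N$ and is modified at most once, at stage $N+1$, by a cocycle correction, after which it is fixed; hence the $f_{Nd+1}$ assemble into a well-defined $f\in\prod_{n\geq1}\dgHom[\kk]{A^{\otimes n}}{A}[1-n]$, the desired gauge $A_\infty$-isomorphism $(A,\Astr)\stackrel{\sim}{\to}(A,m)$.
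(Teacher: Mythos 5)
Your proposal is correct and takes essentially the same approach as the proof this paper relies on: the statement is not reproved here but imported from \cite[Theorem~B]{JKM22}, whose argument is precisely the arity-by-arity obstruction-theoretic induction you outline, with the crucial propagation step (that cocycle perturbations of the penultimate component act on the next obstruction through the Hochschild--Massey differential, quadratic term included) handled by the enhanced obstruction theory of \cite{Mur20b} and its Bousfield--Kan fringed spectral sequence --- exactly the machinery you defer to. Your remaining points (the Bianchi-type closedness of the obstruction class, the nonlinearity of the differential in bidegree $(d+1,-d)$, and convergence via stabilization of the components $f_{Nd+1}$, which is the naive form of the $\lim\nolimits^1$-collapse in the Milnor sequence) all match the strategy the paper itself sketches in the remark following \Cref{thm:B-bimodules}.
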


\subsection{Universal Massey products for minimal $A_\infty$-bimodules}

Let $(A,\Astr)$ be a $d$-sparse minimal $A_\infty$-algebra and $(M,\Astr[M])$ a
$d$-sparse minimal $A_\infty$-bimodule over it. Notice that the corresponding
$A_\infty$-algebra $(A\oplus M,\Astr[A\ltimes M])$ is also $d$-sparse and
minimal and hence it has a well-defined UMP of length $d+2$, namely the class
\[
  \Hclass{\Astr<d+2>[A\ltimes M]}\in\HH[d+2]<-d>{A\oplus M},
\]
where we regard $A\ltimes M=A\oplus M$ as an associative algebra with the
square-zero product $\Astr<2>[A\ltimes M]$. Although it would seem natural to
consider this class as the bimodule analogue of the UMP for sparse
$A_\infty$-algebras, in order to obtain the correct results we must modify the
ambient cohomology in which it lives. The reason for this is that the Hochschild
cohomology $\HH{A\ltimes M}$ relates to the obstruction theory for the existence
and uniqueness of \emph{arbitrary} $A_\infty$-algebra structures on $(A\oplus
M,\Astr<2>[A\ltimes M])$, which need not satisfy the conditions in
\Cref{prop:Ai-bimodule-vs-Ai-algebra}, and therefore need not correspond to
$A_\infty$-bimodule structures on the graded $A$-bimodule $(M,\Astr<2>[M])$.
This technical point is the \emph{raison d'être} for considering the above
cohomology class in the bimodule Hochschild cohomology of the pair $(A,M)$ and
not in the Hochschild cohomology of $A\ltimes M$, see \cite[Section~2]{JM25} for
more details on this.

\begin{definition}
  \label{def:bUMP}
  Let $(A,\Astr)$ be a $d$-sparse minimal $A_\infty$-algebra and $(M,\Astr[M])$
  a $d$-sparse minimal $A_\infty$-bimodule over it (recall that the sparsity
  condition is vacuous for $d=1$). The \emph{bimodule universal Massey product
    (bimodule UMP) of length $d+2$} of $(M,\Astr[M])$ is the cocycle
  \[
    \Hclass{\Astr<d+2>[A\ltimes
      M]}=\Hclass{\Astr<d+2>+\Astr<d+2>[M]}\in\RelBimHH[d+2]<-d>{A}{M}.
  \]
  Similar to the algebra case, the bimodule Gerstenhaber square of the
  bimodule UMP vanishes: $\Sq(\Hclass{\Astr<d+2>[A\ltimes M]})=0$.
\end{definition}

\begin{remark}
  \label{rmk:bUMP}
  We warn the reader of the following possible sources of confusion in
  \Cref{def:bUMP}. Firstly, while the cochain
  \[
    \Astr<d+2>[A\ltimes M]=\Astr<d+2>+\Astr<d+2>[M]\in\RelBimHC[d+2]<-d>{A}{M}
  \]
  is a cocycle, neither of the cochains $\Astr<d+2>$ and $\Astr<d+2>[M]$ need be
  cocycles in $\RelBimHC{A}{M}$, notwithstanding the fact that
  $\Astr<d+2>\in\HC[d+2]<-d>{A}$ is a cocycle Hochschild complex of $A$. The reason for this is the
  lower-triangular form of the bimodule Hochschild differential, see
  \Cref{eq:dRelBim-matrix}.
\end{remark}

\subsection{Massey bimodule cohomology}

Let $(A,\Astr)$ be a $d$-sparse minimal $A_\infty$-algebra and $(M,\Astr[M])$ a
$d$-sparse minimal $A_\infty$-bimodule over it. In view of \Cref{def:bUMP} and
the discussion preceding it, the following definition is not surprising.

\begin{definition}
  \label{def:RelBimHMC}
  The \emph{Massey bimodule cochain complex of $(M,\Astr[M])$} is the bigraded vector space with the
  components
  \begin{align}
    \label{eq:RelBimHMC}
    \BimHMC[n]<r>{M}\coloneqq\BimHH[n]<r>{M},\qquad n\geq1,\ r\in\ZZ,
  \end{align}
  which is endowed with the bidegree $(d+1,-d)$ \emph{
    Massey bimodule differential} given, without exceptions, by
  \begin{align}
    \label{eq:BimHMd}
    \begin{split}
      \BimHMd\colon\BimHMC[n]<r>{M}&\longrightarrow\BimHMC[n+d+1]<r-d>{M}\\
      x&\longmapsto[\Hclass{\Astr[A\ltimes M]},x].
    \end{split}
  \end{align}
  The \emph{Massey bimodule cohomology of $(M,\Astr[M])$} is
  \begin{equation}
    \label{eq:RelBimHMH}
    \BimHMH{M}\coloneqq\H[\bullet,*]{\BimHMC{M}}.
  \end{equation}
\end{definition}

We are now ready to state the key technical result that we leverage in our proof
of \Cref{thm:CY-correspondence}. It can be regarded as an almost-formality
theorem for minimal $A_\infty$-bimodules and should be compared with \Cref{thm:B} which
treats the case of minimal $A_\infty$-algebras. We remind the reader of \Cref{rmk:bUMP}.

\begin{theorem}[{\cite[Theorem~6.2.7]{JM25}}]
  \label{thm:B-bimodules} Let $(A,\Astr)$ be a $d$-sparse minimal
  $A_\infty$-algebra and $(M,\Astr[M])$ a $d$-sparse minimal $A_\infty$-bimodule
  over it, $d\geq1$. Suppose that
  \[
    \BimHMH[p+1]<-p>{M}=0,\qquad p>d.
  \]
  If $(M,m)$ is any minimal $A_\infty$-bimodule structure over $(A,\Astr)$ with
  the same underlying graded $A$-bimodule structure, $m_2=\Astr<2>[M]$, and such
  that
  \[
    0=\Hclass{\Astr<d+2>[M]-m_{d+2}}\in\BimHH[d+1]<-d>{M},
  \]
  then there exists a gauge $A_\infty$-isomorphism of $A_\infty$-bimodules
  $(M,\Astr[M])\stackrel{\sim}{\to}(M,m)$.
\end{theorem}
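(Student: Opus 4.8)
The plan is to construct the desired gauge $A_\infty$-isomorphism of bimodules inductively by an obstruction-theoretic argument that runs parallel to the proof of \Cref{thm:B}, but carried out entirely within the bimodule Hochschild cochain complex $\RelBimHC{A}{M}$. By \Cref{prop:Ai-bimodule-vs-Ai-algebra}, the two bimodule structures $\Astr[M]$ and $m$ correspond to two $d$-sparse minimal $A_\infty$-algebra structures $\Astr[A\ltimes M]$ and $m^A+m$ on $A\oplus M$, both with the square-zero product in arity $2$ and both exhibiting $M$ as a square-zero ideal; and, by \Cref{def:Ai-bimodule-morphism}, a gauge $A_\infty$-isomorphism $(M,\Astr[M])\to(M,m)$ is precisely a gauge $A_\infty$-isomorphism $\widetilde{f}=\id[A]\oplus f$ between these $A_\infty$-algebras whose components $f_n$ lie in the bimodule summand $\BimHC[n-1]{M}\subseteq\RelBimHC[n]{A}{M}$. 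Since $\RelBimHC{A}{M}$ is itself a brace algebra with multiplication, hence a shifted dg Lie algebra (\Cref{defprop:braces-Gerstenhaber}), the formalism of Maurer--Cartan elements and their gauge transformations that underlies \Cref{thm:B} is available; the point is to keep all cochains inside the dg Lie ideal $\BimHC{M}<-1>$ (\Cref{prop:RelBimHC-all}), which is exactly what the morphism being of bimodule type enforces.

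By $d$-sparsity (\Cref{rmk:sparse-morphisms}) the only possibly non-zero components of $f$ are $f_{kd+1}$, $k\geq0$, with $f_1=\id[M]$ fixed. First I would solve the $A_\infty$-morphism equations for $\widetilde{f}$ arity by arity, the nontrivial equations occurring in arities $kd+2$, $k\geq1$. In arity $kd+2$ the equation takes the schematic form $\dBim(f_{kd+1})=O_k$, where the obstruction cochain $O_k\in\BimHC[kd+1]<-kd>{M}$ is assembled from $\Astr[M]$, $m$, and the previously chosen $f_{jd+1}$ with $j<k$; it is solvable for $f_{kd+1}$ exactly when the class of $O_k$ vanishes in $\BimHH[kd+1]<-kd>{M}$. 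In arity $d+2$ the obstruction is $\Hclass{\Astr<d+2>[M]-m_{d+2}}$, which is zero by hypothesis, so the base case is immediate. For the inductive step one checks that each $O_k$ is a cocycle for the Massey bimodule differential $\BimHMd=[\Hclass{\Astr[A\ltimes M]},-]$ and that altering the earlier choices moves its class only by Massey coboundaries, so that the genuine obstruction is a well-defined class in $\BimHMH[kd+1]<-kd>{M}$; since $kd>d$ for $k\geq2$, this group vanishes by hypothesis and the induction proceeds. A welcome simplification over \Cref{thm:B} is that the Massey bimodule differential carries no quadratic correction in any bidegree (\Cref{def:RelBimHMC}), because the cup product of two bimodule cochains vanishes (\cref{eq:BimHC-square-zero}) as $M$ is a square-zero ideal.

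The hard part will be justifying the two spectral-sequence statements in the inductive step: that $O_k$ is $\BimHMd$-closed and that its class in $\BimHMH[kd+1]<-kd>{M}$ is independent of the admissible choices made at earlier stages. This is precisely the content that must be imported from the enhanced obstruction theory of~\cite{Mur20b}, here in its bimodule incarnation~\cite{JM25}: the Massey bimodule cochain complex is embedded in a fringed Bousfield--Kan-type spectral sequence whose differentials encode the bracket with the bimodule UMP, and it is the vanishing of the groups $\BimHMH[p+1]<-p>{M}$ for $p>d$ along the relevant line that forces every higher obstruction to die. This also explains why the correct hypothesis is phrased in terms of $\BimHMH{M}$ rather than in terms of the full Hochschild--Massey cohomology $\HMH{A\oplus M}$ of the extension: only the bimodule summand of the cohomology of $A\oplus M$ governs deformations that preserve the square-zero ideal, and it is this summand that $\BimHMH{M}$ isolates, compare with \Cref{prop:RelBimHC-all} and the discussion preceding \Cref{def:bUMP}.
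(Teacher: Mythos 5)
First, note that this paper does not prove \Cref{thm:B-bimodules} at all: it is imported verbatim from \cite[Theorem~6.2.7]{JM25}, and the only material the paper offers is the long remark after the statement sketching how the proof goes in \emph{op.~cit.} Measured against that sketch, your proposal reproduces the correct skeleton: the reduction to $A_\infty$-algebra structures on $A\oplus M$ via \Cref{prop:Ai-bimodule-vs-Ai-algebra}, a levelwise induction over the arities $kd+2$ permitted by sparsity, obstruction classes $[O_k]\in\BimHH[kd+1]<-kd>{M}$ with base case $\Hclass{\Astr<d+2>[M]-m_{d+2}}=0$, and the correct observation that the quadratic correction present in \Cref{thm:B} disappears here because of \cref{eq:BimHC-square-zero}. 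But as a proof it is circular: the two statements you defer to the ``enhanced obstruction theory \dots in its bimodule incarnation \cite{JM25}''---that each $O_k$ is $\BimHMd$-closed and that its class is independent of the admissible earlier choices---are not auxiliary lemmas one may quote; they \emph{are} the content of \cite[Theorem~6.2.7]{JM25}, where they are obtained by identifying the obstructions with differentials of a fringed Bousfield--Kan spectral sequence attached to the tower of operadic mapping spaces $\Map[\dgOp]{\opA[n]}{\opLinEnd{A}{M}}$. A blind proof must either carry out these verifications directly from the $A_\infty$-morphism equations (a genuine computation involving all cross terms, signs included) or rebuild that homotopy-theoretic framework; naming them as ``the hard part'' and citing the target theorem's own source does not discharge them.

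Second, there is a concretely missing step even granting those claims: your induction revises earlier components (``altering the earlier choices'') at every stage, and you never argue that the process converges to a single morphism. As written, the argument only yields that for every $n$ the truncated structures are gauge $A_n$-isomorphic, and the paper's remark is explicit that this is \emph{in principle weaker} than the conclusion of the theorem; the bridge in \cite{JM25} is the collapse of the $\lim\nolimits^1$ term in the Milnor exact sequence of the tower, which is exactly what the vanishing hypothesis, fed into the spectral sequence, is used to control. One can hope to avoid $\lim\nolimits^1$ by observing that killing $[O_k]$ should only ever require modifying the single component $f_{(k-1)d+1}$---a bidegree count, since $\BimHMd$ has bidegree $(d+1,-d)$, so Massey coboundaries in bidegree $(kd+1,-kd)$ come from classes in $\BimHH[(k-1)d]<-(k-1)d>{M}$, the bidegree of $f_{(k-1)d+1}$---whence each component is revised at most once and the choices stabilize. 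But that stabilization argument requires your well-definedness claim in a strong form (a cocycle modification $c$ of $f_{(k-1)d+1}$ changes $[O_k]$ by exactly $\pm\BimHMd(\Hclass{c})$, with no contributions mixing $c$ with the other $f_{jd+1}$, $j\geq1$, which here hinges again on \cref{eq:BimHC-square-zero} and on $\Hclass{\Astr<d+2>[A\ltimes M]}=\Hclass{\Astr<d+2>+m_{d+2}}$), and none of this is justified in your proposal.
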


\begin{remark}
  Continuing the discussion in \Cref{rmk:bUMP}, in the context of
  \Cref{thm:B-bimodules}, it follows from \Cref{prop:RelBimHC-all} that the
  condition
  \[
    0=\Hclass{\Astr<d+2>[M]-m_{d+2}}\in\BimHH[d+1]<-d>{M},
  \]
  is equivalent to the more natural condition
  \[
    \Hclass{\Astr<d+2>[A]+\Astr<d+2>[M]}=\Hclass{\Astr<d+2>[A]+m_{d+2}}\text{ in
    }\RelBimHH[d+2]<-d>{A}{M}
  \]
  whenever the graded $\HH{A}$-bimodule $\BimHH{M}$ is graded-symmetric. This is
  the case when $M=A$ is the diagonal $A$-bimodule since $\HH{A}$ is
  graded-commutative and hence its diagonal bimodule is graded-symmetric.
\end{remark}

\begin{remark}
  Keeping \Cref{thm:B-bimodules} in mind, the introduction of the bimodule
  Hochschild--Massey cochain complex (\Cref{def:RelBimHMC}) is motivated by the
  following considerations, the details of which can be found in our companion
  article~\cite{JM25}. Firstly, associated with the $d$-sparse graded algebra $A$
  there is a space (Kan complex)
  \[
    \Map[\dgOp]{\opA}{\opEnd{A}}
  \]
  whose set of points is the set of minimal algebra $A_\infty$-structures on the
  underlying graded vector space of $A$; here, $\opA$ is the $A_\infty$-operad,
  $\opEnd{A}$ is the endomorphism (graded) operad of $A$, and the above space is
  the Dwyer--Kan mapping space in the model category of differential graded
  (non-symmetric) operads endowed with the (transferred) projective model
  structure~\cite{Lyu11,Mur11}. Using an appropriate cylinder object
  $I\opA$~\cite{Mur16}, it is not difficult to show that the set of connected
  components
  \[
    \pi_0(\Map[\dgOp]{\opA}{\opEnd{A}})
  \]
  identifies with the set of minimal $A_\infty$-algebra structures on $A$
  considered up to gauge $A_\infty$-isomorphism~\cite[Proposition~6.9]{Mur20b}.
  Secondly, given a $d$-sparse graded $A$-bimodule $M$, we consider the space
  \[
    \Map[\dgOp]{\opA}{\opLinEnd{A}{M}},
  \]
  whose set of points is now the set of minimal $A_\infty$-algebra structures on
  $A\oplus M$ that satisfy the conditions in
  \Cref{prop:Ai-bimodule-vs-Ai-algebra}; here, $\opLinEnd{A}{M}$ is the linear
  endomorphism (graded) operad of the pair $(A,M)$ introduced in~\cite{BM09}.
  Finally, the linear endomorphism operad is equipped with a canonical
  projection $\opLinEnd{A}{M}\twoheadrightarrow\opEnd{A}$ that induces a
  fibration
  \[
    \Map[\dgOp]{\opA}{\opLinEnd{A}{M}}\twoheadrightarrow\Map[\dgOp]{\opA}{\opEnd{A}}
  \]
  between the above mapping spaces. Given a point $\Astr$ in
  $\Map[\dgOp]{\opA}{\opEnd{A}}$, we consider the induced fibre sequence
  \[
    \Str{M}\rightarrowtail\Map[\dgOp]{\opA}{\opLinEnd{A}{M}}\twoheadrightarrow\Map[\dgOp]{\opA}{\opEnd{A}},
  \]
  where $\Str{M}$ is, by construction, the space of minimal $A_\infty$-bimodule
  structures on the underlying graded vector space of $M$ over the given minimal
  $A_\infty$-algebra $(A,\Astr)$. From this perspective, \Cref{thm:B-bimodules}
  provides sufficient conditions for two minimal $A_\infty$-bimodule structures
  $\Astr+\Astr[M],\Astr+\Astr[]\in\Str{M}$ to lie in the same connected
  component, that is for the minimal $A_\infty$-bimodules $(M,\Astr+\Astr[M])$
  and $(M,\Astr+\Astr[])$ to be gauge $A_\infty$-isomorphic. The bimodule
  Hochschild--Massey cochain complex then emerges as follows. There is a
  commutative diagram
  \[
    \begin{tikzcd}[row sep=small,column sep=small]
      \Str{M}\rar[tail]&\Map[\dgOp]{\opA}{\opLinEnd{A}{M}}\rar[two heads]&\Map[\dgOp]{\opA}{\opEnd{A}}\\[-2em]
      \vdots\dar[two heads]&\vdots\dar[two heads]&\vdots\dar[two heads]\\
      \Str[\opA[n]]<\Astr<\leq n>>{M}\rar[tail]\dar[two heads]&\Map[\dgOp]{\opA[n]}{\opLinEnd{A}{M}}\rar[two heads]\dar[two heads]&\Map[\dgOp]{\opA[n]}{\opEnd{A}}\dar[two heads]\\
      \vdots\dar[two heads]&\vdots\dar[two heads]&\vdots\dar[two heads]\\
      \Str[\opA[3]]<\Astr<\leq 3>>{M}\rar[tail]\dar[two heads]&\Map[\dgOp]{\opA[3]}{\opLinEnd{A}{M}}\rar[two heads]\dar[two heads]&\Map[\dgOp]{\opA[3]}{\opEnd{A}}\dar[two heads]\\
      \Str[\opA[2]]<\Astr<\leq
      2>>{M}\rar[tail]&\Map[\dgOp]{\opA[2]}{\opLinEnd{A}{M}}\rar[two
      heads]&\Map[\dgOp]{\opA[2]}{\opEnd{A}}
    \end{tikzcd}
  \]
  where $\opA[n]$ is the $A_n$-operad (whose algebras are truncated
  $A_\infty$-algebras with operations only up to and including arity $n$), whose
  rows are fibre sequences based at the apparent base points induced by the
  minimal $A_\infty$-algebra structure $\Astr$ and its truncations $\Astr<\leq
  n>$, whose columns are based towers of fibrations, and whose top row is
  obtained by passing to the homotopy limit of each of the towers. Each of these
  towers of fibrations gives rise to an extended Bousfield--Kan spectral
  sequence\footnote{In the case of the rightmost column, this spectral sequence
    was constructed by the second-named author for $d=1$ in~\cite{Mur20b} and
    for $d\geq1$ by the authors in~\cite{JM25}, see
    also~\cite[Sec.~IX.4]{BK72}. } for computing the homotopy groups of the
  homotopy limit at the top of the tower. These extended spectral sequences are
  fringed: they are not fully defined in the right half-plane, and some of their
  terms are possibly non-abelian groups while others are plain pointed sets. In
  the case of the leftmost column, the relevant part of the $(d+1)$-st page of
  the spectral sequence has the terms
  \[
    E_{d+1}^{n-1,r}=\BimHH[n]<r>{M}=\BimHMC[n]<r>{M},\qquad n\geq 2,\ r\in\ZZ,
  \]
  and the bidegree $(d+1,-d)$ differentials
  \[
    \dSS[d+1]\colon\colon E_{d+1}^{n-1,r}\longrightarrow E_{d+1}^{n+d,r-d}
  \]
  is given by $x\mapsto[\Hclass{\Astr[A\ltimes M]},x]$ (that one can compute
  far-away pages and differentials of the spectral sequence is a consequence of
  the $d$-sparsity of $A$ and $M$). The proof of \Cref{thm:B-bimodules} relies
  heavily on the use of the above spectral sequence, as it permits us to control
  the collapse of the $\lim\nolimits^1$ term in the Milnor short exact sequence
  of pointed sets~\cite[Theorem~IX.3.1]{BK72}
  \[
    *\rightarrow \lim\nolimits^1\pi_1(\Str[\opA[n]]<\Astr<\leq n>>{M})\rightarrow\pi_0(\Str{M})\rightarrow\varprojlim\pi_0(\Str[\opA[n]]<\Astr<\leq n>>{M})\rightarrow *
  \]
  Finally, notice that if the $\lim\nolimits^1$ term collapses, then two
  $A_\infty$-bimodule structures
  \[
    \Astr+\Astr[M],\ \Astr+\tilde{m}^M\in\Str{M}
  \]
  are gauge $A_\infty$-isomorphic if and only if all of their truncations to an
  $A_n$-bimodule structure are gauge $A_n$-isomorphic (which is in general a
  weaker statement). This turns out to suffice for proving
  \Cref{thm:B-bimodules}.
\end{remark}

\subsection{A Kadeishvili-type theorem for bimodule right Calabi--Yau
  $A_\infty$-algebras}

We state the following definition for the sake of completeness:

\begin{definition}[Kontsevich, see also~{\cite[Section~10.2]{KS09}}]
  \label{def:right-CY-as-bimodule-Ai}
  Let $(A,\Astr)$ be an $A_\infty$-algebra whose cohomology (with respect to
  $\Astr<1>$) is degree-wise finite dimensional and $n\in\ZZ$.
  \begin{enumerate}
  \item A \emph{bimodule right $n$-Calabi--Yau structure} on $(A,\Astr)$ is an
    $A_\infty$-quasi-iso\-mor\-phi\-sm of $A_\infty$-bimodules
    \[
      \varphi\colon(A(n),\Astr[A[n]])\stackrel{\sim}{\longrightarrow}(DA,\Astr[DA]).
    \]
  \item We say that $(A,\Astr)$ is \emph{right $n$-Calabi--Yau as a bimodule} if
    it admits a bimodule right $n$-Calabi--Yau structure.
  \end{enumerate}
\end{definition}

We remind the reader of \Cref{defprop:upsilon} and of the BV operator introduced
in \Cref{def:BV}.
 
\begin{proposition}
  \label{prop:CY-Ai-aux}
  Let $(A,\Astr)$ be a $d$-sparse minimal $A_\infty$-algebra, $d\geq1$, such
  that the graded algebra $\H{A}$ is unital. Suppose also that there exists an
  isomorphism of graded $A$-bimodules
  \[
    \varphi\colon A(n)\stackrel{\sim}{\longrightarrow}DA
  \]
  for some $n\in d\ZZ$. Then,
  \[
    0=\Hclass{\Astr<d+2>[A[0]]-\Upsilon(\Astr<d+2>[A[0]])}\in\BimHH[d+1]<-d>{A}\cong\HH[d+1]<-d>{A}
  \]
  if and only if
  \[
    0=\BV(\Hclass{\Astr<d+2>})\in\HH[d+1]<-d>{A}.
  \]
\end{proposition}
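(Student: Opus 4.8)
The proposition asserts that two cohomology classes in $\HH[d+1]<-d>{A}$ vanish simultaneously: the class of the difference $\Astr<d+2>[A[0]]-\Upsilon(\Astr<d+2>[A[0]])$ (living in $\BimHH[d+1]<-d>{A}$, which we identify with $\HH[d+1]<-d>{A}$), and the class $\BV(\Hclass{\Astr<d+2>})$. The plan is to compare these two classes directly by transporting the bimodule-level statement into the Hochschild cochain complex $\HC{A}<\varepsilon>$ via the quasi-isomorphism $\kappa$ of \Cref{prop:RelBimHC-A}, and then to recognise the result as precisely the image of $\BV(\Astr<d+2>)$ under the identification $\kappa$. The two main ingredients are: (i) the isomorphism $\H[\bullet,*]{\kappa}\colon\RelBimHH{A}{A}\stackrel{\sim}{\to}\HH{A}<\varepsilon>/(\varepsilon^2)$, and (ii) the explicit description of $\kappa\circ\Upsilon$ recorded in \Cref{rmk:BV-motivation}, which is visibly engineered to match \Cref{def:BV}.

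**The key steps.** First I would observe that $\kappa$ acts as the identity on $\HC{A}\subseteq\RelBimHC{A}{A}$, and that under the diagonal-bimodule identification the diagonal $A_\infty$-operations satisfy $\Astr<p,q>[A[0]]=\Astr<p+1+q>[A]$ by \Cref{defprop:diaognal-Ai-bimodule}; hence applying $\kappa$ to the bimodule cochain $\Astr<d+2>[A[0]]$ reassembles the algebra cochain $\Astr<d+2>$ up to sign, via \cref{eq:kappa-sum}. Concretely, $\kappa(\Astr<d+2>[A[0]])=\cupp{c}{\varepsilon}$ for an explicit $c\in\HC[d+1]<-d>{A}$, and one checks $c=\Astr<d+2>$ using the strict unitality assumption (that $\H{A}$ is unital, so insertion of $1_A$ into $\Astr<d+2>$ in the interior position is governed by the unit axioms). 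Second, I would compute $\kappa(\Upsilon(\Astr<d+2>[A[0]]))$ using \Cref{rmk:BV-motivation}: the remark shows that $\kappa(\Upsilon(c))$ is determined by a pairing identity whose right-hand side is a cyclic sum, and comparing this with the characterising equation for the BV operator in \Cref{def:BV} shows that $\kappa(\Upsilon(\Astr<d+2>[A[0]]))=\cupp{\BV(\Astr<d+2>)}{\varepsilon}$, again up to a controlled sign. Third, since $\kappa$ is a quasi-isomorphism, it induces an isomorphism on cohomology, so the class of $\Astr<d+2>[A[0]]-\Upsilon(\Astr<d+2>[A[0]])$ vanishes in $\BimHH[d+1]<-d>{A}$ if and only if its image $\cupp{(\Astr<d+2>-\BV(\Astr<d+2>))}{\varepsilon}$ vanishes in $\H[\bullet,*]{\HC{A}<\varepsilon>}$.

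**Extracting the BV statement.** The final step is to interpret the vanishing of $\cupp{(\Astr<d+2>-\BV(\Astr<d+2>))}{\varepsilon}$ in the dual-numbers complex. Because multiplication by $\varepsilon$ is an isomorphism from $\HC{A}$ onto the ideal $\HC{A}\cdot\varepsilon$ that intertwines $\Hd$ with the differential on $\HC{A}\cdot\varepsilon$ (by construction $d(\cupp{c}{\varepsilon})=\cupp{\Hd(c)}{\varepsilon}$), the class $\cupp{(\Astr<d+2>-\BV(\Astr<d+2>))}{\varepsilon}$ vanishes precisely when $\Astr<d+2>-\BV(\Astr<d+2>)$ is a coboundary in $\HC{A}$. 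Now $\Astr<d+2>$ is itself a Hochschild cocycle (the $A_\infty$-equation for $n=d+3$ gives $\Hd(\Astr<d+2>)=0$), so $\Hclass{\Astr<d+2>-\BV(\Astr<d+2>)}=\Hclass{\Astr<d+2>}-\BV(\Hclass{\Astr<d+2>})$ in $\HH[d+1]<-d>{A}$; but $\Astr<d+2>$ has Hochschild degree $d+2$ whereas $\BV(\Astr<d+2>)$ has Hochschild degree $d+1$, so these live in different horizontal gradings and the vanishing of the class forces each to vanish separately. In particular the displayed equivalence reduces to $0=\BV(\Hclass{\Astr<d+2>})$, as claimed.

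**Expected obstacle.** The conceptual architecture is clean, but I expect the genuine work to be the sign bookkeeping in the second step: matching the sign $(-1)^{i(m-1)+\maltese}$ appearing in the cyclic sum of \Cref{rmk:BV-motivation} against the sign in \Cref{def:BV} requires tracking the interaction of the transposition $(p,q,r)\mapsto(q,p,r)$ performed by $\Upsilon$, the $\kappa$-insertion sign $(-1)^{n-i}$, and the pairing signs coming from $\pairing<-,->_\varphi$ through \Cref{prop:pairing-MnDM}. A secondary subtlety is confirming that the unitality hypothesis on $\H{A}$ is exactly what is needed to identify $c$ with $\Astr<d+2>$ in the first step (rather than with some unit-corrected variant), which is where the assumption enters essentially; I would treat this carefully since the diagonal-bimodule operations $\Astr<p,q>[A[0]]$ feeding into $\kappa$ evaluate $\Astr<d+2>$ with a $1_A$ inserted in the module slot.
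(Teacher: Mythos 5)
Your overall architecture coincides with the paper's: transport the bimodule class through the quasi-isomorphism $\kappa$ of \Cref{prop:RelBimHC-A} into $\HH{A}<\varepsilon>/(\varepsilon^2)$, identify $\kappa(\Upsilon(\Astr<d+2>[A[0]]))$ with the BV operator via \Cref{rmk:BV-motivation} and \Cref{def:BV}, and conclude from the fact that $\H[\bullet,*]{\kappa}$ is an isomorphism. However, your first key step contains a genuine error, and it is fatal. You claim $\kappa(\Astr<d+2>[A[0]])=\cupp{c}{\varepsilon}$ with $c=\Astr<d+2>$. By \cref{eq:kappa-sum}, $\kappa(\Astr<d+2>[A[0]])$ is the alternating sum of the cochains $\Astr<d+2>(x_1,\dots,x_{i-1},1_A,x_i,\dots,x_{d+1})$, which have $d+1$ inputs; so the result lies in $\HC[d+1]<-d>{A}\cdot\varepsilon$ and cannot equal $\cupp{\Astr<d+2>}{\varepsilon}\in\HC[d+2]<-d>{A}\cdot\varepsilon$. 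More importantly, strict unitality (\Cref{rmk:strict_unitality}) says precisely that every operation of arity at least $3$ \emph{vanishes} whenever $1_A$ is inserted into any slot, so the correct value is $\kappa(\Astr<d+2>[A[0]])=0$ already at the cochain level; it is not a ``reassembled'' copy of $\Astr<d+2>$. This vanishing is the crux of the paper's proof: the difference class then maps under $\kappa$ to $\Hclass{-\kappa(\Upsilon(\Astr<d+2>[A[0]]))}=\BV(\Hclass{\Astr<d+2>})$, and the claimed equivalence follows at once from the injectivity of $\H[\bullet,*]{\kappa}$.

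Because of this error, what you prove is not the proposition. Your final class $\cupp{(\Astr<d+2>-\BV(\Astr<d+2>))}{\varepsilon}$ mixes two horizontal degrees, which is already impossible: the cochain $\Astr<d+2>[A[0]]-\Upsilon(\Astr<d+2>[A[0]])$ is homogeneous of bidegree $(d+1,-d)$ in $\BimHC{A}$ and $\kappa$ preserves bidegrees, so its image has no component in $\HC[d+2]<-d>{A}\cdot\varepsilon$ --- an internal inconsistency that should have signalled the mistake. Moreover, your reading ``the vanishing forces each to vanish separately'' would establish that the bimodule class vanishes if and only if \emph{both} $\Hclass{\Astr<d+2>}=0$ and $\BV(\Hclass{\Astr<d+2>})=0$, a strictly stronger right-hand side than the one claimed; this is not the stated equivalence, and it is false in the intended applications (for the dg algebras of \Cref{thm:CY-correspondence} the universal Massey product $\Hclass{\Astr<d+2>}$ is typically nonzero). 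Finally, you conflate the hypothesis that the graded algebra $\H{A}=(A,\Astr<2>)$ is unital with strict unitality of the whole $A_\infty$-structure: these differ, and the paper's proof begins by replacing $(A,\Astr)$ with a gauge $A_\infty$-isomorphic strictly unital minimal model (\cite{Lef03}, \cite{Sei08}), using that the algebra and bimodule universal Massey products are invariant under gauge $A_\infty$-isomorphisms. This reduction is required before one may kill insertions of $1_A$, and your argument omits it.
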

\begin{proof}
  We begin by recalling an important fact: By \cite[Théorème~2.1.1]{Lef03} or
  \cite[Lemma~2.1]{Sei08}, the minimal $A_\infty$-algebra $(A,\Astr)$ is gauge
  $A_\infty$-isomorphic to a strictly unital minimal $A_\infty$-algebra. Since
  the algebra UMPs and bimodule UMPs involved are invariant under gauge
  $A_\infty$-isomorphisms, we may assume $(A,\Astr)$ to be strictly unital.

  In order to prove the claim in the proposition, recall from \Cref{coro:kappa}
  that there is an isomorphism of Gerstenhaber algebras
  \[
    \H[\bullet,*]{\kappa}\colon\RelBimHH{A}{A}\stackrel{\sim}{\longrightarrow}\HH{A}<\varepsilon>/(\varepsilon^2),
  \]
  where $\varepsilon$ has bidegree $(1,0)$.
  This isomorphism maps
  \begin{align*}
    \Hclass{\Astr<d+2>[A[0]]-\Upsilon(\Astr<d+2>[A[0]])}&\longmapsto\Hclass{-\kappa(\Upsilon(\Astr<d+2>[A[0]]))}\stackrel{\eqref{rmk:BV-motivation}}{=}(-1)^{d}\BV(\Hclass{\Astr<d+2>})\cdot\varepsilon,
  \end{align*}
  since
  \[
    \kappa(\Astr<d+2>[A[0]])\stackrel{\eqref{eq:kappa}}{=}\sum_{i=1}^{d+2}\pm\underbrace{\Astr<d+2>(\id[A]^{i-1}\otimes1_A\otimes\id[A]^{\otimes(d+2-i)})}_{=0}\cdot\varepsilon=0
  \]
  already at the cochain level since the minimal $A_\infty$-algebra $(A,\Astr)$
  is strictly unital (\Cref{rmk:strict_unitality}). Since $\kappa$ is an
  isomorphism, it follows that
  \[
    0=\Hclass{\Astr<d+2>[A[0]]-\Upsilon(\Astr<d+2>[A[0]])}\in\BimHH[d+1]<-d>{A}
  \]
  if and only if
  \[
    0=\BV(\Hclass{\Astr<d+2>})\in\HH[d+1]<-d>{A},
  \]
  which is what we needed to show.
\end{proof}

We are ready to prove the following result, which can be regarded as a
Kadeishvili-type theorem for bimodule right Calabi--Yau algebras.

\begin{theorem}
  \label{thm:CY-Kadeishvili-Ai}
  Let $(A,\Astr)$ be a $d$-sparse minimal $A_\infty$-algebra, $d\geq1$, such
  that the graded algebra $\H{A}$ is unital. Suppose that the Massey
  bimodule cohomology of the diagonal $A_\infty$-bimodule satisfies
  \[
    \BimHMH[p+1]<-p>{A}<A\ltimes A(0)>=0,\qquad p>d.
  \]
  Suppose also that there exists an isomorphism of graded $A$-bimodules
  \[
    \varphi\colon A(n)\stackrel{\sim}{\longrightarrow}DA
  \]
  for some $n\in d\ZZ$. If
  \[
    0=\BV(\Hclass{\Astr<d+2>})\in\HH[d+1]<-d>{A},
  \]
  then $\varphi$ is the linear part of an $A_\infty$-isomorphism of $A_\infty$-bimodules
  \[
    (A(n),\Astr[A[n]])\stackrel{\sim}{\longrightarrow}(DA,\Astr[DA]),
  \]
  so that $(A,\Astr)$ is right $n$-Calabi--Yau as a bimodule.
\end{theorem}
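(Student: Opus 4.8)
The plan is to apply the almost-formality theorem for minimal $A_\infty$-bimodules (\Cref{thm:B-bimodules}) to the diagonal $A_\infty$-bimodule $(A(0),\Astr[A[0]])$, comparing its given $A_\infty$-bimodule structure with the one transported across the graded-bimodule isomorphism $\varphi$. More precisely, I would first use $\varphi\colon A(n)\stackrel{\sim}{\to}DA$ together with \Cref{prop:DM-shift}\eqref{it:upsilon} to obtain a strict $A_\infty$-isomorphism $\psi\colon(A,\Upsilon(\Astr[A[0]]))\stackrel{\sim}{\to}(D(A(n)),\Astr[D(A(n))])$, where $\Upsilon$ is the operator from \Cref{defprop:upsilon}. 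Both $\Astr[A[0]]$ and $\Upsilon(\Astr[A[0]])$ are $d$-sparse minimal $A_\infty$-bimodule structures on the same underlying graded $A$-bimodule $A$ (in degree $0$ they agree, being the graded-bimodule structure, since $\Upsilon$ preserves the multiplication part). The goal is thus to produce a gauge $A_\infty$-isomorphism $(A(0),\Astr[A[0]])\stackrel{\sim}{\to}(A(0),\Upsilon(\Astr[A[0]]))$; composing it with $\psi$ (suitably shifted) and the compatibility isomorphism $\alpha$ from \Cref{prop:DM-shift}\eqref{it:DM-shift} will assemble into the desired $A_\infty$-quasi-isomorphism $(A(n),\Astr[A[n]])\stackrel{\sim}{\to}(DA,\Astr[DA])$.

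To invoke \Cref{thm:B-bimodules} with $M=A(0)$ and $m=\Upsilon(\Astr[A[0]])$, I must verify its two hypotheses. The vanishing $\BimHMH[p+1]<-p>{A}<A\ltimes A(0)>=0$ for $p>d$ is exactly the standing assumption of the theorem. The obstruction-class hypothesis requires
\[
  0=\Hclass{\Astr<d+2>[A[0]]-\Upsilon(\Astr<d+2>[A[0]])}\in\BimHH[d+1]<-d>{A}.
\]
This is precisely where \Cref{prop:CY-Ai-aux} enters: that proposition shows this vanishing is \emph{equivalent} to $0=\BV(\Hclass{\Astr<d+2>})\in\HH[d+1]<-d>{A}$, which is our hypothesis. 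Here I would also note that $\Upsilon$, being built from $\Psi$, $\Theta$, and $\Phi$, is strictly compatible with the brace-algebra multiplication $\Astr<2>[A\ltimes A]$ and hence preserves the underlying associative graded-bimodule structure, so $m_2=\Upsilon(\Astr<2>[A[0]])=\Astr<2>[A[0]]$ as required by \Cref{thm:B-bimodules}.

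With both hypotheses verified, \Cref{thm:B-bimodules} yields a gauge $A_\infty$-isomorphism of $A_\infty$-bimodules $(A(0),\Astr[A[0]])\stackrel{\sim}{\to}(A(0),\Upsilon(\Astr[A[0]]))$. I would then shift this by $n$ using the globally-defined shift functor $[n]$ (\Cref{defprop:shift_Ai-bimodule}, \Cref{prop:shifting-Ai-morphisms}) to get an isomorphism $(A(n),\Astr[A[n]])\stackrel{\sim}{\to}(A(n),\Upsilon(\Astr[A[0]])(n))$, and finally compose with the shift of $\psi$ and the isomorphism $\alpha$ of \Cref{prop:DM-shift}\eqref{it:DM-shift} to land in $(DA,\Astr[DA])$, unravelling the identification $(D(A(n)))(n)\cong DA$. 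The main obstacle I anticipate is bookkeeping rather than conceptual: carefully matching the $A_\infty$-bimodule structures through the chain $\Upsilon$, $\psi$, $[n]$, and $\alpha$, ensuring all the vertical-shift and linear-dual sign conventions from Examples~\ref{ex:DAn} and \ref{ex:VWn} and \Cref{defprop:upsilon} compose coherently, and confirming that the composite linear part is a graded-vector-space isomorphism so that the resulting $A_\infty$-morphism is indeed an $A_\infty$-quasi-isomorphism exhibiting the bimodule right $n$-Calabi--Yau structure.
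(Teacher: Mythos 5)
Your proposal is correct and takes essentially the same approach as the paper's proof: both apply \Cref{thm:B-bimodules} together with \Cref{prop:CY-Ai-aux} to obtain a gauge $A_\infty$-isomorphism $(A(0),\Astr[A[0]])\stackrel{\sim}{\longrightarrow}(A(0),\Upsilon(\Astr[A[0]]))$, and then assemble the desired isomorphism $(A(n),\Astr[A[n]])\stackrel{\sim}{\longrightarrow}(DA,\Astr[DA])$ using \Cref{prop:DM-shift} and \Cref{prop:shifting-Ai-morphisms}. Your explicit check that $\Upsilon$ fixes the underlying associative graded-bimodule structure (so that the hypothesis $m_2=\Astr<2>[A[0]]$ of \Cref{thm:B-bimodules} is met) is left implicit in the paper, but otherwise the two arguments coincide.
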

\begin{proof}
  \Cref{thm:B-bimodules} and \Cref{prop:CY-Ai-aux} yield the existence of a gauge
  $A_\infty$-isomorphism of $A_\infty$-bimodules
  \[
    \gamma\colon(A(0),\Astr[A[0]])\stackrel{\sim}{\longrightarrow}(A(0),\Upsilon(\Astr[A[0]])).
  \]
  \Cref{prop:DM-shift} yields further (strict) $A_\infty$-isomorphisms
  \[
    \begin{tikzcd}
      (A(0),\Astr[A[0]])\rar{\gamma}\dar[dotted]&(A(0),\Upsilon(\Astr[A[0]]))\dar{\wr}\\
      ((DA)(-n),\Astr[(DA)(-n)])&(D(A(n)),\Astr[D(A(n))])\lar[swap]{\sim}
    \end{tikzcd}
  \]
  The linear part of the composite $A_\infty$-isomorphism
  \[
    (A(0),\Astr[A[0]])\stackrel{\sim}{\longrightarrow}((DA)(-n),\Astr[(DA)(-n)])
  \]
  is given by
  \[
    x\stackrel{\gamma}{\longmapsto}x\stackrel{\eqref{eq:prop:DM-shift:it:upsilon}}{\longmapsto} (-1)^{n|x|}\varphi(\s[n]x)\s[-n]\stackrel{\eqref{eq:prop:DM-shift:it:DM-shift}}{\longmapsto}\s[-n]\varphi(x).
  \]
  Applying \Cref{prop:shifting-Ai-morphisms}, we
  obtain the desired $A_\infty$-isomorphism of $A_\infty$-bimodules
  \[
    (A(n),\Astr[A[n]])\stackrel{\sim}{\longrightarrow}(DA,\Astr[DA]).\qedhere
  \]
\end{proof}

\begin{remark}
  \label{rmk:BimHMC-HMC}
  Let $(A,\Astr)$ be a strictly unital minimal $A_\infty$-algebra. Recall again
  from \Cref{coro:kappa} the isomorphism of Gerstenhaber algebras
  \[
    \H[\bullet,*]{\kappa}\colon\RelBimHH{A}{A}\stackrel{\sim}{\longrightarrow}\HH{A}<\varepsilon>/(\varepsilon^2).
  \]
  It follows that there is an induced isomorphism of bigraded vector
  spaces\footnote{Recall that, by definition,
    \[
      \HMC[1]{A}=0
    \]
    while
    \[
      \BimHMC[1]{A}<A\ltimes A(0)>=\BimHH[1]{A}.
    \]}
  \begin{equation}
    \label{eq:BimHMC-HMC}
    \BimHMC[n]<r>{A}<A\ltimes A(0)>\cong\HMC[n]<r>{A},\qquad n\geq2,\ r\in\ZZ,
  \end{equation}
  where the right-hand side is the Hochschild--Massey cochain complex of the
  $A_\infty$-algebra $(A,\Astr)$, and the left-hand side is the Massey
  bimodule cochain complex of the diagonal $A_\infty$-bimodule.
  Moreover, as in the proof of \Cref{thm:CY-Kadeishvili-Ai}, we notice that
  \[
    \kappa(\Astr<d+2>+\Astr<d+2>[A[0]])=\Astr<d+2>+\underbrace{\kappa(\Astr<d+2>[A[0]])}_{=0}=\Astr<d+2>
  \]
  by strict unitality, so that at the cohomological level the \emph{bimodule}
  UMP of the diagonal $A_\infty$-bimodule of $A$ corresponds to the \emph{algebra} UMP
  of $A$:
  \[
    \kappa\colon\Hclass{\Astr<d+2>+\Astr<d+2>[A[0]]}\longmapsto\Hclass{\Astr<d+2>}.
  \]
  It follows that the isomorphisms in \eqref{eq:BimHMC-HMC}
  are compatible with the corresponding differentials in all bidegrees with the
  exception, when $\chark(\kk)=2$, of the bidegree $(d+1,-d)$, where the
  Hochschild--Massey differential has a different expression, see
  \Cref{eq:HMd-2}. In particular, there are isomorphisms
  \[
    \BimHMH[p+1]<-p>{A}<A\ltimes A(0)>\cong\HMH[p+1]<-p>{A},\qquad p>d+1
  \]
  From the perspective of \Cref{thm:CY-Kadeishvili-Ai}, the crucial difference is then in the
  Massey bimodule cohomology
  \[
    \BimHMH[d+2]<-d-1>{A}<A\ltimes A(0)>,
  \]
  which can be identified, using $\kappa$, with the cohomology of the three-term
  complex
  \[
    \HH[1]<0>{A}\longrightarrow\HH[d+2]<-d-1>{A}\longrightarrow\HH[2d+3]<-2d-2>{A}
  \]
  in which both maps are given by the formula $x\mapsto[\Hclass{\Astr<d+2>},x]$.
\end{remark}

\subsection{A Kadeishvili-type theorem for bimodule right Calabi--Yau dg algebras}
\label{subsec:UMP-dg}

Any possible minimal model of a dg algebra admits a direct gauge
$A_\infty$-isomorphism to one obtained as in \Cref{thm:Markl}. In particular, the
following definition is consistent.

\begin{definition}
  \label{def:UMP-dgAs}
  Let $A$ be a cohomologically $d$-sparse dg algebra, $d\geq1$. The
  \emph{universal Massey product (UMP) of length $d+2$} of $A$ is defined as
  the UMP of any of its minimal models $(\H{A},\Astr[\H{A}])$:
  \[
    \Hclass{\Astr<d+2>[A]}\coloneqq\Hclass{\Astr<d+2>[\H{A}]}\in\HH[d+2]<-d>{\H{A}}.
  \]
\end{definition}

Similarly, minimal models of $A_\infty$-bimodules are also unique up to gauge
$A_\infty$-isomorphism, see~\cite[Remark~2.3.8]{JM25}.\footnote{This fact does not follow
  immediately from corresponding statement for $A_\infty$-algebras since one
  needs to guarantee that the gauge $A_\infty$-isomorphisms corresponds to an
  $A_\infty$-bimodule isomorphism, compare with
  \Cref{def:Ai-bimodule-morphism}.} Thus, the following definition is also
consistent.

\begin{definition}
  \label{def:UMP-dgBims}
  Let $A$ be a cohomologically $d$-sparse dg algebra and $M$ a cohomologically
  $d$-sparse dg $A$-bimodule, $d\geq1$. The \emph{universal Massey product
    (UMP) of length $d+2$} of $M$ is defined as the UMP of any of the minimal
  models $(\H{A},\Astr[\H{A}],\H{M},\Astr[\H{M}])$ of the pair $(A,M)$:
  \[
    \Hclass{\Astr<d+2>[A\ltimes
      M]}\coloneqq\Hclass{\Astr<d+2>[\H{A}\ltimes\H{M}]}\in\RelBimHH[d+2]<-d>{\H{A}}{\H{M}},
  \]
  where $\Astr<d+2>[\H{A}\ltimes\H{M}]=\Astr<d+2>[\H{A}]+\Astr<d+2>[\H{M}]$.
\end{definition}

\begin{remark}
  The notation in \Cref{def:UMP-dgAs,def:UMP-dgBims} conflicts slightly with our
  notation $\Astr+\Astr[M]$ for the square-zero product on $A\oplus M$. Since dg
  algebras have vanishing higher operations, this abuse of
  notation should not lead to confusion.
\end{remark}

The following result shows that the vanishing condition in
\Cref{thm:CY-equivalence}\eqref{it:thm:CY-equivalence:BV} is satisfied by all
bimodule right Calabi--Yau algebras with $d$-sparse cohomology.

\begin{proposition}
  \label{prop:CY-BV}
  Let $A$ be a cohomologically $d$-sparse dg algebra, $d\geq1$. Suppose that $A$
  admits a bimodule right $n$-Calabi--Yau structure
  \[
    \widetilde{\varphi}\colon A[n]\stackrel{\sim}{\longrightarrow}DA
  \]
  for some $n\in d\ZZ$. Then,
  \[
    0=\BV(\Hclass{\Astr<d+2>})\in\HH[d+1]<-d>{\H{A}};
  \]
  here, $\BV=\BV_\varphi$ is the BV operator associated with the induced
  isomorphism of graded $\H{A}$-bimodules
  \[
    \varphi\colon\H{A}(n)\stackrel{\sim}{\longrightarrow}D\H{A}.
  \]
\end{proposition}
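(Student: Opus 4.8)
The plan is to push the statement to the level of a minimal model and then invoke \Cref{prop:CY-Ai-aux}. Fix a minimal model $(\H{A},\Astr)$ of $A$, so that by \Cref{def:UMP-dgAs} the universal Massey product $\Hclass{\Astr<d+2>}$ is computed from $\Astr<d+2>[\H{A}]$, and write $\varphi=\H{\widetilde{\varphi}}$ for the induced isomorphism of graded $\H{A}$-bimodules, which we use to form both the operator $\Upsilon=\Upsilon_\varphi$ of \Cref{defprop:upsilon} and the BV operator $\BV=\BV_\varphi$ of \Cref{def:BV}. Since $\H{A}$ is unital (being the cohomology of the unital dg algebra $A$) and $n\in d\ZZ$, \Cref{prop:CY-Ai-aux} reduces the desired identity $0=\BV(\Hclass{\Astr<d+2>})$ to the vanishing
\[
  0=\Hclass{\Astr<d+2>[A[0]]-\Upsilon(\Astr<d+2>[A[0]])}\in\BimHH[d+1]<-d>{\H{A}},
\]
where $\Astr[A[0]]$ denotes the diagonal $A_\infty$-bimodule structure. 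Thus it suffices to compare the length-$(d+2)$ operations of the two $A_\infty$-bimodule structures $\Astr[A[0]]$ and $\Upsilon(\Astr[A[0]])$ on $\H{A}$.

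The second step is to manufacture an $A_\infty$-isomorphism between these two structures out of the Calabi--Yau datum. By the homotopy invariance of minimal models of dg bimodules (\Cref{construction:Markl-bimodules} together with the uniqueness up to gauge $A_\infty$-isomorphism recalled before \Cref{def:UMP-dgBims}), the derived isomorphism $\widetilde{\varphi}\colon A[n]\to DA$ lifts to an $A_\infty$-isomorphism
\[
  g\colon(\H{A}(n),\Astr[A[n]])\stackrel{\sim}{\longrightarrow}(D\H{A},\Astr[DA])
\]
whose linear part is the cohomology isomorphism $\varphi$; equivalently, $\widetilde{\varphi}$ induces a bimodule right $n$-Calabi--Yau structure on $(\H{A},\Astr)$ in the sense of \Cref{def:right-CY-as-bimodule-Ai}. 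Shifting $g$ by $-n$ via \Cref{prop:shifting-Ai-morphisms} and composing with the structural strict $A_\infty$-isomorphisms of \Cref{prop:DM-shift}\eqref{it:DM-shift} and \eqref{it:upsilon}---that is, running the commutative square displayed in the proof of \Cref{thm:CY-Kadeishvili-Ai} backwards---yields an $A_\infty$-isomorphism of $A_\infty$-bimodules over $\id[(\H{A},\Astr)]$ of the form
\[
  (\H{A}(0),\Astr[A[0]])\stackrel{\sim}{\longrightarrow}(\H{A}(0),\Upsilon(\Astr[A[0]])).
\]

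The third step is to read off the linear part of this composite. Because the two structural isomorphisms are assembled from the very same $\varphi$ that enters $\Upsilon$, a routine tracking of Koszul signs shows that the linear part equals the scalar $(-1)^n\id[\H{A}]$. A scalar automorphism acts trivially on bimodule Hochschild cochains by the functoriality formula in \Cref{rmk:RelBimHC-under-isos}, so the composite preserves the bimodule universal Massey product exactly as a gauge $A_\infty$-isomorphism would; hence the two structures share their bimodule UMP of length $d+2$,
\[
  \Hclass{\Astr<d+2>+\Astr<d+2>[A[0]]}=\Hclass{\Astr<d+2>+\Upsilon(\Astr<d+2>[A[0]])}\in\RelBimHH[d+2]<-d>{\H{A}}{\H{A}}.
\]
Since $\H{A}$ is graded-commutative, its diagonal bimodule $\BimHH{\H{A}}$ is graded-symmetric, so by the remark following \Cref{thm:B-bimodules} this equality is equivalent to the vanishing of $\Hclass{\Astr<d+2>[A[0]]-\Upsilon(\Astr<d+2>[A[0]])}$ in $\BimHH[d+1]<-d>{\H{A}}$; a final appeal to \Cref{prop:CY-Ai-aux} then delivers $0=\BV(\Hclass{\Astr<d+2>})$.

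I expect the main obstacle to lie in the second step: justifying carefully that the derived-category isomorphism $\widetilde{\varphi}$ descends to an $A_\infty$-isomorphism of the relevant minimal $A_\infty$-bimodules with linear part \emph{exactly} $\varphi$, together with the sign bookkeeping of the third step confirming that the resulting linear part is the harmless scalar $(-1)^n$. Once these two points are in place, everything else is a formal combination of \Cref{prop:CY-Ai-aux} with the shift-and-dual compatibilities already established in \Cref{prop:DM-shift,prop:shifting-Ai-morphisms}.
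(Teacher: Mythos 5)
Your proposal is correct and follows essentially the same route as the paper's own proof: lift $\widetilde{\varphi}$ to an $A_\infty$-isomorphism of minimal models with linear part $\varphi$, use \Cref{prop:DM-shift} to produce a (gauge, up to a harmless scalar) $A_\infty$-isomorphism $(\H{A}(0),\Astr[\H{A}(0)])\stackrel{\sim}{\to}(\H{A},\Upsilon(\Astr[\H{A}(0)]))$, deduce equality of the bimodule UMPs of length $d+2$, and conclude via \Cref{prop:CY-Ai-aux}. The only cosmetic differences are that you front-load the reduction through \Cref{prop:CY-Ai-aux} and track the scalar $(-1)^n$ in the linear part explicitly, which the paper absorbs into its appeal to the same ingredients.
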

\begin{proof}
  The isomorphism $\varphi$ is the linear part of an $A_\infty$-isomorphism of
  $A_\infty$-bimodules between minimal models, say
  \[
    (\H{A}(n),\Astr[\H{A}(n)])\qquad\text{and}\qquad(D\H{A},\Astr[D\H{A}])
  \]
  of the dg $A$-bimodules $A[n]$ and $DA$, respectively, over a compatible
  minimal model of the dg algebra $A$, see for example~\cite[Remark~2.3.8]{JM25}. It
  then follows from \Cref{prop:DM-shift} that there exists a gauge
  $A_\infty$-isomorphism of $A_\infty$-bimodules
  \[
    (\H{A}(0),\Astr[\H{A}(0)])\stackrel{\simeq}{\longrightarrow}(\H{A},\Upsilon(\Astr[\H{A}(0)])).
  \]
  Consequently, there is an equality of bimodule UMPs of length $d+2$
  \begin{multline*}
    \Hclass{\Astr<d+2>[\H{A}]+\Astr<d+2>[\H{A}(0)]}=\Hclass{\Astr<d+2>[\H{A}]+\Upsilon(\Astr<d+2>[\H{A}(0)])}\\\in\RelBimHH[d+2]<-d>{\H{A}}{\H{A}(0)}.
  \end{multline*}
  Forming their difference and applying \Cref{prop:CY-Ai-aux} yields the
  vanishing condition
  \[
    0=\BV(\Hclass{\Astr<d+2>[\H{A}]})\in\HH[d+1]<-d>{\H{A}},
  \]
  where $\BV=\BV_\varphi$, which is what we needed to prove.
\end{proof}

\subsection{The proof of \Cref{thm:CY-Kadeikshvili}}

We are ready to prove \Cref{thm:CY-Kadeikshvili}, which is a translation of
\Cref{thm:CY-Kadeishvili-Ai} to the language of dg algebras and dg bimodules. It
is also a partial converse to \Cref{prop:CY-BV}.

\begin{proof}[Proof of \Cref{thm:CY-Kadeikshvili}]
  Let $(\H{A},\Astr[\H{A}])$ be a minimal model for $A$; in view of the
  discussion in \Cref{ex:minimod-diagonal}, the corresponding diagonal
  $A_\infty$-bimodule is indeed a minimal model for the diagonal dg
  $A$-bimodule. Similarly, minimal models of the dg $A$-bimodules $A[n]$ and
  $DA$ are obtained by shifting and dualising the minimal model of the diagonal
  dg $A$-bimodule, respectively. \Cref{thm:CY-Kadeishvili-Ai} then yields an
  $A_\infty$-isomorphism of $A_\infty$-bimodules
  \[
    (\H{A}(n),\Astr[\H{A}(n)])\stackrel{\sim}{\longrightarrow}(D\H{A},\Astr[D\H{A}])
  \]
  with linear part the given graded $\H{A}$-bimodule isomorphism
  \[
    \varphi\colon\H{A}(n)\stackrel{\sim}{\to}D\H{A}.
  \]
  The existence of the above $A_\infty$-isomorphism of $A_\infty$-bimodules
  implies the existence of the desired isomorphism $A[n]\simeq DA$ in the
  derived category dg $A$-bimodules.\footnote{We also remind the reader
    of~\cite[Proposition~2.4.6]{JM25}, which says that quasi-isomorphic dg
    bimodules are distinguished by their minimal models.}
\end{proof}

\section{The proofs of
  \texorpdfstring{\Cref{thm:CY-equivalence,thm:CY-correspondence}}{Theorems C
    and D}}
\label{sec:Thm:CY}

\subsection{The $d$-sparse graded algebra $\bfLambda$}

\begin{setting}
  \label{setting:graded-lambda}
  We fix integers $d\geq1$ and $m\in\ZZ$ and set $n\coloneq dm$. We also fix a
  basic Frobenius algebra $\Lambda$, a ($\kk$-linear) algebra automorphism
  $\sigma\in\Aut{\Lambda}$, and introduce the graded algebra
  \[
    \bfLambda\coloneqq\frac{\Lambda\langle\imath^{\pm1}\rangle}{(\imath
      x-\sigma(x)\imath)_{x\in\Lambda}},\qquad |\imath|=-d.
  \]
  As suggested by the notation, the graded algebra $\bfLambda$ is not
  graded-commutative unless $d$ is even.
\end{setting}

\begin{remark}
  The graded algebra $\bfLambda$ is $d$-sparse in the sense that it is
  concentrated in degrees that are multiples of $d$. Indeed, the possibly
  non-zero homogeneous components of $\bfLambda$ are given by
  \[
    \bfLambda^{dk}=\imath^{-k}\Lambda=\set{\imath^{-k}x}[x\in\Lambda],\qquad
    k\in\ZZ.
  \]
  Moreover, by definition, for $x\in\Lambda$ and $k\in\ZZ$ there are equalities
  \begin{equation}
    \label{eq:imath-conjugation}
    \imath^k
    x\imath^{-k}=\sigma^k(x)\qquad\text{and}\qquad\imath^{-k}x\imath^{k}=\sigma^{-k}(x).
  \end{equation}
  In particular, the map
  \begin{equation}
    \label{eq:iso-imath-twbim}
    \bfLambda^{dk}=\imath^{-k}\Lambda\stackrel{\sim}{\longrightarrow}\twBim[\sigma^k]{\Lambda},\qquad
    \imath^{-k}x\longmapsto x,
  \end{equation}
  is an isomorphism of $\Lambda$-bimodules. These isomorphisms assemble into a
  further isomorphism of graded algebras
  \[
    \bfLambda\stackrel{\sim}{\longrightarrow}\gLambda\coloneqq\bigoplus_{dk\in
      d\ZZ}\twBim[\sigma^k]{\Lambda},
  \]
  where the algebra structure on the right-hand side is given by
  \[
    x*y=\sigma^{k}(x)y,\qquad |y|=dk.
  \]
  We prefer to work with the graded algebra $\bfLambda$ in what follows since
  there is no ambiguity on the degrees of its homogeneous elements.
\end{remark}

\begin{remark}
  The graded algebra $\bfLambda$ can be described as a graded skew group algebra
  over the abelian group of integers, see \cite[Remark~4.5.7]{JKM22} for details.
\end{remark}

The following result provides a criterion for the existence, for some $n\in\ZZ$,
of an isomorphism of graded $\bfLambda$-bimodules
\[
  \bfLambda(n)\stackrel{\sim}{\longrightarrow}D\bfLambda
\]
in terms of the Frobenius algebra $\Lambda=\bfLambda^0$.

\begin{proposition}
  \label{prop:graded-bimodule-iso-vs-pairing}
  Recall \Cref{setting:graded-lambda}. There is a canonical bijection between the following:
  \begin{enumerate}
  \item\label{it:graded-bimodule-iso} Isomorphisms of graded
    $\bfLambda$-bimodules
    \[
      \varphi\colon\bfLambda(n)\stackrel{\sim}{\longrightarrow} D\bfLambda.
    \]
  \item\label{it:pairing} Associative nondegenerate bilinear pairings
    \[
      \pairing<-,->\colon\Lambda\times\Lambda\longrightarrow\kk
    \]
    such that
    \begin{align}
      \pairing<y,x>&=\pairing<\sigma^m(x),y>&x,y&\in\Lambda,\label{eq:props-pairing-Nakayama}\\
      \pairing<\sigma^k(x),y>&=(-1)^{dk(dm+1)}\pairing<x,\sigma^{-k}(y)>&x,y&\in\Lambda,\ k\in\ZZ.\label{eq:props-pairing-shift}
    \end{align}
  \end{enumerate}
  In particular, when either of the above exist, $\sigma^m$ is a Nakayama
  automorphism of the Frobenius algebra $\Lambda$, that is
  $\twBim[\sigma^m]{\Lambda}\cong D\Lambda$ as $\Lambda$-bimodules. The
  correspondence \eqref{it:graded-bimodule-iso}$\to$\eqref{it:pairing}
  associates to an isomorphism $\varphi$ the bilinear pairing
  \[
    \pairing<-,->\colon\Lambda\times\Lambda\longrightarrow\kk,\qquad
    (x,y)\longmapsto\varphi(\s[n]\imath^{-m}x)(y).
  \]
\end{proposition}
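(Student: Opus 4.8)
The plan is to set up a bijection between graded bimodule isomorphisms $\varphi\colon\bfLambda(n)\stackrel{\sim}{\to}D\bfLambda$ and associative non-degenerate pairings $\pairing<-,->$ on $\Lambda$ satisfying the two listed identities, by unwinding both sides into data on the degree-$0$ component $\Lambda=\bfLambda^0$. Since $\bfLambda$ is generated over $\Lambda$ by the invertible element $\imath$, a graded bimodule map is determined by very little data, and the strategy is to extract that data and translate the bimodule-compatibility conditions into the pairing conditions. I would work throughout with the element $\imath^{-m}\in\bfLambda^{dm}=\bfLambda^n$, since $\s[n]\imath^{-m}$ is the degree-$0$ generator of $\bfLambda(n)$ and is the natural input for producing a pairing valued in $\kk=(D\bfLambda)^0$ paired against $\bfLambda^0=\Lambda$.

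First I would define, given $\varphi$, the pairing by the stated formula $(x,y)\mapsto\varphi(\s[n]\imath^{-m}x)(y)$ and check non-degeneracy: because $\varphi$ is an isomorphism of graded bimodules and $\bfLambda$ is free of rank one over $\Lambda$ in each degree, non-degeneracy of $\varphi$ in the relevant bidegree is equivalent to non-degeneracy of the induced pairing on $\Lambda$ (here I would invoke the degree-counting remark after \Cref{prop:pairing-MnDM}, namely that $\pairing<\s[n]m_1,m_2>$ vanishes unless the degrees are complementary, so the only surviving component of $\varphi$ relevant to $\Lambda\times\Lambda$ is the one landing in $D\Lambda$). Associativity of the pairing, $\pairing<xy,z>=\pairing<x,yz>$, is exactly \Cref{cor:graded-symmetric-pairing} applied to this $\varphi$, or can be read off directly from right $\Lambda$-linearity of $\varphi$ together with the fact that left multiplication by elements of $\Lambda\subseteq\bfLambda$ commutes past $\imath^{-m}$ up to the twist $\sigma^{-m}$. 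The two remaining identities are where the twist $\sigma$ and the shift sign enter: \eqref{eq:props-pairing-Nakayama} should come from \emph{left} $\Lambda$-linearity of $\varphi$ combined with the conjugation relation $\imath^{-m}x=\sigma^{-m}(x)\imath^{-m}$ from \cref{eq:imath-conjugation}, yielding the Nakayama-type symmetry with $\sigma^m$; and \eqref{eq:props-pairing-shift} should come from compatibility of $\varphi$ with multiplication by $\imath^{\pm k}$, that is, from the requirement that $\varphi$ be a morphism of \emph{graded} bimodules across all degrees, where the sign $(-1)^{dk(dm+1)}$ is precisely the Koszul/shift sign recorded in \Cref{prop:pairing-MnDM} (with the shift $n=dm$ and the degree-$dk$ element $\imath^{-k}$). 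I would verify these sign computations by direct substitution into \Cref{prop:pairing-MnDM}, tracking $|x|=|y|=0$ for $x,y\in\Lambda$ but $|\imath^{-k}|=dk$.

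For the converse direction I would start from a pairing $\pairing<-,->$ satisfying \eqref{eq:props-pairing-Nakayama} and \eqref{eq:props-pairing-shift} and reconstruct $\varphi$. The idea is to define $\varphi$ on a homogeneous element $\imath^{-k}x\in\bfLambda^{dk}$ (shifted into $\bfLambda(n)$) by specifying the linear functional $\varphi(\s[n]\imath^{-k}x)\in D\bfLambda$ through its values $\varphi(\s[n]\imath^{-k}x)(\imath^{-\ell}y)$, which must vanish unless the degrees are complementary (i.e. unless $dk+d\ell=n$) and otherwise be expressible in terms of $\pairing<-,->$ on $\Lambda$ after conjugating the $\imath$'s to one side. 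Concretely, I expect $\varphi(\s[n]\imath^{-k}x)(\imath^{-\ell}y)$ to reduce, via \cref{eq:imath-conjugation}, to a sign times $\pairing<\sigma^{?}(x),\sigma^{?}(y)>$, and the well-definedness of this assignment as a \emph{bimodule} map is exactly the content of the two pairing identities, read in reverse. I would then check that $\varphi$ so defined is an isomorphism (non-degeneracy of the pairing gives bijectivity in each complementary-degree pair) and that the two constructions are mutually inverse, which is essentially bookkeeping once the formulas match.

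The main obstacle I anticipate is sign management, specifically verifying \eqref{eq:props-pairing-shift} with the exact exponent $dk(dm+1)$ and confirming that this is forced by—and consistent across—all degrees, rather than only checking a single generator. The subtlety is that a graded bimodule map must intertwine the action of $\imath$ on both sides, and on the dual side $D\bfLambda$ the action picks up the signs recorded in \Cref{ex:DA} and \Cref{ex:Mn}, compounded by the shift-by-$n$ signs of \Cref{ex:DAn}; these must conspire to give precisely $(-1)^{dk(dm+1)}$ and no more. I would handle this by appealing directly to \Cref{prop:pairing-MnDM} with $M=\bfLambda$ (which already packages the correct signs for the action of $A=\bfLambda$ on itself) and specialising its displayed identity to $x=\imath^{\pm k}$, $m_1,m_2\in\Lambda$, rather than recomputing the dual-bimodule signs by hand. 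The final clause, that $\sigma^m$ is a Nakayama automorphism, then follows immediately by restricting the isomorphism $\varphi$ to degree $0$, where \eqref{eq:iso-imath-twbim} identifies $\bfLambda^{dm}$ with $\twBim[\sigma^m]{\Lambda}$ and \eqref{eq:props-pairing-Nakayama} exhibits the pairing as witnessing $\twBim[\sigma^m]{\Lambda}\cong D\Lambda$.
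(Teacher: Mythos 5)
Your proposal is correct and follows essentially the same route as the paper: extract the pairing by restricting $\varphi$ to the degree-$0$ components, derive \eqref{eq:props-pairing-Nakayama} and \eqref{eq:props-pairing-shift} from the graded-symmetry packaged in \Cref{prop:pairing-MnDM}/\Cref{cor:graded-symmetric-pairing} together with the conjugation relations \eqref{eq:imath-conjugation}, and reconstruct $\varphi$ componentwise from the pairing twisted by $\sigma^{-k}$ on $\bfLambda(n)^{dk}$, checking bijectivity from non-degeneracy and that the two assignments are mutually inverse. The paper's proof is exactly this plan carried out with the explicit sign bookkeeping you deferred, so no further comparison is needed.
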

\begin{proof}
  We make the following observations that are helpful for keeping track of
  degrees and Koszul signs throughout the proof. Recall that $n=dm$.
  \begin{itemize}
  \item For $k\in\ZZ$, we have
    \[
      \bfLambda(n)^{dk}=\bfLambda^{dk+dm}=\imath^{-(k+m)}\Lambda.
    \]
  \item For $k\in\ZZ$ and $x\in\Lambda$ we have
    \[
      \s*[n]{\imath^{-k}x}=\s*[n]{\imath^{-k}}x
    \]
    in $\bfLambda(n)$ and therefore we may write $\s[n]{\imath^{-k}x}$ without
    ambiguity. Furthermore,
    \[
      |\s[n]{\imath^{-k}x}|=dk-dm=d(k-m).
    \]
    In particular, $|\s[n]{\imath^{-m}x}|=0$.
  \item For $k\in\ZZ$ and $x,y\in\Lambda$ we have
    \[
      x\s[n]{\imath^{-k}y}=\s[n]{x\imath^{-k}y}
    \]
    since $x$ is homogeneous of degree $0$ (as an element of $\bf\Lambda$).
  \end{itemize}

  Let ${\varphi\colon\bfLambda(n)\stackrel{\sim}{\to}D\bfLambda}$ be an
  isomorphism of graded $\bfLambda$-bimodules and consider the induced
  nondegenerate associative bilinear pairing
  \[
    \pairing<-,->_\varphi\colon\bfLambda(n)\times\bfLambda\longrightarrow\kk,\qquad
    (s^n\imath^{-k}x,\imath^{-\ell}y)\longmapsto\varphi(\s[n]1)(\imath^{-k}x\imath^{-\ell}y);
  \]
  recall from \Cref{cor:graded-symmetric-pairing} that this pairing is
  graded-symmetric with respect to the degree in $\bfLambda$, that is
  \begin{equation}
    \label{eq:graded-symmetric-pairing-lambda}
    \pairing<\s[n]\imath^{-k}x,\imath^{-\ell}y>_\varphi=(-1)^{dkd\ell}\pairing<\s[n]\imath^{-\ell}y,\imath^{-k}x>,\qquad
    x,y\in\Lambda,\ k,\ell\in\ZZ.
  \end{equation}
  Since the restriction
  \begin{align*}
    \varphi\colon \imath^{-m}\Lambda=\bfLambda^n=\bfLambda(n)^0&\stackrel{\sim}{\longrightarrow}(D\bfLambda)^0=D\Lambda\\
    \s[n]{\imath^{-m}x}&\longmapsto(y\mapsto\varphi(\imath^{-m}x)(y))
  \end{align*}
  to the corresponding components degree $0$ is an isomorphism of
  $\Lambda$-bimodules, the induced bilinear pairing
  \[
    \pairing<-,->\colon\Lambda\times\Lambda\longrightarrow\kk,\qquad
    (x,y)\longmapsto\pairing<\s[n]\imath^{-m}x,y>_\varphi
  \]
  is nondegenerate and associative. Therefore, for $x,y\in\Lambda$ we have
  \begin{align*}
    \pairing<y,x>&=\pairing<\s[n]\imath^{-m}y,x>_\varphi\\
                 &\stackrel{\eqref{eq:graded-symmetric-pairing-lambda}}{=}\pairing<\s[n]x,\imath^{-m}y>_\varphi&|x|&=0\\
                 &=\pairing<\s[n]x\imath^{-m},y>_\varphi\\
                 &\stackrel{\eqref{eq:imath-conjugation}}{=}\pairing<\s[n]\imath^{-m}\sigma^m(x),y>_\varphi\\
                 &=\pairing<\sigma^m(x),y>.
  \end{align*}
  Similarly, for $x,y\in\Lambda$ and $k\in\ZZ$,
  \begin{align*}
    \pairing<\sigma^k(x),y>&=\pairing<\s[n]\imath^{-m}\sigma^k(x),y>_\varphi\\
                           &\stackrel{\eqref{eq:imath-conjugation}}{=}\pairing<\s[n]\imath^{-m}\imath^kx\imath^{-k},y>_\varphi\\
                           &=\pairing<\s[n]\imath^k,\imath^{-m}x\imath^{-k}y>_\varphi\\
                           &\stackrel{\eqref{eq:graded-symmetric-pairing-lambda}}{=}(-1)^{dk(dm+dk)}\pairing<\s[n]\imath^{-m}x\imath^{-k}y,\imath^k>_\varphi&&|\imath^{-m}|=dm,\ |\imath^{-k}|=dk\\
                           &=(-1)^{dk(dm+1)}\pairing<\s[n]\imath^{-m}x,\imath^{-k}y\imath^k>_\varphi\\
                           &\stackrel{\eqref{eq:imath-conjugation}}{=}(-1)^{dk(dm+1)}\pairing<\s[n]\imath^{-m}x,\sigma^{-k}(y)>_\varphi\\
                           &=(-1)^{dk(dm+1)}\pairing<x,\sigma^{-k}(y)>.
  \end{align*}
  This shows that the association
  \[
    \varphi\longmapsto\pairing<-,->_{\varphi}|_{\bfLambda(n)^0\times\bfLambda^0}
  \]
  yields a well-defined map
  \eqref{it:graded-bimodule-iso}$\mapsto$\eqref{it:pairing}.

  Conversely, suppose given an associative nondegenerate pairing
  \[
    \pairing<-,->\colon\Lambda\times\Lambda\longrightarrow\kk
  \]
  with the additional properties listed in statement \eqref{it:pairing} of the
  proposition. We claim that the isomorphism of graded vector spaces
  \[
    \varphi=\varphi_{\pairing<-,->}\colon\bfLambda(n)\stackrel{\sim}{\longrightarrow}D\bfLambda
  \]
  with the components
  \begin{align*}
    \varphi\colon\bfLambda(n)^{dk}=\imath^{-(k+m)}\Lambda&\stackrel{\sim}{\longrightarrow}D(\imath^{k}\Lambda)=(D\bfLambda)^{dk}\\
    \s[n]{\imath^{-(k+m)}x}&\longmapsto(\imath^{k}y\mapsto\pairing<\sigma^{-k}(x),y>)
  \end{align*}
  is in fact an isomorphism of graded $\bfLambda$-bimodules. Indeed, let
  $w,x,y,z\in\Lambda$ and $r,s,t\in\ZZ$; for readability we set $u\coloneqq
  r+s+t$. On the one hand,
  \begin{align*}
    \varphi(\imath^{-r}x\cdot\s*[n]{\imath^{-(s+m)}y}\cdot \imath^{-t}z)(\imath^{u}w)&=(-1)^{drdm}\varphi(\s[n]{\imath^{-(u+m)}\sigma^{s+t+m}(x)\sigma^{t}(y)z})(\imath^{u}w)\\
                                                                                     &=(-1)^{drdm}\pairing<\sigma^{m-r}(x)\sigma^{-r-s}(y)\sigma^{-u}(z),w>\\
                                                                                     &\stackrel{\eqref{eq:props-pairing-Nakayama}}{=}(-1)^{drdm}\pairing<\sigma^{-r-s}(y)\sigma^{-u}(z),w\sigma^{-r}(x)>\\
                                                                                     &=(-1)^{drdm}\pairing<\sigma^{-r-s}(y),\sigma^{-u}(z)w\sigma^{-r}(x)>.
  \end{align*}
  On the other hand,
  \begin{align*}
    (\imath^{-r}x\cdot\varphi(\s[n]{\imath^{-(s+m)}y})\cdot\imath^{-t}z)(\imath^{u}w)&=(-1)^{dr(ds+dt+du)}\varphi(\s[n]{\imath^{-(s+m)}y})(\imath^{-t}z\imath^{u}w\imath^{-r}x)\\
                                                                                     &=(-1)^{dr}\varphi(\s[n]{\imath^{-(s+m)}y})(\imath^{s}\sigma^{r-u}(z)\sigma^{r}(w)x)\\
                                                                                     &=(-1)^{dr}\pairing<\sigma^{-s}(y),\sigma^{r-u}(z)\sigma^{r}(w)x>\\
                                                                                     &\stackrel{\eqref{eq:props-pairing-shift}}{=}(-1)^{dr}(-1)^{dr(dm+1)}\pairing<\sigma^{-r-s}(y),\sigma^{-u}(z)w\sigma^{-r}(x)>\\
                                                                                     &=(-1)^{drdm}\pairing<\sigma^{-r-s}(y),\sigma^{-u}(z)w\sigma^{-r}(x)>.
  \end{align*}
  This shows that the construction
  \[
    \pairing<-,->\longmapsto\varphi_{\pairing<-,->}
  \]
  yields a well defined map
  \eqref{it:pairing}$\mapsto$\eqref{it:graded-bimodule-iso}.

  We now verify that the maps
  \eqref{it:graded-bimodule-iso}$\mapsto$\eqref{it:pairing} and
  \eqref{it:pairing}$\mapsto$\eqref{it:graded-bimodule-iso} defined above are
  inverse to each other. Indeed, suppose given a bimodule isomorphism $\varphi$
  as in \eqref{it:graded-bimodule-iso} and write
  $\pairing<-,->=\pairing<-,->_{\varphi}|_{\bfLambda(n)^0\times\bfLambda^0}$.
  Then, for $x,y\in\Lambda$ and $k\in\ZZ$,
  \begin{align*}
    \varphi_{\pairing<-,->}(\imath^{-(k+m)}x)(\imath^ky)&=\pairing<\sigma^{-k}(x),y>\\
                                                        &=\varphi(\imath^{-m}\sigma^{-k}(x))(y)\\
                                                        &\stackrel{\eqref{eq:imath-conjugation}}{=}\varphi(\imath^{-m}\imath^{-k}x\imath^k)(y)\\
                                                        &=\varphi(\imath^{-(k+m)}x)(\imath^ky).
  \end{align*}
  Similarly, suppose given a pairing $\pairing<-,->$ as in \eqref{it:pairing} and
  write $\varphi=\varphi_{\pairing<-,->}$. Then, for $x,y\in\Lambda$,
  \[
    \pairing<\s[n]\imath^{-m}x,y>_{\varphi}=\varphi(\s[n]\imath^{-m}x)(y)=\pairing<x,y>.
  \]
  This finishes the proof.
\end{proof}

\begin{remark}
  Suppose given a pairing
  \[
    \pairing<-,->\colon\Lambda\times\Lambda\longrightarrow\kk
  \]
  as in \Cref{prop:graded-bimodule-iso-vs-pairing}\eqref{it:pairing}. Then,
  according to \eqref{eq:props-pairing-Nakayama} we must have
  \[
    \pairing<\sigma^{m}(x),y>=\pairing<y,x>=\pairing<x,\sigma^{-m}(y)>\qquad
    x,y\in\Lambda.
  \]
  On the other hand, according to \eqref{eq:props-pairing-shift} for $k=m$ we also must
  have
  \[
    \pairing<\sigma^{m}(x),y>=(-1)^{dm(dm+1)}\pairing<x,\sigma^{-m}(y)>\qquad
    x,y\in\Lambda.
  \]
  Notice, however, that $dm(dm+1)$ is always even.
\end{remark}

When $m=1$, \Cref{prop:graded-bimodule-iso-vs-pairing} is much simpler.

\begin{corollary}
  \label{coro:graded-bimodule-iso-vs-pairing-trivial}
  Recall~\Cref{setting:graded-lambda}. Suppose that $m=1$, so that $n=d$. There
  is a canonical bijection between the following:
  \begin{enumerate}
  \item\label{it:graded-bimodule-iso-trivial} Isomorphisms of graded
    $\bfLambda$-bimodules
    \[
      \varphi\colon\bfLambda(n)\stackrel{\sim}{\longrightarrow} D\bfLambda.
    \]
  \item\label{it:pairing-trivial} Associative nondegenerate bilinear pairings
    \[
      \pairing<-,->\colon\Lambda\times\Lambda\longrightarrow\kk
    \]
    such that
    \begin{align}
      \pairing<y,x>&=\pairing<\sigma(x),y>&x,y&\in\Lambda.\label{eq:props-pairing-Nakayama-trivial}
    \end{align}
  \end{enumerate}
  In particular, when either of the above exist, $\sigma$ is a Nakayama
  automorphism of the Frobenius algebra $\Lambda$. The correspondence
  \eqref{it:graded-bimodule-iso-trivial}$\to$\eqref{it:pairing-trivial}
  associates to an isomorphism $\varphi$ the bilinear pairing
  \[
    \pairing<-,->\colon\Lambda\times\Lambda\longrightarrow\kk,\qquad
    (x,y)\longmapsto\varphi(\s[n]\imath^{-1}x)(y).
  \]
\end{corollary}
\begin{proof}
  In view of \Cref{prop:graded-bimodule-iso-vs-pairing}, it suffices to observe
  that, if $m=1$, then the integer $dk(dm+1)$ is always even and hence condition
  \eqref{eq:props-pairing-shift} reads
  \[
    \pairing<\sigma^k(x),y>=\pairing<x,\sigma^{-k}(y)>,\qquad x,y\in\Lambda,\
    k\in\ZZ.
  \]
  Clearly, it suffices to verify this condition for $k=1$, in which case we have
  \[
    \pairing<\sigma(x),y>\stackrel{\eqref{eq:props-pairing-Nakayama-trivial}}{=}\pairing<y,x>\stackrel{\eqref{eq:props-pairing-Nakayama-trivial}}{=}\pairing<x,\sigma^{-1}(y)>.\qedhere
  \]
\end{proof}

\begin{example}
  Let $\Lambda=\kk$ and $\sigma=\id$, the unique $\kk$-linear algebra
  automorphism of $\kk$. Every nondegenerate bilinear pairing $\kk\times \kk\to
  \kk$ is necessarily of the form
  \[
    \pairing<-,->\colon \kk\times\kk\longrightarrow\kk,\qquad (x,y)\longmapsto
    uxy,
  \]
  where $u\in\kk$ is non-zero. In particular, any such pairing is associative
  and satisfies \eqref{eq:props-pairing-Nakayama} since $\kk$ is commutative.
  However, condition \eqref{eq:props-pairing-shift} is satisfied if and only
  if either of the following conditions hold:
  \begin{itemize}
  \item $\chark(\kk)=2$.
  \item $d$ is even.
  \item $n=dm$ is odd.
  \end{itemize}
  Indeed, for condition \eqref{eq:props-pairing-shift} to hold for the above
  pairing it is necessary and sufficient to have
  \[
    (-1)^{dk(dm+1)}=1\in\kk\qquad k\in\ZZ.
  \]
  We conclude from \Cref{prop:graded-bimodule-iso-vs-pairing} that there exists
  an isomorphism of graded $\bfLambda$-bimodules
  \[
    \bfLambda(n)\stackrel{\sim}{\longrightarrow}D\bfLambda,\qquad n\coloneqq dm,
  \]
  if and only if either of the three conditions listed above are satisfied.
  Notice that $\bfLambda=\kk[\imath^{\pm1}]$ with $|\imath|=-d$ in this case.
\end{example}

\begin{example}
  \label{ex:dual_numbers-bimodule_isos}
  Let $\Lambda=\kk[\varepsilon]/(\varepsilon^2)$ be the algebra of dual numbers.
  The algebra $\Lambda$ is symmetric since it is commutative and self-injective,
  see for example~\cite[Proposition~IV.4.6]{SY11a}. The vector space
  \[
    \Hom[\Lambda^e]{\Lambda}{D\Lambda}\cong\Hom[\Lambda^e]{\Lambda}{\Lambda}\cong
    Z(\Lambda)=\Lambda
  \]
  is therefore $2$-dimensional: To a pair $a,b\in\kk$ we associate the morphism
  $\Lambda$-bimodules
  \[
    \varphi=\varphi_{a,b}\colon\Lambda\longrightarrow D\Lambda
  \]
  that is adjoint to the associative and symmetric bilinear pairing
  \[
    \pairing<-,->=\pairing<-,->_{a,b}\colon\Lambda\times\Lambda\to\kk
  \]
  uniquely determined by the requirement that
  \begin{align*}
    \pairing<1,1>&=a,&\pairing<1,\varepsilon>&=b,\\
    \pairing<\varepsilon,1>&=b,&\pairing<\varepsilon,\varepsilon>&=0;
  \end{align*}
  explicitly,
  \[
    \varphi(1)=a1^*+b\varepsilon^*\qquad{and}\qquad\varphi(\varepsilon)=b1^*.
  \]
  By the determinant criterion, such a pairing is nondegenerate if and only if
  $b\neq 0$, in which case the inverse of $\varphi=\varphi_{a,b}$ is the map
  \begin{align*}
    \psi=\psi_{a,b}\colon D\Lambda&\longrightarrow\Lambda,\\
    1^*&\longmapsto b^{-1}\varepsilon,\\
    \varepsilon^*&\longmapsto b^{-1}-ab^{-2}\varepsilon.
  \end{align*}  
  Fix now $a,b\in\kk$ with $b\neq 0$, and hence a $\Lambda$-bimodule isomorphism
  $\varphi=\varphi_{a,b}$ as above. Fix $d\geq 1$ and consider the algebra
  automorphism $\sigma\in\Aut{\Lambda}$ that is uniquely determined by the
  requirement that $\sigma(\varepsilon)=(-1)^d\varepsilon$ (compare
  with~\cite[Corollary~5.4.12]{JKM22}). Let $m\in\ZZ$ and set $n\coloneqq dm$.
  We analyse the conditions under which the equalities
  \eqref{eq:props-pairing-Nakayama} and \eqref{eq:props-pairing-shift} in
  \Cref{prop:graded-bimodule-iso-vs-pairing} are satisfied. This is clearly the case
  if $\chark(\kk)=2$, so let us assume that $\chark(\kk)\neq2$. In this case
  \begin{align*}
    \pairing<1,\varepsilon>&\stackrel{!}{=}\pairing<\sigma^m(\varepsilon),1>=(-1)^{dm}\pairing<\varepsilon,1>=(-1)^{dm}\pairing<1,\varepsilon>
  \end{align*}
  is satisfied if and only if $d$ or $m$ are even (recall that
  $\pairing<1,\varepsilon>\neq0$). Similarly, for the equality
  \begin{align*}
    (-1)^{dk}\pairing<\varepsilon,1>=\pairing<\sigma^k(\varepsilon),1>\stackrel{!}{=}(-1)^{dk(dm+1)}\pairing<\varepsilon,\sigma^{-k}(1)>=(-1)^{dk(dm+1)}\pairing<\varepsilon,1>
  \end{align*}
  to hold for all $k\in\ZZ$ in addition to the previous one requires that either
  $d$ is even and $m$ is arbitrary or that $d$ is odd and $m$ is even (hence in
  both cases $n=md$ is even).
\end{example}

\subsection{Proofs of \Cref{thm:CY-equivalence,thm:CY-correspondence}}

We are finally ready to give our proof of
\Cref{thm:CY-equivalence,thm:CY-correspondence}.

\begin{proof}[Proof of \Cref{thm:CY-correspondence}]
  We show first that the correspondence
  $\eqref{it:thm:CY-correspondence:dgAs}\to\eqref{it:thm:CY-correspondence:gLambda}$
  in \Cref{thm:CY-correspondence} is well defined. It is part
  of~\cite[Theorem~A]{JKM22} that the correspondence is well defined if we drop
  items \eqref{it:thm:CY-correspondence:dgAs:it:CY} and
  \eqref{it:thm:CY-correspondence:gLambda:it:CY} in
  \Cref{thm:CY-correspondence}; it is here that we need the assumption that the
  ground field is perfect, which is also needed in \emph{loc.~cit.} Suppose then
  that $A$ is a dg algebra as in
  \Cref{thm:CY-correspondence}\eqref{it:thm:CY-correspondence:dgAs} which
  therefore satisfies conditions \eqref{it:thm:CY-correspondence:dgAs:it:H0} and
  \eqref{it:thm:CY-correspondence:dgAs:it:dZ-CT} therein, and hence the pair
  \[
    (\Lambda,I)\coloneqq(\H[0]{A},\H[-d]{A})
  \]
  satisfies the conditions in \eqref{it:thm:CY-correspondence:gLambda:it:tp} and
  \eqref{it:thm:CY-correspondence:gLambda:it:I} in \Cref{thm:CY-correspondence}.
  Moreover, there is an isomorphism of graded algebras, which we treat as an
  identification in what follows,
  \[
    \bfLambda=\Lambda(\sigma,d)\stackrel{\sim}{\longrightarrow}\H{A},\qquad
    \imath\longmapsto u,
  \]
  that restricts to the identity on the degree $0$ part, and is otherwise
  determined by the choice of an isomorphism
  \[
    u\in\Hom[\DerCat[c]{A}]{A}{A[-d]}\cong\H{A}[-d]
  \]
  in the derived category of (right) dg $A$-modules (the latter exists since $A$
  satisfies conditions \eqref{it:thm:CY-correspondence:dgAs:it:H0} and
  \eqref{it:thm:CY-correspondence:dgAs:it:dZ-CT} in
  \Cref{thm:CY-correspondence}, see~\cite[Section~4.5.1]{JKM22} for details).
  Since the dg algebra $A$ is bimodule right $n$-Calabi--Yau by condition
  \eqref{it:thm:CY-correspondence:dgAs:it:CY} in \Cref{thm:CY-correspondence}, we
  may apply \Cref{prop:CY-BV} to obtain the vanishing condition
  \[
    0=\BV(\Hclass{\Astr<d+2>[\H{A}]})\in\HH[d+1]<-d>{\H{A}},
  \]
  where $\BV=\BV_\varphi$ is the BV operator associated with $\varphi$.
  Notice that the conditions on the class $\Hclass{\Astr<d+2>[\H{A}]}$ listed in
  \Cref{thm:CY-correspondence}\eqref{it:thm:CY-correspondence:gLambda:it:CY}
  uniquely characterise the UMP of $A$ up to $A_\infty$-isomorphism of its
  minimal models, see the proof of~\cite[Theorem~5.3.3]{JKM22}. Finally, it
  follows from \cite[Theorem~5.1.2]{JKM22} that the class of any short exact
  sequence of $\Lambda$-bimodules
  \[
    \eta\colon\quad 0\to I\to P_{d+2}\to\cdots\to P_3\to P_2\to P_1\to\Lambda\to
    0
  \]
  with projective(-injective) middle terms can be realised as the restricted UMP
  of a minimal model for a dg algebra in the quasi-isomorphism class of $A$,
  hence the previous argument shows that the vanishing condition in
  \eqref{it:thm:CY-correspondence:gLambda:it:CY} indeed holds for arbitrary
  $\eta$. This finishes the proof of the fact that the correspondence
  $\eqref{it:thm:CY-correspondence:dgAs}\to\eqref{it:thm:CY-correspondence:gLambda}$
  is well defined.

  We now conclude the proof of the theorem by proving that the correspondence
  $\eqref{it:thm:CY-correspondence:dgAs}\to\eqref{it:thm:CY-correspondence:gLambda}$
  is surjective; as explained in the introduction, the correspondence is
  injective since it is the restriction of the bijective correspondence
  in~\cite[Theorem~A]{JKM22}. Let $\Lambda$ and $\bfLambda$ as in
  \Cref{thm:CY-correspondence}\eqref{it:thm:CY-correspondence:gLambda}. In
  particular, there exists an isomorphism of graded $\bfLambda$-bimodules
  \[
    \varphi\colon\bfLambda(n)\stackrel{\sim}{\longrightarrow}D\bfLambda.
  \]
  Assuming that the conditions \eqref{it:thm:CY-correspondence:gLambda:it:tp}
  and \eqref{it:thm:CY-correspondence:gLambda:it:I} in
  \Cref{thm:CY-correspondence} hold, \cite[Theorems~5.1.2 and~5.3.3]{JKM22}
  guarantee the existence of a minimal $A_\infty$-algebra structure
  $(\bfLambda,\Astr[\eta])$ with the property that
  \begin{align*}
    j^*\colon\HH{\bfLambda}[\bfLambda]&\longrightarrow\HH{\Lambda}[\bfLambda]\\
    \Hclass{\Astr<d+2>[\bfLambda]}&\longmapsto j^*\Hclass{\Astr<d+2>[\bfLambda]}=\Hclass{\eta},
  \end{align*}
  where
  \[
    \eta\colon\quad 0\to\twBim{\Lambda}[\sigma]\to P_{d+2}\to\cdots P_3\to
    P_2\to P_1\to\Lambda\to 0
  \]
  is any given short exact sequence of $\Lambda$-bimodules with
  projective(-injective) middle terms. Since UMPs of length $d+2$ always satisfy
  $\Sq(\Hclass{\Astr<d+2>[\bfLambda]})=0$, by assumption we must also have
  \[
    0=\BV(\Hclass{\Astr<d+2>[\bfLambda]})\in\HH[d+1]<-d>{\bfLambda},
  \]
  where $\BV=\BV_\varphi$ is the BV operator. Let $A$ be a dg algebra
  strictifying $(\bfLambda,\Astr[\eta])$, so that the latter is a minimal
  $A_\infty$-model of $A$ (for example, we may take the dg endomorphism
  algebra of $(\bfLambda,\Astr[\eta])$ in its derived dg category). Now, as the
  graded algebra $\bfLambda$ is $d$-sparse by construction, we find ourselves in
  the setting of \Cref{thm:CY-Kadeikshvili} and hence we are able to conclude
  that the dg algebra $A$ is right $n$-Calabi--Yau as a bimodule as soon as we
  establish the following vanishing of the Massey bimodule cohomology of the
  diagonal $\bfLambda$-bimodule:
  \[
    \BimHMH!\bfLambda![p+1]<-p>{\bfLambda(0)}<\bfLambda\ltimes\bfLambda(0)>=0,\qquad
    p>d.
  \]
  We finish the proof of the theorem by establishing this vanishing. Keeping in
  mind the discussion in \Cref{rmk:BimHMC-HMC}, in what follows we identify the
  Massey bimodule cochain complex of the diagonal $\bfLambda$-bimodule with the
  bigraded vector space with the components
  \[
    \HH[p]<r>{\bfLambda},\qquad p\geq1,\ r\in\ZZ,
  \]
  that is endowed with the bidegree $(d+1,-d)$ differential
  $x\mapsto[\Hclass{\Astr<d+2>[\bfLambda]},x]$, where
  $\Hclass{\Astr<d+2>[\bfLambda]}$ is the (algebra) UMP of length $d+2$. From
  this point onwards the proof is very similar (in fact slightly simpler) to
  the proof of \cite[Proposition~5.2.9]{JKM22}, which is a key result in
  \emph{op.~cit.} Consider the bidegree $(d+2,-d)$ endomorphism of
  $\HMC{\bfLambda}$ given by
  \begin{equation}
    \label{eq:key-map}
    \HH[s]<t>{\bfLambda}\longrightarrow\HH[s+d+2]<t-d>{\bfLambda},\qquad
    x\longmapsto\cupp{\Hclass{\Astr<d+2>[\bfLambda]}}{x};
  \end{equation}
  unlike in the proof of~\cite[Proposition~5.2.9]{JKM22} where the algebra
  Hochschild--Massey cohomology is considered, there is no need to change the
  definition in bidegree $(d+1,-d)$ because the Massey bimodule differential is
  given by the formula ${x\mapsto[\Astr<d+2>[\bfLambda],x]}$ without exceptions,
  see \eqref{eq:BimHMd}. The same calculation as in
  \cite[Proposition~5.2.9]{JKM22} shows that the above maps indeed determine a
  cochain map, which is to say that
  \[
    [\Hclass{\Astr<d+2>[\bfLambda]},\cupp{\Hclass{\Astr<d+2>[\bfLambda]}}{x}]=\cupp{\Hclass{\Astr<d+2>[\bfLambda]}}{[\Hclass{\Astr<d+2>[\bfLambda]},x]}.
  \]
  We reproduce the proof of the latter equality for the convenience of the
  reader:
  \begin{align*}
    [\Hclass{\Astr<d+2>[\bfLambda]},\cupp{\Hclass{\Astr<d+2>[\bfLambda]}}{x}]&\stackrel{\eqref{eq:Gerstenhaber_relation}}{=}\cupp{[\Hclass{\Astr<d+2>[\bfLambda]},\Hclass{\Astr<d+2>[\bfLambda]}]}{x}+\cupp{\Hclass{\Astr<d+2>[\bfLambda]}}{[\Hclass{\Astr<d+2>[\bfLambda]},x]}\\
                                                                             &\stackrel{\eqref{eq:Gerstenhaber_bracket-HC}}{=}\cupp{2\underbrace{\Sq(\Hclass{\Astr<d+2>[\bfLambda]})}_{\stackrel{\ref{def:UMP}}{=}0}}{x}+\cupp{\Hclass{\Astr<d+2>[\bfLambda]}}{[\Hclass{\Astr<d+2>[\bfLambda]},x]}\\
                                                                             &=\cupp{\Hclass{\Astr<d+2>[\bfLambda]}}{[\Hclass{\Astr<d+2>[\bfLambda]},x]}.
  \end{align*}
  In bidegree $(d+2,-d-1)$, the `lowest' relevant bidegree from the perspective
  of the vanishing conditions required to apply \Cref{thm:CY-Kadeishvili-Ai},
  the situation looks as follows:
  \[
    \begin{tikzcd}[column sep=huge, row sep=large]
      \HH[1]<0>{\bfLambda}\rar{[\Hclass{\Astr<d+2>[\bfLambda]},-]}\dar{\cupp{\Hclass{\Astr<d+2>[\bfLambda]}}{?}}&\HH[d+2]<-d-1>{\bfLambda}\rar{[\Hclass{\Astr<d+2>[\bfLambda]},-]}\dar{\cupp{\Hclass{\Astr<d+2>[\bfLambda]}}{?}}\ar[dotted]{dl}[description]{\cupp{\Hclass{\delta_{/d}}}{?}}&\HH[2d+3]<-2d-2>{\bfLambda}\dar{\cupp{\Hclass{\Astr<d+2>[\bfLambda]}}{?}}\ar[dotted]{dl}[description]{\cupp{\Hclass{\delta_{/d}}}{?}}\\
      \HH[d+3]<-d>{\bfLambda}\rar[swap]{[\Hclass{\Astr<d+2>[\bfLambda]},-]}&\HH[2d+4]<-2d-1>{\bfLambda}\rar[swap]{[\Hclass{\Astr<d+2>[\bfLambda]},-]}&\HH[3d+5]<-3d-2>{\bfLambda}
    \end{tikzcd}
  \]
  We also know from \cite[Proposition~4.7.15]{JKM22} that the maps
  \eqref{eq:key-map} are isomorphisms in source bidegrees $(s,t)$, $s\geq2$ and
  surjective if $s\geq1$; hence, in particular, there are induced
  cohomology-level isomorphisms
  \begin{equation}
    \label{eq:key-vanishing}
    \BimHMH!\bfLambda![p+1]<-p>{\bfLambda(0)}\stackrel{\sim}{\longrightarrow}\BimHMH!\bfLambda![p+d+3]<-p-d>{\bfLambda(0)},\quad p>d.
  \end{equation}
  Still as in the proof of~\cite[Proposition~5.2.9]{JKM22}, the map
  \eqref{eq:key-map} is null-homotopic via the bidegree $(1,0)$ maps
  \[
    \HH[s]<r>{\bfLambda}\longrightarrow\HH[s+1]<r>{\bfLambda},\qquad
    x\longmapsto\cupp{\Hclass{\delta_{/d}}}{x},
  \]
  where $\delta\in\HC[1]<0>{\bfLambda}$ is the fractional Euler derivation
  \[
    \delta_{/d}\colon\bfLambda\longrightarrow\bfLambda,\qquad x\longmapsto
    \tfrac{|x|}{d}x,
  \]
  where we use that $|x|\in d\ZZ$ since the graded algebra $\bfLambda$ is
  $d$-sparse by assumption (we refer the reader to~\cite[Section~4.7]{JKM22} for
  more information on this derivation). This follows immediately from the last
  equation in~\cite[Proposition~4.7.8]{JKM22}, but we provide here some
  additional calculations for the convenience of the reader. For this, we follow
  the computations in \cite[Section~3]{Mur22} and observe first that
  \begin{align*}
    [\Hclass{\Astr<d+2>[\bfLambda]},\Hclass{\delta_{/d}}]\stackrel{\eqref{eq:shifted_Lie_algebra}}{=}-[\Hclass{\delta_{/d}},\Hclass{\Astr<d+2>[\bfLambda]}]=\Hclass{\Astr<d+2>[\bfLambda]},
  \end{align*}
  where in the first equality we use that the shifted total degree $|\s
  \delta_{/d}|=0$ and in the second equality we
  use~\cite[Proposition~4.7.4]{JKM22} for the vertical degree $-d$ of
  $\Astr<d+2>[\bfLambda]$. Combining this observation with the Gerstenhaber
  relation (keeping in mind that $|\s \Astr<d+2>[\bfLambda]|=1$ and
  $|\delta_{/d}|=1$) we obtain the required identity
  \begin{align*}
    \cupp{\Hclass{\Astr<d+2>[\bfLambda]}}{x}&=\cupp{[\Hclass{\Astr<d+2>[\bfLambda]},\Hclass{\delta_{/d}}]}{x}\\&\stackrel{\eqref{eq:Gerstenhaber_relation}}{=}[\Hclass{\Astr<d+2>[\bfLambda]},\cupp{\Hclass{\delta_{/d}}}{x}]-(-1)^{|\s\Astr<d+2>[\bfLambda]||\delta_{/d}|}\cupp{\Hclass{\delta_{/d}}}{[\Hclass{\Astr<d+2>[\bfLambda]},x]}\\
                                            &=[\Hclass{\Astr<d+2>[\bfLambda]},\cupp{\Hclass{\delta_{/d}}}{x}]+\cupp{\Hclass{\delta_{/d}}}{[\Hclass{\Astr<d+2>[\bfLambda]},x]}.
  \end{align*}
  Finally, since the maps \eqref{eq:key-map} are null-homotopic, we conclude
  that the induced isomorphisms \eqref{eq:key-vanishing} are null maps, and
  hence we must have
  \[
    \BimHMH!\bfLambda![p+1]<-p>{\bfLambda(0)}=0,\qquad p>d,
  \]
  which is what we needed to prove.

  Finally, the fact that a graded
  $\bfLambda$-bimodule isomorphism
  $\varphi\colon\bfLambda(n)\stackrel{\sim}{\to}D{\bfLambda}$ satisfies the
  vanishing condition in
  statement
  \eqref{it:thm:CY-correspondence:gLambda:it:CY}
  for some $\eta$ if and only if it is induced by a bimodule right
  $n$-Calabi--Yau structure on any choice of dg algebra $A$ satisfying the
  conditions in statement \eqref{it:thm:CY-correspondence:dgAs} follows from the
  previous argument and the conclusion of \Cref{thm:CY-Kadeikshvili}.
  This finishes the proof of the theorem.
\end{proof}

We conclude by proving \Cref{thm:CY-equivalence}.

\begin{proof}[Proof of \Cref{thm:CY-equivalence}]
  The implication
  \eqref{it:thm:CY-equivalence:CY}$\Rightarrow$\eqref{it:thm:CY-equivalence:BV}
  is a special case of \Cref{prop:CY-BV}, and only needs the dg algebra $A$ to
  be cohomologically $d$-sparse and bimodule right $n$-Calabi--Yau, $n=md$. To
  prove the converse implication, we recall that, under the assumptions of
  \Cref{thm:CY-equivalence}, the restricted UMP of length $d+2$,
  \[
    j^*\Hclass{\Astr<d+2>[\bfLambda]}\in\HH[d+2]<-d>{\H[0]{A}}[\H{A}]\cong\Ext[\H[0]{A}^e][\bullet]{\H[0]{A}}{\H[-d]{A}},
  \]
  can be represented by an exact sequence of $\H[0]{A}$-bimodules with
  projective(-injective) middle terms~\cite[Corollary~4.5.17]{JKM22}.
  \Cref{thm:CY-correspondence} applied to the pair $(\H[0]{A},\H[-d]{A})$ yields
  the fact that the dg algebra $A$ is bimodule $n$-Calabi--Yau (here we use
  again that any exact sequence $\eta$ as in
  \Cref{thm:CY-correspondence}\eqref{it:thm:CY-correspondence:gLambda:it:CY} can
  be realised as a restricted UMP for a dg algebra in the quasi-isomorphism
  class of $A$~\cite[Theorem~5.1.2]{JKM22}).

  Finally, the fact that a graded $\H{A}$-bimodule isomorphism
  \[
    {\varphi\colon\H{A}(n)\stackrel{\sim}{\longrightarrow}D\H{A}}
  \]
  satisfies the vanishing
  condition in statement \eqref{it:thm:CY-equivalence:BV} if and only if it is
  induced by a bimodule right $n$-Calabi--Yau structure on $A$ is part of
  \Cref{thm:CY-correspondence}. This finishes the proof of the theorem.
\end{proof}

\section{A non-enhanceable triangulated Calabi--Yau structure}
\label{sec:the-example}

In this section we provide an explicit example of an algebraic triangulated
category with a triangulated Calabi--Yau structure that cannot be lifted to a
bimodule right Calabi--Yau structure on any of its dg enhancements
(\Cref{thm:non-enhanceable-CY}). We fix the following setting throughout.

\begin{setting}
  \label{setting:dual-numbers-char2}
  Let $d\geq1$. Suppose that $\chark(\kk)=2$ and let
  $\Lambda=\kk[\varepsilon]/(\varepsilon^2)$ be the algebra of dual numbers
  (concentrated in degree $0$). It is well-known and easy to verify that
  $\Lambda$ is $(d+2)$-periodic. Indeed, there is a short exact sequence of
  $\Lambda$-bimodules
  \begin{equation}
    \label{eq:u-ses}
    0\longrightarrow\Lambda\stackrel{i}{\longrightarrow}\Lambda\otimes\Lambda\stackrel{\mu}{\longrightarrow}\Lambda\longrightarrow0,
  \end{equation}
  where $\mu$ is the multiplication map and
  $i(1)=1\otimes\varepsilon+\varepsilon\otimes1$. We set
  \[
    \bfLambda\coloneqq\kk[\imath^{\pm1}]\otimes\Lambda,\qquad|\imath|=-d.
  \]
  Compare with \Cref{setting:graded-lambda}.
\end{setting}

We begin by recalling the description of the Hochschild cohomology of $\Lambda$
as a graded algebra.

\begin{proposition}[{\cite[Theorem~7.1]{Hol00}}]
  \label{prop:HH-dual-numbers-char2}
  Recall~\Cref{setting:dual-numbers-char2}. There is an isomorphism of graded
  algebras
  \[
    \HH*{\Lambda}[\Lambda]\cong\kk[\varepsilon,u]/(\varepsilon^2),\qquad
    |\varepsilon|=0,\quad |u|=1.
  \]
\end{proposition}

\begin{remark}
  \label{rmk:u-derivation}
  In \Cref{prop:HH-dual-numbers-char2}, the class
  $u\in\HH*[1]{\Lambda}[\Lambda]$ is represented by the short exact sequence
  \eqref{eq:u-ses}. Alternatively, in terms of Hochschild cochains, the class
  $u$ corresponds to the derivation of $\Lambda$ that is uniquely determined by
  the requirement that $u(\varepsilon)=1$; the latter is a derivation since
  $\chark(\kk)=2$.\footnote{It is straightforward to verify that the
    derivation $u\colon\varepsilon\mapsto 1$ induces a functorial isomorphism
    \[
      ?\circ u\colon\Hom[\Lambda^e]{\Lambda}{M}\stackrel{\sim}{\longrightarrow}\operatorname{Der}(\Lambda,M),\qquad M\in\mmod*{\Lambda^e},
    \]
    where the target denotes the vector space of derivations $\Lambda\to M$,
    compare with~\cite[Section~4.2]{Wit19}. On the other hand, the short exact
    sequence \eqref{eq:u-ses} determines an isomorphism
    $\Omega_{\Lambda^e}^1(\Lambda)\stackrel{\sim}{\to}\Lambda$ in the stable
    category of $\Lambda$-bimodules, so that there are further functorial
    isomorphisms
    \[
      \sHom[\Lambda^e]{\Lambda}{M}\stackrel{\sim}{\longrightarrow}\Ext[\Lambda^e][1]{\Lambda}{M},\qquad\mmod*{\Lambda^e},
    \]
    induced by pushing out the short exact sequence \eqref{eq:u-ses} (we use
    here that $\Lambda^e$ is self-injective), see for
    example~\cite[Theorem~3.4.3]{Wei94} and its proof. For $M=\Lambda$, we
    obtain isomorphisms
    \[
      \operatorname{Der}(\Lambda,\Lambda)\stackrel{\sim}{\longleftarrow}\Hom[\Lambda^e]{\Lambda}{\Lambda}=\sHom[\Lambda^e]{\Lambda}{\Lambda}\stackrel{\sim}{\longrightarrow}\Ext[\Lambda^e][1]{\Lambda}{\Lambda},
    \]
    from which the claim readily follows. Since $\chark(\kk)=2$ by assumption,
    the equality between the centre and the stable centre of $\Lambda$ follows
    from the fact that the canonical quotient map has kernel the ideal generated
    by $(2\varepsilon)$, see for example the proof
    of~\cite[Corollary~5.4.12]{JKM22}.} In what follows, we interpret the class $u$ in either way, as
  is convenient from the context.
\end{remark}

\begin{setting}
  \label{setting:dual-numbers-char2-eta}
  Recall \Cref{setting:dual-numbers-char2}. We introduce the class
  \[
    \eta\coloneqq u^{d+2}\in\HH*[d+2]{\Lambda}[\Lambda],
  \]
  represented by the exact sequence obtained by splicing the short exact
  sequence \eqref{eq:u-ses} with itself $d+2$ times. Under the isomorphism
  \[
    \HH*[d+2]{\Lambda}[\Lambda]=\Ext[\Lambda^e][d+2]{\Lambda}{\Lambda}\cong\sHom[\Lambda^e]{\Omega_{\Lambda^e}^{d+2}(\Lambda)}{\Lambda},
  \]
  the class $\eta$ determines an isomorphism
  \[
    \eta\colon\Omega_{\Lambda^e}^{d+2}(\Lambda)\stackrel{\sim}{\longrightarrow}\Lambda
  \]
  in the stable category of $\Lambda$-bimodules.
\end{setting}

We wish to determine the class
\[
  \Hclass{\Astr<d+2>[\eta]}\in\HH[d+2]<-d>{\bfLambda}[\bfLambda],
\]
which we know can be realised as the UMP of length $d+2$ of a dg algebra $A$
with cohomology isomorphic to $\bfLambda$ and such that $A\in\DerCat[c]{A}$ is a
basic $d\ZZ$-cluster tilting object, see~\cite[Theorems~A and~5.1.2]{JKM22}. For
this, we need the description of the Hochschild cohomology of $\bfLambda$ as a
bigraded algebra.

\begin{proposition}
  \label{prop:HH-bfLambda-dual-numbers-char2}
  Recall \Cref{setting:dual-numbers-char2}. The Hochschild cohomology of
  $\bfLambda$ participates in a commutative diagram of morphisms of bigraded
  algebras
  \begin{equation*}
    \begin{tikzcd}[column sep=small]
      \HH{\bfLambda}[\bfLambda]\dar{\wr}\rar{j^*}&\HH{\Lambda}[\bfLambda]\dar{\wr}\\
      \HH*{\Lambda}[\Lambda]{[\imath^{\pm1},\delta]}/(\delta^2)\rar[two
      heads]\dar{\wr}&\HH*{\Lambda}[\Lambda]{[\imath^{\pm1}]}\dar{\wr}\\
      \kk[\varepsilon,u,\imath^{\pm1},\delta]/(\varepsilon^2,\delta^2)\rar[two
      heads]&\kk[\varepsilon,u,\imath^{\pm1}]/(\varepsilon^2)
    \end{tikzcd}
  \end{equation*}
  \begin{align*}
    |\varepsilon|&=(0,0),&|u|&=(1,0),&|\imath|&=(0,-d),&|\delta|&=(1,0).
  \end{align*}
  in which the vertical arrows are isomorphisms and the two bottom horizontal
  maps are the evident quotient maps with kernel the bigraded ideal generated
  by $\delta$. Here, $j\colon\Lambda\hookrightarrow\bfLambda$ is the inclusion
  of the degree $0$ part and the class $\delta$ is represented by the fractional
  Euler derivation $x\mapsto\frac{|x|}{d}x$. Moreover,
  \begin{align*}
    [\imath,\HH*{\Lambda}[\Lambda]]&=0&[\imath,\imath]=0\\
    [\delta,\HH*{\Lambda}[\Lambda]]&=0&[\delta,\imath]=\imath.
  \end{align*}
\end{proposition}
\begin{proof}
  Combined with \Cref{prop:HH-dual-numbers-char2}, this follows immediately
  from~\cite[Proposition 4.7.6, Proposition 4.7.12, Corollary~4.7.14 and
  Remark~4.7.15]{JKM22} taking $\sigma=\id$ (compare also
  with~\cite[Proposition~4.3.1]{JKM24}).
\end{proof}

\begin{remark}
  Recall \Cref{setting:dual-numbers-char2,setting:dual-numbers-char2-eta}. Under
  the isomorphisms
  \[
    \HH*[d+2]{\Lambda}[\Lambda]\cong\HH[d+2]<-d>{\Lambda}[\bfLambda]\stackrel{\ref{prop:HH-bfLambda-dual-numbers-char2}}{\cong}\imath\cdot\HH*[d+2]{\Lambda}[\Lambda],
  \]
  the class $\eta$ corresponds to $\imath\cdot\eta$.
\end{remark}

\begin{lemma}
  \label{lemma:Sq(u)}
  With the notation in \Cref{prop:HH-bfLambda-dual-numbers-char2}, we have
  \[ [u^p,u]=0\qquad\text{and}\qquad\Sq(u^p)=0,\qquad p\geq 1.
  \]
\end{lemma}
\begin{proof}
  The equality $[u,u]=0$ holds since $u$ has odd total degree (see
  \Cref{eq:shifted_Lie_algebra}), and a straightforward induction using the
  Gerstenhaber relation \eqref{eq:Gerstenhaber_relation} shows that $[u^p,u]=0$,
  $p\geq1$, also holds. Suppose that $\Sq(u)=0$ (we prove below that this holds)
  and $\Sq(u^p)=0$ for some $p\geq1$. Then,
  \[
    \Sq(u^{p+1})=\underbrace{\Sq(u^{p})}_{=0}\cdot
    u^2+u^p\cdot\underbrace{[u^p,u]}_{=0}\cdot
    u+u^{2p}\cdot\underbrace{\Sq(u)}_{\stackrel{!}{=}0}=0
  \]
  It thus remains to prove the base case of the induction: $\Sq(u)=0$. For this,
  observe that, as a derivation, $u$ satisfies
  \[
    \braces{u}{u}=u\circ_1u=0
  \]
  at the Hochschild-cochain level since necessarily $u(\id*[\Lambda])=0$ and
  $u(\Lambda)=\kk\cdot\id*[\Lambda]$, see~\Cref{rmk:u-derivation}.
\end{proof}

The following computation should be compared with~\cite[Corollary~4.3.2]{JKM24}.

\begin{proposition}
  \label{prop:ump-ieta}
  Recall \Cref{setting:dual-numbers-char2,setting:dual-numbers-char2-eta}. There
  is an equality
  \[
    \Hclass{\Astr<d+2>[\eta]}=\imath\cdot\eta\in\HH[d+2]<-d>{\bfLambda}[\bfLambda].
  \]
\end{proposition}
\begin{proof}
  The class $\Hclass{\Astr<d+2>[\eta]}$ is uniquely characterised by the
  conditions
  \[
    j^*(\Hclass{\Astr<d+2>[\eta]})=\eta\qquad\text{and}\qquad\Sq(\Hclass{\Astr<d+2>[\eta]})=0,
  \]
  see the proof of~\cite[Theorem~5.3.3]{JKM22}. Under the identifications in
  \Cref{prop:HH-bfLambda-dual-numbers-char2}, the condition
  $j^*(\Hclass{\Astr<d+2>[\eta]})=\eta$ means that
  $\Hclass{\Astr<d+2>[\eta]}=\imath\cdot\eta$ modulo $\delta$. Hence, we only
  need to show that $\Sq(\imath\cdot\eta)=0$. For this, we use
  \Cref{lemma:Sq(u)} to obtain
  \[
    \Sq(\imath\cdot\eta)\stackrel{\eqref{eq:Gerstenhaber_square-relations}}{=}\underbrace{\Sq(\imath)}_{=0}\cdot
    \eta^2+\imath\cdot\underbrace{[\imath,\eta]}_{\stackrel{\ref{prop:HH-bfLambda-dual-numbers-char2}}{=}0}\cdot\imath+\imath^2\cdot\underbrace{\Sq(\eta)}_{\stackrel{\ref{lemma:Sq(u)}}{=}0}=0;
  \]
  here, $\Sq(\imath)=0$ since $\braces{\imath}{\imath}=0$ for
  $\imath\colon\kk\to\Lambda$ is an operation of arity $0$.
\end{proof}

Having determined the class
$\Hclass{\Astr<d+2>[\eta]}\in\HH{\bfLambda}[\bfLambda]$, we now wish to compute
the obstruction class
\[
  \BV(\Hclass{\Astr<d+2>[\eta]})\in\HH[d+1]<-d>{\bfLambda}[\bfLambda],
\]
compare with \Cref{thm:CY-equivalence}. For this, we need to fix a
$\bfLambda$-bimodule isomorphism ${\bfLambda\stackrel{\sim}{\longrightarrow}D\bfLambda}$.

\begin{setting}
  \label{setting:dual-numbers-char2-pairing}
  Recall \Cref{setting:dual-numbers-char2,setting:dual-numbers-char2-eta}. Fix
  scalars $a,b\in\kk$ with $b\neq0$. Consider the isomorphism of
  $\Lambda$-bimodules
  \[
    \varphi=\varphi_{a,b}\colon\Lambda\stackrel{\sim}{\longrightarrow} D\Lambda
  \]
  that is adjoint to the associative and symmetric bilinear pairing
  \begin{equation}
    \label{eq:the-ab-pairing}
    \pairing<-,->=\pairing<-,->_{a,b}\colon\Lambda\times\Lambda\to\kk
  \end{equation}
  uniquely determined by the requirement that
  \begin{align*}
    \pairing<1,1>&=a,&\pairing<1,\varepsilon>&=b,\\
    \pairing<\varepsilon,1>&=b,&\pairing<\varepsilon,\varepsilon>&=0,
  \end{align*}
  see~\Cref{ex:dual_numbers-bimodule_isos}. Since $\chark(\kk)=2$, the
  $\Lambda$-bimodule isomorphism $\varphi$ extends to an isomorphism of graded
  $\bfLambda$-bimodules
  \begin{equation}
    \label{eq:the-ab-graded-iso}
    \varphi=\varphi_{a,b}\colon\bfLambda\stackrel{\sim}{\longrightarrow}D\bfLambda
  \end{equation}
  whose associated pairing
  \[
    \pairing<-,->=\pairing<-,->_\varphi\colon\bfLambda\times\bfLambda\longrightarrow\kk
  \]
  has the non-zero components
  \[
    \pairing<\imath^{-k}x,\imath^ky>=\pairing<x,y>,\qquad x,y\in\Lambda,\quad
    k\in\ZZ,
  \]
  see \Cref{prop:graded-bimodule-iso-vs-pairing} and
  \Cref{ex:dual_numbers-bimodule_isos}.
\end{setting}

The following computation shows that, in general, the vanishing of the
Batalin--Vilkovisky operator on the universal Massey product of length $d+2$ of
a cohomologically $d$-sparse dg algebra depends on the choice of pairing.

\begin{proposition}
  \label{prop:BV-non-zero}
  Recall
  \Cref{setting:dual-numbers-char2,setting:dual-numbers-char2-eta,setting:dual-numbers-char2-pairing}.
  Then,
  \[
    \BV(\Hclass{\Astr<d+2>[\eta]})\stackrel{\ref{prop:ump-ieta}}{=}\BV(\imath\cdot\eta)=\begin{cases}
      (ab^{-1}+a^2b^{-2}\varepsilon)\imath\cdot u^{d+1}&d\text{ is odd},\\
      0&d\text{ is even},
    \end{cases}
  \]
  where $\BV=\BV_\varphi$ is the Batalin--Vilkovisky operator (\Cref{def:BV})
  associated with the isomorphism of graded $\bf\Lambda$-bimodules
  \eqref{eq:the-ab-graded-iso}. In particular, $\BV(\imath\cdot\eta)=0$ if and
  only if $d$ is even or $d$ is odd and $a=0$.
\end{proposition}
\begin{proof}
  Recall that $\eta=u^{d+2}$ and that $u\in\HH*[1]{\Lambda}[\Lambda]$ is
  determined as a derivation by the requirement that $u(\varepsilon)=1$. We make
  the following recollections and observations:
  \begin{itemize}
  \item The Batalin--Vilkovisky operator satisfies the following identity on
    homogeneous elements~\cite[Theorem~2]{Tra08a}
    \[
      \BV(x\cdot y)=\BV(x)\cdot y+x\cdot\BV(y)+[x,y],\qquad
      x,y\in\HH{\bfLambda}[\bfLambda].
    \]
  \item In view of \Cref{prop:HH-bfLambda-dual-numbers-char2} and the previous
    item,
    \[
      \BV(\imath\cdot\eta)=\underbrace{\BV(\imath)}_{=0}\cdot\eta+\imath\cdot\BV(\eta)+\underbrace{[\imath,\eta]}_{\stackrel{\ref{prop:HH-bfLambda-dual-numbers-char2}}{=}0}=\imath\cdot\BV(\eta),
    \]
    where we use that $\BV(\imath)\in\HH[-1]<-d>{\bfLambda}[\bfLambda]=0$ since
    $\HH{\bfLambda}[\bfLambda]$ is concentrated in non-negative horizontal
    degrees.
  \item In view of \Cref{lemma:Sq(u)} and the first item,
    \[
      \BV(u^{p+1})=\BV(u^p)\cdot
      u+u^p\cdot\BV(u)+\underbrace{[u^p,u]}_{\stackrel{\ref{lemma:Sq(u)}}{=}0},\qquad
      p\geq1.
    \]
    A straightforward induction shows that
    \[
      \BV(\eta)=\BV(u^{d+2})=\begin{cases}
        u^{d+1}\BV(u)&\text{if $d$ is odd},\\
        0&\text{if $d$ is even}.
      \end{cases}
    \]
  \end{itemize}
  Combining the above we conclude that $\BV(\imath\cdot\eta)=0$ if $d$ is even.
  
  Suppose that $d$ is odd. We finish the proof by establishing the equality
  \[
    \BV(u)=ab^{-1}+a^2b^{-2}\varepsilon\in\HH[0]<0>{\bfLambda}[\bfLambda]=\Lambda.
  \]
  At the cochain level, $\BV(u)$ is uniquely characterised by the property that
  \[
    \pairing<\BV(u),x>=\pairing<u(x),1>,
  \]
  for all $x\in\bfLambda$, and both sides vanish on multiples of $\imath^p$ if
  $p\neq0$. Evaluating at $x=\varepsilon$ yields
  \[
    \pairing<\BV(u),\varepsilon>=\pairing<u(\varepsilon),1>=\pairing<1,1>=a,
  \]
  while evaluating at $x=1$ yields
  \[
    \pairing<\BV(u),1>=\pairing<u(1),1>=\pairing<0,1>=0.
  \]
  Consequently,
  \[
    \pairing<\BV(u),->=a\varepsilon^*\in D\Lambda
  \]
  and, therefore,
  \[
    \BV(u)=a(b^{-1}+ab^{-2}\varepsilon)=ab^{-1}+a^2b^{-2}\varepsilon,
  \]
  see \Cref{ex:dual_numbers-bimodule_isos}. This finishes the proof.
\end{proof}

\begin{remark}
  In $\chark(\kk)\neq2$, the Batalin--Vilkovisky operator
  \[
    \BV=\BV_{0,1}\colon\HH*[\bullet]{\Lambda}[\Lambda]\longrightarrow\HH*[\bullet-1]{\Lambda}[\Lambda]
  \]
  is computed in~\cite[Example~6]{Tra08a}.
\end{remark}

For the necessary terminology concerning algebraic $(d+2)$-angulated categories
and their dg enhancements, which is used in the statement below, we refer the
reader to the relevant sections in~\cite{JKM22}, see also~\cite{GKO13}. We only
mention that the notion of (algebraic) $3$-angulated category agrees with that
of (algebraic) triangulated category.

\begin{theorem}
  \label{thm:non-enhanceable-CY}
  Recall
  \Cref{setting:dual-numbers-char2,setting:dual-numbers-char2-eta,setting:dual-numbers-char2-pairing}.
  In particular, $\chark(\kk)=2$ and $\Lambda=\kk[\varepsilon]/(\varepsilon^2)$
  is the algebra of dual numbers. The following statements hold:
  \begin{enumerate}
  \item\label{it:dual_numbers-angulated} The pair $(\proj*{\Lambda},\id)$ admits
    an (essentially unique) algebraic $(d+2)$-angulated structure $\pentagon$
    determined by the exact sequence of $\Lambda$-bimodules
    \[
      0\to \Lambda\to\underbrace{\Lambda\otimes\Lambda\to\cdots\to
        \Lambda\otimes\Lambda\to\Lambda\otimes\Lambda\to
        \Lambda\otimes\Lambda}_{d+2\text{ terms}}\to\Lambda\to 0
    \]
    that represents the class $\eta\in\HH*[d+2]{\Lambda}[\Lambda]$. Moreover,
    the $(d+2)$-angulated category $(\proj*{\Lambda},\id,\pentagon)$ has a
    unique dg enhancement.
  \item\label{it:ab-Serre} Let $a,b\in\kk$ with $b\neq0$. Then, the graded
    $\bfLambda$-bimodule isomorphism
    \[
      \varphi_{a,b}\colon\bfLambda\stackrel{\sim}{\longrightarrow}D\bfLambda
    \]
    determined by the pairing \eqref{eq:the-ab-pairing} endows the algebraic
    $(d+2)$-angulated category $(\proj*{\Lambda},\id,\pentagon)$ with graded
    Serre duality isomorphisms
    \[
      \varphi_{a,b}\colon\Hom[\Lambda]{Q}{P}\stackrel{\sim}{\longrightarrow}D\Hom[\Lambda]{P}{Q},\qquad
      P,Q\in\proj*{\Lambda}.
    \]
  \item\label{it:BV-non-zero} In item \eqref{it:ab-Serre}, the graded Serre
    duality isomorphisms $\varphi_{a,b}$ are induced by a bimodule
    $0$-Calabi--Yau structure on a dg enhancement of
    $(\proj*{\Lambda},\id,\pentagon)$ if and only if $d$ is even or $d$ is odd
    and $a=0$.
  \end{enumerate}
  In particular, letting $d=1$ and $a\neq0$, we conclude that the pair
  $(\proj*{\Lambda},\id)$ can be endowed with the structure of an algebraic
  triangulated category with a triangulated $0$-Calabi--Yau structure that
  cannot be lifted to a bimodule right $0$-Calabi--Yau structure on any of its
  (quasi-equivalent) dg enhancements.
\end{theorem}
\begin{proof}
  Statement \eqref{it:dual_numbers-angulated} is contained
  in~\cite[Theorem~5.1.3 and Corollary~5.4.12]{JKM22}, while \eqref{it:ab-Serre}
  is a straightforward consequence of graded Morita Theory. To prove statement
  \eqref{it:BV-non-zero}, we first recall the following consequences of
  \cite[Theorem~5.1.2 and Corollary~4.5.18]{JKM22}:
  \begin{itemize}
  \item There exists a dg algebra $A$ such that $\H{A}\cong\bfLambda$ as graded
    algebras and
    \[
      \Hclass{\Astr<d+2>[A]}=\Hclass{\Astr<d+2>[\eta]}\stackrel{\ref{prop:BV-non-zero}}{=}\imath\cdot\eta\in\HH[d+2]<-d>{\bfLambda}[\bfLambda].
    \]
    Moreover, $A\in\DerCat[c]{A}$ is a basic $d\ZZ$-cluster tilting object.
  \item Let $B$ be a dg algebra such that $\H{B}\cong\bfLambda$ as graded
    algebras and
    \[
      j^*{\Hclass{\Astr<d+2>[B]}}\in\HH[d+2]<-d>{\Lambda}[\bfLambda]\cong\Ext[\Lambda^e][d+2]{\Lambda}{\Lambda}
    \]
    corresponds to an isomorphism
    $\Omega_{\Lambda^e}^{d+2}(\Lambda)\stackrel{\sim}{\to}\Lambda$ in the stable
    category of $\Lambda$-bimodules (compare with
    \Cref{setting:dual-numbers-char2-eta}). Then $A$ and $B$ are
    quasi-isomorphic as dg algebras, where $A$ is as in the previous item and,
    in fact, the above properties characterise the dg algebras in the
    quasi-isomorphism class of $A$. Moreover, given minimal models
    $(\bfLambda,\Astr[A])$ and $(\bfLambda,\Astr[B])$ of $A$ and $B$,
    respectively, there exists a strict $A_\infty$-isomorphism
    $\gamma(\bfLambda,\Astr[B])\to(\bfLambda,\Astr[A])$ for some
    $\gamma\in\Aut{\bfLambda}$. In particular,
    \[
      \Hclass{(m_{d+2}^B)^\gamma}=\Hclass{\Astr<d+2>[A]},
    \]
    where the action of $\gamma$ is defined as in
    \Cref{rmk:functoriality-isos-HC}.
  \item Let $\gamma\in\Aut{\bfLambda}$. Its restriction to
    $\Lambda$ is uniquely determined by the value
    $\gamma(\varepsilon)=c\varepsilon$, where $c\in\kk^\times$. As explained in
    \Cref{rmk:BV-under-algebra-isos}, we obtain a nondegenerate symmetric
    and associative pairing uniquely determined by
    \begin{align*}
      \pairing<\gamma(1),\gamma(1)>_{a,b}&=\pairing<1,1>_{a,b}=a,\\\pairing<\gamma(1),\gamma(\varepsilon)>_{a,b}&=\pairing<1,c\varepsilon>_{a,b}=cb\neq0.
    \end{align*}
    In particular, the scalar $a\in\kk$ is invariant under the action of
    $\Aut{\bfLambda}$.
  \end{itemize}
  We conclude that, for the purpose of establishing the validity of statement
  \eqref{it:BV-non-zero}, it suffices to consider the case of a dg algebra $A$
  such that $\H{A}\cong\bfLambda$ as graded algebras and
  \[
    \Hclass{\Astr<d+2>[A]}=\Hclass{\Astr<d+2>[\eta]}=\imath\cdot\eta.
  \]
  By \Cref{thm:CY-equivalence}, the graded $\bfLambda$-bimodule isomorphism
  $\varphi_{a,b}$ is induced by an isomorphism $A\cong DA$ in the derived
  category of dg $A$-bimodules if and only if
  \[
    0=\BV_{a,b}(\Hclass{\Astr<d+2>})\stackrel{\ref{prop:BV-non-zero}}{=}\begin{cases}(ab^{-1}+a^2{b^{-2}}\varepsilon)\imath\cdot
      u^{d+1}&d\text{ is odd},\\
      0&d\text{ is even},\end{cases}
  \]
  if and only if $d$ is even or $d$ is odd and $a=0$. This finishes the proof of
  the theorem.
\end{proof}

\begin{remark}
  \label{rmk:A-dual-numbers-periodicity}
  The algebra $\Lambda=\kk[\varepsilon]/(\varepsilon^2)$ can be described
  alternatively as the preprojective algebra of generalised Dynkin type
  $\mathbb{L}_1$~\cite{HPR80}, see also~\cite{BES07}, and is one of the algebras
  considered in~\cite[Section~9]{Ami07}, see also~\cite[p.~1176,
  Examples]{Dug12a}. For $d=1$, the algebraic triangulated structure on
  $\proj*{\Lambda}$ in \Cref{thm:non-enhanceable-CY} goes back to work of
  Auslander and Reiten (see also~\cite[Section~7.4]{Kel05}) and its essential
  uniqueness follows from the main result in~\cite{Kel18c}. That this algebraic
  triangulated structure admits a unique dg enhancement follows from work of the
  second-named author~\cite{Mur22}. An explicit representative of the
  quasi-isomorphism class of the dg algebra appearing in the proof of
  \Cref{thm:non-enhanceable-CY} appears already in~\cite[Remark~8]{MSS07}, see
  also~\cite[Example~5.4.13]{JKM22}. It is given by the graded algebra
  \[
    A\coloneqq\frac{\kk[e,t^{\pm1}]\langle h\rangle}{(h^2,ht+th,he+eh+1)},\qquad
    |e|=|h|=0,\quad |t|=1,
  \]
  endowed with the differential
  \[
    d(e)=0,\qquad d(t)=0,\qquad d(h)=e^2t.
  \]
  There is an isomorphism of graded algebras
  \begin{align*}
    \H{A}\stackrel{\sim}{\longrightarrow}\bfLambda,\qquad
    [e]\longmapsto\varepsilon,\qquad
    [t]\longmapsto\imath^{-1},
  \end{align*}
  induced by an equivalence of pairs
  \[
    (\DerCat[c]{A},[1])\stackrel{\simeq}{\longrightarrow}(\proj*{\Lambda},\id),\qquad
    A\longmapsto\Lambda,
  \]
  which is an equivalence of triangulated categories if we endow the target with
  its (essentially unique) algebraic triangulated structure. The dg algebra $A$
  enjoys the following strict form of periodicity: Let $n\in\ZZ$. Since $t^n\in
  A$ is a degree $n$ invertible central cocycle, the Leibniz rule implies that
  the isomorphism of graded $A$-bimodules (recall that $\chark(\kk)=2$)
  \[
    A\stackrel{\sim}{\longrightarrow}A[n],\qquad x\longmapsto \s[n]t^nx,
  \]
  is a morphism of dg vector spaces, and hence an isomorphism of dg
  $A$-bimodules. Combining this observation with
  \Cref{thm:non-enhanceable-CY}\eqref{it:BV-non-zero} (with $d=1$ and $a=0$), we
  conclude that $A$ admits a bimodule right $n$-Calabi--Yau structure for each
  $n\in\ZZ$.
\end{remark}

\begin{remark}
  The phenomenon described at the end of \Cref{rmk:A-dual-numbers-periodicity}
  holds also for $d\geq1$ and $n\coloneqq md$, $m\in\ZZ$. In this case, one
  considers instead the nondegenerate pairing
  \[
    \pairing<-,->\colon\bfLambda(n)\times\bfLambda\longrightarrow\kk
  \]
  whose non-zero components
  \[
    \pairing<\imath^{-(k+m)}x,\imath^ky>=\pairing<x,y>_{a,b},\qquad
    x,y\in\Lambda,\quad k\in\ZZ,
  \]
  are defined in terms of the pairing \eqref{eq:the-ab-pairing}. It is
  straightforward to verify that the formulas in \Cref{prop:BV-non-zero} also
  hold for the Batalin--Vilkovisky operator defined using this pairing (in
  particular, they are independent of $m$). Indeed, we have
  \begin{multline*}
    \pairing<\s[n]\imath^{-1}\BV(\imath\cdot
    u^{d+2})(x_1,\dots,x_{d+1}),x_{d+2}>\stackrel{\ref{def:BV}}{=}\\\sum_{i=1}^{d+2}\pairing<\s[n]u(x_i)\cdots
    u(x_{d+2})u(x_1)\cdots u(x_{i-1}),1>,
  \end{multline*}
  where $x_1,\dots,x_{d+2}\in\bfLambda$. Since $\bfLambda$ is
  (graded-)commutative, the right-hand side is the sum of $d+2$ equal terms and
  hence vanishes if $d$ is even. If $d$ is instead odd, the above equality
  simplifies to
  \begin{align*}
    \pairing<\s[n]\imath^{-1}\BV(\imath\cdot
    u^{d+2})(x_1,\dots,x_{d+1}),x_{d+2}>\stackrel{\ref{def:BV}}{=}\pairing<\s[n]u(x_1)\cdots
    u(x_{d+2}),1>.
  \end{align*}
  Keeping in mind that $u(\bfLambda)=\kk\cdot1_{\bfLambda}$, one directly
  verifies that
  \[
    \imath^{-1}\Delta(\imath\cdot
    u^{d+2})=a(b^{-1}+ab^{-2}\varepsilon)u^{d+1},
  \]
  for the right-hand side satisfies the previous equality. Arguing exactly as in
  \Cref{thm:non-enhanceable-CY}, we conclude that any dg algebra $A$ such that
  $\H{A}\cong\bfLambda$ as graded algebras, and whose UMP of length $d+2$
  satisfies
  \[
    \Hclass{\Astr<d+2>[A]}=\Hclass{\Astr<d+2>[\eta]}=\imath\cdot\eta\in\HH[d+2]<-d>{\bfLambda}[\bfLambda],
  \]
  admits a bimodule right $n$-Calabi--Yau structure (since we can always choose
  $a=0$). In particular, in view of \Cref{thm:CY-correspondence}, the free dg
  $A$-module $A\in\DerCat[c]{A}$ is an example of a (basic) $d\ZZ$-cluster
  tilting object in an $n$-Calabi--Yau triangulated category ($n\in d\ZZ$) in
  which $[d]\cong\id$ as triangle functors.
\end{remark}


\begin{acknowledgements}
  The authors thank S.~Barmeier and Y.~Lekili for respectively bringing
  references~\cite{FM07} and \cite{Li24} to their attention, and B.~Keller for
  suggesting to us the terminology `Hochschild--Massey cohomology.' The authors
  also thank P.~Bodin and B.~Keller for their comments on a previous version of
  this article.
\end{acknowledgements}


\begin{financialsupport}
  G.~J.~was partially supported by the Swedish Research Council
  (Vetenskapsrådet) Research Project Grant 2022-03748 `Higher structures in
  higher-dimensional homological algebra.' Fernando Muro was partially
  supported by grants PID2020-117971GB-C21
  funded by MCIN/AEI/10.13039/501100011033, US-1263032 (US/JUNTA/FEDER, UE),
  P20\_01109 (JUNTA/FEDER, UE), and PID2024-157173NB-I00 funded by
  MCIN/AEI/10.13039/501100011033 and also by FEDER, UE.
\end{financialsupport}


\printbibliography

\end{document}